 \definecolor{white}{rgb}{1,1,1}
\newenvironment{packed_enum}{
\begin{enumerate}[ \bf i.]
  \setlength{\itemsep}{1pt}
  \setlength{\parskip}{1pt}
  \setlength{\parsep}{0pt}
}{\end{enumerate}}
\newenvironment{packed_enum_b}{
\begin{enumerate}[\,\,\,\,  \sc a.]
  \setlength{\itemsep}{1pt}
  \setlength{\parskip}{1pt}
  \setlength{\parsep}{0pt}
}{\end{enumerate}}
\newenvironment{packed_enum_c}{
\begin{enumerate}[\,\,\,\,   (i)]
  \setlength{\itemsep}{1pt}
  \setlength{\parskip}{1pt}
  \setlength{\parsep}{0pt}
}{\end{enumerate}}
\newtheorem{theorem}{Theorem}[section]
\newtheorem{definition}[theorem]{Definition}
\newtheorem{corollary}[theorem]{Corollary}
\newtheorem{proposition}[theorem]{Proposition}
\newtheorem{lemma}[theorem]{Lemma}
\theoremstyle{definition}
\newtheorem{IP}{Interpolation Problem}
\newtheorem*{discussion}{Discussion}
\newtheorem*{helmdec}{Hodge decomposition}
\newtheorem*{remL1L2}{Remark: $L^1$ vs.~$L^2$}
\newtheorem*{example}{Example}
\newtheorem*{examples}{Examples}
\newtheorem{example_n}{Example}
\newtheorem*{remark}{Remark}
\newtheorem*{notation}{Notation}
\newcommand{\idmatrix}{\mathbb{I}}
\newcommand{\prparx}{\mathrm{Pr}^\parallel(x)}
\newcommand{\prperx}{\mathrm{Pr}^\perp(x)}
\newcommand{\hpar}{\aaa}
\newcommand{\hper}{\bbb}
\newcommand{\rot}{R}
\newcommand{\RRot}{\rho}
\newcommand{\cs}{s}
\newcommand{\ce}{\ell}
\newcommand{\kk}{{\bf k}}
\newcommand{\kkh}{\widehat{{\bf k}}}
\newcommand{\dd}{d}
\newcommand{\Rm}{{\mathbb{R}^m}}
\newcommand{\Rd}{{\mathbb{R}^\dd}}
\newcommand{\Rt}{{\mathbb{R}^3}}
\newcommand{\Rdd}{\mathbb{R}^{\dd\times \dd}}
\newcommand{\Rtt}{\mathbb{R}^{3\times 3}}
\newcommand{\AAAm}{H^\parallel}
\newcommand{\BBBm}{H^\perp}
\newcommand{\AAA}{h^\parallel}
\newcommand{\BBB}{h^\perp}
\newcommand{\aaa}{k^\parallel}
\newcommand{\bbb}{k^\perp}
\newcommand{\ccc}{\tilde{k}}
\newcommand{\Fktilde}{\widetilde{F}}
\newcommand{\Fkperp}{F^\perp}
\newcommand{\dindex}{p}
\newcommand{\kscalar}{k_H}
\newcommand{\kvector}{k_W}
\newcommand{\fxa}{f_{\boldsymbol x,\boldsymbol\alpha}}
\begin{document}
\begin{center}
{\Large\bf Matrix-valued Kernels for Shape Deformation Analysis} 
\par\vspace{.5cm}

\begin{tabular}{ccc}
\large Mario Micheli$^\ast$
&\mbox{ }
& 
\large
Joan Alexis Glaun\`es$^\ddagger$ \\
\normalsize
\tt mariomicheli@gmail.com
&
&\tt alexis.glaunes@mi.parisdescartes.fr\\
\end{tabular}
\par\vspace{.3cm}
$^\ast$ 
Department of Mathematics, 
University of Washington,
Box 354350, 
Seattle, WA 98195, USA
\par\vspace{.1cm}
$^\ddagger$ 
Laboratoire MAP5, Universit\'e Paris Descartes and CNRS, Sorbonne Paris Cit\'e, 
\\ 45 rue des Saints-P\`eres, 75006 Paris, France
\end{center}
\begin{abstract}
The main purpose of this paper is providing a
systematic study and classification
of non-scalar kernels
for Reproducing Kernel Hilbert Spaces (RKHS),
to be used in the analysis 
of deformation in shape spaces endowed with 
metrics induced by the action of groups of diffeomorphisms.
After providing an introduction to matrix-valued kernels and
their relevant differential properties, 
we explore extensively those, that we call TRI kernels, 
that induce a metric on the corresponding Hilbert spaces of vector fields that is
both translation- and rotation-invariant. 
These are analyzed in an effective manner
in the Fourier domain, where the characterization
of RKHS of curl-free and divergence-free vector fields
is particularly natural. 
A simple technique for constructing generic 
matrix-valued kernels
from scalar kernels is also developed. 
We accompany the exposition of the theory with 
several examples, and provide
numerical results
that show the dynamics induced
by different choices of TRI kernels
on the manifold of labeled landmark points.
\par\vspace{.1cm}
\noindent{\bf Key words:} 
Reproducing Kernel Hilbert Spaces, matrix-valued kernels,
shape spaces, 
landmarks. 
\end{abstract}
%
%
\section{Introduction}
\label{secIntro}
Recent years have seen the rapid development 
of acquisition techniques for medical data, such as 
magnetic resonance imaging, that allow the identification 
and very detailed visualization of 
anatomical structures \em in vivo \em and the quantitative description of their
shape. The key challenge for researchers is to devise 
statistical methods that assess normal and abnormal variations
of such shapes across subjects, with the goal 
of gaining understanding of several pathologies 
and ultimately providing practitioners with
shape-based diagnostic tools. 
The problems of comparing geometrical objects of the same nature, 
establishing correspondences that are anatomically justified, and
quantifying deformation (i.e.~computing \em distances \em between shapes,
so to be able perform, for example, shape classification)
are central in addressing all of the 
above issues. The emerging discipline of 
computational anatomy thus lies at the interface
of  geometry, imaging science, statistics, and numerical analysis.
\par
{\bf Group action approach.} 
It was  Ulf Grenander~\cite{grenander} that introduced the notion of group action
in this field. The main idea is to compare anatomical objects through the estimation
and the analysis of a deformation 
of their entire space where such objects are located. The theoretical model thus consists in 
defining (i)~the mathematical space of the observed objects 
(e.g.~the images, surfaces, curves, point sets---in one word, the {\it shapes\/}), (ii)~an appropriate group of transformations, 
and (iii)~a group action on the objects.
This approach stems from the observation that the visual
comparison of medical images coming from different individuals 
suggests the existence of an underlying deformation of the ambient space.
\par
The formulation of precise registration algorithms 
requires rigorous mathematical modeling of the 
deformations 
of such space. Rigid or affine
transformations alone are insufficient to describe the 
complexity of the observed deformations, and it is in fact appropriate
to consider functional spaces of infinite dimensions.
In the diffeomorphic approach introduced in~\cite{ChristensenRabbittMiller,dupuis} 
the transformations are obtained by
the temporal integration of a family of vector fields;
this was 
partly inspired
by Vladimir Arnold's seminal paper~\cite{arnold:66},
in which it was proven that 
incompressible 
fluid dynamics can be characterized as geodesic 
flow in the group of volume preserving
diffeomorphisms, with respect to the kinetic energy metric
(i.e.~the~$L^2$ norm of the fluid velocity).
 A rigorous mathematical foundation for this new model and 
the construction of invariant metrics for the groups of diffeomorphisms 
was established by Alain Trouv\'e~\cite{trouve}: in this approach 
one chooses a Hilbert space~$V$ of vector fields
whose norm~$\|\cdot\|_V$ defines the cost of infinitesimal deformations, and 
time-dependent diffeomorphisms~$\varphi_t$ are obtained by the integration
of a family of vector fields~$v(t,\cdot):\Rd\rightarrow\Rd$ in~$V$, 
$t\in[0,1]$,
via the 
equation~$\partial_t\varphi_t(x)=v(t,\varphi_t(x))$,
with~$\varphi_0(x)=x$. Given two shapes (geometric objects or images),
among all diffeomorphisms 
that perform the registration, 
the one that is generated by a time-dependent
vector field that minimizes the kinetic energy~$\int_0^1\|v(t,\cdot)\|_V^2dt$
is chosen,
and the square root of the minimal energy is in fact a \em distance \em
between the two shapes.
\par
This framework, often referred to as 
Large Deformation Diffeomorphic Metric Mapping (LDDMM),
has received considerable interest and
is now popular in the fields of 
computational anatomy, morphometry, and shape analysis.
The method may be applied for performing registration and computing distances
in different kinds of ``shape spaces'', such as the manifolds curves~\cite{glaunes:3, michormumford:3}, surfaces~\cite{zhang_s}, 
images~\cite{beg:2,miller:2,miller:1}, vector fields~\cite{cao:2}, diffusion tensor images~\cite{cao:1}, measures~\cite{glaunes.phd,glaunes:2}, 
and labeled feature points, 
or landmarks~\cite{glaunes:1,joshi,MMM1,miller:1}.
We should note that in this context the formulation of the registration problem 
requires the vector fields to be~$L^1$ or~$L^2$
in the time parameter~$t\,$; these and other notions, that deviate from the classical theory of 
dynamical systems, have been 
expanded in a book 
by Laurent Younes~\cite{Younes10}.
\par
David Mumford 
and his 
collaborators have given considerable contributions to 
the understanding of
the differential geometry of shape spaces 
endowed with a Riemannian metric,
including several instances of metrics induced by 
the action of groups of diffeomorphisms.
The knowledge of 
such geometry (and in particular, of curvature) is fundamental as it allows one
to infer, for example, about the uniqueness of geodesics between shapes, 
the existence 
of conjugate points, and the well-posedness of the problem of computing
the intrinsic (or Karcher's) mean of a database of shapes. 
See, for example, the papers~\cite{MMM1,MMM2,michormumford:1,michormumford:3,
mumford_geometry_2012,michormumford:4}
and references therein.
\par
{\bf Reproducing Kernel Hilbert Spaces.} In LDDMM
the  Hilbert space of vector fields~$V$ has a reproducing kernel,
whose choice uniquely determines the model in use and the properties of the deformation maps; 
moreover, from a computational point of view, the most demanding operations 
in registration algorithms usually consist in convolutions of 
data points with these kernels or their derivatives. 
For example, 
when dealing with the shape manifold~$\mathcal{L}^N(\Rd)$ 
of~$N$ labeled landmarks in~$\Rd$ (typically, $d=2$ or~3), 
the registration constraints are described in terms of the displacements of such points. 
The mathematical object of interest is the reproducing 
kernel~$K:\Rd\times\Rd\rightarrow\Rdd$
of the space~$V$ of deformation fields. 
One can in fact show that
the vector fields that minimize the kinetic energy for such registration constraints 
are sums of spline functions
centered on the landmark trajectories~$x_a(t)$,
$a=1,\ldots,N$, $t\in[0,1]$, i.e.
\begin{equation}
\label{eqone}
v(t,x)
=
\sum_{a=1}^NK\big(x,x_a(t)\big)\alpha_a(t),
\qquad x\in\Rd,\;t\in[0,1],
\end{equation}
and that the associated cost of the infinitesimal 
deformation for vector fields of this type (i.e.~their 
norm in the Hilbert space~$V$)
is also expressed in terms of the reproducing kernel, as follows:
$$
\|v(t,\cdot)\|^2_V=
\sum_{a,b=1}^{N}
\alpha_a(t)\cdot K\big(x_a(t),x_b(t)\big)\alpha_b(t),
\qquad t\in[0,1];
$$
the vectors~$\alpha_a: [0,1]\rightarrow \Rd$, $a=1,\ldots,N$, are called \em momenta\em\/, in analogy with analytical mechanics~\cite{arnold:1}, and 
they completely parametrize the search space of an optimal 
(i.e.~energy-minimizing) solution. 
\par
In this framework it is therefore natural to consider \em the kernel \em 
as the starting point for modeling
the linear deformation space~$V$ and the group of diffeomorphisms that it generates. The theory of Reproducing Kernel Hilbert Spaces (RKHS) was developed starting in the
in the 1940s, mostly 
by Aronszajn~\cite{aronszajn:43,aronszajn} and Schwartz~\cite{schwartz:64}, who built on previous studies by Bergman~\cite{bergman,bergman:book}, Bochner~\cite{bochner33}, Schoenberg~\cite{schoenberg:38,schoenberg}, and others. 
Such theory is mostly used in complex  
and functional analysis, and in more recent years it has found numerous 
applications and developments 
in statistics and machine learning~\cite{ScholkopfSmola}.
Reproducing kernels are also used as spline functions for performing data interpolation~\cite{wahba}; 
their interest in this framework stems from the fact that functions expressed in terms
of kernels solve the minimal norm interpolation
problem
in the corresponding Hilbert space. 
Kernels are also commonly used to interpolate data with values in the 
Euclidean space~$\Rd$ (again, mostly with $d=2$ or $3$), such as 
vector fields in fluid dynamics~\cite{bonaventura:11} 
\par
The kernels that are most commonly used in applications
are 
either scalar-valued functions or (in the case of RKHS of~$\Rd$-valued functions) 
they are given by a scalar-valued
function multiplied by the~$d\times d$ identity matrix. However,
employing kernels that are truly non-scalar
allows one to obtain desirable properties of the vector fields
that are not achievable otherwise.
To our knowledge
a
systematic study and classification of such kernels have not yet taken place. 
Here we focus our attention on kernels that induce
translation- and rotation-invariant norms
in the corresponding RKHS, since this a common 
requirement for the interpolation of geometric data.
We shall 
examine in detail the ties between the properties of the kernels and the corresponding
deformation spaces, 
with the goal of providing a large class of kernels that may 
be used to perform shape analysis.
In particular, matrix-valued 
kernels that induce divergence-free vector fields 
are desirable in analyzing volume-preserving transformations;
however, it is the case that
curl-free and divergence-free vector fields
are only achievable with non-scalar reproducing kernels.
%
%
%
\par
{\bf Additional related work.} 
As we said, it was Laurent Schwartz in his seminal work~\cite{schwartz:64}
that built the foundations of the theory of (non-scalar) 
reproducing kernels. Much more recently, 
Carmeli \em et al.\em~\cite{carmeli:10} derived regularity results of 
vector-valued RKHS
and
analyzed some of their properties from the point of view of the machine learning community.
De~Vito \em et al.\em~\cite{devito:12} extend 
Mercer's theorem to matrix-valued measurable kernels, whereas
Micchelli and Pontil~\cite{micchelli:05} use such kernels for
learning vector-valued functions.
Cachier and Ayache~\cite{CachierAyache} consider a class of matrix-valued
kernels that generalize thin-plate splines 
for the interpolation of dense and sparse vector fields, precisely for the
purposes of image registration.
The work by Dodu and Rabut~\cite{DoduRabut} is 
related to ours in that it introduces a class of 
irrotational and 
divergence-free kernels that minimize Beppo Levi seminorms, for the interpolation of vector fields
in two and three dimensions; our study is restricted
to the case of positive definite kernels but is more general in that 
it characterizes the \em entire \em class
of kernels that induce translation- and 
rotation-invariant norms.
Last, but not least, we should certainly mention 
a very recent paper by Mumford and Michor~\cite{mumfordEuler},
where they study an approximation
to Euler's equation with EPDiff, i.e.~the 
geodesic equation in the group of diffeomorphisms~\cite{mumford_desolneux},
by choosing kernels that are Green's functions of 
a specific class of differential operators. The latter depend on two
positive parameters, and their limit behavior
yields precisely Euler's equation for fluid flow; such kernels, as well as their limit cases,
fall within our study.
\par
{\bf Paper organization.} 
In Section~\ref{vv_rkhs} we introduce notation and definitions 
for Reproducing Kernel Hilbert Spaces of vector-valued functions, 
state and prove existence and uniqueness theorems for such spaces, 
and investigate relevant differential properties of the corresponding 
matrix-valued kernels.
Section~\ref{secTRI} 
explores the characterization of what we call {\it TRI kernels\/}, i.e.~those that induce
a translation- and rotation-invariant metric on the corresponding
RKHS: this is also performed in the Fourier domain, and
constitutes the core of the present contribution;
in fact, it turns out that the characterization of kernels Hilbert spaces 
of curl-free
or divergence-free vector fields
is described in a more natural manner in the Fourier domain
than it is in the spatial domain.   
Section~\ref{constr} details methods for building TRI 
kernels from the more commonly used scalar kernels, which is of course
crucial for applications; in fact, we show that all matrix-valued 
kernels may be obtained with such constructive procedures.
Section~\ref{secApp} develops 
the equations of the dynamics induced by the LDDMM 
approach on the manifold of landmark points, such as~\eqref{eqone}, 
and presents some numerical
results for specific choices of TRI kernels.
We summarize conclusions and describe directions of 
potential future research developments 
in Section~\ref{secCon}. We would like to point out that
the length of our paper is justified by our desire to make it
self-contained, exhaustive, and accessible the largest possible 
audience---in the hope to attract more researchers (mathematicians,
engineers, statisticians, and computer scientists) to
this interdisciplinary field. 
\section{Reproducing Kernel Hilbert Spaces of vector-valued 
functions}
\label{vv_rkhs}
In the classical setting, the elements of Reproducing Kernel Hilbert 
Spaces are scalar-valued 
functions~\cite{aronszajn:43,aronszajn,wahba}.
Such theory can be extended to vector-valued functions~\cite{carmeli:10,hein:2004,schwartz:64}
and
has recently been adapted to
 the study of 
shape and deformation~\cite{Younes10}. Here we provide
a summary of notions and results that are relevant precisely for this purpose,
also extending some known ones to the vector-valued case.
\begin{notation}
If~$S$ is a subset of a vector space then~$\mathrm{span}(S)$ denotes 
the set of finite linear combinations of elements of~$S$. 
%
If~$(X,\|\cdot\|_X)$, $(Y,\|\cdot\|_Y)$ are two normed spaces with~$X$
continuously embedded in~$Y$ (i.e.~$X\subseteq Y$ and there
there is a constant $C>0$ such that
$\|x\|_Y\leq C\|x\|_X$ for all $x\in X$)
we write~$X\hookrightarrow Y$.
If~$(H,\langle\cdot,\cdot\rangle_H)$ is an inner product 
space 
and~$U$
is a subspace of~$H$  
we indicate its orthogonal complement with~$U^\perp$.
If~$H$ is a Hilbert space its dual space
is written as~$H^\ast$;
if a sequence~$\{u_n\}$ in~$H$ 
converges weakly to some~$u\in H$,
i.e.~$\langle u_n-u,h\rangle_H\rightarrow 0$ for all $h\in H$, we write~$u_n\rightharpoonup u$ in~$H$.
The  dot product of two vectors $a,b\in\mathbb{R}^d$
is denoted with~$\langle a,b\rangle_{\mathbb{R}^d}$
or~$a\cdot b$. Vectors~$a\in\Rd$
are treated as column vectors, 
and~$^T$ indicates the transpose of a vector or a matrix.

\end{notation}
\begin{definition} 
Let~$(H,\langle\cdot,\cdot\rangle_H)$ be a Hilbert space of $\Rd$-valued functions 
defined on a set~$\Omega$.
We call $H$ a \em Reproducing Kernel Hilbert Space (RKHS) \em if 
the evaluation functionals \mbox{$\delta_x^\alpha: 
u\mapsto \alpha \cdot u(x)$} are linear and continuous for all~$x\in \Omega$
and~$\alpha\in\Rd$; that is, 
if~$\delta_x^\alpha\in H^\ast$. 
\end{definition}
By the Riesz Representation Theorem~\cite{folland},
in Reproducing Kernel Hilbert Spaces, for all~$x\in\Omega$ and~$\alpha\in\Rd$,
there exists
a unique function~$K_x^\alpha(\cdot)\in H$
such that
\begin{equation}
\label{eq:repprop}
\big\langle K_x^\alpha,u
\big\rangle_H
=
\alpha \cdot u(x)
\end{equation}
for all $u\in H$.
Such function is called the
\em representer \em of the evaluation functional~$\delta_x^\alpha$,
and relation~\eqref{eq:repprop} is referred to as
the \em reproducing property \em of~$K_x^\alpha$.
The map
$\alpha\mapsto K^\alpha_x(\cdot)$
is linear in~$\alpha$,
i.e.~$K_x^{a\alpha+b\beta}=aK_x^\alpha+bK_x^\beta$
for all~$x\in\Omega$, $\alpha,\beta\in\Rd$, and~$a,b\in \mathbb{R}$,
by the uniqueness of the representer.
So for any  pair of points $x,y\in \Omega $
there exists a 
matrix~$K(y,x)\in \Rdd $
such that 
$K_x^\alpha(y)=K(y,x)\alpha$ for all~$\alpha\in \Rd $;
the matrix-valued 
function $K:\Omega \times\Omega \rightarrow\Rdd $
is called the \em reproducing kernel\em\/, or simply the \em kernel\em\/, of
the space~$H$. 
\par
By the reproducing property~\eqref{eq:repprop} we have,
for any pair of points~$x,y\in \Omega $
and any pair of 
vectors $\alpha,\beta\in \Rd $, that
$
\langle K_x^{\alpha},K_y^{\beta}
\rangle_H
= 
\alpha\cdot K_y^{\beta}(x)
=
\alpha\cdot K(x,y)\beta
$
but also
$
\langle K_y^{\beta},K_x^{\alpha}
\rangle_H
= 
\beta\cdot K_x^{\alpha}(y)
=
\beta\cdot K(y,x)\alpha
=
\alpha\cdot K(y,x)^T\beta
,
$
so that 
the symmetry $K(x,y)=K(y,x)^T$ holds for all $x,y\in \Omega $.
\begin{definition}
\label{non_deg}
A Reproducing Kernel Hilbert Space 
is called~\mbox{\em non-degenerate}
when it has the following property: 
for any~$N\in \mathbb{N}$
and any 
distinct %
points $x_1,\ldots, x_N \in\Omega$,
if 
the vectors $\alpha_1,\ldots, \alpha_N\in \Rd $
are such that 
$\sum_{a=1}^N\alpha_a\cdot u(x_a)=0$ 
for all~$u\in H$, 
then $\alpha_1=\cdots=\alpha_N=0$.
\end{definition}
Non-degeneracy establishes a certain 
richness of functions in
Reproducing Kernel Hilbert Spaces.
Rather obviously, we will call a RKHS \em degenerate \em if it is not non-degenarate.
%
\begin{definition}
\label{def_posdef}
A generic matrix-valued 
function~$K:\Omega\times\Omega\rightarrow\Rdd$ is \em positive definite \em
if
for abitrary $N\in \mathbb{N}$, 
vectors
$\alpha_1,\ldots, \alpha_N\in \Rd $
and 
points
$x_1,\ldots, x_N\in \Omega $, 
it is the case that
\begin{equation}
\label{eq:Gpos}
\sum_{a,b=1}^N
\alpha_a\cdot
K(x_a,x_b)
\alpha_b
\geq 0.
\end{equation}
Moreover, $K$~is \em strictly \em positive definite if 
for abitrary~$N\in \mathbb{N}$, 
vectors
$\alpha_1,\ldots, \alpha_N\in \Rd $,
and 
distinct 
points
$x_1,\ldots, x_N\in \Omega$
the above inequality holds, 
with equality
 if~and only if~$\alpha_1=\cdots=\alpha_N=0$.
\end{definition}
\begin{proposition}
\label{Kposdef}
The kernel~$K$ of a RKHS is a positive definite matrix-valued function. Moreover, 
the RKHS is non-degenerate if and only if its kernel is
\em strictly \em positive definite. 
\end{proposition}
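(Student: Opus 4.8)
The plan is to prove the two assertions of Proposition~\ref{Kposdef} in sequence, exploiting the reproducing property~\eqref{eq:repprop} to translate sums involving the kernel into inner products in~$H$.

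\textbf{Positive definiteness.} First I would fix arbitrary $N\in\mathbb{N}$, points $x_1,\dots,x_N\in\Omega$, and vectors $\alpha_1,\dots,\alpha_N\in\Rd$, and consider the element $u^\ast:=\sum_{a=1}^N K_{x_a}^{\alpha_a}\in H$. Using the reproducing property twice (once for each index) together with the bilinearity of $\langle\cdot,\cdot\rangle_H$ and the identity $\langle K_{x_a}^{\alpha_a},K_{x_b}^{\alpha_b}\rangle_H=\alpha_a\cdot K(x_a,x_b)\alpha_b$ already derived in the excerpt, I would compute
\[
0\le\|u^\ast\|_H^2=\Big\langle\sum_{a}K_{x_a}^{\alpha_a},\sum_{b}K_{x_b}^{\alpha_b}\Big\rangle_H=\sum_{a,b=1}^N\alpha_a\cdot K(x_a,x_b)\alpha_b,
\]
which is exactly inequality~\eqref{eq:Gpos}. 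This direction is essentially immediate once the right test element is chosen.

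\textbf{Non-degeneracy $\Leftrightarrow$ strict positive definiteness.} For the second claim I would argue both implications via the contrapositive, again through $u^\ast=\sum_a K_{x_a}^{\alpha_a}$ and the identity $\|u^\ast\|_H^2=\sum_{a,b}\alpha_a\cdot K(x_a,x_b)\alpha_b$. Suppose first the RKHS is non-degenerate; take distinct points $x_1,\dots,x_N$ and vectors $\alpha_a$ with $\sum_{a,b}\alpha_a\cdot K(x_a,x_b)\alpha_b=0$, i.e.\ $\|u^\ast\|_H=0$, so $u^\ast=0$ in $H$. Then for every $u\in H$, the reproducing property gives $\sum_a\alpha_a\cdot u(x_a)=\langle u^\ast,u\rangle_H=0$; non-degeneracy forces $\alpha_1=\dots=\alpha_N=0$, establishing strict positive definiteness. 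Conversely, if $K$ is strictly positive definite, take distinct points $x_1,\dots,x_N$ and vectors $\alpha_a$ with $\sum_a\alpha_a\cdot u(x_a)=0$ for all $u\in H$. Choosing $u=u^\ast$ yields $0=\langle u^\ast,u^\ast\rangle_H=\sum_{a,b}\alpha_a\cdot K(x_a,x_b)\alpha_b$, and strict positive definiteness (with the $x_a$ distinct) gives $\alpha_1=\dots=\alpha_N=0$, which is precisely non-degeneracy.

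\textbf{Main obstacle.} The proof is conceptually light; the only point requiring mild care is the bookkeeping when the given points $x_1,\dots,x_N$ need \emph{not} be distinct in the positive-definiteness part, versus the strict version where distinctness is assumed — one should note that in the non-degeneracy definition and in the strict-definiteness definition the same distinctness hypothesis is in force, so the equivalence is a clean matching of conditions. A secondary subtlety is that ``$\sum_a\alpha_a\cdot u(x_a)=0$ for all $u\in H$'' is equivalent to ``$u^\ast=0$'': the forward direction uses $u=u^\ast$ as above, and the reverse uses the reproducing property for arbitrary $u$; making this double-use of $u^\ast$ explicit is the crux of the argument. No deeper machinery (completeness, density, closed-range theorems) is needed beyond the Riesz-representer construction already recorded in the excerpt.
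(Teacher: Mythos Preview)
Your proposal is correct and matches the paper's proof essentially line for line: both use the test element $u^\ast=\sum_a K_{x_a}^{\alpha_a}$ and the identity $\|u^\ast\|_H^2=\sum_{a,b}\alpha_a\cdot K(x_a,x_b)\alpha_b$ for positive definiteness, and then argue the equivalence with non-degeneracy by evaluating $\langle u^\ast,u\rangle_H$ (with $u$ arbitrary in one direction and $u=u^\ast$ in the other). The only cosmetic difference is that the paper phrases the ``strictly positive definite $\Rightarrow$ non-degenerate'' direction as a contrapositive, whereas you phrase it directly.
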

\begin{proof}
By the reproducing property~\eqref{eq:repprop} 
of the representer function we have
$$
0\leq
\Big\|
\sum_{i=a}^N
K_{x_a}^{\alpha_a}
\Big\|^2_H
=
\bigg\langle
\sum_{a=1}^N
K_{x_a}^{\alpha_a}
,
\sum_{b=1}^M
K_{x_b}^{\alpha_b}
\bigg\rangle_{\!\! H}
=
\sum_{a,b=1}^M
\big\langle
K_{x_a}^{\alpha_a}
,
K_{x_b}^{\alpha_b}
\big\rangle_H
= 
\sum_{a,b=1}^M
\alpha_a\cdot
K(x_a,x_b)
\alpha_b.
$$
Assuming that the RKHS is non-degenerate, 
if for $x_1,\ldots, x_N\in\Omega$ (distinct),
$\alpha_1,\ldots,\alpha_N\in\Rd$
we have 
$\|\sum_aK_{x_a}^{\alpha_a}\|_H^2=\sum_{a,b}\alpha_a
\cdot K(x_a,x_b)\alpha_b=0$,
then
$\sum_{a=1}^N
K_{x_a}^{\alpha_a}
=0$, 
i.e.~$
\langle
\sum_{a=1}^N
K_{x_a}^{\alpha_a}
,u
\rangle_{H}=0
$
for all~\mbox{$u\in H$};
this is
equivalent to
$
\sum_{a}
\alpha_a
\cdot u(x_a)
=0
$ for all~$u\in H$,
therefore~$\alpha_1=\cdots=\alpha_N=0$ by Definition~\ref{non_deg}.
On the other hand, assuming that the kernel~$K$
is strictly positive definite, if at least one of the 
vectors~$\alpha_1,\ldots,\alpha_N\in\Rd$
is non-zero take~$u:=\sum_aK_{x_a}^{\alpha_a}$, with distinct 
points~$x_1,\ldots,x_N\in\Omega$; 
it is the case that
$\sum_{a}\alpha_a\cdot u(x_a)=\sum_{a,b}\alpha_a\cdot K(x_a,x_b)\alpha_b>0$
by strict positive definiteness.
\end{proof}
\begin{example}
Consider a finite dimensional space~$H$ of $\Rd$-valued functions
with a given inner product~$\langle\cdot,\cdot\rangle_H$. 
We may assume without loss of generality 
that $H=\mathrm{span}\{f_1,\ldots,f_M\}$ with
$\langle f_i,f_j\rangle_H=\delta_{ij}$, where~$\delta_{ij}$ is Kronecker's delta.
The reproducing kernel of~$H$ is the matrix-valued 
function 
$
K(x,y)=\sum_{i=1,\ldots,M}
f_i(x)f_i(y)^T
$. It is in fact the case that \em all \em finite-dimensional
functions spaces are degenerate. To see this, fix
arbitrarily~$N\in \mathbb{N}$ and the distinct points~$x_1,\ldots,x_N\in\Omega$.
Define:
$$
\boldsymbol{\alpha}:=
\left[
\!\!
\begin{array}{c}
\alpha_1
\\
\vdots
\\
\alpha_N
\end{array}
\!\!
\right]\in \mathbb{R}^{Nd}
\qquad
\mbox{and}
\qquad
f_i(\boldsymbol{x}):=
\left[
\!\!
\begin{array}{c}
f_i(x_1)
\\
\vdots
\\
f_i(x_N)
\end{array}
\!\!
\right]\in \mathbb{R}^{Nd},\quad i=1,\ldots,M,
$$
all intended as column vectors. Having~$\sum_{i=1}^N\alpha_i\cdot u(x_i)=0$
for all~$u\in H$
is equivalent to~$\boldsymbol{\alpha}$ being 
orthogonal 
to~$U:=\mathrm{span}\{f_1(\boldsymbol{x}),\ldots,f_M(\boldsymbol{x})\}$,
i.e.~$\boldsymbol{\alpha}\in U^\perp$. 
But~$\dim U\leq M$, therefore 
we must have that $\dim U^\perp\geq \max\{Nd-M,0\}$. Whence, if~$N$ 
was chosen large enough,~$\boldsymbol\alpha$ is not necessarily zero.
\par
\end{example}
We have seen above that Reproducing Kernel Hilbert Spaces
have a unique reproducing kernel.
The next definition of ``kernel'' is justified by
the theorem that immediately follows.
\begin{definition}
\label{def_ker}
A positive definite function~$K:\Omega\times\Omega\rightarrow\Rdd$
such that~\mbox{$K(y,x)=K(x,y)^T$}
for all~$x,y\in\Omega$ is called 
a \em positive kernel\em, or simply a \em kernel\em, of dimension~$d$.
If, in addition,~$K$ is a strictly positive definite function, then we shall call it 
a \em strictly positive kernel \em of dimension~$d$.
\end{definition}
\begin{theorem}
\label{exist}
Given a positive kernel~$K:\Omega\times\Omega\rightarrow\Rdd$
of dimension~$d$, 
there exists a {\em unique} RKHS 
of~$\Rd$-valued functions defined on~$\Omega$ 
that has~$K$ as its reproducing kernel.
\end{theorem}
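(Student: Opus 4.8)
The plan is to mimic the classical Moore--Aronszajn argument, adapted to the $\Rd$-valued setting, in four stages: build a pre-Hilbert space of functions directly from $K$, complete it, identify the completion concretely with a space of functions, and then verify that the object produced is an RKHS with kernel $K$ and is the only one. First I would set $K_x^\alpha(\cdot):=K(\cdot,x)\alpha$ for $x\in\Omega$, $\alpha\in\Rd$, and let $H_0:=\mathrm{span}\{K_x^\alpha:x\in\Omega,\ \alpha\in\Rd\}$, a linear space of $\Rd$-valued functions on $\Omega$. On $H_0$ I would define a bilinear form by $\langle K_x^\alpha,K_y^\beta\rangle_{H_0}:=\alpha\cdot K(x,y)\beta$, extended bilinearly to finite sums. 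The routine preliminaries to verify are: the form is well defined, i.e.\ it depends only on $f,g\in H_0$ and not on their representations as finite linear combinations of generators (because $\langle f,K_y^\beta\rangle_{H_0}=\beta\cdot f(y)$ for any representation $f=\sum_a K_{x_a}^{\alpha_a}$, and symmetrically in the other argument, so evaluation pins the form down); it is symmetric, by the hypothesis $K(y,x)=K(x,y)^T$; and $\langle f,f\rangle_{H_0}\ge 0$, which is exactly the positive-definiteness~\eqref{eq:Gpos} of $K$.

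Next I would show the form is a genuine inner product, not merely semidefinite. Combining the reproducing identity $\langle f,K_x^\alpha\rangle_{H_0}=\alpha\cdot f(x)$ with the Cauchy--Schwarz inequality for positive semidefinite forms yields $|\alpha\cdot f(x)|^2\le\langle f,f\rangle_{H_0}\,(\alpha\cdot K(x,x)\alpha)$; hence $\langle f,f\rangle_{H_0}=0$ forces $f\equiv0$, and $(H_0,\langle\cdot,\cdot\rangle_{H_0})$ is a pre-Hilbert space. Let $H$ be its abstract completion. The step I expect to be the main obstacle is realizing $H$ concretely as a space of $\Rd$-valued functions on $\Omega$---rather than as equivalence classes of Cauchy sequences---containing $H_0$ as a dense subspace. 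Given a Cauchy sequence $\{f_n\}\subset H_0$, for each $x$ and $\alpha$ the scalars $\alpha\cdot f_n(x)=\langle f_n,K_x^\alpha\rangle_{H_0}$ form a Cauchy sequence in $\mathbb{R}$, since $|\alpha\cdot(f_n-f_m)(x)|\le\|f_n-f_m\|_{H_0}\,(\alpha\cdot K(x,x)\alpha)^{1/2}$; they therefore converge, and varying $\alpha$ (polarizing) defines a limit vector $f(x)\in\Rd$. One checks the assignment $[\{f_n\}]\mapsto f$ is linear and independent of the representing sequence; the delicate point is its injectivity, which I would argue as follows: if $f\equiv0$ then $\langle f_n,K_x^\alpha\rangle_{H_0}\to0$ for all $x,\alpha$, and since $\{f_n\}$ is norm-bounded and $H_0$ is dense in $H$, the $H$-limit of $\{f_n\}$ is orthogonal to a dense subset, hence zero. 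This identifies $H$ with a space of functions on $\Omega$ whose inner product restricts on $H_0$ to the form already constructed.

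It then remains to confirm that $H$ is an RKHS with reproducing kernel $K$. The reproducing identity $\langle f,K_x^\alpha\rangle_H=\alpha\cdot f(x)$ extends from $H_0$ to all of $H$ by density and continuity of both sides; consequently $|\alpha\cdot f(x)|=|\langle f,K_x^\alpha\rangle_H|\le(\alpha\cdot K(x,x)\alpha)^{1/2}\|f\|_H$, so each $\delta_x^\alpha$ is bounded and $H$ is an RKHS. Since $K_x^\alpha$ is then the representer of $\delta_x^\alpha$, the discussion following~\eqref{eq:repprop} identifies the reproducing kernel of $H$ as the matrix-valued function $(y,x)\mapsto K(y,x)$, namely $K$ itself.

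Finally, for uniqueness, suppose $H'$ is any RKHS of $\Rd$-valued functions on $\Omega$ with reproducing kernel $K$. Then $H'$ must contain each representer $K_x^\alpha$, and the reproducing property in $H'$ gives $\langle K_x^\alpha,K_y^\beta\rangle_{H'}=\alpha\cdot K(x,y)\beta=\langle K_x^\alpha,K_y^\beta\rangle_H$, so $H_0\subseteq H'$ with matching inner products. Moreover $H_0$ is dense in $H'$: any $f\in H'$ with $\langle f,K_x^\alpha\rangle_{H'}=0$ for all $x,\alpha$ satisfies $\alpha\cdot f(x)=0$ for all $x,\alpha$, whence $f=0$. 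A Hilbert space being the completion of any of its dense subspaces, the isometric inclusion $H_0\hookrightarrow H'$ extends uniquely to an isometric isomorphism $H\to H'$ that is the identity on $H_0$; because norm convergence in either space implies pointwise convergence, this isomorphism is the identity on functions, so $H'=H$ as reproducing kernel Hilbert spaces.
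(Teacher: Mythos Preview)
Your proposal is correct and follows essentially the same Moore--Aronszajn route as the paper: build $H_0$ from the representers, verify the form is a genuine inner product via the reproducing identity and Cauchy--Schwarz, complete, realize the completion as a function space via pointwise limits, and argue uniqueness from density of $H_0$. The only cosmetic differences are that the paper proves injectivity of the ``equivalence class $\mapsto$ pointwise limit'' map by an explicit norm estimate (showing $\|h_n\|_{H_0}\to 0$ from weak convergence plus the Cauchy property) rather than your orthogonality-to-a-dense-set argument in the abstract completion, and it phrases uniqueness via $H_1=H\oplus H^\perp$ rather than via uniqueness of completions; both pairs of arguments are equivalent.
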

\begin{proof}
For arbitrary $x\in\Omega$ and~$\alpha\in\Rd$
we define the function~$K_x^\alpha(\cdot):=K(\cdot,x)\alpha$ and the linear space
$H_0:=\mathrm{span}\{K_x^\alpha\,|\,x\in\Omega,\alpha\in\Rd\}$.
We also define the 
inner product~$\langle K^\alpha_x,K^\beta_y\rangle_{H_0}:=\alpha\cdot
K(x,y)\beta$. In order to extend it to all of~$H_0$
by bilinearity it is sufficient to show that if 
$$
f=\sum_{i=1}^{n}K_{x_i}^{\alpha_i}=\sum_{i=1}^{n'}K_{x'_i}^{\alpha'_i}
\qquad\mbox{(i.e.~there are two different representations of~$f\in H_0$)}
$$
then for all $g=K_y^\beta\in H_0$ 
we have
$\langle f,g\rangle_{H_0}
:=
\sum_{i=1}^{n}\langle K_{x_i}^{\alpha_i},K_y^\beta\rangle_{H_0}
=
\sum_{i=1}^{n'}\langle K_{x'_i}^{\alpha'_i},K_y^\beta\rangle_{H_0}
$. To see this 
take~$m:=n+n'$, $\gamma_i:=\alpha_i$ 
for $1\leq i\leq n$ and
$\gamma_i:=-\alpha'_i$
for $n+1\leq i\leq n+n'$; 
now observe that since~$\sum_{i=1}^mK_{z_i}^{\gamma_i}=0$ 
we also have 
$
\sum_{j=1}^m\langle K_{z_j}^{\gamma_j}, K_{y}^{\beta}\rangle_{H_0}=
\sum_{j=1}^m \gamma_j\cdot K(z_j,y)\beta
=
\big( \sum_{j=1}^m K_{z_j}^{\gamma_j} (y)\big)\beta
=0
$.
\par
It is immediate to verify that the inner product~$\langle\cdot,\cdot\rangle_{H_0}$ is symmetric;
also, if~$u\in H_0$
then it is the case that~$\langle u,u\rangle_{H_0}\geq 0$
by the positive definiteness of~$K$.
By construction  
$\langle K_x^\alpha,u\rangle_{H_0}=\alpha\cdot u(x)$ 
for all~$u\in H_0$,
so by the Cauchy-Schwarz inequality we have
$|\alpha\cdot u (x)|\leq\|K_x^\alpha\|_{H_0}\|u\|_{H_0}$
for all~$u\in H_0$, $x\in\Omega$ and $\alpha\in\Rd$;
whence if~$\langle u,u\rangle_{H_0}=0$
then~$u=0$. 
Therefore $H_0$ is an inner product space.
\par
Now one can follow the standard procedure
for the completion of metric spaces~\cite{rudin1}
with some modifications. Letting~$\mathcal{C}$ be 
\em the set of Cauchy sequences in~$H_0$\/\em, one first
partitions it into equivalence classes (we call~$\{u_n\},\{v_n\}\in\mathcal{C}$
equivalent and write~$\{u_n\}\sim\{v_n\}$
if~\mbox{$\lim_{n\rightarrow\infty}\|u_n-v_n\|_{H_0}=0$}).
We denote with~$H_e:=\mathcal{C}/\!\!\sim$ the set of equivalence classes,
and with~$[\{u_n\}]$ the generic element of~$H_e$.
Then one defines the inner product between
$\mathcal{U},\mathcal{V}\in H_e$
as~$\langle\mathcal{U},\mathcal{V}\rangle_{H_e}
:=\lim_{n\rightarrow\infty}\langle u_n,v_n\rangle_{H_0}$,
where~$\{u_n\}\in \mathcal{U}$ and~$\{v_n\}\in \mathcal{V}$.
The map~$\varphi:H_0\rightarrow H_e$ given by $\varphi(u)=[\{u,u,u,\ldots\}]$ 
is an isometry, and~$\varphi(H_0)$ is in fact dense in~$H_e$.
Any Cauchy sequence in~$\varphi(H_0)$
converges to some~$\mathcal{U}\in H_e$
(in fact if~$\{\varphi(u_n)\}$ is Cauchy in~$H_e$ then by isometry~$\{u_n\}$ is Cauchy in~$H_0$, 
therefore $\{u_n\}$ belongs to some~$\mathcal{U}\in H_e$; one can prove that
$\varphi(u_n)\rightarrow\mathcal{U}$ in $H_e$). 
Together with the density of~$\varphi(H_0)$ in~$H_e$, this can 
be used to show that~$H_e$
is complete. As we mentioned above, this procedure was rather standard; 
what follows is specific to our setting.
\par  
It is in fact the case that any Cauchy sequence in~$H_0$ converges pointwise
to some function~$\Omega\rightarrow\Rd$.
To see this, 
if~$\{u_n\}\in\mathcal{C}$ then by the Cauchy-Schwarz inequality
$
|(u_m(x)-u_n(x))\cdot\alpha|
=|\langle u_m-u_n,K_x^\alpha\rangle_{H_0}|
\leq
\|u_m-u_n\|_{H_0}
\sqrt{\alpha\cdot K(x,x)\alpha}
$
for all~$x\in\Omega$, $\alpha\in\Rd$, and~$m,n\in\mathbb{N}$.
Whence 
$\{u_n(x)\}$ is Cauchy in~$\Rd$ for all~$x\in\Omega$
and~there exists a function~$u:\Omega\rightarrow\Rd$ such that~$u_n\rightarrow u$
pointwise. 
\par
On the other hand
Cauchy sequences in~$H_0$
from distinct equivalence classes converge, pointwise, to distinct functions.
In fact, if 
$\{u_n\},\{v_n\}\in \mathcal{C}$ 
converge 
pointwise to the same~$u:\Omega\rightarrow\Rd$ then
the sequence~$\{h_n\}$, with~$h_n:=u_n-v_n$, is also Cauchy
in~$H_0$ and it converges to~$0$ pointwise. Therefore,
since $\langle h_n,K_x^\alpha\rangle_{H_0}=h_n(x)\cdot\alpha$ 
for all~$x\in\Omega$ and~$\alpha\in\Rd$, we have that 
$h_n\rightharpoonup 0$  in~$H_0$.
By the Cauchy property of~$\{h_n\}$ there exists an integer~$M$ such that
$
\|h_n\|^2_{H_0}-2\langle h_n,h_M\rangle_{H_0}
\leq
\|h_n-h_M\|^2_{H_0}
\leq\varepsilon
$
for all~$n\geq M$.
But we have~$\lim 
\langle h_n,h_M\rangle_{H_0}=0$ by weak convergence, whence
$\|h_n\|_{H_0}^2=\|u_n-v_n\|_{H_0}^2\leq 2\varepsilon$
for sufficiently large~$n$.  So it is the case that $\{u_n\}\sim\{v_n\}$.
\par
This suggests that we may realize the completion~$H_0$ as follows:
we associate to each equivalence class in~$\mathcal{C}/\!\!\sim$ 
of Cauchy sequences in~$H_0$ the 
function~$u:\Omega\rightarrow\Rd$ to which they converge pointwise, and let~$H$
be the space of such functions.
If $\{u_n\},\{v_n\}\in \mathcal{C}$ and~$u_n\rightarrow u$,~$v_n\rightarrow v$ pointwise,
we say that $u=\lim_n u_n$ in~$H$ and 
set $\langle u,v\rangle_H:=\lim_{n\rightarrow\infty}
\langle u_n,v_n\rangle_{H_0}$, so~$H$ is isometric to~$H_e$. By construction the function space~$H$
is complete, $H_0$ is embedded in~$H$, and $H=\overline{H}_0$. 
Moreover~$K$ is a reproducing kernel for~$H$, since 
if~$\{u_n\}\in\mathcal{C}$ and~$u_n\rightarrow u\in H$ pointwise
then $\langle K_x^\alpha ,u\rangle_H=\lim_n\langle K_x^\alpha ,u_n\rangle_{H_0}
=\lim_n \alpha\cdot u_n(x)=\alpha \cdot u(x)$,
for all~$x\in\Omega$ and~$\alpha\in\Rd$.
\par
Finally, assume that~$H_1$ is 
another RKHS with the same kernel~$K$. So it must be the case
that~$H_0\subseteq H_1$, and the inner 
products~$\langle\cdot,\cdot\rangle_{H}$
and~$\langle\cdot,\cdot\rangle_{H_1}$ 
must coincide on~$H_0$. 
If~$\{u_n\}\in \mathcal{C}$ 
is such that~$u_n\rightarrow u\in H$ pointwise,
by the completeness of~$H_1$ there exists~$v\in H_1$
with~$u_n\rightarrow v$ in~$H_1$, and 
$
|(u(x)-v(x))\cdot\alpha|=
\lim_{n}
|(u_n(x)-v(x))\cdot\alpha|
=
\lim_{n}
|\langle u_n-v,K_x^\alpha\rangle_{H_1}|
\leq 
\lim_{n}
\|u_n-v\|_{H_1}\|K_x^\alpha\|_{H_0}=0,
$
for all~$x\in\Omega$ and~$\alpha\in\Rd$, so it is the case that $u=v$.
Therefore~$H\subseteq H_1$.
Also $\langle\cdot,\cdot\rangle_{H_1}=\langle\cdot,\cdot\rangle_{H}$
on~$H$, whence~$H$ is a closed subset of~$H_1$ and~$H_1=H\oplus H^\perp$.
If~$u\in H_1$ is orthogonal to~$H$
then
$\alpha\cdot u(x)=\langle u, K_x^\alpha\rangle_{H_1}=0$
for all~$x\in\Omega$ and~$\alpha\in\Rd$,
whence~$u=0$. In conclusion~$H=H_1$.
\end{proof}
\begin{corollary}
Let~$H$ be a RKHS of\/ $\Rd$-valued functions that are defined on~$\Omega$, with 
reproducing kernel~$K$.
It is the case that~$H=\overline{H}_0$, where
$H_0=\mathrm{span}\{K(\cdot,x)\alpha\,|\,x\in\Omega,\alpha\in\Rd\}$.
\end{corollary}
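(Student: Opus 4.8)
The plan is to deduce the corollary directly from the reproducing property together with the orthogonal decomposition of a Hilbert space, rather than re-running the completion argument of Theorem~\ref{exist}. First I would record that, by the discussion preceding Definition~\ref{non_deg}, the representer $K_x^\alpha$ of the evaluation functional $\delta_x^\alpha$ satisfies $K_x^\alpha(\cdot)=K(\cdot,x)\alpha$; in particular each generator $K(\cdot,x)\alpha$ of $H_0$ lies in $H$, so $H_0\subseteq H$ and its closure $\overline{H}_0$ is a well-defined closed subspace of the Hilbert space $H$.

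Next I would invoke the orthogonal decomposition $H=\overline{H}_0\oplus(\overline{H}_0)^\perp$, valid since $\overline{H}_0$ is closed, and observe that $(\overline{H}_0)^\perp=H_0^\perp$, because a vector is orthogonal to a set if and only if it is orthogonal to its closed linear span. The crux is then to show $H_0^\perp=\{0\}$: if $u\in H$ satisfies $\langle u,K_x^\alpha\rangle_H=0$ for every $x\in\Omega$ and $\alpha\in\Rd$, then the reproducing property~\eqref{eq:repprop} gives $\alpha\cdot u(x)=0$ for all such $x$ and $\alpha$, forcing $u(x)=0$ for every $x\in\Omega$, i.e.~$u=0$. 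Hence $(\overline{H}_0)^\perp=\{0\}$ and $H=\overline{H}_0$.

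There is no serious obstacle here; the only point requiring care is making sure the generators of $H_0$ are exactly the representers $K_x^\alpha$, so that the reproducing property applies verbatim, and that one is entitled to the splitting $H=\overline{H}_0\oplus(\overline{H}_0)^\perp$ (which needs $\overline{H}_0$ closed, hence the passage to the closure). Alternatively, one could observe that the space constructed in the proof of Theorem~\ref{exist} is by construction the completion of $H_0$, and then appeal to the uniqueness clause of that theorem to conclude $H=\overline{H}_0$; the direct argument above is, however, shorter and entirely self-contained.
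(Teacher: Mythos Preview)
Your argument is correct. The paper states this corollary without a separate proof, since in the construction of Theorem~\ref{exist} the space~$H$ is obtained precisely as the completion of~$H_0$, and the uniqueness clause then identifies any RKHS with kernel~$K$ with that completion; so from the paper's point of view the corollary is a one-line consequence of the preceding proof.

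Your route is slightly different in emphasis: rather than appealing to the construction-plus-uniqueness, you work directly inside the given~$H$ and use the orthogonal decomposition $H=\overline{H}_0\oplus H_0^\perp$ together with the reproducing property to kill~$H_0^\perp$. This is perfectly valid and self-contained; note that the very step you use---``if $u\perp K_x^\alpha$ for all $x,\alpha$ then $\alpha\cdot u(x)=0$, hence $u=0$''---is exactly the argument the paper employs at the end of the uniqueness proof of Theorem~\ref{exist}. So the two approaches share the same kernel idea; yours just isolates it without re-running the completion.
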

\begin{corollary}
\label{weak_cor}
Let~$H$ be a RKHS, and~$\{u_n\}$ a sequence in~$H$. If $u_n\rightharpoonup u$
in~$H$ then it converges pointwise to~$u$.
On the other hand, if~$\{u_n\}$ 
is bounded in~$H$ and it converges pointwise to some function~$u$,
then~$u\in H$ and~$u_n\rightharpoonup u$ in H.  
\end{corollary}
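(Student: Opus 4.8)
The plan is to combine the reproducing property \eqref{eq:repprop} with the weak sequential compactness of norm-bounded subsets of a Hilbert space; no other ingredient is needed.

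For the first assertion, suppose $u_n\rightharpoonup u$ in $H$. Fix $x\in\Omega$ and $\alpha\in\Rd$. Since the representer $K_x^\alpha$ belongs to $H$, weak convergence gives $\langle u_n-u,K_x^\alpha\rangle_H\rightarrow 0$; by the reproducing property \eqref{eq:repprop} this quantity is exactly $\alpha\cdot\big(u_n(x)-u(x)\big)$. Letting $\alpha$ run over the standard basis of $\Rd$ shows that each component of $u_n(x)$ converges to the corresponding component of $u(x)$, i.e.~$u_n\rightarrow u$ pointwise on $\Omega$.

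For the second assertion, assume $\|u_n\|_H\leq M$ for all $n$ and $u_n\rightarrow u$ pointwise. Because $H$ is a Hilbert space, the bounded sequence $\{u_n\}$ admits a subsequence $\{u_{n_k}\}$ that converges weakly to some $v\in H$. By the first part $u_{n_k}\rightarrow v$ pointwise, and since also $u_{n_k}\rightarrow u$ pointwise we conclude $u=v$; in particular $u\in H$. To promote this to weak convergence of the full sequence, argue by contradiction: if $u_n\not\rightharpoonup u$, there are $h\in H$, $\varepsilon>0$, and a subsequence $\{u_{n_j}\}$ with $|\langle u_{n_j}-u,h\rangle_H|\geq\varepsilon$ for all $j$. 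This subsequence is still bounded, hence has a further subsequence converging weakly to some $w\in H$, and the same pointwise argument forces $w=u$; but then $\langle u_{n_j}-u,h\rangle_H\rightarrow 0$ along that sub-subsequence, contradicting the lower bound $\varepsilon$. Therefore $u_n\rightharpoonup u$ in $H$.

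The only step requiring genuine care is the appeal to weak sequential compactness of bounded sets (equivalently, reflexivity of $H$ together with the Eberlein--\v{S}mulian theorem, or a direct diagonal extraction); everything else is routine bookkeeping with \eqref{eq:repprop}. I expect this to be the main, though still standard, obstacle.
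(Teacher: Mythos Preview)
Your proof is correct and follows the paper's argument closely for the first assertion and for establishing $u\in H$ via a weakly convergent subsequence. The only divergence is in the final step, where you promote subsequential weak convergence to convergence of the full sequence: you use the standard subsequence principle (every subsequence has a further subsequence with the desired limit), whereas the paper instead checks directly that $\langle u_n,h_0\rangle_H\to\langle u,h_0\rangle_H$ for all $h_0$ in the dense subspace $H_0=\mathrm{span}\{K_x^\alpha\}$ (this is immediate from pointwise convergence and the reproducing property) and then extends to arbitrary $h\in H$ by density combined with the uniform bound on $\|u_n-u\|_H$. Your argument is arguably cleaner and does not rely on knowing an explicit dense subset; the paper's version makes the RKHS structure more visible. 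Either way the substance is the same.
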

\begin{proof}
Let~$x\in\Omega$ and~$\alpha\in \Rd$ be arbitrary.
If~$u_n\rightharpoonup u$ in~$H$ then 
$\lim_n \alpha\cdot u_n(x)
=\lim_n \langle K_x^\alpha,u_n\rangle_H
=\langle K_x^\alpha,u\rangle_H
=\alpha\cdot u(x)$.
Vice versa, if~$\{u_n\}$ is bounded in~$H$ then it has
a subsequence~$\{u_{\psi(n)}\}$ that converges weakly to some~$w\in H$;
however, 
$\alpha\cdot w(x)
=
\langle K_x^\alpha,w\rangle_H
=
\lim_n\langle K_x^\alpha,u_{\psi(n)}\rangle_H
=
\lim_n\alpha\cdot u_{\psi(n)}(x)
=
\alpha\cdot u(x)$, whence~$u=w$ and $u\in H$.
Now, 
we have
$
\lim_n\langle K_x^\alpha,u_n\rangle_H 
=
\lim_n\alpha\cdot u_n(x) 
=
\alpha\cdot u(x)= \langle K_x^\alpha,u\rangle_H,
$
so~$\lim_n\langle u_n,h_0\rangle_H=\langle u,h_0\rangle_H$
for all $h_0\in H_0$. If~$M:=\sup_n\|u_n-u\|_H$ then, since~$H=\overline{H}_0$,
for arbitrary~$\varepsilon>0$ and~$h\in H$
we can choose~$h_0\in H_0$ such that $\|h-h_0\|<\varepsilon/M$,
and we have
$|\langle u_n-u,h\rangle_H|
\leq
|\langle u_n-u,h_0\rangle_H|+\|u_n-u\|_H\|h-h_0\|_H<2\varepsilon$ for sufficiently large~$n$.
\end{proof}
\begin{proposition}[Kernel of separable RKHS] Let $H$ be a separable RKHS,
$K$ its kernel and~$\{u_i\}_{i\in\mathbb{N}}$
an orthonormal basis of~$H$. Then
$K(x,y)=\lim_{n\rightarrow\infty}
\sum_{i=1}^n u_i(x)\,u_i(y)^T$ pointwise,
for all~$x,y\in\Omega$.
\end{proposition}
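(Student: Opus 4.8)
The plan is to reduce the matrix identity to a scalar one by pairing against arbitrary vectors and then invoking Parseval's identity for the orthonormal basis. First I would fix $x,y\in\Omega$ and $\alpha,\beta\in\Rd$, and recall from the discussion following~\eqref{eq:repprop} that $\alpha\cdot K(x,y)\beta=\langle K_x^\alpha,K_y^\beta\rangle_H$, where $K_x^\alpha=K(\cdot,x)\alpha\in H$ is the representer of $\delta_x^\alpha$. The point is that the matrix $K(x,y)$ is completely determined by all these bilinear pairings: taking $\alpha=e_j$, $\beta=e_k$ (the standard basis vectors of $\Rd$) recovers the entry $K(x,y)_{jk}$, and likewise $e_j^T\big(\sum_{i=1}^n u_i(x)u_i(y)^T\big)e_k=\sum_{i=1}^n(u_i(x))_j(u_i(y))_k$ is the $(j,k)$ entry of the partial sum. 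So it suffices to prove that $\sum_{i=1}^n(\alpha\cdot u_i(x))(\beta\cdot u_i(y))\to\alpha\cdot K(x,y)\beta$ as $n\to\infty$ for every $\alpha,\beta\in\Rd$.

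For this, I would expand the representers in the orthonormal basis. Since $\{u_i\}$ is an orthonormal basis of $H$, we have $K_x^\alpha=\sum_i\langle K_x^\alpha,u_i\rangle_H\,u_i$, and by the reproducing property~\eqref{eq:repprop} each coefficient equals $\langle K_x^\alpha,u_i\rangle_H=\alpha\cdot u_i(x)$; similarly $\langle K_y^\beta,u_i\rangle_H=\beta\cdot u_i(y)$. Parseval's identity (equivalently: the partial sums $\sum_{i\le n}\langle K_x^\alpha,u_i\rangle_H u_i$ converge to $K_x^\alpha$ in $H$, combined with continuity of the inner product) then gives
\[
\langle K_x^\alpha,K_y^\beta\rangle_H=\sum_{i=1}^\infty\langle K_x^\alpha,u_i\rangle_H\,\langle u_i,K_y^\beta\rangle_H=\sum_{i=1}^\infty(\alpha\cdot u_i(x))(\beta\cdot u_i(y)),
\]
the series being convergent. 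Rewriting $(\alpha\cdot u_i(x))(\beta\cdot u_i(y))=\alpha^T u_i(x)\,u_i(y)^T\beta$, the right-hand side equals $\alpha^T\big(\sum_{i=1}^\infty u_i(x)u_i(y)^T\big)\beta$ while the left-hand side equals $\alpha\cdot K(x,y)\beta$; since $\alpha,\beta$ were arbitrary, the matrix series $\sum_{i=1}^n u_i(x)u_i(y)^T$ converges entrywise to $K(x,y)$, which is the claim.

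I do not expect a serious obstacle: the argument is essentially a transcription of the scalar Mercer-type expansion, and everything hinges on the completeness of the orthonormal basis—which is exactly why separability of $H$ is assumed (it guarantees that a countable orthonormal basis exists, so that the stated limit is even meaningful). The only points requiring a little care are (i) that the real-valued inner product means no complex conjugates intervene, so it is the transpose $u_i(y)^T$—not a conjugate transpose—that appears, and (ii) that convergence is asserted only pointwise in $(x,y)$ and entrywise in the matrix, not in any operator or uniform sense, so no uniform tail estimate is needed—only the scalar Parseval convergence for each fixed $x,y,\alpha,\beta$. If $H$ is finite-dimensional the basis is finite, the ``limit'' is a finite sum, and the statement reduces to the formula in the Example above.
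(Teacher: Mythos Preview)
Your proof is correct and essentially the same as the paper's: both expand the representer in the orthonormal basis and invoke the reproducing property to identify the Fourier coefficients as $\alpha\cdot u_i(y)$. The paper is marginally more direct in that it expands only $K_y^\alpha$, observes that $H$-convergence implies pointwise convergence, and then evaluates at $x$ to obtain the vector identity $K(x,y)\alpha=\lim_n\sum_{i\le n}(\alpha\cdot u_i(y))u_i(x)$ in one stroke, whereas you pair with a second vector $\beta$ and pass through Parseval; the content is the same.
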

\begin{proof}
For all~$y\in\Omega$ and~$\alpha\in\Rd$
we have
$K_y^\alpha=\lim_{n\rightarrow\infty}\sum_{i=1}^n\langle K_y^\alpha,u_i\rangle_Hu_i$
(limit in~$H$ and therefore pointwise), whence
$
K(x,y)\alpha
=
\lim_{n\rightarrow\infty}\sum_{i=1}^n\langle K_y^\alpha,u_i\rangle_H u_i(x)
=
\lim_{n\rightarrow\infty}\sum_{i=1}^n\alpha\cdot u_i(y)\, u_i(x).
$
\end{proof}
We now introduce notions of \em regularity \em 
of $\Rd$-valued functions in RKHS, for which we assume that~$\Omega$
is an open, connected subset of~$\Rm$. 
Denote with~$C_0(\Omega ,\Rd )$  
the space
of continuous functions $u:\Omega \rightarrow\Rd $
that vanish at infinity (i.e.~such that for every~$\varepsilon>0$
the set~$\{x\in\Omega:\|u(x)\|_{\Rd }
\geq\varepsilon\}$
is compact) 
which is Banach with the norm
$
\|u\|_{\infty}
:=
\sup_{x\in \Omega }
\|u(x)\|_{\Rd }.
$
For any integer $\cs\geq 0$ we define
$$
C_0^\cs(\Omega ,\Rd )
:=
\big\{
u
\in
C^\cs(\Omega ,\Rd ):
\partial^\dindex u
\in
C_0(\Omega ,\Rd )
\mbox{ for } |\dindex|\leq \cs
\big\},
$$
where~$C^\cs(\Omega ,\Rd )$
is the space of functions 
that are continuously differentiable~$\cs$
times. (We have used multi-index notation~\cite{evans}
for partial 
derivatives~$\partial^\dindex
=
\partial_1^{\dindex_1}\partial_2^{\dindex_2}
\ldots\partial_m^{\dindex_m}$: i.e.~if $\dindex=(\dindex_1,\ldots,\dindex_m)$ 
then we set $|\dindex|:=\dindex_1+\ldots+\dindex_m$. Also, we have used~$\cs$ instead 
of the more customary~$k$ to avoid confusion with a symbol that we will use
later for kernels).
The space $C_0^\cs(\Omega,\Rd)$ is Banach with the~$W^{\cs,\infty}$
(Sobolev)
norm 
$$\|u\|_{\cs,\infty}
:=
\sum_{|\dindex|\leq \cs}\,
\sup_{x\in \Omega }
\|
\partial^\dindex u(x)
\|_{\Rd },$$
and  $C_0^\cs(\Omega,\Rd)\hookrightarrow C_0^{\cs-1}(\Omega,\Rd)$
for any~$\cs\geq 1$.  
Also,~$C(\Omega,\Rd)$ and~$C^{\cs}(\Omega,\Rd)$, respectively,
have the topology  induced by the 
convergence with respect to norms~$\|\cdot\|_{\infty}$ and~$\|\cdot\|_{\cs,\infty}$
on compact sets.
\begin{definition}
\label{def:adm}
Let~$\cs\geq0$ be an integer.
A Hilbert space $H$ 
of $\Rd$-valued functions defined on a set~$\Omega$ 
is called 
$\cs$-\em admissible \em 
if 
$H\hookrightarrow C_0^\cs(\Omega ,\Rd )$. 
\end{definition}
\par
Given a $\cs$-admissible space~$H$,
for a fixed point~$x\in \Omega $
and a fixed vector~$\alpha\in \Rd $
the evaluation functional
$
\delta_x^\alpha:H\rightarrow \mathbb{R}:
u\mapsto
\alpha\cdot u(x)
$
is obviosuly linear in~$u$ but also bounded:
in fact, for all $u\in H$,
$$
|\delta_x^\alpha(u)|
= 
|
\alpha\cdot u(x)
|
\;\leq\;
\|\alpha\|_{\Rd }
\|u(x)\|_{\Rd }
\leq 
\|\alpha\|_{\Rd }
\|u\|_{\cs,\infty}
\;\leq\;
C\|\alpha\|_{\Rd }
\|u\|_{H}\,;
$$
we have used the 
Cauchy-Schwarz inequality, the definition 
of~$\|\cdot\|_{\cs,\infty}$,
and  
Definition~\ref{def:adm}.
Therefore if~$H$ is a $\cs$-admissible Hilbert space then it is also a Reproducing Kernel Hilbert Space,
and admits a reproducing kernel such that~$
K(\cdot,x)\alpha\in C_0^\cs(\Omega,\Rd)$
for all~$x\in\Omega$, $\alpha\in\Rd$. 
\begin{remark}
By the symmetry~$K(x,y)=K(y,x)^T$ it is the case that if~$K$ is continuous
(respectively, differentiable) in one of the two variables then it is continuous (differentiable)
in the other one too.
\end{remark}
If kernel~$K:\Omega\times\Omega\rightarrow\Rdd$ is smooth enough and
$p=(p_1,\ldots,p_m)$ and $q=(p_1,\ldots,p_m)$ are multi-indices, we indicate
with $\partial_1^p\partial_2^qK(\cdot,\cdot)$ 
the partial derivative of~$K$ where $\partial^p$ and $\partial^q$
are taken with respect to the first and second sets of variables of~$K(\cdot,\cdot)$, respectively.
%
\begin{theorem} 
\label{Th_diff}
Let~$H$ be a RKHS with kernel~$K:\Omega\times\Omega\rightarrow\Rdd$, 
and~$\cs\geq0$ be an integer. The following two statements are equivalent:
\begin{packed_enum_b}
\item $H\hookrightarrow C^\cs(\Omega,\Rd)$;
\item the function $\partial_1^p\partial_2^pK$ exists for all multi-indices~$p$ 
with~$0\leq|p|\leq \cs$, it is  continuous  
in each of the two variables (separately), and it is locally bounded. 
\end{packed_enum_b}
Under the above assumptions, the following also holds:
\begin{packed_enum_b}
\setcounter{enumi}{2}
\item for all~$x\in\Omega$, $\alpha\in\Rd$, and multi-index p such that
$0\leq |p|\leq \cs$, we have 
 $\partial_2^pK(\cdot,x)\alpha\in H$ and
\begin{equation}
\label{rep_diff}
\big\langle
\partial_2^pK(\cdot,x)\alpha,u
\big\rangle_H
=
\alpha\cdot\partial^pu(x),
\qquad\mbox{for all }u\in H.
\end{equation}
\end{packed_enum_b}
If, furthermore, we have~$K(\cdot,x)\alpha\in C_0^\cs(\Omega,\Rd)$
for all~$x\in \Omega$ and~$\alpha\in\Rd$, then $H\hookrightarrow C_0^\cs(\Omega,\Rd)$.
\end{theorem}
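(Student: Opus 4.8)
The plan is to prove (a)$\Leftrightarrow$(b) by a bootstrapping argument on the order of the multi-index, then deduce (c) along the way, and finally obtain the $C_0$ statement by a short separate argument. I would organize the equivalence as two implications, handled by induction on $|p|$.

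\medskip

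\emph{From (b) to (a), and (c).} First I would treat $|p|=0$: the local boundedness of $K$ together with the reproducing property gives, for any $x\in\Omega$ and $\alpha\in\Rd$, the pointwise bound $|\alpha\cdot u(x)|=|\langle K_x^\alpha,u\rangle_H|\leq \sqrt{\alpha\cdot K(x,x)\alpha}\,\|u\|_H$, so $H\hookrightarrow C(\Omega,\Rd)$ locally; continuity of the evaluated functions follows from continuity of $K$ in one variable. For the inductive step, suppose the claim holds for multi-indices of order $<|p|$, so that in particular $\partial^qu$ exists and $\partial_2^qK(\cdot,x)\alpha\in H$ with the representation \eqref{rep_diff} for all $|q|<|p|$. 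Write $p=q+e_j$ for some coordinate direction $j$. The candidate representer is $\partial_2^pK(\cdot,x)\alpha$; I would show it lies in $H$ by exhibiting it as a weak (hence pointwise, by Corollary~\ref{weak_cor}) limit of difference quotients $h^{-1}\big(\partial_2^qK(\cdot,x+he_j)\alpha-\partial_2^qK(\cdot,x)\alpha\big)$, which are in $H$ by the inductive hypothesis. The key point is that these difference quotients are \emph{bounded} in $H$: their $H$-norm squared is $\alpha\cdot D^2_{h}K(x,x)\alpha$ where $D^2_h$ is a second difference quotient in the first and second variables, and this converges to $\alpha\cdot\partial_1^{e_j}\partial_2^pK(x,x)\alpha$ (equivalently $\partial_1^p\partial_2^pK$ by symmetry), which exists and is locally bounded by hypothesis (b). Boundedness in $H$ plus pointwise convergence to $\partial_2^pK(\cdot,x)\alpha$ — the latter holding because $\partial_2^qK$ is differentiable in its first variable, which follows from the existence of $\partial_1^{e_j}\partial_2^qK$ — gives, again by Corollary~\ref{weak_cor}, that $\partial_2^pK(\cdot,x)\alpha\in H$ and the difference quotients converge to it weakly. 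Then for any $u\in H$,
$$
\big\langle \partial_2^pK(\cdot,x)\alpha,u\big\rangle_H
=\lim_{h\to0}\frac1h\Big(\langle\partial_2^qK(\cdot,x+he_j)\alpha,u\rangle_H-\langle\partial_2^qK(\cdot,x)\alpha,u\rangle_H\Big)
=\lim_{h\to0}\frac{\alpha\cdot\partial^qu(x+he_j)-\alpha\cdot\partial^qu(x)}{h},
$$
and the existence of this limit for every $u$, uniformly on compacts (via a Cauchy–Schwarz bound by $\|u\|_H$ times a local bound on $\|\partial_2^pK(\cdot,x)\alpha\|_H$), shows $\partial^qu$ is differentiable in the $j$-th direction with $\partial^pu(x)=\alpha$-independent value recovered by ranging over $\alpha$, and that $\partial^pu$ is continuous. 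This simultaneously proves (c) and the local embedding $H\hookrightarrow C^\cs(\Omega,\Rd)$.

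\medskip

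\emph{From (a) to (b).} Assuming $H\hookrightarrow C^\cs(\Omega,\Rd)$, the functionals $u\mapsto \alpha\cdot\partial^pu(x)$ are bounded on $H$ for $|p|\leq\cs$ (composition of the continuous embedding with a continuous functional on $C^\cs$), so by Riesz there are representers; I would identify them by induction with $\partial_2^pK(\cdot,x)\alpha$, using the same difference-quotient identity. The existence, separate continuity, and local boundedness of $\partial_1^p\partial_2^pK$ then come from evaluating $\langle \partial_2^pK(\cdot,x)\alpha,\partial_2^pK(\cdot,y)\beta\rangle_H=\beta\cdot\partial_1^p\big(\partial_2^pK(\cdot,x)\alpha\big)(y)$ together with the uniform boundedness principle (the map $x\mapsto\partial_2^pK(\cdot,x)\alpha$ is locally bounded in $H$ because the functionals $\delta_x^{p,\alpha}$ are pointwise bounded on the Banach space $C^\cs$ restricted to compacts).

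\medskip

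\emph{The $C_0$ upgrade.} Finally, if in addition $K(\cdot,x)\alpha\in C_0^\cs(\Omega,\Rd)$ for all $x,\alpha$, then $H_0=\mathrm{span}\{K(\cdot,x)\alpha\}\subseteq C_0^\cs(\Omega,\Rd)$. Since we have already shown $H\hookrightarrow C^\cs(\Omega,\Rd)$ with a norm bound $\|u\|_{\cs,\infty}\leq C\|u\|_H$ (this comes out of the Cauchy–Schwarz estimates above, taking suprema over $x$ of the local bounds — here I would note one must check the bounds are in fact global, which they are once $K(\cdot,x)\alpha$ and its derivatives vanish at infinity appropriately, or argue directly), any $u\in H$ is an $\|\cdot\|_{\cs,\infty}$-limit of elements of $H_0\subseteq C_0^\cs$, and $C_0^\cs(\Omega,\Rd)$ is closed in $(C^\cs,\|\cdot\|_{\cs,\infty})$, hence $u\in C_0^\cs(\Omega,\Rd)$; the embedding is continuous with the same constant $C$.

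\medskip

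\emph{Main obstacle.} The delicate point is the inductive step in (b)$\Rightarrow$(a)/(c): one must show the difference quotients of $\partial_2^qK(\cdot,x)\alpha$ are bounded in $H$, and this requires carefully expressing $\|\cdot\|_H^2$ of a first-order difference quotient (in the second variable) as a \emph{second-order} mixed difference quotient of $K$ evaluated diagonally, then invoking exactly the existence-and-local-boundedness of $\partial_1^p\partial_2^pK$ hypothesized in (b) — this is why (b) is stated with the mixed derivative $\partial_1^p\partial_2^p$ rather than just $\partial_2^{2p}$ or $\partial_2^p$. Keeping track of which difference quotients live in $H$ (only those already covered by the induction) versus which are merely pointwise limits, and getting weak convergence rather than just pointwise convergence, is the part that needs care; everything else is bookkeeping with Cauchy–Schwarz and Corollary~\ref{weak_cor}.
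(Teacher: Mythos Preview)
Your proposal follows essentially the same induction-on-$|p|$/difference-quotient/weak-limit strategy as the paper, invoking Corollary~\ref{weak_cor} at the same points and using the Uniform Boundedness Principle for the local boundedness in (a)$\Rightarrow$(b). Two small differences are worth noting. First, for the continuity of the inclusion $H\hookrightarrow C^\cs$ in (b)$\Rightarrow$(a), the paper invokes the Closed Graph Theorem, whereas you propose to read it off directly from the Cauchy--Schwarz estimate $|\alpha\cdot\partial^p u(x)|\le\sqrt{\alpha\cdot\partial_1^p\partial_2^pK(x,x)\alpha}\,\|u\|_H$ together with the assumed local boundedness of $\partial_1^p\partial_2^pK$; your route is valid and slightly more elementary. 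Second, the paper proves continuity of $\partial^p u$ by a \emph{separate} weak-convergence argument (showing $x_n\to x_0$ implies $\partial_2^pK(\cdot,x_n)\alpha\rightharpoonup\partial_2^pK(\cdot,x_0)\alpha$ via boundedness plus pointwise convergence, then pairing with $u$); your ``uniformly on compacts via a Cauchy--Schwarz bound'' sketch is vaguer and would need strong continuity of $x\mapsto\partial_2^pK(\cdot,x)\alpha$ in $H$, which you have not established---the paper's weak-convergence route is the cleaner fix here. (Also, a minor slip: the pointwise convergence of your difference quotients to $\partial_2^pK(\cdot,x)\alpha$ uses differentiability of $\partial_2^qK$ in its \emph{second} variable, not its first.) Your $C_0^\cs$ upgrade matches the paper's density-plus-closure argument, and your caveat about needing global bounds is well placed.
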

\begin{proof}
We shall prove that ({\sc a}\,$\Rightarrow$\,{\sc b}\,\&\,{\sc c}) and 
({\sc b}\,$\Rightarrow$\,{\sc a}\,\&\,{\sc c}); in particular, this will imply that 
({\sc a}\,$\Leftrightarrow$\,{\sc b}).
\par
$\bullet$ ({\sc a}$\,\Rightarrow\,${\sc b}\,\&\,{\sc c}).
We will prove by induction on~$|p|$ that
for all multi-indices~$p$ with~$0\leq|p|\leq \cs$, the function~$\partial_2^pK(\cdot,x)\alpha$,
with fixed $x\in\Omega$ and~$\alpha\in\Rd$,
is in~$H$ and~\eqref{rep_diff} holds, and that the function~$\partial_1^p\partial_2^pK$
exists
and is continuous in each of its two variables (its local boundedness 
is proven separately). 
If~$|p|=0$ then~$\partial^p_2K(\cdot,x)\alpha=K(\cdot,x)\alpha\in H$ 
and~\eqref{rep_diff}
is simply the reproducing 
property~\eqref{eq:repprop}; also, since~$H\subset C^\cs(\Omega,\Rd)$ we have that
$\partial^p_1\partial^p_2K(\cdot,x)=K(\cdot,x)$ is continuous in 
its first variable: the continuity in the second one
derives from the symmetry~$K(x,y)=K(y,x)^T$,~$x,y\in\Omega$.
\par
We now fix~$|p|$ and assume that for any multi-index~$r$ 
with~$0\leq|r|\leq|p|$ we have that~$\partial_1^r\partial_2^rK$ exists and
it is continuous in each of its two variables;
also, $\partial^r_2K(\cdot,x)\alpha\in H$ and 
$\langle\partial^r_2K(\cdot,x)\alpha,u\rangle_H=\alpha\cdot\partial^r u(x)$, for 
all~$u\in H$. For fixed~$|p|$ take~$p=(p_1,\ldots,p_m)$ and~$q=p+e_\ell
=(p_1,\ldots,p_\ell+1,\ldots,p_m)$ 
for some arbitrary index $\ell\in\{1,\ldots,m\}$. 
For any~$x\in\Omega$ and~$\alpha\in\Rd$ 
we have that~$K(\cdot,x)\alpha\in H\subset
C^\cs(\Omega,\Rd)$; by the symmetry $K(x,y)=K(y,x)^T$
we also have that~$K(x,\cdot)\alpha$ is in~$C^\cs(\Omega,\Rd)$.
Whence if we take a sequence~$\{\varepsilon_n\}$ 
in~$\mathbb{R}$ with~$|\varepsilon_n|\rightarrow 0$,  the sequence
$\Delta_n(\cdot):=\big(\partial_2^pK(\cdot,x+\varepsilon_n e_\ell)\alpha-
\partial_2^pK(\cdot,x)\alpha\big)/\varepsilon_n\in H$ converges pointwise to
$\partial^q_2K(\cdot,x)\alpha$. For all~$u\in H$
we have
$\langle \Delta_n, u\rangle_H=\alpha\cdot\big(\partial^p 
u(x+\varepsilon_ne_\ell)-\partial^pu(x)\big)/\varepsilon_n$,
which converges to~$\alpha\cdot \partial^q u(x)$ because~$H\subset C^\cs(\Omega,\Rd)$. 
Therefore,
for a fixed~$u\in H$, we have $\sup_{n}|\langle \Delta_n,u\rangle_H|<\infty$,
so by the Uniform Boundedness Principle~\cite{folland} 
the sequence~$\|\Delta_n\|_H$ is bounded. Together with the pointwise 
convergence of~$\Delta_n$, by Corollary~\ref{weak_cor}
this implies  
that:
\begin{packed_enum_c}
\item
$\partial^q_2K(\cdot,x)\alpha\in H$. Since $H\subset C^\cs(\Omega,\Rd)$,
 $\partial_1^q\partial_2^qK(\cdot,x)$ exists and it is continuous
in its first variable; by the symmetry~$K(x,y)=K(y,x)^T$ it is also continuous in the second 
variable.
\item
for all~$u\in H$ 
the sequence~$\langle \Delta_n,u\rangle_H$ converges to 
$\langle \partial^q_2K(\cdot,x)\alpha, u \rangle_H$. But we saw above that
it also converges to~$\alpha\cdot\partial^q u(x)$, whence 
$\langle \partial^q_2K(\cdot,x)\alpha, u \rangle_H=\alpha\cdot\partial^q u(x)$. 
\end{packed_enum_c}
\par
By the arbitrariness of~$\ell\in\{1,
\ldots,m\}$, this concludes the induction argument.
To prove local boundedness of~$\partial_1^p\partial_2^pK$ for an arbitrary multi-index~$p$
with~$0\leq|p|\leq \cs$, fix a compact subset~$D\subset\Omega$. For an arbitrary $\alpha\in\Rd$
consider the maps
$\{\Lambda_x:H\rightarrow \mathbb{R}:u\mapsto\langle
\partial^p_2K(\cdot,x)\alpha,u\rangle_H\,|\,x\in D\}\subset H^\ast$, parameterized by~$x\in  D$.
For a fixed~$u\in H$ we have
$\sup_{x\in D}|\Lambda_x u |
=
\sup_{x\in D}
|\langle \partial^p_2K(\cdot,x)\alpha,u\rangle_H|
=\sup_{x\in D}
|\alpha\cdot \partial^p u(x)|<\infty$, by the continuity of~$\partial^pu$. So
$\sup_{x\in D}\|\Lambda_x\|^2_{H^\ast}=\sup_{x\in D}
\alpha\cdot \partial_1^p\partial_2^pK(x,x)\alpha<\infty$, again
by the Uniform Boundedness Principle; that is, 
the map~$x\mapsto\alpha\cdot \partial_1^p\partial_2^p K(x,x)\alpha$ 
is locally bounded.
Therefore~$\partial_1^p\partial_2^pK$ is locally bounded, since
for all~$x,y\in\Omega$ and~$\alpha,\beta\in\Rd$ we have
\begin{align*}
\alpha\cdot \partial_1^p\partial_2^p K(x,y)\beta
&=
\big\langle \partial_2^pK(\cdot,x)\alpha,\partial_2^pK(\cdot,y)\beta\big\rangle_H
\leq
\big\|\partial_2^pK(\cdot,x)\alpha\big\|_H
\big\|\partial_2^pK(\cdot,y)\beta\big\|_H
\\
&=
\sqrt{\alpha\cdot \partial_1^p\partial_2^pK(x,x)\alpha}\,
\sqrt{\beta\cdot \partial_1^p\partial_2^pK(y,y)\beta}.
\qquad
\end{align*}
\par
$\bullet$ ({\sc b}$\,\Rightarrow\,${\sc a}\,\&\,{\sc c}).
Similarly, we will prove by induction on $|p|$
that for all multi-indices~$p$ with $0\leq|p|\leq \cs$
we have that~$H\subset C^{|p|}(\Omega,\Rd)$, that
the function~$\partial_2^pK(\cdot,x)\alpha$,
with fixed $x\in\Omega$ and~$\alpha\in\Rd$,
is in~$H$ and that~\eqref{rep_diff} holds
(the continuity of the inclusion,~$H\hookrightarrow C^\cs(\Omega,\Rd)$, 
will be proven separately).
If~$|p|=0$ then~$\partial^p_2K(\cdot,x)\alpha=K(\cdot,x)\alpha\in H$ 
and~\eqref{rep_diff}
is simply the reproducing 
property~\eqref{eq:repprop}. To prove the continuity of the functions in~$H$,
fix~$x_0\in\Omega\subset \mathbb{R}^m$
and let~$\{x_n\}$ be a sequence in~$\Omega$ with~$x_n\rightarrow x_0$.
Let~$\alpha\in\Rd$ be arbitrary and define~$v_n:=K(\cdot,x_n)\alpha\in H$.
It is the case that (i)~$\|v_n\|^2_H
=\alpha\cdot K(x_n,x_n)\alpha$ is bounded, by the local boundedness of~$K$,
and (ii)~$v_n(y)=K(y,x_n)\alpha\rightarrow K(y,x_0)\alpha$, for all~$y\in\Omega$,
by the continuity of~$K$ in each of its variables. Since~$v_n$
is bounded and it converges pointwise, by Corollary~\ref{weak_cor}
it converges weakly to the same function: 
for any~$u\in H$ we have 
$\langle K(\cdot,x_N)\alpha,u\rangle_H\rightarrow\langle K(\cdot,x_0)\alpha,u\rangle_H$,
that is~$\alpha\cdot u(x_n)\rightarrow \alpha\cdot u(x_0)$.
Therefore~$H\subset C(\Omega,\Rd)$ by the arbitrariness of~$u$.
\par
We now fix~$|p|$ and 
assume that~$H\subset C^{|p|}(\Omega,\Rd)$
and that for any multi-index~$r$ with~\mbox{$0\leq|r|\leq|p|$}
we have~$\partial^r_2K(\cdot,x)\alpha\in H$
and~$\langle\partial^rK(\cdot,x)\alpha,u\rangle_H=\alpha\cdot\partial^r u(x)$, for all 
$u\in H$.
So, for fixed~$|p|$, assume that $p=(p_1,\ldots,p_m)$
and~$q=p+e_\ell=(p_1,\ldots,p_\ell+1,\ldots,p_m)$, for some~$\ell\in\{1,\ldots,m\}$.
Take an arbitrary~$u\in H$
and a sequence~$\{\varepsilon_n\}$
in~$\mathbb{R}$, with~$|\varepsilon_n|\rightarrow 0$. By~\eqref{rep_diff}, for 
all~$x\in\Omega$ and~$\alpha\in\Rd$ we have 
\begin{align}
\label{aux_diff}
\psi_n:=
\alpha
\cdot
\frac{\partial^p u(x+\varepsilon_ne_\ell)
-
\partial^pu(x)}{\varepsilon_n}
=
\Big\langle
\frac{\partial^p_2 K(\cdot,x+\varepsilon_ne_\ell)\alpha
-
\partial^p_2 K(\cdot,x)\alpha}{\varepsilon_n}
,u
\Big\rangle_{\!\!H}.
\end{align}
For fixed~$x\in\Omega$  and $\alpha\in\Rd$
the sequence of functions~$\Delta_n(\cdot):=
\big(
\partial^p_2 K(\cdot,x+\varepsilon_ne_\ell)\alpha
-
\partial^p_2 K(\cdot,x)\alpha
\big)/\varepsilon_n$
converges pointwise to~$\partial_2^qK(\cdot,x)\alpha$.
Once again by~\eqref{rep_diff}, we have that
$$
\|\Delta_n\|_H^2
=
\frac{1}{\varepsilon_n^2}
\Big\{
\alpha\cdot
\partial_1^p
\partial_2^p
K(x+\varepsilon_ne_\ell,x+\varepsilon_ne_\ell)
\alpha
-2
\alpha\cdot
\partial_1^p
\partial_2^p
K(x,x+\varepsilon_ne_\ell)
\alpha
+
\alpha\cdot
\partial_1^p
\partial_2^p
K(x,x)
\alpha
\Big\},
$$
which converges to~$\alpha\cdot
\partial_1^q
\partial_2^q
K(x,x)
\alpha$ as~$n\rightarrow\infty$, with~$q$ as above. Therefore~$\Delta_n$ is 
bounded in~$H$. By Corollary~\ref{weak_cor} it
converges weakly in~$H$ to~$\partial_2^qK(\cdot,x)\alpha$, 
which must be an element of~$H$.
Whence $\psi_n=\langle\Delta_n,u\rangle_H$ converges,
i.e.~the derivative $\partial^qu(x)$ exists 
and~$\alpha\cdot \partial^qu(x)=\langle \partial_2^qK(\cdot,x)\alpha,u \rangle_H$.
\par 
To prove the continuity of~$\partial^qu$, we proceed as in the case~$|p|=0$. 
Fix~$x_0\in\Omega\subset \mathbb{R}^m$
and let~$\{x_n\}$ be a sequence in~$\Omega$ with~$x_n\rightarrow x_0$.
Let~$\alpha\in\Rd$ be arbitrary and define~$w_n:=\partial_2^qK(\cdot,x_n)\alpha\in H$.
It is the case that (i) $\|w_n\|^2_H
=\alpha\cdot \partial^q_1\partial^q_2K(x_n,x_n)\alpha$ is a bounded sequence, by the local boundedness of~$K$,
and (ii)~$w_n(y)=\partial^q_2K(y,x_n)\alpha\rightarrow \partial^q_1K(y,x_0)
\alpha$, for all~$y\in\Omega$,
by the continuity of~$\partial^q_2K$ in each of its variables.
By Corollary~\ref{weak_cor}, $w_n$ converges weakly to $\partial_2^qK(\cdot,x)\alpha$, 
i.e.~for all~$u\in H$ we have 
$\langle  \partial^q_2 K(\cdot,x_N)\alpha,
u\rangle_H\rightarrow\langle  \partial^q_2 K(\cdot,x_0)\alpha,u\rangle_H$,
or~$\alpha\cdot  \partial^q u(x_n)\rightarrow \alpha\cdot  \partial^q u(x_0)$;
whence~$\partial^qu\in C(\Omega,\Rd)$.
By the arbitrariness of~$u$ and the index~$\ell$ we have~$H\subset C^{|p|+1}(\Omega,\Rd)$.
This concludes the induction. 
\par
Consider now the map~$\iota: H\rightarrow C^\cs(\Omega,\Rd):v\mapsto v$,
let~$\{v_n\}$ be a sequence in~$H$ and assume that~$(v_n,v_n)\rightarrow (u,v)$
in~$H\times C^\cs(\Omega,\Rd)$. 
Since~$v_n \rightarrow u$ in~$H$ it converges
to it pointwise (by Corollary~\ref{weak_cor}) and
since~$v_n \rightarrow v$ in~$C^\cs(\Omega,\Rd)$
it also converges to it pointwise:
whence~$u=v$.
Therefore the graph of~$\iota$ is a closed subspace 
of~$H\times C^\cs(\Omega,\Rd)$, and by the Closed Graph Theorem~\cite{folland}
the map~$\iota$ 
is bounded. We conclude that
the inclusion is continuous, i.e.~$H\hookrightarrow C^\cs(\Omega,\Rd)$.
\par
$\bullet$ We have thus proven that ({\sc a}\,$\Leftrightarrow\,${\sc b}). We now 
introduce the further assumption 
that~$K_x^\alpha\in C_0^\cs(\Omega,\Rd)$. 
Let~$\{u_n\}$ be a sequence in~$H_0$ that converges, in the~$H$ norm, to an arbitrary~$u\in H$; therefore~$u_n\rightharpoonup u$ in~$H$. 
For any multi-index~$p$ such that~$0\leq |p|\leq \cs$ we have
$\langle \partial_2^pK(\cdot,x)\alpha,u_n\rangle_H
\rightarrow
\langle \partial_2^pK(\cdot,x)\alpha,u\rangle_H$
i.e.~$\alpha\cdot\partial^pu_n(x)\rightarrow 
\alpha\cdot\partial^pu(x)$. Therefore~$u_n\rightarrow u$
in the topology of~$C^\cs(\Omega,\Rd)$; but~$C_0^\cs(\Omega,\Rd)$
is closed in~$C^\cs(\Omega,\Rd)$, therefore $u\in C_0^\cs(\Omega,\Rd)$.
In conclusion, $H\hookrightarrow C_0^\cs(\Omega,\Rd)$.
\end{proof}
\begin{example} 
An example of non-degenerate, $\cs$-admissible RKHS that is used in 
applications~\cite{Younes10} is the Sobolev space of 
vector fields~$H:=W^{\ce,2}(\Rd,\Rd)
=H^{\ce}(\Rd,\Rd)$ with the inner product:
\begin{equation}
\label{prod_bess}
\langle u, v\rangle_H
:= \int_{\Rd} 
Lu(x)\cdot v(x)
\, dx
= \int_{\Rd} 
\sum_{j=0}^\ce 
\binom{\ce}{j} \sigma^{2j} \sum_{|\dindex|=j} 
\partial^\dindex u 
\cdot
\partial^\dindex v
\ dx.
\end{equation}
In~\eqref{prod_bess} the differential operator~$L:=(1-\sigma^2\Delta)^\ce$
(where~$\sigma>0$ is a scaling 
factor, $\ce\geq 0$ is an integer, and~$\Delta$ is the Laplace operator)
is applied to each component of the vector field~$u=(u^1,\ldots,u^d)$;
if $\ce>\cs+\dd/2$ 
then~$H \hookrightarrow
C_0^\cs(\Rd,\Rd)$
by the Sobolev Embedding Theorem~\cite{folland},
i.e.~$H$ is $\cs$-admissible. 
Regarding its reproducing 
kernel, first note that for any~$u\in H$ and $x,\alpha\in\Rd$ we must have
$$
\alpha\cdot u(x)
=
\langle K_x^\alpha,u\rangle_H
=
\int_\Rd
\!\!
K_x^\alpha(y)\cdot Lu(y)
\,
dy
=
\int_\Rd
\big(K(y,x)\alpha\big)^T Lu(y)
\,
dy
=
\alpha\cdot\int_\Rd
K(x,y)\, Lu(y)
\,
dy,
$$ 
therefore~$u(x)
=
\int_\Rd
K(x,y) \,Lu(y)
\,
dy$ for all~$u\in H$.
Since~$L$ is applied to each component of the vector field~$u$
(i.e.~the differential operator does not mix the components of~$u$)
it must be the case that $K(x,y)=G(x,y)\mathbb{I}_d$,
where~$G$ is the \em Green's function \em of~$L$
and $\mathbb{I}_d$ is the~$d\times d$ identity matrix. 
As reported in~\cite{EDM}, since
$L=(1-\sigma^2\Delta)^\ce$ we have that $K(x,y)\alpha=k(\|x-y\|_\Rd)\mathbb{I}_d$,
with
\begin{equation}
\label{k_bess}
k(r)\, =\, C(\sigma,d,\ce)\;
\Big(\frac{r}{\sigma}\Big)^{\ce-\frac{\dd}{2}}\,
K_{\ce-\frac{\dd}{2}}\!\Big(\frac{r}{\sigma}\Big), \qquad r\geq 0,
\end{equation}
where~$C(\sigma,d,\ce):=\big(2^{\ce+\frac{\dd}{2}-1}\pi^{\frac{\dd}{2}}\,\Gamma(\ce)
\,\sigma^\dd\big)^{-1}$
and~$K_\nu$, with~$\nu=\ce-d/2$, is a modified Bessel 
function~\cite{abramowitz} of order~$\nu$ (this should not be confused with the
symbol~$K$ that we use for kernels).
Kernels of the type~$K(x,y)=k(\|x-y\|)\mathbb{I}_d$
with~$k$ given by~\eqref{k_bess} are in fact referred to as \em Bessel kernels\em\/.  
\end{example}
We have just seen an example of a RKHS with a ``scalar'' kernel,
in that~$K$ is given by a scalar-valued function multiplied by 
the identity matrix.
In the next section we shall explore a large class of (non-scalar) 
reproducing kernels, study their properties, and provide some significant examples.
\section{Translation- and rotation-invariant metrics in RKHS}
\label{secTRI}
In the current and following sections we shall restrict our attention
to RKHS whose kernel induces an inner product that is translation- and rotation-invariant;
we shall call these TRI kernels.
In this case it is natural to assume that~\mbox{$\Omega=\Rd$}, so that the
representer functions~$K_x^\alpha(\cdot)$ are in fact vector fields in~$\Rd$. 
From now on, unless otherwise explicitly specified, we shall assume that~$d\geq 2$.
\subsection{TRI kernels}
\label{gfik}
In~$\Omega=\Rd$ we indicate the generic translation with $\tau: x\mapsto x+t$, for
some fixed~$t\in\Rd$, and the generic rotation 
with~$\RRot:x\mapsto \rot x$, for some fixed~$\rot\in O(\Rd)$ 
(the orthogonal group). 
\begin{theorem}[translation invariance]
\label{trinv}
Let $H$ be a RKHS  
with kernel~$K:\Omega\times\Omega\rightarrow\Rdd$.
The map \mbox{$u\mapsto u\circ\tau$}
is an isometry in~$H$ for any translation~$\tau$
if and only if
there exists a function $\kk:\Rd \rightarrow \Rdd $ such that $K(x,y)=\kk(x-y)$ for all $x,y\in \Rd.$
\end{theorem}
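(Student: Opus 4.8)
The plan is to prove both implications by working first on the dense subspace $H_0=\mathrm{span}\{K_x^\alpha\mid x\in\Rd,\ \alpha\in\Rd\}$ (recall $H=\overline{H}_0$) and then passing to the closure, using the fact that norm convergence in a RKHS forces pointwise convergence (Corollary~\ref{weak_cor}). Throughout, write $\tau:x\mapsto x+t$ and $T_\tau u:=u\circ\tau$, a linear map on functions.

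\emph{Sufficiency.} Assume $K(x,y)=\kk(x-y)$ and fix $\tau$. The first step is the pointwise identity
\[
(K_x^\alpha\circ\tau)(y)=K(y+t,x)\alpha=\kk\big(y-(x-t)\big)\alpha=K(y,x-t)\alpha,
\]
i.e.\ $K_x^\alpha\circ\tau=K_{x-t}^\alpha$; hence $T_\tau$ maps $H_0$ bijectively onto $H_0$, and for $f=\sum_iK_{x_i}^{\alpha_i}\in H_0$ one gets $\|f\circ\tau\|_H^2=\sum_{i,j}\alpha_i\cdot\kk(x_i-x_j)\alpha_j=\|f\|_H^2$, so $T_\tau$ is an isometry on $H_0$. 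For arbitrary $u\in H$ choose $u_n\in H_0$ with $u_n\to u$ in $H$; then $\{u_n\circ\tau\}$ is Cauchy in $H$, hence converges in $H$ to some $v$, and since $u_n\to u$ and $u_n\circ\tau\to v$ pointwise (Corollary~\ref{weak_cor}) while plainly $u_n\circ\tau\to u\circ\tau$ pointwise, we conclude $u\circ\tau=v\in H$ and $\|u\circ\tau\|_H=\lim_n\|u_n\|_H=\|u\|_H$. Running the same argument with $\tau^{-1}$ shows $T_\tau$ is onto, so it is an isometry of $H$.

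\emph{Necessity.} Suppose $T_\tau$ is an isometry of $H$ for every translation $\tau$. Applying this to $\tau$ and to $\tau^{-1}$ shows $T_\tau$ is a surjective linear isometry, hence unitary with $T_\tau^{-1}=T_{\tau^{-1}}$. The crucial step is to identify the image of a representer: for every $w\in H$, setting $h:=w\circ\tau^{-1}\in H$,
\[
\langle T_\tau K_x^\alpha,w\rangle_H=\langle T_\tau K_x^\alpha,T_\tau h\rangle_H=\langle K_x^\alpha,h\rangle_H=\alpha\cdot h(x)=\alpha\cdot w(x-t)=\langle K_{x-t}^\alpha,w\rangle_H,
\]
so $K_x^\alpha\circ\tau=K_{x-t}^\alpha$, equivalently $K(y+t,x)=K(y,x-t)$ for all $x,y,t\in\Rd$. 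Putting $y=0$ gives $K(t,x)=K(0,x-t)$, i.e.\ $K(x,y)=K(0,y-x)$ for all $x,y$, and therefore $K(x,y)=\kk(x-y)$ with $\kk(z):=K(0,-z)$.

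\emph{Expected main obstacle.} Apart from the routine bookkeeping, the delicate point is the necessity direction: converting the \emph{metric} hypothesis (isometry) into the \emph{pointwise} statement $K_x^\alpha\circ\tau=K_{x-t}^\alpha$ requires $T_\tau$ to be surjective — which is exactly why the hypothesis must be invoked for all translations, in particular for $\tau^{-1}$. Without surjectivity the reproducing property alone only yields $\langle K_x^\alpha\circ\tau,K_z^\gamma\rangle_H=\gamma\cdot K(z+t,x)\alpha$, which does not by itself force the desired identity. The parallel subtlety on the sufficiency side — that $u\circ\tau$ really returns to $H$ for every $u\in H$, not merely for $u\in H_0$ — is what the Cauchy-sequence/pointwise-limit argument above is designed to handle.
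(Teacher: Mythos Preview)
Your proof is correct and follows essentially the same strategy as the paper's: establish the result on $H_0=\mathrm{span}\{K_x^\alpha\}$ using the reproducing property, then pass to the closure via Cauchy sequences and pointwise limits. The sufficiency arguments are virtually identical.

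In the necessity direction there is a small but genuine difference worth noting. The paper simply equates $\langle f\circ\tau,g\circ\tau\rangle_H$ with $\alpha\cdot K(x+t,y+t)\beta$ for $f=K_x^\alpha$, $g=K_y^\beta$, without explaining why $f\circ\tau$ is again a representer. You instead make the mechanism explicit: you use that the hypothesis holds for \emph{all} translations, so $T_\tau$ is surjective (via $T_{\tau^{-1}}$) and hence unitary, and then identify $T_\tau K_x^\alpha=K_{x-t}^\alpha$ by testing against an arbitrary $w\in H$. This is the cleaner route, and your ``expected obstacle'' paragraph pinpoints exactly the step the paper leaves implicit. The payoff of the paper's shortcut is brevity; the payoff of your version is that it makes transparent why the quantifier ``for any translation~$\tau$'' is doing real work.
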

\begin{proof} 
Assume first that~$u\mapsto u\circ\tau$
is an isometry for all translations~$\tau: x\mapsto x+t$.
Let~$f:=K_x^\alpha$ and $g:=K_y^\beta$; their product 
is~$\langle f,g\rangle_H=\alpha\cdot K(x,y)\beta$, while
$\langle f\circ \tau,g\circ \tau\rangle_H
=\langle K(\cdot,x+t)\alpha,K(\cdot,y+t)\beta\rangle_H
=
\alpha\cdot K(x+t,y+t)\beta$.
So it must be the case that
$K(x,y)=K(x+t,y+t)$ for all $x$, $y$ and~$t\in\Rd$;
in particular 
$K(x,y)=K(x-y,0)=\kk(x-y)$ for all $x,y\in\Rd$,
where $\kk(z):=K(z,0)$, $z\in \Rd$.
\par
Conversely
if $K(x,y)=\kk(x-y)$ for all~$x,y\in\Rd$ 
then it is the case that
$\langle K_{x+t}^\alpha,K_{y+t}^\beta\rangle_H=
\alpha\cdot K(x+t,y+t)\beta
=
\alpha\cdot \kk(x+t-y-t)\beta
=
\alpha\cdot \kk(x-y)\beta
=
\alpha\cdot K(x,y)\beta
=
\langle K_{x}^\alpha,K_{y}^\beta\rangle_H$ for all~$t\in\Omega$.
Letting $H_0=\mathrm{span}\{K_x^\alpha\,|\,x,\alpha\in\Rd\}$
we have proven that  
the linear map~$T:H_0\rightarrow H_0:u\mapsto u \circ \tau$, 
with~$\tau: x\mapsto x+t$,  is an
isometry in~$H_0$ for arbitrary translations~$\tau$. 
This can be extended to an isometry~$\overline{T}$ 
in~$H$ as follows. If
$u\in H$, let~$\{u_n\}$ be a Cauchy sequence such that 
$u_n\rightarrow u$ in~$H$ and therefore pointwise;
$T$~is an isometry whence~$\{Tu_n\}$
is also Cauchy in~$H_0$, therefore it converges to some element of~$H$
which we call~$\overline{T}u$. If~$\{u_n\},\{v_n\}$ are Cauchy in $H_0$
and respectively converge to $u,v\in H$ then
$\langle \overline{T}u,\overline{T}v\rangle_H
=
\lim_n\langle \overline{T}u_n,\overline{T}v_n\rangle_{H_0}
=
\lim_n\langle u_n,v_n\rangle_{H_0}=\langle u,v \rangle_H
$, so~$\overline{T}:H\rightarrow H$ is also an isometry. Finally,
$
\alpha\cdot \overline{T}u(x)
= \langle K_x^\alpha , \overline{T}u\rangle_H
= \lim_n \langle K_x^\alpha , Tu_n\rangle_{H_0}
= \lim_n \alpha\cdot Tu_n(x)
= \lim_n \alpha\cdot u_n(x+t)
= \alpha\cdot u(x+t),$ 
for all~$x,\alpha\in\Rd$,
therefore~$\overline{T}u=u\circ\tau$. This completes the proof.
\end{proof}
We call a kernel with the properties described in 
Theorem~\ref{trinv} {\it translation invariant}\/; with 
a small abuse of terminology we shall also call~$\kk$ ``kernel''.
We introduce the following matrices in~$\Rdd$: 
\begin{equation}
\label{proj_op}
\mathrm{Pr}_x^\parallel:=
\frac{xx^T}{\|x\|^2}
\qquad
\mbox{and}
\qquad
\mathrm{Pr}_x^\perp:=
\mathbb{I}_\dd -\frac{xx^T}{\|x\|^2},
\qquad x\in \Rd\setminus\{0\},
\end{equation}
which are, respectively, the projection operators onto vector~$x$
and onto the plane that is perpendicular to~$x$
(we indicate with $\mathbb{I}_\dd $ the $\dd\times \dd$ identity matrix).
In~\eqref{proj_op} we have indicated with~$\|x\|$ 
the Euclidean norm 
of a point in $x\in\Rd$,
as we shall do, for simplicity, throughout the rest of the paper. 
The following lemma does not require~$\kk$ to be the kernel of 
a RKHS.
\begin{lemma}
\label{lem_equiv}
Let~$\kk:\Rd\rightarrow\Rdd$ be a generic matrix-valued function. The
following are equivalent:
\begin{packed_enum}
\item for any point~$x\in\Rd$ and rotation~$\rot \in O(\Rd )$ 
it is the case 
that 
$\kk(-x)=\kk(x)^T$ and
\begin{equation}
\label{auxrot}
\kk(\rot x)=\rot\,\kk(x)\rot^{-1};
\end{equation}
\item there exist a scalar~$k_0\in\mathbb{R}$ such that $\kk(0)=k_0\mathbb{I}_d$ and 
two scalar functions $\hpar,\hper:\mathbb{R}^+\rightarrow \mathbb{R}$ 
such that 
for all $x\in \Rd $, $x\not=0$, one can write
\begin{equation}
\label{invker}
\kk(x)
=
\aaa(\|x\|)
\,
\mathrm{Pr}_x^\parallel
+
\bbb(\|x\|)
\,
\mathrm{Pr}_x^\perp.
\end{equation}
\end{packed_enum}
\end{lemma}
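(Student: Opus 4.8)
The plan is to prove the two implications separately, with the bulk of the work going into $(\mathrm{i})\Rightarrow(\mathrm{ii})$; the reverse direction is a routine verification.

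\medskip

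\noindent\textbf{Setting up $(\mathrm{i})\Rightarrow(\mathrm{ii})$.} Assume $\kk(-x)=\kk(x)^T$ and $\kk(\rot x)=\rot\,\kk(x)\rot^{-1}$ for all $x\in\Rd$ and $\rot\in O(\Rd)$. First I would treat $x=0$: for every $\rot\in O(\Rd)$ equation~\eqref{auxrot} gives $\kk(0)=\rot\,\kk(0)\,\rot^{-1}$, i.e.~$\kk(0)$ commutes with the whole orthogonal group. By Schur's lemma (or an elementary argument using reflections and rotations for $d\ge 2$), the only matrices commuting with all of $O(\Rd)$ are scalar multiples of $\mathbb{I}_d$, so $\kk(0)=k_0\mathbb{I}_d$ for some $k_0\in\mathbb{R}$. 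Next, fix $x\ne 0$ and consider the stabilizer subgroup $O_x:=\{\rot\in O(\Rd):\rot x=x\}$, which is isomorphic to $O(\Rd[d-1])$ acting on the hyperplane $x^\perp$. For $\rot\in O_x$, \eqref{auxrot} becomes $\kk(x)=\rot\,\kk(x)\,\rot^{-1}$, so $\kk(x)$ commutes with every element of $O_x$. I would then decompose $\Rd=\mathrm{span}(x)\oplus x^\perp$ and write $\kk(x)$ in block form relative to this splitting. Commuting with $O_x$ (which acts trivially on $\mathrm{span}(x)$ and as the full orthogonal group on $x^\perp$) forces the off-diagonal blocks to vanish and the $x^\perp$-block to be a scalar multiple of the identity on $x^\perp$; the $\mathrm{span}(x)$-block is automatically a scalar. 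Hence there exist scalars $a(x),b(x)$ with $\kk(x)=a(x)\,\mathrm{Pr}_x^\parallel+b(x)\,\mathrm{Pr}_x^\perp$.

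\medskip

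\noindent\textbf{Reducing to radial functions.} It remains to see that $a(x)$ and $b(x)$ depend only on $\|x\|$, and that they are real-valued — the latter is immediate since $\mathrm{Pr}_x^\parallel,\mathrm{Pr}_x^\perp$ span the real subspace of symmetric matrices in their two eigendirections and $a(x)=\mathrm{tr}(\kk(x)\,\mathrm{Pr}_x^\parallel)$, $b(x)=\tfrac{1}{d-1}\mathrm{tr}(\kk(x)\,\mathrm{Pr}_x^\perp)$ are real. For the radial dependence, given $x,y\ne 0$ with $\|x\|=\|y\|$, pick $\rot\in O(\Rd)$ with $\rot x=y$; then $\mathrm{Pr}_y^\parallel=\rot\,\mathrm{Pr}_x^\parallel\,\rot^{-1}$ and similarly for the perpendicular projector, so \eqref{auxrot} applied to $\kk(x)=a(x)\mathrm{Pr}_x^\parallel+b(x)\mathrm{Pr}_x^\perp$ yields $\kk(y)=a(x)\mathrm{Pr}_y^\parallel+b(x)\mathrm{Pr}_y^\perp$. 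Comparing with $\kk(y)=a(y)\mathrm{Pr}_y^\parallel+b(y)\mathrm{Pr}_y^\perp$ and using that the two projectors are linearly independent gives $a(x)=a(y)$, $b(x)=b(y)$. Thus $a(x)=\aaa(\|x\|)$ and $b(x)=\bbb(\|x\|)$ for well-defined functions $\aaa,\bbb:\mathbb{R}^+\to\mathbb{R}$, which is~\eqref{invker}. (The symmetry hypothesis $\kk(-x)=\kk(x)^T$ is not needed for this direction, since each $\kk(x)$ already came out symmetric; it is recorded in (i) because it is part of the kernel symmetry and is used in the converse.)

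\medskip

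\noindent\textbf{The converse $(\mathrm{ii})\Rightarrow(\mathrm{i})$.} Suppose $\kk(0)=k_0\mathbb{I}_d$ and $\kk(x)=\aaa(\|x\|)\mathrm{Pr}_x^\parallel+\bbb(\|x\|)\mathrm{Pr}_x^\perp$ for $x\ne 0$. Since each $\mathrm{Pr}_x^\parallel,\mathrm{Pr}_x^\perp$ is a real symmetric matrix and $\aaa,\bbb$ are real, $\kk(x)$ is symmetric, and moreover $\mathrm{Pr}_{-x}^\parallel=\mathrm{Pr}_x^\parallel$, $\mathrm{Pr}_{-x}^\perp=\mathrm{Pr}_x^\perp$, $\|-x\|=\|x\|$, so $\kk(-x)=\kk(x)=\kk(x)^T$. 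For the rotation identity, note $\|\rot x\|=\|x\|$ and the direct computation
\begin{equation*}
\mathrm{Pr}_{\rot x}^\parallel=\frac{(\rot x)(\rot x)^T}{\|\rot x\|^2}=\rot\,\frac{xx^T}{\|x\|^2}\,\rot^T=\rot\,\mathrm{Pr}_x^\parallel\,\rot^{-1},
\end{equation*}
using $\rot^T=\rot^{-1}$; likewise $\mathrm{Pr}_{\rot x}^\perp=\mathbb{I}_d-\mathrm{Pr}_{\rot x}^\parallel=\rot\,\mathrm{Pr}_x^\perp\,\rot^{-1}$. Substituting into the formula for $\kk$ gives $\kk(\rot x)=\aaa(\|x\|)\rot\,\mathrm{Pr}_x^\parallel\,\rot^{-1}+\bbb(\|x\|)\rot\,\mathrm{Pr}_x^\perp\,\rot^{-1}=\rot\,\kk(x)\,\rot^{-1}$; the case $x=0$ is trivial since $\kk(0)$ is scalar. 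This establishes~(i).

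\medskip

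\noindent\textbf{Main obstacle.} The only genuinely substantive point is the representation-theoretic step: showing that a $d\times d$ real matrix commuting with the stabilizer $O_x\cong O(\Rd[d-1])$ must be block-diagonal with a scalar perpendicular block. This is where $d\ge 2$ is used (for $d=1$ there is no perpendicular direction), and one must be a little careful to argue with real matrices and the real orthogonal group rather than invoking complex Schur's lemma casually; the cleanest route is the explicit block-form argument with reflections in $x^\perp$ killing off-diagonal entries and rotations within $x^\perp$ forcing the perpendicular block to be scalar. Everything else is bookkeeping with the two orthogonal projectors and their linear independence.
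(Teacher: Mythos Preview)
Your proof is correct and follows essentially the same architecture as the paper's: use commutation with the stabiliser of~$x$ to get the form $\kk(x)=a(x)\mathrm{Pr}_x^\parallel+b(x)\mathrm{Pr}_x^\perp$, then use rotation equivariance to reduce $a,b$ to radial functions; the converse is identical.

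The execution differs in one point worth noting. The paper first uses $\rot=-\mathbb{I}_d$ together with the hypothesis $\kk(-x)=\kk(x)^T$ to deduce that each $\kk(x)$ is symmetric, diagonalises it, and then argues with individual eigenvectors (choosing $\rot$ with $\rot x=x$ and one-dimensional fixed space to show $x$ itself is an eigenvector, then permuting the remaining eigenvectors by rotations in $x^\perp$). You instead invoke the block decomposition relative to $\mathrm{span}(x)\oplus x^\perp$ and Schur's lemma for the action of $O_x\cong O(d-1)$, which forces the off-diagonal blocks to vanish and the perpendicular block to be scalar without ever appealing to symmetry of $\kk(x)$. This is cleaner and, as you correctly observe, shows that the hypothesis $\kk(-x)=\kk(x)^T$ is in fact redundant for the direction $(\mathrm{i})\Rightarrow(\mathrm{ii})$; the paper's diagonalisation step genuinely uses it. Your remark about needing $d\ge2$ for the stabiliser argument matches the paper's standing assumption.
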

\begin{proof}
(i.$\,\Rightarrow\,$ii.)
By choosing~$\rot=-\idmatrix_\dd $ in~\eqref{auxrot}
we have~$\kk(x)=\kk(-x)$ for all~$x\in\Rd$; combining the latter with
$\kk(-x)=\kk(x)^T$ yields the symmetry~$\kk(x)=\kk(x)^T$.
So~$\kk(x)$ can be diagonalized, i.e.~there exist matrices~$U(x)\in O(\Rd)$
and~$\Sigma(x)=\mathrm{diag}(\sigma_1(x),\ldots,\sigma_\dd(x))$
such that~$\kk(x) U(x)=U(x)\Sigma(x)$.
Since~$\kk(0)\rot=\rot\,\kk(0)$
for all~$\rot\in O(\Rd)$ 
by~\eqref{auxrot},  
we have 
$\kk(0)=\Sigma(0)$ (diagonal matrix). 
If~$\rot\in O(\Rd)$ is such that~$\rot e_i=e_j$ (for fixed indices~$i\not=j$) 
then
$\sigma_j(0) e_j=\kk(0)e_j=\kk(0)\rot e_i=\rot\,\kk(0)e_i=\rot\sigma_i(0)e_i=\sigma_i(0)e_j$,
i.e.~$\sigma_j(0)=\sigma_i(0)$ (all eigenvalues are the same).
So 
$\kk(0)=k_0\mathbb{I}_d$,
for some~$k_0\in \mathbb{R}$.
\par
Now fix~$x\in\Rd$, $x\not=0$, and~$\rot\in O(\Rd)$ such that
$\rot x=x$ and 
$\dim\,\{v\in\Rd\,|\,\rot v=v\}=1$. 
By~\eqref{auxrot}
we have $\kk(x)\rot=\rot\,\kk(x)$ for such choice of~$\rot$,
so~$\rot\,\kk(x)x =\kk(x)\rot x=\kk(x)x$, i.e.~both~$\kk(x)x$ and~$x$
are eigenvectors of~$\rot$ with eigenvalue~1. 
Whence they
$\kk(x)x=\lambda^\parallel(x)x$
for some scalar~$\lambda^\parallel(x)$.
Therefore~$\lambda^\parallel(x)$ is an eigenvalue of~$\kk(x)$; 
without loss of generality we 
assume that~$\sigma_1(x)=\lambda^\parallel(x)$.
We now claim that~$\sigma_i(x)=\sigma_j(x)$ for $i,j\geq 2$.
Denoting the orthonormal columns of~$U(x)$, i.e.~the eigenvectors
of~$\kk(x)$, with~$u_1(x),\ldots,u_\dd(x)$, for any pair of indices~$i,j\geq2$
with~$i\not=j$ there exists a matrix~$\rot\in O(\Rd)$
such that~$x = \rot x$ and~$u_i(x)=\rot u_j(x)$. By~\eqref{auxrot},
$\kk(\rot x)\rot\, u_j(x)=\rot\,\kk(\rot x)\, u_j(x)$,
whence $\kk(\rot x)\, u_i(x)=\rot\, \kk(\rot x)\, u_j(x)$,
i.e.~$\sigma_i(x)\,u_i(x)=\rot\,\sigma_j(x)\,u_j(x)=\sigma_j(x)\,u_i(x)$;
therefore \mbox{$\sigma_i(x)=\sigma_j(x)$}.
So 
there is a scalar~$\lambda^\perp(x)$
such that~$\kk(x)\beta=\lambda^\perp(x)\beta$ for all vectors~$\beta\perp x$.
\par
We claim that~$\lambda^\parallel$ and~$\lambda^\perp$ 
only depend  on~$\|x\|$. For all~$x\in\Rd$ and~\mbox{$\rot\in O(\Rd)$} we have, by~\eqref{auxrot}, 
that $\kk(\rot x)\rot x=\rot\,\kk(x)x$, whence 
$\lambda^\parallel(\rot x)\rot x=\rot\lambda^\parallel(x)x$, 
i.e.~$\lambda^\parallel(\rot x)=\lambda^\parallel(x)$
for all~$x$ and $\rot$, which implies that~$\lambda^\parallel(x)=\aaa(\|x\|)$
for some~$\aaa:\mathbb{R}^+\rightarrow \mathbb{R}$.
Similarly for any~$\alpha\in\Rd$ we have, again by~\eqref{auxrot}, that
$\kk(\rot x)\rot \alpha=\rot\,\kk(x)\alpha$; if~$\alpha\perp x$
then also $\rot\alpha\perp\rot x$, whence
$\lambda^\perp(\rot x)\rot \alpha=\rot\lambda^\perp(x)\alpha$
and in fact $\lambda^\parallel(\rot x)=\lambda^\parallel(x)$
for all~$x$ and~$\rot$.  This again implies that~$\lambda^\perp(x)=\bbb(\|x\|)$
for some~$\bbb:\mathbb{R}^+\rightarrow \mathbb{R}$.
\par
(ii.$\,\Rightarrow\,$i.) 
The implication is obvious in the case~$x=0$. 
When~$x\not=0$, for any~$\rot \in O(\Rd)$ it is the case that~$\mathrm{Pr}_{\rot x}^\parallel
=\rot\,\mathrm{Pr}_x^\parallel\,\rot^{-1}$
and~$\mathrm{Pr}_{\rot x}^\perp=\rot\,\mathrm{Pr}_x^\perp\rot^{-1}$, 
and we conclude immediately.
\end{proof}
\begin{definition}
\label{def_coe}
When a matrix-valued function~$\kk:\Rd\rightarrow\Rdd$ may be written in the
form~\eqref{invker}, we call~the 
functions~$\aaa,\bbb:\mathbb{R}^+\rightarrow \mathbb{R}$ 
the \em coefficients \em of~$\kk$.
\end{definition}
\begin{corollary} 
If the function~$\kk:\Rd\rightarrow\Rdd$ is such that~\eqref{invker} holds,
then for~$x\not=0$ the eigenvalues of~$\kk(x)$ 
are~$\aaa(\|x\|)$, with multiplicity~$1$ and eigenvector~$x$, and~$\bbb(\|x\|)$,
with multiplicity~$\dd-1$.
\end{corollary}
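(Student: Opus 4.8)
The plan is to read off the spectral data directly from the representation~\eqref{invker}, exploiting the geometric meaning of the two matrices appearing there. For a fixed $x\neq 0$, the matrix $\mathrm{Pr}_x^\parallel$ is the orthogonal projection of $\Rd$ onto the line $\mathbb{R}x$, while $\mathrm{Pr}_x^\perp$ is the orthogonal projection onto its $(\dd-1)$-dimensional orthogonal complement $x^\perp:=\{v\in\Rd : v\cdot x=0\}$. In particular $\mathrm{Pr}_x^\parallel x=x$ and $\mathrm{Pr}_x^\perp x=0$, whereas $\mathrm{Pr}_x^\parallel v=0$ and $\mathrm{Pr}_x^\perp v=v$ for every $v\in x^\perp$.

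First I would apply $\kk(x)$ to $x$ itself: by~\eqref{invker},
$$
\kk(x)\,x=\aaa(\|x\|)\,\mathrm{Pr}_x^\parallel x+\bbb(\|x\|)\,\mathrm{Pr}_x^\perp x=\aaa(\|x\|)\,x,
$$
so $x$ is an eigenvector of $\kk(x)$ with eigenvalue $\aaa(\|x\|)$. Next, for an arbitrary $v\in x^\perp$,
$$
\kk(x)\,v=\aaa(\|x\|)\,\mathrm{Pr}_x^\parallel v+\bbb(\|x\|)\,\mathrm{Pr}_x^\perp v=\bbb(\|x\|)\,v,
$$
so every nonzero vector of the $(\dd-1)$-dimensional subspace $x^\perp$ is an eigenvector with eigenvalue $\bbb(\|x\|)$.

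Since $\Rd=\mathbb{R}x\oplus x^\perp$ is an orthogonal direct sum, the eigenvectors exhibited above span all of $\Rd$; hence $\kk(x)$ is diagonalizable and its spectrum is contained in $\{\aaa(\|x\|),\bbb(\|x\|)\}$, with $\aaa(\|x\|)$ occurring with multiplicity at least $1$ (eigenvector $x$) and $\bbb(\|x\|)$ with multiplicity at least $\dd-1$. As the total algebraic multiplicity equals $\dd$, both bounds are sharp, which gives the claim: $\aaa(\|x\|)$ has multiplicity exactly $1$ with eigenvector $x$, and $\bbb(\|x\|)$ has multiplicity exactly $\dd-1$.

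There is no real obstacle here: equation~\eqref{invker} already displays $\kk(x)$ in diagonal form with respect to the orthogonal splitting $\Rd=\mathbb{R}x\oplus x^\perp$, so the argument reduces to recording how the two projection operators act on $x$ and on $x^\perp$. The only point worth flagging is the degenerate case $\aaa(\|x\|)=\bbb(\|x\|)$, in which $\kk(x)=\aaa(\|x\|)\,\mathbb{I}_\dd$ and the two listed eigenvalues coincide; there the phrase ``multiplicity~$1$ with eigenvector~$x$'' should be understood as singling out a distinguished eigenvector rather than as a statement about the algebraic multiplicity of that eigenvalue.
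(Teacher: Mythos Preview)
Your proof is correct and is exactly the intended argument: the paper states this corollary without proof, treating it as an immediate consequence of the form~\eqref{invker} (and of the eigenvalue computations already carried out in the proof of Lemma~\ref{lem_equiv}). Your observation about the degenerate case $\aaa(\|x\|)=\bbb(\|x\|)$ is a fair caveat; the paper's phrasing tacitly assumes the generic situation.
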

\begin{theorem}[rotation invariance]
\label{rotinv}
Let $H$ be a RKHS  
with a translation-invariant reproducing kernel, i.e.~$K(x,y)=\kk(x-y)$ for all $x,y\in\Rd$.
The map
$v\mapsto \RRot v\circ\RRot^{-1}$ is an isometry in~$H$
for any rotation~$\RRot:x\mapsto\rot x$, with $\rot \in O(\Rd )$,
if and only if~\eqref{auxrot} holds 
for all~$\rot\in O(\Rd )$.
\end{theorem}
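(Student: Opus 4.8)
The plan is to mimic the structure of the proof of Theorem~\ref{trinv}: first establish the equivalence on the dense subspace $H_0 = \mathrm{span}\{K_x^\alpha\}$ by a direct computation with representer functions, then extend the isometry from $H_0$ to all of $H$ by a density/completion argument identical to the one already carried out for translations. The key algebraic observation is the following transformation law for representers: for a rotation $\RRot:x\mapsto Rx$ one has $\RRot\,K_x^\alpha\circ\RRot^{-1} = K_{Rx}^{R\alpha}$. Indeed, $(\RRot\,K_x^\alpha\circ\RRot^{-1})(y) = R\,K_x^\alpha(R^{-1}y) = R\,\kk(R^{-1}y - x)\alpha = R\,\kk(R^{-1}(y - Rx))\alpha$, and this equals $\kk(y-Rx)R\alpha = K(y,Rx)R\alpha = K_{Rx}^{R\alpha}(y)$ \emph{precisely when} $R\,\kk(R^{-1}z) = \kk(z)R$ for all $z$, i.e.\ when $\kk(Rz) = R\,\kk(z)R^{-1}$; so this identity is equivalent to~\eqref{auxrot} (applied with $R$ replaced by $R^{-1}$, which ranges over all of $O(\Rd)$ as well).

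Granting~\eqref{auxrot}, I would then compute, for $f = K_x^\alpha$ and $g = K_y^\beta$,
\begin{align*}
\langle \RRot f\circ\RRot^{-1},\, \RRot g\circ\RRot^{-1}\rangle_H
&= \langle K_{Rx}^{R\alpha},\, K_{Ry}^{R\beta}\rangle_H
= R\alpha\cdot K(Rx,Ry)\,R\beta
= R\alpha\cdot \kk(Rx - Ry)\,R\beta\\
&= R\alpha\cdot R\,\kk(x-y)R^{-1}R\beta
= \alpha\cdot\kk(x-y)\beta
= \langle f,g\rangle_H,
\end{align*}
using that $R^TR = \mathbb{I}_d$. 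By bilinearity this shows the map $T:u\mapsto \RRot u\circ\RRot^{-1}$ is an isometry of $H_0$ onto itself (it is onto since its inverse is $u\mapsto \RRot^{-1}u\circ\RRot$). The extension to an isometry $\overline T$ of $H$ is then word-for-word the argument in Theorem~\ref{trinv}: for $u\in H$ take a Cauchy sequence $\{u_n\}\subset H_0$ with $u_n\to u$ in $H$ and pointwise, note $\{Tu_n\}$ is Cauchy hence converges to some $\overline T u\in H$, check $\langle\overline Tu,\overline Tv\rangle_H = \lim_n\langle u_n,v_n\rangle_{H_0} = \langle u,v\rangle_H$, and finally verify $\alpha\cdot\overline Tu(x) = \lim_n\langle K_x^\alpha, Tu_n\rangle_{H_0} = \lim_n R\alpha\cdot u_n(R^{-1}x)\cdot\text{(suitably)} = \alpha\cdot(\RRot u\circ\RRot^{-1})(x)$ via Corollary~\ref{weak_cor}, so that $\overline Tu = \RRot u\circ\RRot^{-1}$ and the map is genuinely the claimed rotation action.

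For the converse, suppose $v\mapsto \RRot v\circ\RRot^{-1}$ is an isometry of $H$ for every rotation. Applying it to $f = K_x^\alpha$, $g = K_y^\beta$ and using $\RRot K_x^\alpha\circ\RRot^{-1}\in H$ together with the reproducing property gives $\alpha\cdot\kk(x-y)\beta = \langle K_x^\alpha,K_y^\beta\rangle_H = \langle \RRot K_x^\alpha\circ\RRot^{-1}, \RRot K_y^\beta\circ\RRot^{-1}\rangle_H$, and evaluating the right-hand side pointwise (as above, before assuming~\eqref{auxrot}) yields $R\alpha\cdot\kk(Rx-Ry)R\beta = \alpha\cdot R^T\kk(R(x-y))R\,\beta$. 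Setting $z = x-y$ (which ranges over all of $\Rd$ as $x,y$ vary) and using arbitrariness of $\alpha,\beta$ gives $\kk(z) = R^T\kk(Rz)R$, i.e.\ $\kk(Rz) = R\,\kk(z)R^{-1}$, which is~\eqref{auxrot}. I do not expect a serious obstacle here: the only point requiring a little care is making sure the pointwise evaluation of $\RRot K_x^\alpha\circ\RRot^{-1}$ is carried out \emph{before} invoking~\eqref{auxrot} (so the argument is not circular), and confirming that $R$ and $R^{-1}$ both range over all of $O(\Rd)$ so the two forms of the identity are interchangeable; the completion/extension step is entirely parallel to Theorem~\ref{trinv} and can be abbreviated by reference to it.
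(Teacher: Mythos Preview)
Your converse direction (\eqref{auxrot} $\Rightarrow$ isometry) is essentially the paper's argument: you use \eqref{auxrot} to identify $\RRot K_x^\alpha\circ\RRot^{-1}=K_{Rx}^{R\alpha}$, compute the inner product on $H_0$, and extend by density exactly as in Theorem~\ref{trinv}. That part is fine.

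The forward direction (isometry $\Rightarrow$ \eqref{auxrot}) has a genuine gap, and it is precisely the circularity you flag but do not actually resolve. You write that $\langle \RRot K_x^\alpha\circ\RRot^{-1},\,\RRot K_y^\beta\circ\RRot^{-1}\rangle_H$ can be ``evaluated pointwise'' to give $R\alpha\cdot\kk(Rx-Ry)R\beta$. But the RKHS inner product is \emph{not} a pointwise (e.g.\ $L^2$-type) pairing; the only way to compute $\langle u,v\rangle_H$ from point values is via the reproducing property, which requires one of the two arguments to be a representer $K_z^\gamma$. Without \eqref{auxrot} you do not know that $\RRot K_x^\alpha\circ\RRot^{-1}$ equals $K_{Rx}^{R\alpha}$, so neither factor in your inner product is a representer and the evaluation you claim is unjustified. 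Knowing the function $\RRot K_x^\alpha\circ\RRot^{-1}$ explicitly as $z\mapsto R\,\kk(R^{-1}z-x)\alpha$ is not enough.

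The paper sidesteps this neatly: instead of applying the isometry $T:u\mapsto\RRot u\circ\RRot^{-1}$ to two representers, it takes $f:=T^{-1}K_{Rx}^{R\alpha}=R^{-1}K_{Rx}^{R\alpha}\circ R$ and $g:=K_y^\beta$. Then $g$ is a representer, so $\langle f,g\rangle_H=\beta\cdot f(y)=\beta^TR^{-1}K(Ry,Rx)R\alpha$; and $Tf=K_{Rx}^{R\alpha}$ is a representer \emph{by construction}, so $\langle Tf,Tg\rangle_H=R\alpha\cdot(Tg)(Rx)=\alpha^TK(x,y)\beta$. Equating the two (by the isometry hypothesis) gives $R^{-1}K(Ry,Rx)R=K(y,x)$, hence \eqref{auxrot} via translation invariance. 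The trick is arranging that, on each side of the equality, at least one factor is a bona fide representer so the reproducing property can be invoked without assuming what you are trying to prove.
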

\begin{proof}
Assume that 
$v\mapsto \RRot v\circ\RRot^{-1}$ is
 an isometry in~$H$
for any rotation $\RRot:x\mapsto\rot x$, with $\rot\in O(\Rd )$.
For fixed~$x,y,\alpha,\beta\in\Rd$ and~$\rot\in O(\Rd)$,
let~$f:=\rot^{-1}K_{\rot x}^{\rot\alpha}\circ\rot$
and $g:=K_y^\beta\in H$;
then we have that
$$
\langle f,g\rangle_H 
=
\big\langle
\rot^{-1}K(\rot\;\cdot\;,\rot x)\rot\alpha,
K(\,\cdot\,,y)\beta
\big\rangle_H
= \beta^T\rot^{-1}
K(\rot y,\rot x)\rot\alpha,
$$
by the reproducing property of the \em second \em factor.
But for all~$\alpha,\beta\in\Rd$  this must be equal to
\begin{align*}
\langle \rot f\circ\rot^{-1},\rot g\circ\rot^{-1}\rangle_H 
&=
\big\langle
K(\,\cdot\,,\rot x)\rot\alpha
,
\rot K(\rot^{-1}\,\cdot\,,y)\beta
\big\rangle_H
\stackrel{(\ast)}{=} 
(\rot\alpha)^T
\rot
K(\rot^{-1}\rot x,y)\beta
\\&=
\alpha^TK(x,y)\beta
=
\beta^TK(x,y)^T\alpha
=
\beta^TK(y,x)\alpha
\end{align*}
in~$(\ast)$ we have used
the reproducing property
of the \em first \em factor.
Whence we have $\rot^{-1}K(\rot y,\rot x)\rot=K(y,x)$
for all~$x,y\in\Rd$ and~$\rot\in O(\Rd)$, and~\eqref{auxrot}
follows from translation invariance (Proposition~\ref{trinv}).
\par
Now assume that~\eqref{auxrot}
holds for all~$\rot\in O(\Rd)$.
Take 
take~$f:=K_x^\alpha$ and~$g:=K_y^\beta$,
for arbitrary points~$x,y$ and
and vectors~$\alpha,\beta$. 
For any $\rot\in O(\Rd)$ we have
$
(\rot f\circ\rot^{-1}
)(\cdot)
=
\rot\,\kk
(\rot^{-1}\cdot-x
)\alpha
=
\rot\,\kk
(\rot^{-1}(\cdot-\rot x)
)\alpha
=
\kk
(\cdot-\rot x
)\rot\alpha
$
by~\eqref{auxrot}, and a similar expression holds for~$\rot g\circ\rot^{-1}$; therefore
\begin{align*}
\big\langle
\rot 
f\circ\rot^{-1}
&,
\rot g
\circ
\rot^{-1}
\big\rangle_H
=
\big\langle
\kk
(\cdot-\rot x
)\rot\alpha
,
\kk
(\cdot-\rot y
)\rot\beta
\big\rangle_H
=
\beta^T\rot^T\kk(\rot(y-x))\rot\alpha
\\
&
\stackrel{(\ast\ast)}{=}
\beta^T\rot^T\rot\,\kk(y-x)\alpha
=
\beta^T\kk(y-x)\alpha
=
\big\langle
\kk
(\,\cdot\,- x
)\alpha
,
\kk
(\,\cdot\,- y
)\beta
\big\rangle_H
=\big\langle f,g\big\rangle_H,
\end{align*}
where we have used~\eqref{auxrot} 
in~$(\ast\ast)$. Whence rotations are isometries 
$H_0=\mathrm{span}\{K_x^\alpha\,|\,x,\alpha\in\Rd\}$.
An argument that is in all similar to the last part of the proof
of Theorem~\ref{trinv} can be employed to prove that 
rotations $\RRot: x\mapsto\rot x$, with~$\rot\in O(\Rd)$, 
are in fact isometries on all of~$H=\overline{H}_0$.
\end{proof}
\par
Note that by ``rotations'' we intend all the elements of~$O(\Rd)$
and not just the special orthogonal group~$SO(\Rd)$, 
therefore we also include, for example, 
all permutations of the coordinates and reflections.
By Definition~\ref{def_ker}, translation-invariant \em kernels\em~$K(x,y)=\kk(x-y)$ 
have the property that~$\kk(-x)=\kk(x)^T$ for all~$x\in\Rd$, 
whence they may be written in the form~\eqref{invker}.
\begin{definition}[TRI kernels]
\label{def_tri}
The kernels 
of 
RKHS 
with translation- and 
rotation-invariant inner products, that may therefore be written in the form~\eqref{invker},
are called \em TRI kernels\em\/.
\end{definition}
\par
Not all matrix-valued functions~$\kk$ of the form~\eqref{invker}
are positive definite, i.e.~kernels. In the next section we will characterize
the functions~$(\aaa,\bbb)$ that give rise to TRI kernels, and later in the paper
we shall provide a method to construct such coefficients.
Introducing the auxiliary function
\begin{equation}
\label{def_ktilde}
\ccc(r):=\frac{\hpar(r)-\hper(r)
}{r^2},
\qquad
r>0,
\end{equation}
we may write the generic element of a 
function~$\kk=(\kk^{ij})_{i,j=1,\ldots,d}$
of the type~\eqref{invker} as follows, for~$x\not=0$:
\begin{equation}
\label{kelement}
\kk^{ij}(x)
=
\big[
\hpar(\|x\|)-\hper(\|x\|)
\big]
\frac{x^ix^j}{\|x\|^2}+\hper(\|x\|)\,\delta^{ij}
=
\ccc(\|x\|)\,
x^ix^j+\hper(\|x\|)\,\delta^{ij},
\quad
i,j=1,\ldots,\dd.
\end{equation}
If~$\kk(0)=k_0\mathbb{I}_d $  
we may define~$\aaa$ and~$\bbb$ at zero by setting
$\aaa(0)=\bbb(0)=k_0$; in fact when~$\kk$ is 
continuous we have that~$\lim_{r\rightarrow0^+}\aaa(r)=
\lim_{r\rightarrow0^+}\bbb(r)=k_0$, thus this choice is justified.
\par
The result that follows states
a simple property of TRI kernels, which is in fact an immediate
generalization of a well known property of scalar-valued positive definite  functions. 
\begin{proposition}
\label{kzero}
Let~$H$ be a RKHS with a TRI kernel~$\kk$.
Its coefficients~$\hpar$, $\hper$ and the number~$k_0$,
introduced in Lemma~\ref{lem_equiv}, 
have the properties:
$k_0\geq 0$, $k_0\geq |\aaa(r)|$ and $k_0\geq |\bbb(r)|$ for all~$r>0$.
If~$H$ is non-degenerate then such inequalities are strict.
\end{proposition}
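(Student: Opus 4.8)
The plan is to reduce everything to the classical scalar result: for a positive definite scalar function $g:\Rd\to\mathbb{R}$ one has $g(0)\geq 0$ and $|g(x)|\leq g(0)$ for all $x$, with strict inequality in the nondegenerate (strictly positive definite) case. The point is that each of the numbers $k_0$, $\aaa(r)$, $\bbb(r)$ arises as a value of the quadratic form $\sum_{a,b}\alpha_a\cdot\kk(x_a-x_b)\alpha_b$ for a suitable small configuration of points and vectors, so positive definiteness of $\kk$ (Proposition~\ref{Kposdef}) forces the stated inequalities directly.

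First I would fix $r>0$, pick any unit vector $e\in\Rd$, and set $x:=re$. Using~\eqref{invker} together with the corollary after Lemma~\ref{lem_equiv}, $\kk(x)$ has eigenvalue $\aaa(r)$ on the line $\mathbb{R}x$ and eigenvalue $\bbb(r)$ on $x^\perp$; also $\kk(0)=k_0\mathbb{I}_d$ and, by the symmetry $\kk(-x)=\kk(x)^T=\kk(x)$, the matrix $\kk(x)$ is symmetric. Now apply Proposition~\ref{Kposdef} with $N=1$, $x_1=x$, $\alpha_1=\alpha$: this gives $\alpha\cdot\kk(0)\alpha=k_0\|\alpha\|^2\geq0$, hence $k_0\geq0$. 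Next take $N=2$ with $x_1=x$, $x_2=0$, $\alpha_1=\alpha$, $\alpha_2=\pm\beta$: the quadratic form equals
\begin{equation}
\label{eq:two-point}
\alpha\cdot\kk(0)\alpha+\beta\cdot\kk(0)\beta\pm 2\,\alpha\cdot\kk(x)\beta
= k_0\big(\|\alpha\|^2+\|\beta\|^2\big)\pm 2\,\alpha\cdot\kk(x)\beta\geq0,
\end{equation}
using $\kk(-x)=\kk(x)^T=\kk(x)$ to combine the cross terms. Choosing $\alpha=\beta$ a unit eigenvector of $\kk(x)$ for the eigenvalue $\aaa(r)$ (i.e.\ $\alpha=e$) yields $2k_0\pm 2\aaa(r)\geq0$, so $|\aaa(r)|\leq k_0$; choosing $\alpha=\beta$ a unit eigenvector for $\bbb(r)$ (any unit vector in $x^\perp$, which requires $d\geq2$—available by the standing assumption) yields $|\bbb(r)|\leq k_0$.

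For the strict inequalities, assume $H$ is non-degenerate, so $\kk$ is \emph{strictly} positive definite (Proposition~\ref{Kposdef}). Then $k_0\|\alpha\|^2>0$ for $\alpha\neq0$ gives $k_0>0$. For $\aaa(r)$ and $\bbb(r)$: in~\eqref{eq:two-point} the points $x$ and $0$ are distinct, so with $\alpha=\beta\neq0$ not both $\alpha_1,\alpha_2$ vanish, and strict positive definiteness makes the inequality strict, giving $k_0>|\aaa(r)|$ and $k_0>|\bbb(r)|$ by the same eigenvector choices. The only mild subtlety—and the step I expect to need the most care—is making sure the eigenvector choices are legitimate, i.e.\ that $\aaa(r)$ really is attained by a genuine eigenvector ($x$ itself) and $\bbb(r)$ by a unit vector orthogonal to $x$ (nonempty precisely because $d\geq2$); both follow cleanly from the corollary to Lemma~\ref{lem_equiv}, so there is no real obstacle, only bookkeeping.
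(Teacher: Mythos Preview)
Your proposal is correct and follows essentially the same approach as the paper: both use positive definiteness (Proposition~\ref{Kposdef}) with $N=1$ to get $k_0\geq0$, and with $N=2$ at two distinct points, choosing the vectors $\alpha_1=\pm\alpha_2$ either parallel or perpendicular to $x_1-x_2$ to extract $|\aaa(r)|\leq k_0$ and $|\bbb(r)|\leq k_0$, with strictness in the non-degenerate case. Your eigenvector phrasing is slightly more explicit than the paper's direct use of the projection operators, but the argument is the same.
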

\begin{proof}
By Proposition~\ref{Kposdef}
when expression~\eqref{invker} 
holds it is the case that  
for all~$N\in \mathbb{N}$, 
points
$x_1,\ldots, x_N\in \Rd $ 
and 
vectors
$\alpha_1,\ldots, \alpha_N\in \Rd $
we must have
$$
2
\!\!\!\!
\sum_{1\leq a<b\leq N}
\!\!\!\!\!
\big\{
\hpar(\|x_a-x_b\|)
\alpha_a^T\mathrm{Pr}^\parallel(x_a-x_b)\alpha_b
+\hper(\|x_a-x_b\|)
\alpha_a^T\mathrm{Pr}^\perp(x_a-x_b)\alpha_b\big\}
+
k_0
\!\sum_{a=1}^{N}\|\alpha_a\|^2\geq0.
$$
For now we shall not assume the non-degeneracy of~$H$.
By choosing~$N=1$ we have~$k_0\|\alpha_1\|^2\geq0$
for all~$\alpha_1\in\Rd$; therefore
$k_0\geq0$. We now fix~$N=2$. 
If we take $\alpha_1=\alpha_2\perp (x_1-x_2)$ 
then the above expression yields
$2[\bbb(\|x_1-x_2\|)+k_0]\|\alpha_1\|^2\geq0$ 
for all~$x_1\not=x_2$ and~$\alpha_1$, 
so that~$k_0\geq-\hper(r)$ for all~$r>0$;
similarly, if we take 
 $\alpha_1=-\alpha_2\perp (x_1-x_2)$ then we get
$2[-\hper(\|x_1-x_2\|)+k_0]\|\alpha_1\|^2\geq0$
for all~$x_1\not=x_2$ and~$\alpha_1$, 
which implies~$k_0\geq\hper(r)$ for all~$r>0$.
Combining the two results yields $k_0\geq|\bbb(r)|$. 
To prove that~$k_0\geq|\aaa(r)|$
one follows an analogous argument, by choosing 
$\alpha_1=\alpha_2$ parallel to $(x_1-x_2)$ first,
and~$\alpha_1=-\alpha_2$ parallel to $(x_1-x_2)$ later.
It is immediate to see that when~$H$ is non-degenerate
the inequalities~($\geq$) become strict~($>$).
\end{proof}
\begin{remark}
So far in this section we have not made any assumption of regularity.
For now we shall limit ourselves to observing that
if~$H$ is a RKHS with a translation-invariant kernel~$\kk$
and we also have~$H\hookrightarrow  C^\cs(\Omega,\Rd)$,
then by
Theorem~\ref{Th_diff} it is the case that~$\kk\in C^{2s}(\Omega,\Rdd)$;
that is, the kernel is ``twice as smooth'' as the functions of the space
that~$H$ is embedded into.
Also, since
$\partial_2^qK(\cdot,x)\alpha=(-1)^{|q|}
\partial^q\kk(\cdot-x)\alpha$
and $\partial_1^p\partial_2^qK(\cdot,x)\alpha=(-1)^{|q|}
\partial^{p+q}\kk(\cdot-x)\alpha$, property~\eqref{rep_diff} implies that 
\begin{equation}
\label{diff_ti}
\alpha\cdot\partial^{p+q}\kk(x-y)\beta
=(-1)^{|p|}\big\langle\partial^p
\kk(\cdot-x)\alpha,\partial^q\kk(\cdot-y)\beta
\big\rangle_H,
\end{equation}
for all~$x,y,\alpha,\beta\in\Rd$ and multi-indices~$p$ and~$q$ 
with~$0\leq|p|\leq \cs$ and~$0\leq|q|\leq \cs$.
\end{remark}
\subsection{Characterization of TRI kernels}
We shall now find conditions on~$\aaa$
and~$\bbb$ for a function~$\kk$ for the type~\eqref{invker} 
to be positive definite,
whence a kernel.
The following auxiliary 
result does not require~$\kk$ to be of the type~\eqref{invker}.
\begin{theorem}[Bochner]
\label{bochner2}
Consider a matrix-valued function~$\kk\in L^1(\Rd,\Rdd)$ whose 
Fourier transform~$\widehat{\kk}$ is also in~$L^1(\Rd,\Rdd)$.
Then~$K(x,y):=\kk(x-y)$ is positive definite 
if and only if~$\widehat{\kk}(\xi)$ is a self-adjoint positive
definite matrix for all~$\xi\in\Rd$.
\end{theorem}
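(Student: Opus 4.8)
The plan is to reduce the matrix-valued statement to the classical scalar Bochner theorem by testing against vectors, while using the Fourier inversion formula to pass between $\kk$ and $\widehat{\kk}$. First I would recall that positive definiteness of $K(x,y)=\kk(x-y)$ means $\sum_{a,b}\alpha_a\cdot\kk(x_a-x_b)\alpha_b\ge 0$ for all finite families of points $x_a\in\Rd$ and vectors $\alpha_a\in\Rd$. Since both $\kk$ and $\widehat{\kk}$ are in $L^1$, the inversion formula $\kk(z)=(2\pi)^{-d}\int_{\Rd}\widehat{\kk}(\xi)e^{i\xi\cdot z}\,d\xi$ holds pointwise (with $\kk$ continuous after modification on a null set), so I can substitute it into the quadratic form.

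For the ``if'' direction, assuming $\widehat{\kk}(\xi)$ is self-adjoint and positive semidefinite for every $\xi$, I would write
\[
\sum_{a,b=1}^N\alpha_a\cdot\kk(x_a-x_b)\alpha_b
=(2\pi)^{-d}\int_{\Rd}\sum_{a,b=1}^N e^{i\xi\cdot(x_a-x_b)}\,\alpha_a^T\widehat{\kk}(\xi)\alpha_b\,d\xi
=(2\pi)^{-d}\int_{\Rd}\overline{w(\xi)}^{\,T}\widehat{\kk}(\xi)\,w(\xi)\,d\xi,
\]
where $w(\xi):=\sum_{a=1}^N e^{i\xi\cdot x_a}\alpha_a\in\mathbb{C}^d$. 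Each integrand is $\ge 0$ because $\widehat{\kk}(\xi)$ is a positive semidefinite Hermitian matrix, so the integral is $\ge 0$; here one must note that $\widehat{\kk}(\xi)$ acting on complex vectors is still positive semidefinite once it is so on real vectors \emph{and} self-adjoint, which is why the self-adjointness hypothesis is needed. This gives positive definiteness of $K$.

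For the ``only if'' direction I would fix $\xi_0\in\Rd$ and a complex vector $c\in\mathbb{C}^d$ and recover $\overline{c}^{\,T}\widehat{\kk}(\xi_0)c\ge 0$ by an approximate-identity argument: choose real test points and momenta so that $w(\xi)$ concentrates its mass near $\xi_0$, or more cleanly, convolve the quadratic form against a Gaussian (or Fej\'er-type) kernel in the frequency variable. Concretely, one shows $\int_{\Rd}\phi(\xi)\,\overline{w(\xi)}^{\,T}\widehat{\kk}(\xi)w(\xi)\,d\xi\ge 0$ for a suitable family of nonnegative $\phi$ obtained as $|\,\widehat{\text{bump}}\,|^2$, then lets the bump shrink so that by continuity of $\widehat{\kk}$ (a consequence of $\kk\in L^1$, via Riemann--Lebesgue applied to the inversion, or simply of $\widehat{\kk}\in L^1$ giving $\kk$ continuous and then re-transforming) the limit yields $\overline{c}^{\,T}\widehat{\kk}(\xi_0)c\ge 0$. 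Self-adjointness of $\widehat{\kk}(\xi_0)$ follows from the symmetry $\kk(-z)=\kk(z)^T$, since $\widehat{\kk}(-\xi)=\widehat{\kk}(\xi)^T$ and also $\widehat{\kk}(-\xi)=\overline{\widehat{\kk}(\xi)}$ as $\kk$ is real-valued, hence $\widehat{\kk}(\xi)^T=\overline{\widehat{\kk}(\xi)}$, i.e.\ $\widehat{\kk}(\xi)$ is Hermitian.

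The main obstacle I anticipate is the ``only if'' direction: extracting pointwise information about $\widehat{\kk}(\xi)$ from an inequality that, a priori, only holds for exponential sums $w(\xi)$, and doing so rigorously with the mollification argument while justifying the interchange of limits and integrals (which is where the hypothesis $\widehat{\kk}\in L^1$, not merely $\kk\in L^1$, earns its keep). Getting the complex-vs-real bookkeeping right — that testing with real $\alpha_a$ only directly controls the real part of the Hermitian form, and that the symmetry $\kk(-z)=\kk(z)^T$ is exactly what upgrades this to full Hermitian positivity — is the subtle point, and I would be careful to state it explicitly rather than gloss over it.
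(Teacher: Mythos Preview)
Your proposal is correct, and the ``if'' direction is identical to the paper's: both write
\[
\sum_{a,b}\alpha_a\cdot\kk(x_a-x_b)\alpha_b=\int_{\Rd}\fxa^\ast(\xi)\,\kkh(\xi)\,\fxa(\xi)\,d\xi,
\qquad \fxa(\xi)=\sum_a e^{-2\pi i x_a\cdot\xi}\alpha_a,
\]
and read off nonnegativity of the integrand.

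For the ``only if'' direction the paper takes a somewhat different route. It inserts an intermediate condition (ii), namely $\int f^\ast\,\kkh\,f\,d\xi\geq 0$ for all $f\in C_c^\infty(\Rd,\mathbb{C}^d)$, and proves (i)$\Rightarrow$(ii) by approximating an arbitrary compactly supported smooth $f$ uniformly by trigonometric polynomials---the partial sums of its Fourier series on a large cube $[-A,A]^d$ containing $\mathrm{supp}\,f$---which are exactly of the form $\fxa$. The step (ii)$\Rightarrow$(iii) is then the easy localization: if $\kkh(\xi_0)$ had a negative eigenvalue with eigenvector $\gamma_0$, take $f=\psi\gamma_0$ with $\psi$ a bump near $\xi_0$ and contradict (ii). Your mollification idea and the paper's Fourier-series approximation serve the same purpose---upgrading the inequality from exponential sums $\fxa$ to localized test functions---and both are valid; the paper's version has the virtue of making the approximation completely explicit, which is, as you correctly anticipated, the crux.

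On self-adjointness: the paper extracts it from condition (ii) with \emph{complex} test functions (reality of $\int f^\ast\kkh f$ for all complex $f$ forces $\kkh=\kkh^\ast$), whereas you deduce it from the symmetry $\kk(-z)=\kk(z)^T$. That symmetry is not literally among the hypotheses of the theorem as stated, but in the paper's context it is always present (it is part of the definition of a positive kernel), and in fact both arguments tacitly need it to pass from real $\alpha_a$ to complex test vectors---your flagging of the real-versus-complex bookkeeping is on point.
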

The classical version of the above theorem 
holds for~$d=1$ and states that a function is
positive definite if and only if its Fourier transform is 
nonnegative~\cite{bochner33}. The proof of Theorem~\ref{bochner2}
is reported in Appendix~\ref{appBochner}, and the result can be extended to~$L^2$
functions with the usual density arguments~\cite{stein_weiss}.
\par
In the particular case $\aaa=\bbb=:k$ we have 
the class of kernels that are simply 
referred to as~\em scalar\em\/:
$\kk(x)=k(\|x\|)\mathbb{I}_{\dd}$, $x\in\Rd$. 
In such case positive definiteness (Definition~\ref{def_posdef}) 
is obviously 
equivalent to the following: 
for 
abitrary~$N\in \mathbb{N}$, 
$x_1,\ldots, x_N\in \Rd $ 
and 
$r_1,\ldots, r_N\in \mathbb{R} $,
we have
\begin{equation}
\sum_{a,b=1}^N
r_ar_b\,k(\|x_a-x_b\|)
\geq 0;
\end{equation}
whence
the scalar-valued 
function~$k(\|\cdot\|)$ must be positive definite; 
to apply Bochner's theorem we must compute 
its Fourier transform. We shall employ the next proposition
(see Appendix~\ref{AppA} for a simple proof).
\begin{theorem}
\label{FT}
If $f\in L^1(\Rd)$ is a radial function, i.e.~$f(x)=g(\|x\|)$
for some 
$g:\mathbb{R}^+\rightarrow \mathbb{R}$,
then so is
its Fourier transform~$\widehat{f}$.
It is in fact that case that~$\widehat{f}(\xi)=G(\|\xi\|)
$
with
$$
\displaystyle
G(\varrho)
=
\frac{2\pi}{\varrho^\mu}
\int_0^\infty
r^{\mu+1}g(r)
\,
J_{\mu}(2\pi\varrho r )\,dr\,,
\qquad\varrho>0,
$$
where~$\displaystyle\mu:=\frac{\dd}{2}-1$.
\end{theorem}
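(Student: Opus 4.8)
The plan is to reduce the $d$-dimensional Fourier integral to a one-dimensional integral by exploiting the rotational symmetry of $f$, and then recognize the angular integral as a standard Bessel-function identity. First I would note that, since $f(x) = g(\|x\|)$ is invariant under every $R \in O(\Rd)$, a change of variables $x \mapsto Rx$ in the defining integral $\widehat f(\xi) = \int_{\Rd} f(x)\, e^{-2\pi i \langle x,\xi\rangle_{\Rd}}\, dx$ shows immediately that $\widehat f(R\xi) = \widehat f(\xi)$ for all rotations $R$; hence $\widehat f$ is radial too, say $\widehat f(\xi) = G(\|\xi\|)$, and it suffices to compute $G(\varrho)$ along a single ray, e.g.\ by taking $\xi = \varrho e_d$ with $\varrho > 0$.

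Next I would pass to (hyper)spherical coordinates, writing $x = r\omega$ with $r = \|x\| \ge 0$ and $\omega \in S^{d-1}$, so that $dx = r^{d-1}\, dr\, d\sigma(\omega)$ where $d\sigma$ is the surface measure on the unit sphere. This turns the integral into
\begin{equation}
\label{eq:radial-split}
G(\varrho) = \int_0^\infty r^{d-1} g(r) \left( \int_{S^{d-1}} e^{-2\pi i r \varrho\, \langle \omega, e_d\rangle_{\Rd}}\, d\sigma(\omega) \right) dr.
\end{equation}
The inner angular integral depends only on the product $r\varrho$, and the crux of the argument is to identify it. Parametrizing $\omega$ by its last coordinate $t = \langle \omega, e_d\rangle \in [-1,1]$, the slice at height $t$ is an $(d-2)$-sphere of radius $\sqrt{1-t^2}$, giving a factor $\mathrm{vol}(S^{d-2})(1-t^2)^{(d-3)/2}\, dt$; thus the angular integral equals $\mathrm{vol}(S^{d-2}) \int_{-1}^1 e^{-2\pi i r\varrho\, t}(1-t^2)^{(d-3)/2}\, dt$. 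This last integral is, up to normalizing constants, the classical Poisson integral representation of the Bessel function: with $\mu = d/2 - 1$ one has $\int_{-1}^1 e^{-ist}(1-t^2)^{\mu - 1/2}\, dt = \sqrt\pi\, \Gamma(\mu + \tfrac12)\, (2/s)^\mu J_\mu(s)$. Substituting $s = 2\pi r\varrho$ and collecting the constants (using $\mathrm{vol}(S^{d-2}) = 2\pi^{(d-1)/2}/\Gamma(\tfrac{d-1}{2})$ and the duplication formula for $\Gamma$) should produce exactly the factor $2\pi\, r^{\mu+1}/\varrho^\mu$ multiplying $J_\mu(2\pi\varrho r)$ inside the $r$-integral, yielding the claimed formula.

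The main obstacle is the bookkeeping of constants: one must verify that all the Gamma-function factors arising from $\mathrm{vol}(S^{d-2})$, from the Poisson representation of $J_\mu$, and from the powers of $2\pi$ combine to give precisely the stated coefficient $2\pi/\varrho^\mu$ with no leftover dimension-dependent constant. A minor secondary point is justifying the interchange of the radial and angular integrations in \eqref{eq:radial-split}, which follows from Fubini's theorem since $f \in L^1(\Rd)$ and the integrand is absolutely integrable over $(0,\infty) \times S^{d-1}$ with respect to $r^{d-1}\, dr\, d\sigma$. Everything else is a routine change of variables. Since the excerpt defers the detailed verification to Appendix~\ref{AppA}, I would present exactly this chain of reductions and leave the constant-chasing to the appendix.
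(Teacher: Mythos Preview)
Your proposal is correct and matches the paper's own proof in Appendix~\ref{AppA} essentially step for step: both establish radiality of $\widehat f$ via a rotation change of variables, fix $\xi$ along a coordinate axis, split into radial and spherical integrals, reduce the spherical integral to $\int_{-1}^1 e^{\pm 2\pi i \varrho r u}(1-u^2)^{(d-3)/2}\,du$ by slicing the sphere at height $u$, and identify the result via the Poisson integral representation of $J_\mu$ with $\mu=\tfrac{d}{2}-1$. One small remark: the constant-chasing does not actually require the duplication formula, since $\Gamma(\tfrac{d-1}{2})$ from $\sigma(\mathbb{S}^{d-2})$ cancels directly against the $\Gamma(\mu+\tfrac12)$ in the Bessel representation.
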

We remind the reader that the 
\em Hankel transform\em~\cite{bracewell,papoulis:2,sneddon}
of order $\nu$
of a function $f:\mathbb{R}^+\rightarrow \mathbb{R}$
is defined as~$
\mathcal{H}_\nu[f](\varrho)
:=
\int_0^\infty
rf(r)
\,
J_{\nu}
(\varrho r)\,dr,
$
where~$J_\nu$ is the Bessel function of the first 
kind~\cite{abramowitz} of order~$\nu$
(references~\cite{ditkin,erdelyi_tables_v2,oberhettinger}
provide tables of Hankel transforms);
in Appendix~\ref{appBessel} 
we list relevant properties of~$J_\nu$
that we will be using throughout the rest of the paper.
Here we note that if the function~$f:\Rd\rightarrow \mathbb{R}$
is radial, i.e.~$f(x)=g(\|x\|)$ for some~$g:\mathbb{R}^+\rightarrow\mathbb{R}$,
then
$$
\int_\Rd
|f(x)|^p\,dx=\sigma(\mathbb{S}^{d-1})\int_0^\infty|g(r)|^p\,r^{d-1}dr,
$$
where~$\sigma(\mathbb{S}^n)$ is the surface area of the unit~$n$-sphere; 
therefore $f\in L^p(\Rd)$ if and only if 
\mbox{$g\in L^p(\mathbb{R}^+,r^{d-1})$}, 
i.e.~$g$ is $p^\mathrm{th}$-power integrable with respect to the measure~$r^{d-1}dr$.
\par
An immediate consequence of the above proposition and of Bochner's theorem 
is that a~scalar kernel $\kk(\cdot)=k(\|\cdot\|)\mathbb{I}_\dd\in L^1(\Rd,\Rdd)$ 
is positive definite if 
and only if the function
\begin{equation}
\label{cond_sc2}
h(\varrho):=\frac{2\pi}{\varrho^\mu}
\int_0^\infty r^{\mu+1} k(r)\,J_{\mu}(2\pi\varrho r)\, dr, 
\qquad\mbox{defined for }\varrho>0,
\end{equation}
is nonnegative, 
where we have set~$\mu:=\frac{\dd}{2}-1$. 
Also, we will have that~$\kkh(\xi)=h(\|\xi\|)\mathbb{I}_d$, with $\xi\in\Rd$.
We should note that
if~$\kk$ is in~$L^1(\Rd,\Rdd)$ then the function~$h$ 
is (uniformly) continuous and vanishes at infinity,
by the well-known properties of Fourier transforms~\cite{folland,stein_weiss}.
Also, by applying the Hankel's 
integral formula~\eqref{inv_Htf2} in Appendix~\ref{appBessel} to~\eqref{cond_sc2},
it is immediate to verify that 
\begin{equation}
\label{invhank}
k(r)=\frac{2\pi}{r^\mu}
\int_0^\infty \varrho^{\mu+1} h(\varrho)\,J_{\mu}(2\pi\varrho r)\, dr, 
\qquad r>0,
\end{equation}
i.e.~the map~$L^1(\mathbb{R}^+,r^{d-1})
\rightarrow
C_0(\mathbb{R}^+):k\mapsto h$ given by formula~\eqref{cond_sc2} is in fact an {\it involution\/}
(it is equal to its inverse, when inversion makes sense);
here we have used the symbol~$C_0(\mathbb{R}^+)$ to indicate 
the set of real-valued continuous functions defined on~$\mathbb{R}^+$
that vanish at~$+\infty$.  

\begin{examples} 
At the end of Section~\ref{vv_rkhs} we already saw the example of
(scalar) Bessel kernels, i.e.~of the type~$\kk(x)=k(\|x\|)
\mathbb{I}_d$
with~$k$ given by~\eqref{k_bess} for the Sobolev space~$H^\ce(\Rd,\Rd)$; in this
case the function~\eqref{cond_sc2} 
is given by~$h(\varrho)=(1+4\sigma^2\pi^2\varrho^2)^{-\ce}$. This can be easily seen
by observing that since~$k(\|\cdot\|)$ is the Green's function 
of the differential operator~$L=(1-\sigma^2\Delta)^\ce$,
then its Fourier transform~$h(\|\cdot\|)$ must be given by~$1/\widehat{L}$;
here~$\widehat{L}(\xi)=(1+4\sigma^2\pi^2\|\xi\|^2)^\ce$, $\xi\in\Rd$, is the Fourier
transform of~$L$, intended as a distribution.
For fixed~$\sigma>0$, other popular examples are given by:
\begin{align}
\label{scGauss}
\mbox{Gaussian kernels:}& & 
k(r)&= \exp\!\Big(\!\!-\frac{1}{2}\frac{r^2}{\sigma^2}\Big),&
h(\varrho)&=\big(2\pi\sigma^2\big)^{\mu+1}\exp\!\big({-2\pi^2\sigma^2\varrho^2}\big);
\\
\label{scCauchy}
\mbox{Cauchy kernels:}& & 
k(r)&=\frac{1}{1+r^2/\sigma^2}, &
h(\varrho)&= 2\pi\sigma^2\Big(\frac{\sigma}{\varrho}\Big)^\mu K_\mu(2\pi\sigma\varrho),
\end{align}
where~$K_\mu$ is a modified Bessel 
function of order~$\mu$; in the above 
expressions~$\mu=\frac{\dd}{2}-1$. 
The functions~$h$ in cases~\eqref{scGauss}
and~\eqref{scCauchy} are computed using~\cite[\S1.5.9]{oberhettinger}
and~\cite[\S1.4.13]{oberhettinger}.
Note that in all cases we have~$h\geq0$.
We refer the reader to~\cite{Younes10} for further 
examples and techniques to build scalar kernels.
\end{examples}
To generalize the above result 
we first compute the Fourier transform of
matrix-valued functions~$\kk$ of the type~\eqref{invker}, that are not
necessarily kernels.
First note that for all~$\rot\in O(\Rd)$ we have
$$
\kkh(\rot\xi)
=
\int_{\Rd}\kk(x)\, e^{-2\pi ix\cdot \rot\xi}
\,
dx
=
\int_{\Rd}\kk(\rot y)\, e^{-2\pi iy\cdot \xi}
\,
dy
\stackrel{(\ast)}{=}
\int_{\Rd}\rot\,\kk(y)\rot^T\, e^{-2\pi iy\cdot \xi}
\,
dy=\rot\,\kkh(\xi)\rot^T;
$$
in~$(\ast)$ we have used the property~\eqref{auxrot}, which therefore also holds
for~$\kkh$. Whence we may write~$\kkh$ as
\begin{equation}
\label{khat_gen}
\widehat{\kk}(\xi) = \AAA
(\|\xi\|)\,\mathrm{Pr}_\xi^\parallel
+\BBB(\|\xi\|)\,\mathrm{Pr}_\xi^\perp,
\qquad
\xi\in\Rd\setminus\{0\},
\end{equation}
for some functions~$\AAA, \BBB:\mathbb{R}^+\rightarrow \mathbb{R}$,
by Lemma~\ref{lem_equiv}. 
When~$\kk\in L^1(\Rd,\Rdd)$
is of the type~\eqref{invker} we have that the coefficients 
$\aaa,\bbb$ are in $L^1(\mathbb{R}^+,r^{d-1})$
whereas, once again by the properties of Fourier transforms, 
 the functions~$\AAA$ and~$\BBB$ are continuous and vanish at infinity.
\begin{definition}
We call~the 
functions~$\AAA,\BBB:\mathbb{R}^+\rightarrow \mathbb{R}$ 
in~\eqref{khat_gen}
the \em coefficients \em of~$\kkh$.
\end{definition}
\par
The above definition 
is in line with~Definition~\ref{def_coe}; while this may be a bit redundant, 
we want to associate the symbols~$(\aaa,\bbb)$ with~$\kk$ 
and~$(\AAA,\BBB)$ with its Fourier transform~$\kkh$. 
The coefficients of~$\kkh$ are expressed in terms of those of~$\kk$
by the following result, which generalize
formula~\eqref{cond_sc2}. 
\begin{theorem}
\label{prop_FT}
Consider a function~$\kk\in L^1(\Rd,\Rdd)$ of the type~\eqref{invker}.
Its Fourier transform~$\kkh$ is a matrix-valued function
of the form~\eqref{khat_gen},
where~$\AAA,\BBB:\mathbb{R}^+\rightarrow \mathbb{R}$ are the scalar functions:
\begin{subequations}
\begin{align}
\label{Tpar}
\AAA(\varrho)
&
=
\frac{2\pi}{\varrho^\mu}
\int_0^\infty
r^{\mu+1}\,\aaa(r)J_\mu(2\pi\varrho r)\,dr
-
\frac{2\mu+1}{\varrho^{\mu+1}}
\int_0^\infty
r^{\mu}\big(\aaa(r)-\bbb(r)\big)J_{\mu+1}(2\pi\varrho r)\,dr,
\\
\label{Tper}
\BBB(\varrho)
&=
\frac{2\pi}{\varrho^\mu}
\int_0^\infty
r^{\mu+1}\,\bbb(r)J_\mu(2\pi\varrho r)\,dr
+
\frac{1}{\varrho^{\mu+1}}
\int_0^\infty
r^{\mu}\big(\aaa(r)-\bbb(r)\big)J_{\mu+1}(2\pi\varrho r)\,dr,
\end{align}
\end{subequations}
both defined for~$\varrho>0$, where~$\displaystyle
\mu:=\frac{\dd}{2}-1$.
\end{theorem}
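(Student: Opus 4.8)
The plan is to split $\kk$ according to \eqref{kelement} into its ``scalar'' part $\bbb(\|x\|)\,\mathbb{I}_\dd$ and its rank-one part $m(x):=\ccc(\|x\|)\,xx^T$, with $\ccc$ the auxiliary function \eqref{def_ktilde}, and to transform the two pieces separately. The scalar part is a radial $L^1$ function times the identity, so Theorem~\ref{FT} gives at once that its Fourier transform is $B_0(\|\xi\|)\,\mathbb{I}_\dd$ with $B_0(\varrho):=\frac{2\pi}{\varrho^\mu}\int_0^\infty r^{\mu+1}\bbb(r)\,J_\mu(2\pi\varrho r)\,dr$; thus $\kkh(\xi)=B_0(\|\xi\|)\,\mathbb{I}_\dd+\widehat m(\xi)$, so in the notation of \eqref{khat_gen} the scalar part contributes $B_0$ to \emph{both} coefficients of $\kkh$. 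For the rank-one part, note first that each entry $\ccc(\|x\|)\,x^ix^j=(\aaa(\|x\|)-\bbb(\|x\|))\,x^ix^j/\|x\|^2$ is dominated in absolute value by $|\aaa(\|x\|)-\bbb(\|x\|)|$ and hence lies in $L^1(\Rd)$, so $\widehat m$ is a well-defined continuous matrix-valued function; moreover $m(\rot x)=\rot\,m(x)\rot^{-1}$ and $m(-x)=m(x)=m(x)^T$, so $\widehat m$ inherits the symmetry and equivariance \eqref{auxrot} (by the computation preceding \eqref{khat_gen}) and therefore, by Lemma~\ref{lem_equiv}, $\widehat m(\xi)=P(\|\xi\|)\,\mathrm{Pr}_\xi^\parallel+Q(\|\xi\|)\,\mathrm{Pr}_\xi^\perp$ for scalar functions $P,Q$. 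Consequently the coefficients of $\kkh$ are $\AAA=B_0+P$ and $\BBB=B_0+Q$, and it remains to identify $P$ and $Q$.

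I would obtain two relations. The first is the trace: $\operatorname{tr}m(x)=\ccc(\|x\|)\|x\|^2=\aaa(\|x\|)-\bbb(\|x\|)$ is radial and $L^1$, and $\operatorname{tr}\widehat m(\xi)=P(\varrho)+(\dd-1)Q(\varrho)$, so by Theorem~\ref{FT}, $P(\varrho)+(\dd-1)Q(\varrho)=S(\varrho)$ with $S(\varrho):=\frac{2\pi}{\varrho^\mu}\int_0^\infty r^{\mu+1}(\aaa(r)-\bbb(r))\,J_\mu(2\pi\varrho r)\,dr$ and $\dd-1=2\mu+1$. The second comes from the rule $x^i e^{-2\pi i x\cdot\xi}=\frac{i}{2\pi}\partial_{\xi_i}e^{-2\pi i x\cdot\xi}$: differentiating under the integral sign yields $\widehat m(\xi)=-\frac{1}{4\pi^2}\big(\partial_{\xi_i}\partial_{\xi_j}\,\widehat{\ccc(\|\cdot\|)}(\xi)\big)_{i,j}$, and since $\ccc(\|\cdot\|)$ is radial its transform is $G(\|\xi\|)$ with $G(\varrho)=\frac{2\pi}{\varrho^\mu}\int_0^\infty r^{\mu+1}\ccc(r)\,J_\mu(2\pi\varrho r)\,dr$; a chain-rule computation gives, for $\xi\neq0$ and $\varrho=\|\xi\|$, that $\big(\partial_{\xi_i}\partial_{\xi_j}G(\|\xi\|)\big)_{i,j}=G''(\varrho)\,\mathrm{Pr}_\xi^\parallel+\frac{G'(\varrho)}{\varrho}\,\mathrm{Pr}_\xi^\perp$, whence $Q(\varrho)=-\frac{1}{4\pi^2}\,\frac{G'(\varrho)}{\varrho}$. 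Using the identity $\frac{d}{d\varrho}\big[\varrho^{-\mu}J_\mu(2\pi\varrho r)\big]=-2\pi r\,\varrho^{-\mu}J_{\mu+1}(2\pi\varrho r)$ from Appendix~\ref{appBessel} to differentiate $G$ under the integral, together with $r^2\ccc(r)=\aaa(r)-\bbb(r)$, I get $G'(\varrho)=-4\pi^2\varrho^{-\mu}\int_0^\infty r^{\mu}(\aaa(r)-\bbb(r))\,J_{\mu+1}(2\pi\varrho r)\,dr$, so $Q(\varrho)=\frac{1}{\varrho^{\mu+1}}\int_0^\infty r^{\mu}(\aaa(r)-\bbb(r))\,J_{\mu+1}(2\pi\varrho r)\,dr$; adding $B_0$ produces exactly \eqref{Tper}.

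For \eqref{Tpar}, the trace relation gives $P=S-(2\mu+1)Q$, hence $\AAA=B_0+P=(B_0+S)-(2\mu+1)Q$. Here $B_0+S=\frac{2\pi}{\varrho^\mu}\int_0^\infty r^{\mu+1}\big(\bbb(r)+(\aaa(r)-\bbb(r))\big)J_\mu(2\pi\varrho r)\,dr=\frac{2\pi}{\varrho^\mu}\int_0^\infty r^{\mu+1}\aaa(r)\,J_\mu(2\pi\varrho r)\,dr$ and $(2\mu+1)Q=\frac{2\mu+1}{\varrho^{\mu+1}}\int_0^\infty r^{\mu}(\aaa(r)-\bbb(r))\,J_{\mu+1}(2\pi\varrho r)\,dr$, which is precisely \eqref{Tpar}. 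Absolute convergence of every integral in \eqref{Tpar}--\eqref{Tper} is not an issue: near $r=0$ one has $J_\mu(2\pi\varrho r)=O(r^\mu)$ and $J_{\mu+1}(2\pi\varrho r)=O(r^{\mu+1})$, so each integrand is $O\!\big(r^{\dd-1}(|\aaa(r)|+|\bbb(r)|)\big)$ there, which is integrable because $\aaa,\bbb\in L^1(\mathbb{R}^+,r^{\dd-1})$, while at infinity $J_\nu(z)=O(z^{-1/2})$ together with $\dd\geq2$ suffices.

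The one delicate point, which I expect to be the main obstacle, is the interchange of differentiation and integration used to express $\widehat m$ through the Hessian of $\widehat{\ccc(\|\cdot\|)}$ and to compute $G'$: under the bare hypothesis $\kk\in L^1$ the radial function $\ccc(\|\cdot\|)$ itself need not be integrable near the origin, so the classical dominated-derivative criterion does not apply directly. I would handle this by approximation: set $\aaa_n:=\chi_n\aaa$, $\bbb_n:=\chi_n\bbb$, where $\{\chi_n\}$ is a sequence of smooth radial cutoffs supported in $\{1/n\leq\|x\|\leq n\}$ and increasing to $1$. For the corresponding $\kk_n$ the functions $\ccc_n(\|\cdot\|)$, $\ccc_n(\|\cdot\|)\,x^ix^j$, and all of their relevant $\xi$-derivatives are dominated by $L^1$ functions, so the formal steps above are legitimate and \eqref{Tpar}--\eqref{Tper} hold for $(\aaa_n,\bbb_n)$; then $\kk_n\to\kk$ in $L^1(\Rd,\Rdd)$ forces $\kkh_n\to\kkh$ uniformly, while the right-hand sides converge by dominated convergence using the bounds of the previous paragraph. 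Everything else is the chain rule, the Bessel recurrences of Appendix~\ref{appBessel}, and bookkeeping.
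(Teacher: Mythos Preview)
Your proof is correct and follows essentially the same route as the paper: split $\kk$ into the scalar piece $\bbb(\|x\|)\,\mathbb{I}_\dd$ and the rank-one piece $\ccc(\|x\|)\,xx^T$, transform the first with Theorem~\ref{FT}, and recognize the transform of the second as $-\tfrac{1}{4\pi^2}$ times the Hessian of the radial transform of $\ccc(\|\cdot\|)$, then differentiate via the Bessel identity $\tfrac{d}{dz}[z^{-\nu}J_\nu(z)]=-z^{-\nu}J_{\nu+1}(z)$. The only notable differences are organizational: the paper computes the full Hessian (both the $G''$ and $G'/\varrho$ pieces) and then invokes the recurrence~\eqref{Jrec1} to collapse $C+D$ into~\eqref{Tpar}, whereas you compute only the perpendicular component $Q=-\tfrac{1}{4\pi^2}G'/\varrho$ and recover $P$ from the trace identity $P+(d-1)Q=\widehat{\operatorname{tr} m}$, which is a clean shortcut that bypasses $G''$ and one recurrence; and you make explicit the approximation argument needed because $\ccc(\|\cdot\|)$ itself need not lie in $L^1$, a point the paper's proof treats formally.
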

\begin{proof} 
Define~$\ccc$ as in~\eqref{def_ktilde}.
The Fourier transform of the generic element~\eqref{kelement} 
of~$\mathbf{ k}$ is 
\begin{align}
\nonumber
\widehat{\mathbf{ k}}^{j\ell}(\xi)
&
=
\int_{\Rd}
\mathbf{k}^{j\ell}(x)e^{-2\pi i x\cdot\xi}
dx
=
\int_{\Rd}
\ccc(\|x\|)\,x^jx^\ell
\,e^{-2\pi i x\cdot\xi}
dx
+
\delta^{j\ell}
\int_{\Rd}
\bbb(\|x\|)
\,e^{-2\pi i x\cdot\xi}
dx
\\
\label{HH}
&=
-\frac{1}{(2\pi)^2}
\frac{\partial^2}{\partial\xi^\ell\partial\xi^j}
\Fktilde(\xi)+\delta^{j\ell}\Fkperp(\xi),\qquad\xi\in\Rd,
\end{align}
where $\Fktilde,\Fkperp:\Rd\rightarrow \mathbb{C}$ are, respectively,
the Fourier transforms of~$\ccc(\|\cdot\|)$ and~$\bbb(\|\cdot\|)$. 
It is convenient at this point to introduce the 
auxiliary Bessel-type function~$\widetilde{J}_\nu(z):=z^{-\nu}J_\nu(z)$,
which has the property~$\frac{d}{dz}\widetilde{J}_\nu(z)=-z\widetilde{J}_{\nu+1}(z)$
(see~\cite[\S9.1.30]{abramowitz}). By Theorem~\ref{FT} we have 
\begin{align*}
\Fktilde(\xi) 
&=
\frac{2\pi}{\|\xi\|^\mu}
\int_0^\infty
r^{\mu+1}\ccc(r)J_\mu(2\pi r\|\xi\|)\,dr
=
(2\pi)^{\mu+1}
\int_0^\infty
r^{2\mu+1}\ccc(r)\widetilde{J}_\mu(2\pi r\|\xi\|)\,dr,
\quad\mbox{so that}
\\
\frac{\partial\Fktilde}{\partial\xi^j}(\xi)
&=
(2\pi)^{\mu+2}
\frac{\xi^j}{\|\xi\|}
\int_0^\infty
r^{2\mu+2}\ccc(r)\widetilde{J}'_\mu(2\pi r\|\xi\|)\,dr
=
-
(2\pi)^{\mu+3}\xi^j
\int_0^\infty
r^{2\mu+3}\ccc(r)\widetilde{J}_{\mu+1}(2\pi r\|\xi\|)\,dr.
\end{align*}
Therefore the second partial derivatives of~$\Fktilde$
are given by: 
\begin{align*}
\frac{\partial^2\Fktilde}{\partial\xi^\ell\partial\xi^j}(\xi)
&
=
-\delta^{j\ell}
(2\pi)^{\mu+3}
\!\!
\int_0^\infty
\!\!\!\!
r^{2\mu+3}\ccc(r)\widetilde{J}_{\mu+1}(2\pi r\|\xi\|)\,dr
-
(2\pi)^{\mu+4}
\frac{\xi^j\xi^\ell}{\|\xi\|}
\!
\int_0^\infty
\!\!\!\!
r^{2\mu+4}\ccc(r)\widetilde{J}'_{\mu+1}(2\pi r\|\xi\|)\,dr
\\
&=
-\delta^{j\ell}
(2\pi)^{\mu+3}
\!\!
\int_0^\infty
\!\!\!
r^{2\mu+3}\ccc(r)\widetilde{J}_{\mu+1}(2\pi r\|\xi\|)\,dr
+
(2\pi)^{\mu+5}
{\xi^j\xi^\ell}
\!
\int_0^\infty
\!\!\!
r^{2\mu+5}\ccc(r)\widetilde{J}_{\mu+2}(2\pi r\|\xi\|)\,dr,
\end{align*}
which, inserting~$\widetilde{J}_\nu(z)=z^{-\nu}J_\nu(z)$, may be rewritten as
$$
\frac{\partial^2\Fktilde}{\partial\xi^\ell\partial\xi^j}(\xi)
=
-\delta^{j\ell}
\frac{(2\pi)^2}{\|\xi\|^{\mu+1}}
\int_0^\infty
\!\!\!
r^{\mu+2}\,\ccc(r){J}_{\mu+1}(2\pi r\|\xi\|)\,dr
+\frac{(2\pi)^3}{\|\xi\|^\mu}
\frac{\xi^j\xi^\ell}{\|\xi\|^2}
\int_0^\infty
\!\!\!
r^{\mu+3}\,\ccc(r){J}_{\mu+2}(2\pi r\|\xi\|)\,dr
.
$$
\par
Computing~$\Fkperp$ via Proposition~\ref{FT} 
and inserting it into equation~\eqref{HH} finally yields:
\begin{equation}
\label{khatAUX}
\widehat{\mathbf{ k}}^{j\ell}(\xi)
=
C(\|\xi\|) \frac{\xi^j\xi^\ell}{\|\xi\|^2} 
+ 
D(\| \xi \|) \delta^{j\ell}
=
\Big(
C(\|\xi\|) 
+
D(\|\xi\|) 
\Big) \frac{\xi^j\xi^\ell}{\|\xi\|^2}
+ 
D(\| \xi \|) \Big(\delta^{j\ell}- \frac{\xi^j\xi^\ell}{\|\xi\|^2}\Big),
\end{equation}
with 
$\displaystyle C(\varrho):=
-\frac{2\pi}{\varrho^\mu}
\!\!
\int_0^\infty
\!\!\!
r^{\mu+3}\,\ccc(r){J}_{\mu+2}(2\pi r\varrho)\,dr
$,
and $D(\varrho):=\BBB(\varrho)$,
i.e.~expression~\eqref{Tper}. Finally, 
\begin{align*}
C(\varrho)+D(\varrho)
\stackrel{(\ast\ast)}{=}
&
-\frac{2\pi}{\varrho^\mu}
\int_0^\infty
r^{\mu+1}\big(\aaa(r)-\bbb(r)\big)\Big\{\frac{2(\mu+1)}{2\pi r\varrho}
J_{\mu+1}(2\pi r\varrho)
-J_{\mu}(2\pi r\varrho)
\Big\}\,dr
\\&
+
\frac{2\pi}{\varrho^\mu}
\int_0^\infty
r^{\mu+1}\bbb(r)J_\mu(2\pi\varrho r)\,dr
+
\frac{1}{\varrho^{\mu+1}}
\int_0^\infty
r^{\mu}\big(\aaa(r)-\bbb(r)\big)J_{\mu+1}(2\pi\varrho r)\,dr
\\
=
&\;
\frac{2\pi}{\varrho^\mu}
\!\!
\int_0^\infty
\!\!\!
r^{\mu+1}\,\aaa(r)J_\mu(2\pi\varrho r)\,dr
\!
-
\!
\frac{2\mu+1}{\varrho^{\mu+1}}
\!\!
\int_0^\infty
\!\!\!
r^{\mu}\big(\aaa(r)-\bbb(r)\big)J_{\mu+1}(2\pi\varrho r)\,dr=
\AAA(\varrho),
\end{align*}
where in~$(\ast\ast)$ we have used the property~\eqref{Jrec1},
reported in Appendix~\ref{appBessel},
of Bessel functions of the first kind.
Rewriting equation~\eqref{khatAUX} in 
matrix form using definitions~\eqref{proj_op}
completes the proof.
\end{proof}
\begin{corollary} Under the assumptions of Theorem~\ref{prop_FT}, 
by Bochner's theorem we have that~$\kk$ is positive definite,
i.e.~it is a kernel,
if and only if
$\AAA$ and~$\BBB$ are nonnegative functions.
\end{corollary}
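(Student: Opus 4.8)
The plan is to reduce the claim to a pointwise positivity statement about the Fourier transform $\kkh$ and then read it off from the eigenvalue structure already established. By Theorem~\ref{prop_FT} the Fourier transform of $\kk$ is again of the form~\eqref{khat_gen}, i.e.~$\kkh(\xi)=\AAA(\|\xi\|)\,\mathrm{Pr}_\xi^\parallel+\BBB(\|\xi\|)\,\mathrm{Pr}_\xi^\perp$ for $\xi\neq0$, with $\AAA,\BBB$ real-valued. Since a function $\kk$ of type~\eqref{invker} satisfies~\eqref{auxrot} and hence (taking $\rot=-\mathbb{I}_d$) is even, its transform $\kkh$ is real-valued and symmetric, so each $\kkh(\xi)$ is a self-adjoint matrix. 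By Bochner's theorem (Theorem~\ref{bochner2}), $K(x,y)=\kk(x-y)$ is positive definite if and only if $\kkh(\xi)$ is positive definite for every $\xi\in\Rd$, so the whole task is to characterise when all the eigenvalues of $\kkh(\xi)$ are nonnegative for every $\xi$.

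For $\xi\neq0$ this is immediate from the corollary following Lemma~\ref{lem_equiv}, applied to $\kkh$: the eigenvalues of $\kkh(\xi)$ are $\AAA(\|\xi\|)$, with eigenvector $\xi$, and $\BBB(\|\xi\|)$, with multiplicity $d-1$; hence $\kkh(\xi)$ is positive definite for all $\xi\neq0$ exactly when $\AAA(\varrho)\geq0$ and $\BBB(\varrho)\geq0$ for all $\varrho>0$. The point $\xi=0$ needs only a one-line remark: because $\kk\in L^1(\Rd,\Rdd)$ the transform $\kkh$ is continuous, so $\kkh(0)=\lim_{\xi\to0}\kkh(\xi)$; when $\AAA,\BBB\geq0$ this is a limit of positive definite matrices and is therefore itself positive definite, while conversely positivity of $\kk$ forces $\kkh(0)$ to be positive definite but this imposes nothing beyond the limiting values of $\AAA$ and $\BBB$ at $0$ (which agree, since $\kkh(0)$ is a scalar multiple of $\mathbb{I}_d$ by Lemma~\ref{lem_equiv}). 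Assembling these facts with Bochner's theorem gives the stated equivalence in both directions.

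The only genuine wrinkle is matching hypotheses: Theorem~\ref{bochner2} as stated requires both $\kk$ and $\kkh$ to lie in $L^1(\Rd,\Rdd)$, whereas Theorem~\ref{prop_FT} assumes only $\kk\in L^1$ --- under the latter one knows that $\AAA$ and $\BBB$ are continuous and vanish at infinity, but not that $\kkh\in L^1$. So in writing the proof I would either carry the extra hypothesis $\kkh\in L^1(\Rd,\Rdd)$ (equivalently $\AAA,\BBB\in L^1(\mathbb{R}^+,\varrho^{d-1})$), or invoke the standard density argument mentioned immediately after Theorem~\ref{bochner2} to reduce the general case to the one where the transform is integrable. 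I expect this bookkeeping, rather than any substantive estimate, to be the only delicate point; everything else is a direct assembly of Theorem~\ref{prop_FT}, the eigenvalue corollary after Lemma~\ref{lem_equiv}, and Bochner's theorem.
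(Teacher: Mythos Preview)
Your proposal is correct and matches the paper's intended argument exactly: the corollary is stated in the paper without proof, as an immediate consequence of Theorem~\ref{bochner2} together with the eigenvalue description of $\kkh(\xi)$ following Lemma~\ref{lem_equiv}, and that is precisely what you have unpacked. Your observation about the hypothesis mismatch (Theorem~\ref{bochner2} requires $\kkh\in L^1$ while Theorem~\ref{prop_FT} assumes only $\kk\in L^1$) is well taken and in fact more careful than the paper itself, which simply gestures at the density argument in the sentence following Theorem~\ref{bochner2}.
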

As we said above, formulae~\eqref{Tpar} and~\eqref{Tper} generalize~\eqref{cond_sc2}:
in fact the three coincide when~$\aaa=\bbb$, i.e.~for scalar kernels, in which case 
we also have~$\AAA=\BBB$.
By the form of equation~\eqref{khat_gen},
we have 
that, for a fixed~$\xi\in\Rd\setminus\{0\}$, the numbers~$\AAA(\|\xi\|)$ and~$\BBB(\|\xi\|)$ are the eigenvalues of 
the matrix~$\widehat{\kk}(\xi)$; the former has multiplicity~1 and eigenvector~$\xi$,
while the latter  has  
multiplicity equal to~$\dd-1$. Note that while the Fourier transform of
a kernel~$\kk$ may be written in the form~\eqref{khat_gen} it is not a ``kernel'',
in that it is not positive definite itself (but just the Fourier transform of
a matrix-valued positive definite function).
\par
The two functions~$\AAA$ and $\BBB$ may be expressed in terms of 
Hankel transforms, as it is the case for
formula~\eqref{cond_sc2} in the scalar case. In fact in the examples that we shall work out  we will write:
%
\begin{equation}
\label{Halt}
\AAA(\varrho) =\frac{2\pi}{\varrho^\mu}
\,
\AAAm(2\pi\varrho),
\qquad
\mbox{and}
\qquad
\BBB(\varrho) =\frac{2\pi}{\varrho^\mu}
\,
\BBBm(2\pi\varrho),
\end{equation}
where:
\par\vspace{-.5cm}
\begin{subequations}
\begin{align}
\label{AAAf}
\AAAm(\varrho)
&:=
\int_{0}^\infty 
r^{\mu+1}\aaa(r)J_\mu(\varrho r)\,dr
-
\frac{2\mu+1}{\varrho}
\int_0^\infty 
r^{\mu+2}\ccc(r) J_{\mu+1}(\varrho r)\,dr,
\\
\label{BBBf}
\BBBm(\varrho)
&:=
\int_{0}^\infty 
r^{\mu+1}\bbb(r)J_\mu(\varrho r)\,dr
+
\frac{1}{\varrho}
\int_0^\infty 
r^{\mu+2}\ccc(r) J_{\mu+1}(\varrho r)\,dr.
\end{align}
\end{subequations}
\par
The following proposition,
proven in Appendix~\ref{appBochner}, ensures that for any 
given TRI kernel~$\kk$ we have that as long as either~$\AAA$
or~$\BBB$ are strictly positive somewhere (i.e.~we are not dealing with 
the trivial case~$\kk=0$) the corresponding RKHS
is in fact \em non-degenerate \em (see Definition~\ref{non_deg}).
\begin{proposition}
\label{pos_h}
Let~$\kk\in L^1(\Rd,\Rdd)$ 
be a TRI kernel such that~$\kkh\in L^1(\Rd,\Rdd)$. 
Then~$\kk$ is \em strictly \em positive definite
if and only if there exists~$r_0>0$ such that 
either~$\AAA(r_0)>0$ or~$\BBB(r_0)>0$.
\end{proposition}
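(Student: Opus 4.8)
The plan is to reduce the whole statement to the Fourier side. Since $\kk$ is a TRI kernel, Bochner's Theorem~\ref{bochner2} together with the form~\eqref{khat_gen} says that $\kkh(\xi)=\AAA(\|\xi\|)\,\mathrm{Pr}_\xi^\parallel+\BBB(\|\xi\|)\,\mathrm{Pr}_\xi^\perp$ is self-adjoint and positive semidefinite for every $\xi$, so $\AAA,\BBB\geq0$ on $(0,\infty)$; moreover $\kk\in L^1$ makes $\kkh$ continuous, hence (from $\AAA(\|\xi\|)=(\xi\cdot\kkh(\xi)\xi)/\|\xi\|^2$ and the trace) $\AAA,\BBB$ are continuous on $(0,\infty)$, and $\kkh\in L^1$ gives the pointwise inversion formula $\kk(z)=\int_{\Rd}\kkh(\xi)\,e^{2\pi i z\cdot\xi}\,d\xi$. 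First I would insert this inversion into the quadratic form and, using that $\mathrm{Pr}_\xi^\parallel,\mathrm{Pr}_\xi^\perp$ are symmetric idempotents and the elementary identity $\sum_{a,b}(w_a\cdot w_b)\,e^{2\pi i(x_a-x_b)\cdot\xi}=\big\|\sum_a w_a\,e^{2\pi i x_a\cdot\xi}\big\|^2$ (applied to $w_a:=\mathrm{Pr}_\xi^\perp\alpha_a$ coordinatewise, and to the scalars $\alpha_a\cdot\xi$), obtain, for distinct $x_1,\dots,x_N\in\Rd$ and $\alpha_1,\dots,\alpha_N\in\Rd$,
\[
\sum_{a,b=1}^{N}\alpha_a\cdot\kk(x_a-x_b)\,\alpha_b
=\int_{\Rd}\Big(\frac{\AAA(\|\xi\|)}{\|\xi\|^2}\,\big|P(\xi)\big|^2+\BBB(\|\xi\|)\,\big\|Q(\xi)\big\|^2\Big)\,d\xi,
\]
where $P(\xi):=\sum_a(\alpha_a\cdot\xi)\,e^{2\pi i x_a\cdot\xi}\in\mathbb{C}$ and $Q(\xi):=\sum_a e^{2\pi i x_a\cdot\xi}\,\mathrm{Pr}_\xi^\perp\alpha_a\in\mathbb{C}^d$. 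This exhibits the quadratic form as the integral of a sum of two nonnegative, continuous-on-$\Rd\setminus\{0\}$ terms.

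The ``only if'' direction is then immediate: if neither $\AAA$ nor $\BBB$ is ever positive, then, being nonnegative, $\AAA\equiv\BBB\equiv0$, so $\kkh\equiv0$ and hence $\kk\equiv0$ by inversion, which is not strictly positive definite. For ``if'', suppose the quadratic form vanishes; since $\AAA,\BBB\geq0$ each of the two integrands vanishes for a.e.\ $\xi$, hence identically on $\Rd\setminus\{0\}$ by continuity. If $\AAA(r_0)>0$, continuity of $\AAA$ furnishes a nonempty open annulus $A=\{\xi:\|\xi\|\in(r_0-\varepsilon,r_0+\varepsilon)\}$ on which $\AAA(\|\xi\|)>0$, forcing $P(\xi)=0$ on $A$; but $P$ extends to an entire function on $\mathbb{C}^d$, so it vanishes identically on $\Rd$, and since the $x_a$ are distinct the linear independence (over $\mathbb{C}$) of the exponential polynomials $\xi\mapsto p(\xi)\,e^{2\pi i x_a\cdot\xi}$ gives $\alpha_a\cdot\xi\equiv0$, i.e.\ $\alpha_a=0$ for all $a$. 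If instead $\BBB(r_0)>0$, the same reasoning gives $Q(\xi)=0$ on an annulus; multiplying by $\|\xi\|^2$ turns this into $\sum_a e^{2\pi i x_a\cdot\xi}\big(\|\xi\|^2\alpha_a-(\alpha_a\cdot\xi)\,\xi\big)=0$ on $A$, an exponential polynomial that therefore vanishes on all of $\Rd$, so $\|\xi\|^2\alpha_a=(\alpha_a\cdot\xi)\,\xi$ for every $\xi$; choosing $\xi\neq0$ with $\xi\perp\alpha_a$ (possible since $d\geq2$) yields $\alpha_a=0$. Either way all momenta vanish, so $\kk$ is strictly positive definite.

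I expect the main obstacle to be the implication ``both integrands vanish on a nonempty open annulus $\Rightarrow$ all $\alpha_a=0$''. It rests on two classical facts that must be invoked carefully: the identity theorem for entire (equivalently, real-analytic on the connected set $\Rd$) functions, used to promote vanishing on $A$ to vanishing everywhere; and the linear independence, over $\mathbb{C}$, of exponential polynomials with distinct frequencies, which can itself be proven by passing to tempered-distribution Fourier transforms, since the transforms of the individual summands are supported at the distinct points $x_a$ and can cancel only if each is zero. Everything else — the interchange of the finite sum with the inversion integral (legitimate because $\kkh\in L^1$), the ``sum of squares'' rewriting via the projectors $\mathrm{Pr}_\xi^\parallel,\mathrm{Pr}_\xi^\perp$, and the measure-theoretic passage from ``integral zero'' to ``integrand zero'' — is routine.
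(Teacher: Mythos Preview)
Your argument is correct and, at the structural level, matches the paper's: both pass to the Fourier side via the identity
\[
\sum_{a,b}\alpha_a\cdot\kk(x_a-x_b)\alpha_b
=\int_{\Rd} f_{\boldsymbol x,\boldsymbol\alpha}^\ast(\xi)\,\kkh(\xi)\,f_{\boldsymbol x,\boldsymbol\alpha}(\xi)\,d\xi
\]
with $f_{\boldsymbol x,\boldsymbol\alpha}(\xi)=\sum_a e^{-2\pi i x_a\cdot\xi}\alpha_a$, and then split the integrand using $\mathrm{Pr}_\xi^\parallel$ and $\mathrm{Pr}_\xi^\perp$ into the two nonnegative pieces weighted by $\AAA$ and $\BBB$. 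Your $P$ and $Q$ are exactly $\xi\cdot f_{\boldsymbol x,\boldsymbol\alpha}$ and $\mathrm{Pr}_\xi^\perp f_{\boldsymbol x,\boldsymbol\alpha}$ (up to the sign convention in the exponential), so the decomposition is identical.

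Where the two proofs diverge is in the ``if'' direction. The paper fixes a single $\xi_0$ with $\|\xi_0\|=r_0$ and asserts that, for any data $(\boldsymbol x,\boldsymbol\alpha)$ with some $\alpha_a\neq0$ and for any $\xi$, the vector $f_{\boldsymbol x,\boldsymbol\alpha}(\xi)$ is \emph{neither} parallel \emph{nor} perpendicular to $\xi$; it then reads off strict positivity of the integrand at $\xi_0$. That blanket assertion is false (e.g.\ $N=1$, $\alpha_1=e_1$, $\xi=e_1$ or $\xi=e_2$), so the paper's proof, as written, has a gap. Your route---assume the quadratic form vanishes, force $P$ (resp.\ $\|\xi\|^2 Q$) to vanish on an open annulus, extend to all of $\Rd$ by real-analyticity, and conclude $\alpha_a=0$ from the linear independence of exponential polynomials with distinct frequencies---is exactly the correct way to close this step, and it is what the paper's argument would need to be made rigorous (equivalently: $P$ and $Q$ are real-analytic and not identically zero unless all $\alpha_a$ vanish, so their zero sets have measure zero and the integral over the annulus is strictly positive). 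Your version is therefore not just an alternative but an actual repair.

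Two minor remarks. First, in the $\BBB$-case you could avoid the ``choose $\xi\perp\alpha_a$'' step: the identity $\|\xi\|^2\alpha_a=(\alpha_a\cdot\xi)\xi$ for all $\xi$ already says $\mathrm{Pr}_\xi^\perp\alpha_a=0$ for every $\xi\neq0$, which for $d\geq2$ immediately gives $\alpha_a=0$; what you wrote is of course fine. Second, it is worth stating explicitly that the distinctness of the $x_a$ (required in Definition~\ref{def_posdef} for \emph{strict} positive definiteness) is precisely what guarantees the linear independence of the exponential polynomials you invoke.
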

\noindent{\bf A non-example: Gaussian~$\aaa$ and~$\bbb$.}
One may ask whether we can construct 
a positive definite translation- and rotation-invariant kernel~$\kk$
of the type~\eqref{invker} with~$\aaa(r)=\exp(-c_1r^2)$
and $\bbb(r)=\exp(-c_2r^2)$, where~$c_1>0$ and 
$c_2>0$. The answer is that a kernel of this type is
positive definite if an only if~$c_1=c_2$,
which makes it of the form~$\kk(x)=\exp(-c_1\|x\|)\mathbb{I}_\dd$,
$x\in\Rd$,
i.e.~\em scalar \em and of the type~\eqref{scGauss}.
%
To see this, we compute~$\AAA$,~$\BBB$ and impose their nonnegativity. 
\par
We shall use the following Hankel transforms \cite[\S1.5.9 \& \S1.5.10]{oberhettinger}:
\begin{align}
\label{HtfG}
\int_0^\infty
r^{\nu+1} e^{-c r^2} J_{\nu}(\varrho r)\,dr &
= 
\frac{\varrho^\nu}{(2c)^{\nu+1}}\,
\exp\!\Big(\!-\!\frac{\varrho^2}{4c}\Big),
&
\mbox{ for }&\Re\nu>-1\mbox{ and }\Re c>0,
\\
\label{HtfG2}
\int_0^\infty
r^{\nu-1} e^{-c r^2} J_{\nu}(\varrho r)\,dr &
= 
\frac{2^{\nu-1}}{\varrho^\nu}
\,
\gamma\Big(\nu,\frac{\varrho^2}{4c}\Big),
&
\mbox{ for }&\Re\nu>0\mbox{ and }\Re c>0,
\end{align}
where~$\gamma(\nu,x):=\int_0^x e^{-t}\,t^{\nu-1}dt$, $\Re\nu>0$, is the 
\em 
lower \em incomplete gamma function%
~\cite[\S6.5.2]{abramowitz}.
Therefore when~$\aaa(r)=\exp(-c_1r^2)$
and~$\bbb(r)=\exp(-c_2r^2)$ 
the expressions~\eqref{AAAf} and~\eqref{BBBf} become
\begin{align*}
\AAAm(\varrho) &= 
\frac{\varrho^\mu}{(2c_1)^{\mu+1}}
\,\exp\!
\Big(\!-\!\frac{\varrho^2}{4c_1}\Big)
-\frac{2^\mu(2\mu+1)}{\varrho^{\mu+2}}
\Big\{
\gamma\Big(\mu+1,\frac{\varrho^2}{4c_1}\Big)
-
\gamma\Big(\mu+1,\frac{\varrho^2}{4c_2}\Big)
\Big\},
\\
\BBBm(\varrho) &= 
\frac{\varrho^\mu}{(2c_2)^{\mu+1}}
\,\exp\!
\Big(\!-\!\frac{\varrho^2}{4c_2}\Big)
+\frac{2^\mu}{\varrho^{\mu+2}}
\Big\{
\gamma\Big(\mu+1,\frac{\varrho^2}{4c_1}\Big)
-
\gamma\Big(\mu+1,\frac{\varrho^2}{4c_2}\Big)
\Big\}.
\end{align*}
Using formulae~\eqref{Halt} and introducing for later 
convenience the \em upper \em incomplete gamma function
$\Gamma(\nu,x):=\Gamma(\nu)-\gamma(\nu,x)=
\int_x^\infty e^{-t}t^{\nu-1}dt$, $\Re\nu>0$
(see \cite[\S6.5.3]{abramowitz}), we finally get:
\begin{subequations}
\begin{align}
\label{GaussA}
\AAA(\varrho) &= 
\frac{\pi^{\mu+1}}{c_1^{\mu+1}}
\,\exp\!\Big(\!-\!\frac{\pi^2\varrho^2}{c_1}\Big)
-
\frac{2\mu+1}{2\pi^{\mu+1}\varrho^{2\mu+2}}
\Big\{
\Gamma\Big(\mu+1,\frac{\pi^2\varrho^2}{c_2}\Big)
-
\Gamma\Big(\mu+1,\frac{\pi^2\varrho^2}{c_1}\Big)
\Big\},
\\
\label{GaussB}
\BBB(\varrho) &= 
\frac{\pi^{\mu+1}}{c_2^{\mu+1}}
\,\exp\!\Big(\!-\!\frac{\pi^2\varrho^2}{c_2}\Big)
+
\frac{1}{2\pi^{\mu+1}\varrho^{2\mu+2}}
\Big\{
\Gamma\Big(\mu+1,\frac{\pi^2\varrho^2}{c_2}\Big)
-
\Gamma\Big(\mu+1,\frac{\pi^2\varrho^2}{c_1}\Big)
\Big\}.
\end{align}
\end{subequations}
\par
We first consider the case~$\dd=2$ (i.e.~$\mu=0$) since it is particularly simple.
In fact~$\Gamma(1,x)=e^{-x}$ and elementary manipulations
lead to the following expressions for~$\AAA$ and~$\BBB$:
$$
\AAA(\varrho)
=F(\varrho;c_1,c_2):=
\frac{1}{2\pi\varrho^2}
\Big[
\Big(\frac{2\pi^2}{c_1}\varrho^2+1
\Big)
\exp\!\Big(\!-\!\frac{\pi^2\varrho^2}{c_1}\Big)
-
\exp\!\Big(\!-\!\frac{\pi^2\varrho^2}{c_2}\Big)
\Big],
\quad
\BBB(\varrho)
=F(\varrho;c_2,c_1)
$$
i.e.~each is obtained from the other by exchanging~$c_1$ and~$c_2$.
The condition~$\AAA\geq0$
is equivalent to 
\begin{equation}
\label{auxineq}
\frac{2\pi^2}{c_1}\varrho^2+1\geq 
\exp\Big( 
\frac{c_2-c_1}{c_1c_2}
\pi^2\varrho^2
\Big)
\qquad \mbox{for all }\varrho\geq0.
\end{equation}
This is certainly true if $c_1\geq c_2$ (in which case the graph of the function on the right-hand side of the above inequality is a Gaussian); in fact this is also 
necessary for~\eqref{auxineq} to hold, since if~$c_1<c_2$ 
then such condition
breaks down for large values of~$\varrho$. Similarly, $\BBB\geq0$ if and only if
$c_2\geq c_1$, so the functions~$\AAA$ and~$\BBB$ are both nonnegative if and only if~$c_1=c_2$.
\par
To prove that this is also true in higher dimensions ($d>2$, or~$\mu>0$) we 
rewrite~\eqref{GaussA} as follows:
$$
\AAA(\varrho) = 
\frac{\pi^{\mu+1}}{c_1^{\mu+1}}
\,\exp\!\Big(\!-\!\frac{\pi^2\varrho^2}{c_1}\Big)
-
\frac{2\mu+1}{2}
\pi^{\mu+1}
\int_{1/c_2}^{1/c_1}
\tau^\mu
e^{-\tau(\pi\varrho)^2}
d\tau,
$$
which is obviously nonnegative if~$c_1\geq c_2$. The latter condition is in fact 
also necessary for the nonnegativity of~$\AAA$. To see this, 
we 
use
the asymptotic expansion of 
the upper incomplete gamma function~\cite[\S6.5.31]{abramowitz}:
$$
\Gamma(\nu,x)
\sim
x^{\nu-1}e^{-x}
\Big[
1
+\frac{\nu-1}{x}
+\frac{(\nu-1)(\nu-2)}{x^2}
+\cdots
\Big],
\quad
\mbox{ as }
x\rightarrow\infty,
$$
so that for any~$c>0$ it is the case that 
$$
\frac{1}{2\pi^{\mu+1}\varrho^{2\mu+2}}\,
\Gamma\Big(\mu+1,\frac{\pi^2\varrho^2}{c}\Big)
\sim \frac{\pi^{\mu+1}}{2\varrho^2c^\mu}
\,\exp\!\Big(\!-\!\frac{\pi^2\varrho^2}{c}\Big)
\Big[
1+\mu\frac{c}{\pi^2\varrho^2}
+\mu(\mu-1)\frac{c^2}{\pi^4\varrho^4}
+\cdots\Big],
\;\;\;
\mbox{as }
\varrho\rightarrow\infty
$$
(incidentally, note that if the dimension~$d$ is even then~$\mu$ is integer and the asymptotic expansion
on the right-hand side has
a finite number of terms), so~\eqref{GaussA}
can be expanded at infinity as follows:
\begin{align*}
\AAA(\varrho)
\sim
(2\mu+1)
\frac{\pi^{\mu-1}}{2\varrho^2}\frac{1}{c_1^\mu}
\,\exp\!\Big(-&\frac{\pi^2\varrho^2}{c_1}\Big)
\bigg\{
\Big[
\frac{2}{2\mu+1} \frac{\pi^2\varrho^2}{c_1}
+1+\mu\frac{c_1}{\pi^2\varrho^2}+\cdots\Big]
\\
&-\frac{c_1^\mu}{c_2^\mu}
\,\exp\!\Big(\frac{c_2-c_1}{c_1c_2}\pi^2\varrho^2\Big)
\Big[1+\mu\frac{c_2}{\pi^2\varrho^2}+\cdots\Big]\bigg\},
\;\;
\mbox{as }
\varrho\rightarrow\infty.
\end{align*}
One can see that if~$c_1<c_2$ then the second exponential function
diverges faster than~$\varrho^2$, implying that~$\AAA$ is
negative for large values of~$\varrho$. Therefore~$\AAA\geq0$ if and only if~$c_1\geq c_2$.
Manipulating~\eqref{GaussB} in a similar manner proves that~$\BBB\geq0$
if and only if~$c_2\geq c_1$. In conclusion, a kernel~$\kk$ with Gaussian~$\aaa$ and~$\bbb$ is positive definite if and only if 
$c_1=c_2$, i.e.~if and only if it is {\em scalar\em}.
\begin{example_n} 
\label{exGauss}
We now try a different route and verify 
for which values of~$(a,b)\in\mathbb{R}^2$
the functions
\begin{equation}
\label{coe_df}
\aaa(r) = b\,e^{-cr^2}
\qquad
\mbox{and}
\qquad 
\bbb(r) = (b-ar^2)\,e^{-cr^2},\quad r>0,
\end{equation}
with~$c>0$, define a \em positive definite\em~$\kk$ of the type~\eqref{invker},
i.e.~a TRI kernel.
In other words we assume that the function~\eqref{def_ktilde} is given by
$\ccc(r)=a\exp(-cr^2)$, so that both~$\ccc$ and~$\aaa$
are Gaussians with the \em same \em variance, but different values at~$r=0$
(on the other hand~$\aaa$ and~$\bbb$ must
have the same value at~0 by Proposition~\ref{kzero}).
In this example we have that
\begin{align*}
\AAAm(\varrho)
& =
\frac{\varrho^\mu}{(2c)^{\mu+1}}
\Big[b-(2\mu+1)\frac{a}{2c}\Big]
\exp\!\Big(\!-\!\frac{\varrho^2}{4c}\Big),
\\
\BBBm(\varrho)
& =
\frac{\varrho^\mu}{(2c)^{\mu+1}}
\Big[b-(2\mu+1)\frac{a}{2c}
+
\frac{a}{4c^2}\varrho^2\Big]
\exp\!\Big(\!-\!\frac{\varrho^2}{4c}\Big).
\end{align*}
In order to obtain the above expressions we have used the Hankel transform:
$$
\int_0^\infty
r^{\mu+3} e^{-c r^2} J_{\mu}(\varrho r)\,dr 
= 
\frac{\varrho^\mu}{(2c)^{\mu+3}}
\big[
4c(\mu+1)-\varrho^2
\big]
\exp\!\Big(\!-\!\frac{\varrho^2}{4c}\Big)
,
\quad\mbox{ for }\Re\mu>-2\mbox{ and }\Re c>0,
$$
which is computed by inserting in the left-hand side the  
relation~$J_\mu(z)=2(\mu+1)J_{\mu+1}(z)/z-J_{\mu+2}(z)$ 
(see~\eqref{Jrec1})
and applying~\eqref{HtfG}
twice, with~$\nu=\mu+1$ and~$\nu=\mu+2$ respectively.
By~\eqref{Halt} we get: 
\begin{align*}
\AAA(\varrho)
& =
\frac{\pi^{\mu+1}}{c^{\mu+1}}
\Big[b-(2\mu+1)\frac{a}{2c}\Big]
\exp\!\Big(\!-\!\frac{\pi^2\varrho^2}{c}\Big),
\\
\BBB(\varrho)
& =
\frac{\pi^{\mu+1}}{c^{\mu+1}}
\Big[b-(2\mu+1)\frac{a}{2c}
+
\frac{a\pi^2}{c^2}\varrho^2\Big]
\exp\!\Big(\!-\!\frac{\pi^2\varrho^2}{c}\Big).
\end{align*}
\begin{figure}[t]
\begin{center}
\begin{tikzpicture}[scale=.75]
\fill [gray!20] (0,0) -- (3.333,2.5) -- (0,2.5);
\draw [->] (-.5,0) -- (5.5,0) node [below] {$a$};
\draw [->] (0,-.5) -- (0,3.25) node [left] {$b$};
\draw [->] (3,1) -- (2.05,1.48);
\draw (3,.9) node [right] {\small$\displaystyle b=(d-1)\frac{a}{2c}$};
\draw (0.05,-.3) node [left] {$0$};
\draw [very thick] (0,0) -- (3.333,2.5);
\draw [very thick] (0,0) -- (0,2.5);
\draw (.45,1.5) node [right] {\large$D_1$};
\draw (2.25,-1) node [right] {(a)};
\end{tikzpicture}
\hspace{1.5cm}
\begin{tikzpicture}[scale=.75]
\fill [gray!20] (0,0) -- (5,2.5) -- (0,2.5);
\draw [->] (-.5,0) -- (5.5,0) node [below] {$a$};
\draw [->] (0,-.5) -- (0,3.25) node [left] {$b$};
\draw [->] (4,1) -- (3.05,1.48);
\draw (4,.9) node [right] {\small$\displaystyle b=\frac{a}{2c}$};
\draw (0.05,-.3) node [left] {$0$};
\draw [very thick] (0,0) -- (5,2.5);
\draw [very thick] (0,0) -- (0,2.5);
\draw (.75,1.5) node [right] {\large$D_2$};
\draw (2.25,-1) node [right] {(b)};
\end{tikzpicture}
\end{center}
\vspace*{-.5cm}
\caption{Domains~$D_1$ and~$D_2$ of positive definiteness for the kernels in Examples~\ref{exGauss} and~\ref{exGauss2}.}
\label{wedge}
\end{figure}
\par
So~$\AAA\geq0$ if and only if 
$b- (2\mu+1)\frac{a}{2c}\geq0$,
and~$\BBB\geq0$
if and only if
$\frac{a\pi^2}{c^2}\varrho^2\geq-\big[b-(2\mu+1)\frac{a}{2c}\big]$
for all~$\varrho$. Whence~$\AAA$ and~$\BBB$ are \em simultaneously \em
nonnegative, i.e.~$\kk$ 
is positive definite, if and only if the pair~$(a,b)$ is in 
$$
\textstyle
D_1
:=\big\{
(a,b)\in \mathbb{R}^2\,\big|\,
b\geq
(d-1)\frac{a}{2c}
,
a\geq0
\big\}
$$
(since~$2\mu+1=d-1$)
which is the wedge-shaped domain shown in Figure~\ref{wedge}(a). 
Note that the slope of the slanted boundary depends on the dimension~$d$.
The vertical
boundary ($a=0$) corresponds to scalar kernels, while the meaning of the other one 
shall be explored in later sections. 
\par
The graphs~$\aaa$, $\bbb$, $\AAA$ and~$\BBB$ (symmetrized with respect to~$r=0$ 
for clarity) are shown
in Figure~\ref{FigGauss}
for~$d=2$, $a=1.5$ and~$b=c=1$. Note that~$\AAA\geq0$ and~$\BBB\geq0$. The same figure shows
the corresponding vector field~$x\mapsto \kk(x)\alpha$, $x\in\mathbb{R}^2$, 
with~$\kk$ given by~\eqref{invker}
and~$\alpha=e_1\in\mathbb{R}^2$, also shown in the figure
(the importance of vector fields of the type~$x\mapsto \kk(x)\alpha$
is apparent from equation~\eqref{eqone}, on which 
we shall return later in Section~\ref{secApp}).
For~$\alpha=e_1$ the two ``vortices'' of the 2-dimensional 
vector field~$x\mapsto \kk(x)\alpha$
are located at points~$(0,\pm\sqrt{b/a})$; whence for positive definite~$\kk$
their mutual distance is at least~$\sqrt{2/c}$.
\begin{figure}[t]
\begin{center}
\begin{picture}(370,170)
\setlength{\unitlength}{1pt}
\put(3,80){\includegraphics[height=2.25cm]{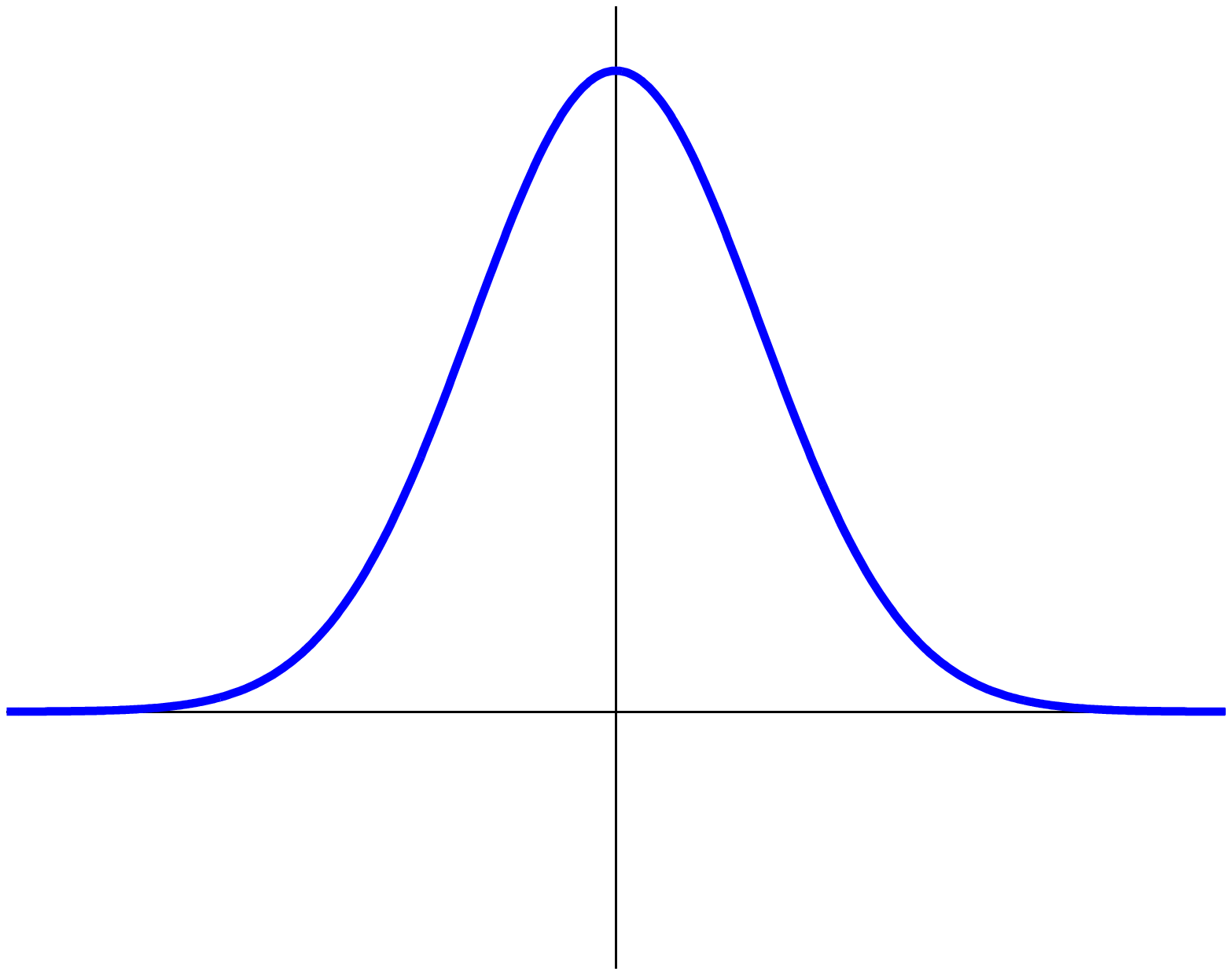}} 
\put(96,80){\includegraphics[height=2.25cm]{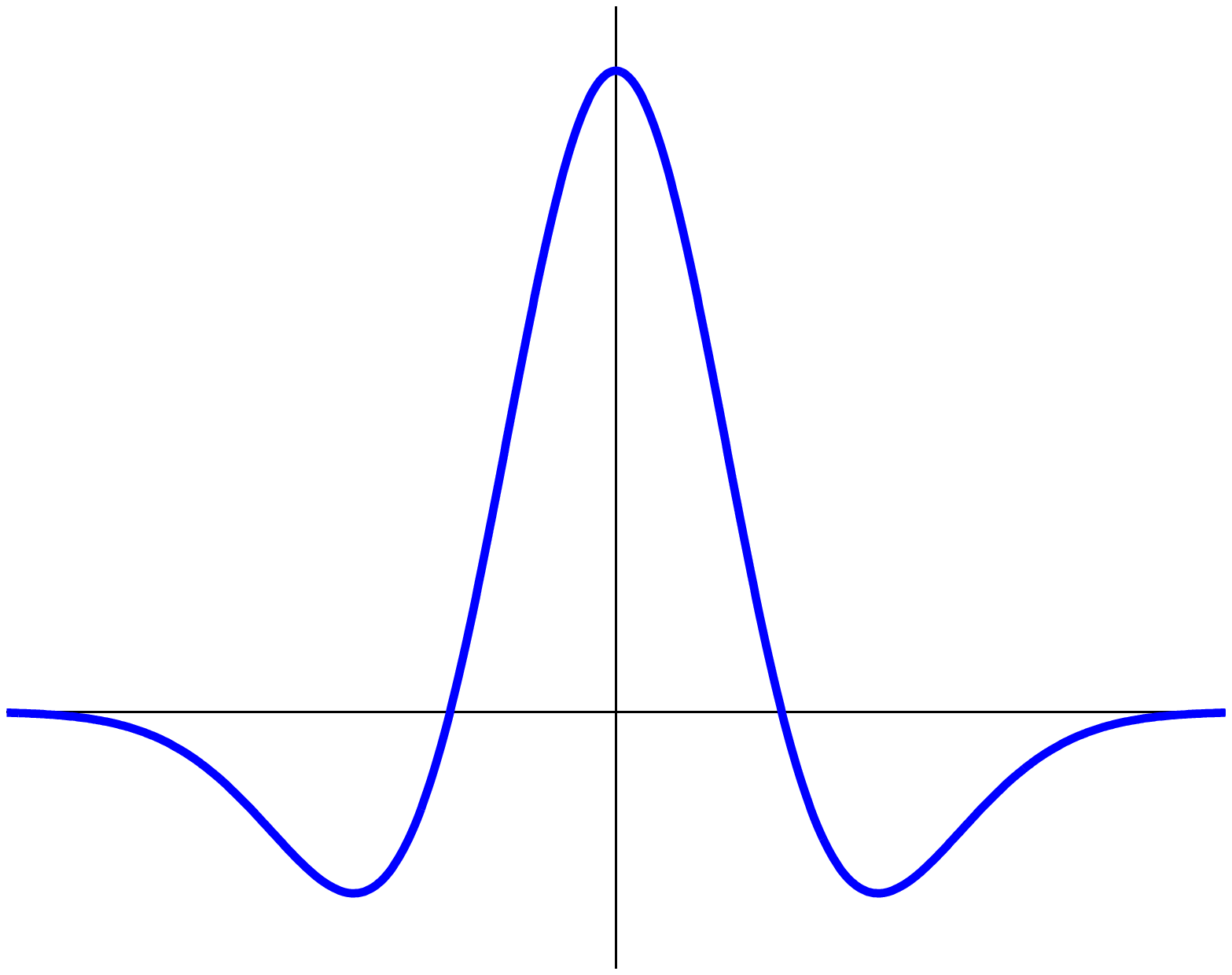}}
\put(3,-10){\includegraphics[height=2.25cm]{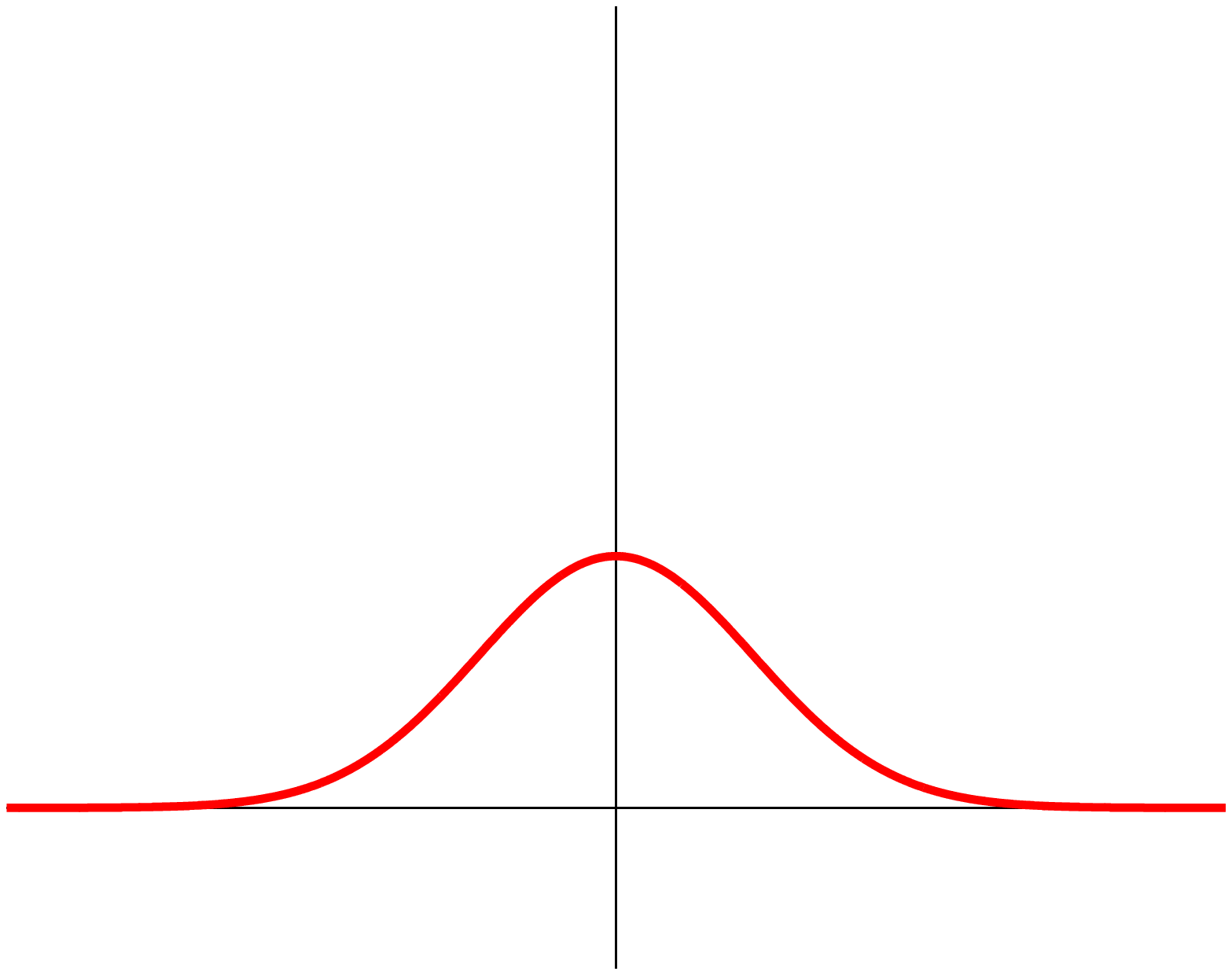}} 
\put(96,-10){\includegraphics[height=2.25cm]{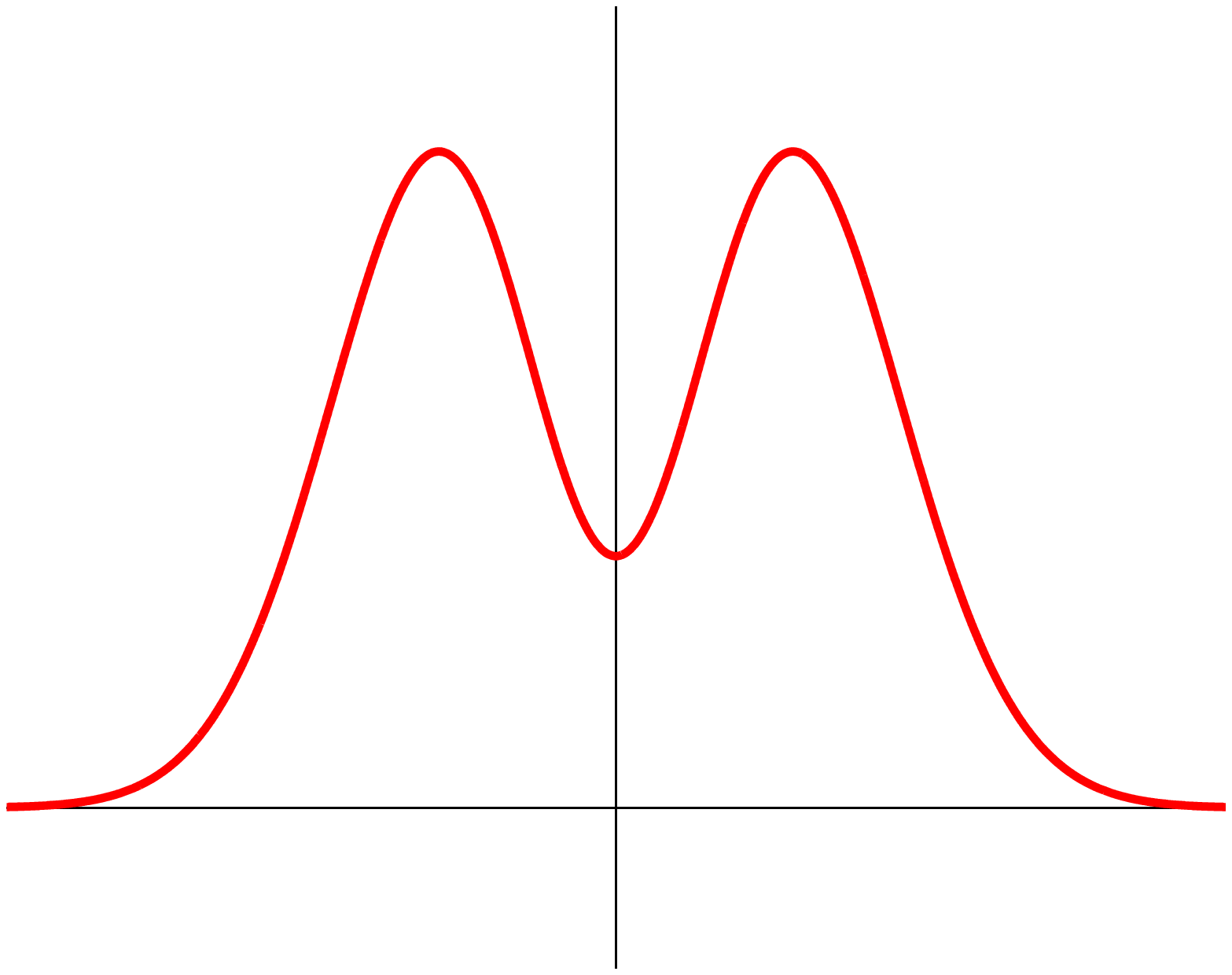}}
\put(46,152){\makebox(0,0){$\aaa$}} \put(140,152){\makebox(0,0){$\bbb$}}
\put(46,62){\makebox(0,0){$\AAA$}} \put(140,62){\makebox(0,0){$\BBB$}}
\put(215,-5){\includegraphics[height=5.5cm]{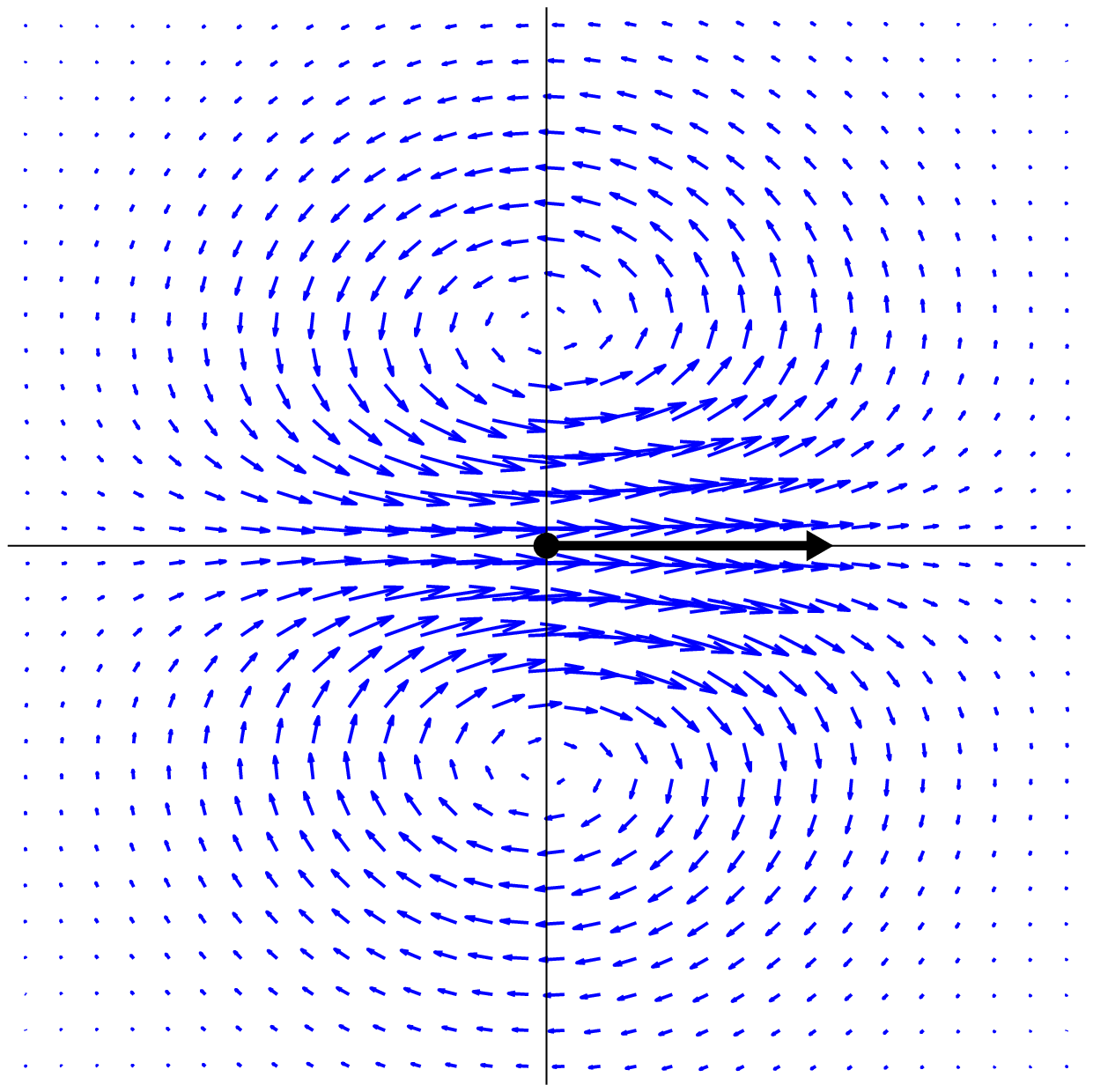}}
\put(295,157){\makebox(0,0){\small$x\mapsto\kk(x)e_1$}}
\thicklines
\end{picture}
\end{center}
\caption{Graphs of~$\aaa$, $\bbb$, $\AAA$, $\BBB$ for Example~\ref{exGauss} 
and the vector field~$x\mapsto\kk(x)\alpha$, with~$\alpha=e_1$ (shown).
}
\label{FigGauss}
\end{figure}
\end{example_n}
\begin{example_n}
\label{exGauss2}
We now modify the previous example and switch the roles of~$\aaa$ 
and~$\bbb$. In other words we want to find conditions on~$(a,b)\in\mathbb{R}^2$
such that the functions
\begin{equation}
\label{coe_cf}
\aaa(r) = (b-ar^2)\,e^{-cr^2}
\qquad
\mbox{and}
\qquad 
\bbb(r) = b\,e^{-cr^2},\quad r>0,
\end{equation}
with~$c>0$, define a positive definite~$\kk$ 
of the type~\eqref{invker}, i.e.~a TRI kernel.
This means we assume that~$\ccc(r)=-a\,e^{-cr^2}$,
hence both~$\ccc$ and~$\bbb$ are Gaussian, with the same variance. 
Calculations that are similar to those 
of the previous example lead to the following expressions
for~$\AAA$ and~$\BBB$:
\begin{align*}
\AAA(\varrho)
& =
\frac{\pi^{\mu+1}}{c^{\mu+1}}
\Big(b-\frac{a}{2c}+\frac{a\pi^2}{c^2}\varrho^2\Big)
\exp\!\Big(\!-\!\frac{\pi^2\varrho^2}{c}\Big),
\\
\BBB(\varrho)
& =
\frac{\pi^{\mu+1}}{c^{\mu+1}}
\Big(b-\frac{a}{2c}
\Big)
\exp\!\Big(\!-\!\frac{\pi^2\varrho^2}{c}\Big).
\end{align*}
\par
The above are simultaneously nonnegative~$\varrho$, 
i.e.~$\kk$ is a TRI kernel,
if and only if $(a,b)$ is in 
$$
\textstyle
D_2
:=\big\{
(a,b)\in \mathbb{R}^2\,\big|\,
b\geq
\frac{a}{2c}
,
a\geq0
\big\}
$$
which is the wedge-shaped domain shown in Figure~\ref{wedge}(b). The vertical
boundary ($a=0$) again corresponds to scalar kernels, while the 
meaning of the other boundary (whose slope this time does \em not \em depend 
on the dimension~$d$ of the domain) will be explored later.
\par
The graphs of~the functions~$\aaa$, $\bbb$, $\AAA$ and $\BBB$ (again, symmetrized with 
respecto to~$r=0$) are shown 
in Figure~\ref{FigGauss2} 
for~$d=2$, $a=1.5$ and~$b=c=1$,
as well as the corresponding vector field~$x\mapsto\kk(x)\alpha$, $x\in\mathbb{R}^2$,
with~$\alpha=e_1\in\mathbb{R}^2$. 
For such~$\alpha$ the apparent ``sink'' and ``source'' of the vector field (where it is equal to zero)
are respectively located at points~$(\pm\sqrt{b/a},0)$.
\begin{figure}[t]
\begin{center}
\begin{picture}(370,170)
\setlength{\unitlength}{1pt}
\put(3,80){\includegraphics[height=2.25cm]{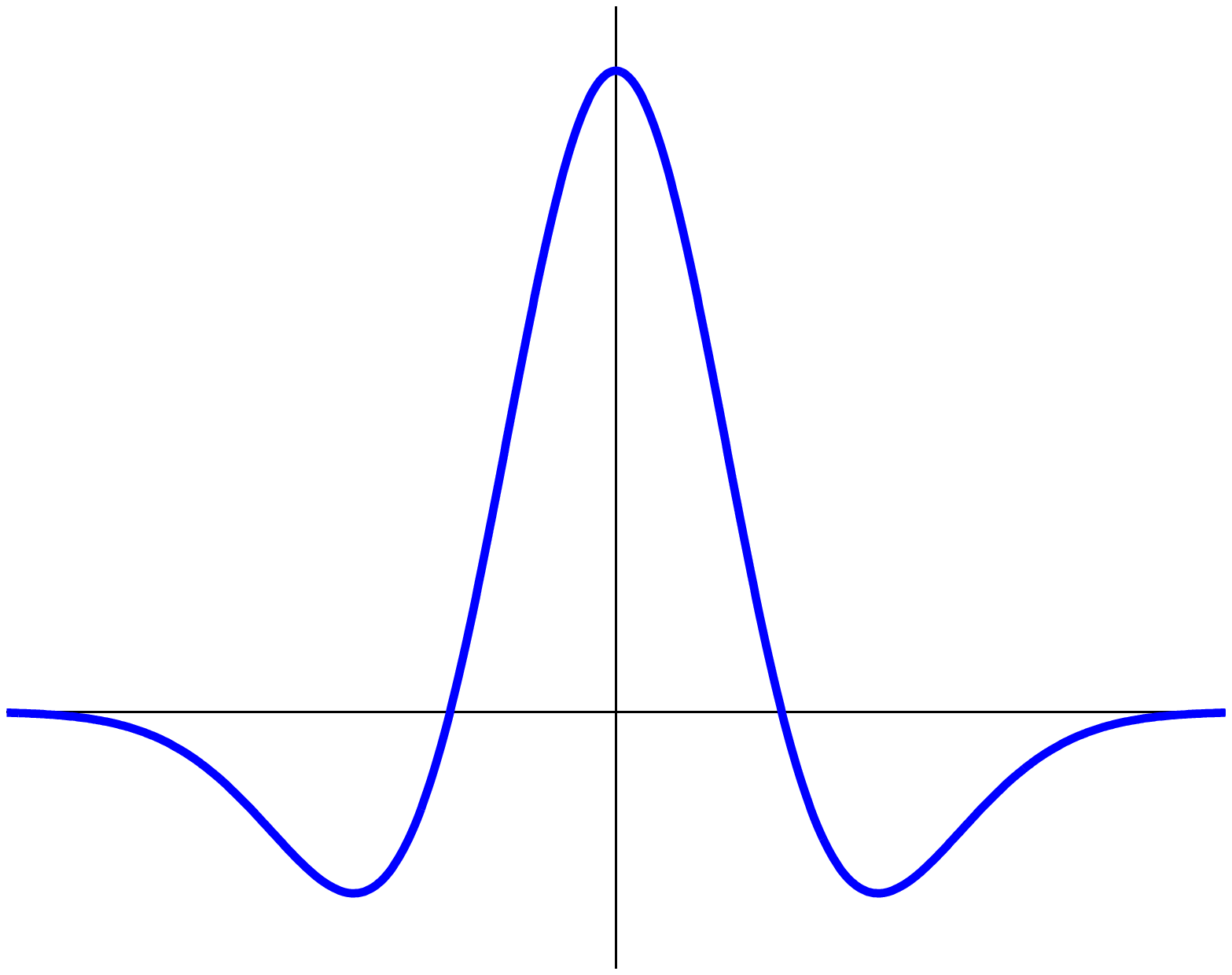}} 
\put(96,80){\includegraphics[height=2.25cm]{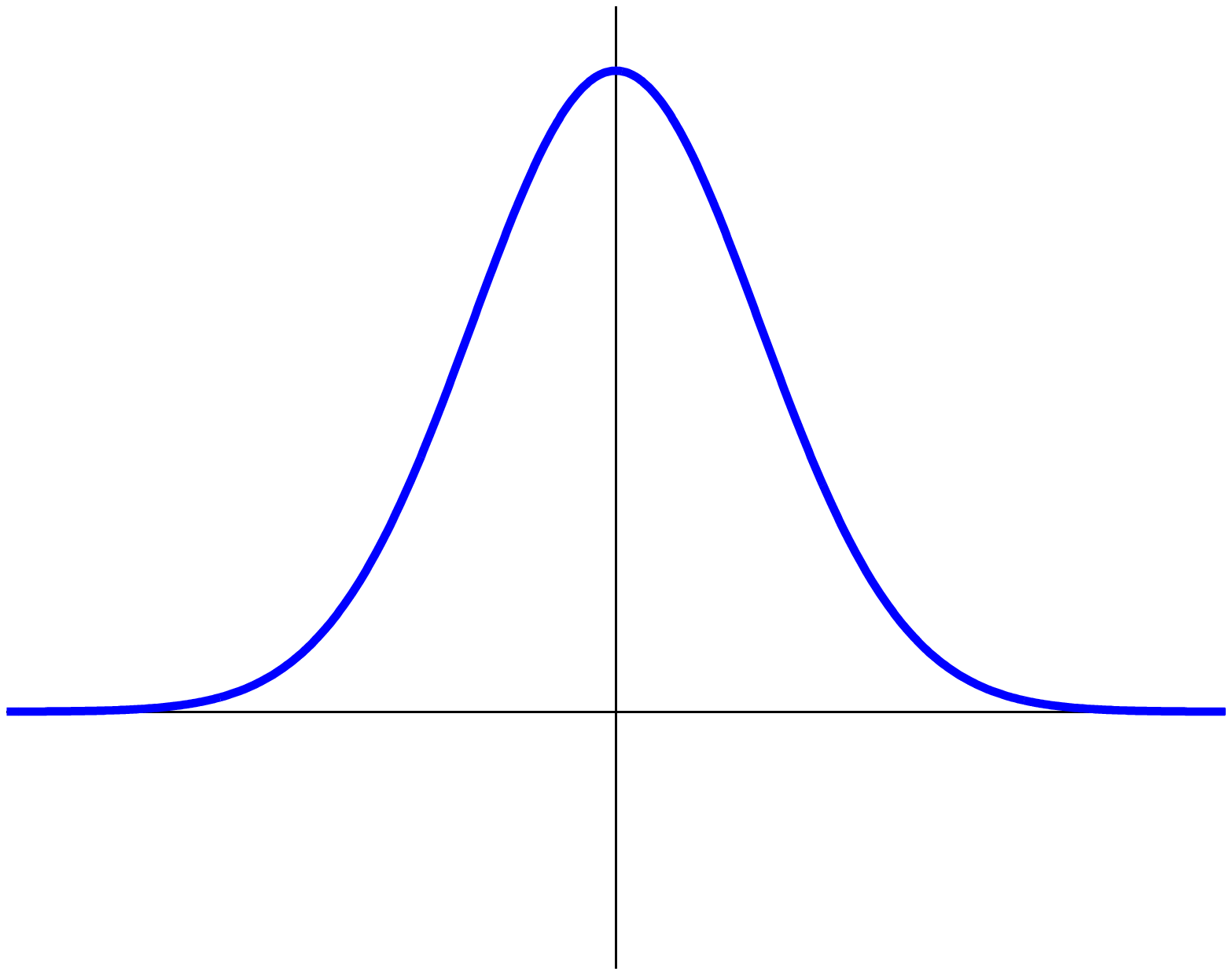}}
\put(3,-10){\includegraphics[height=2.25cm]{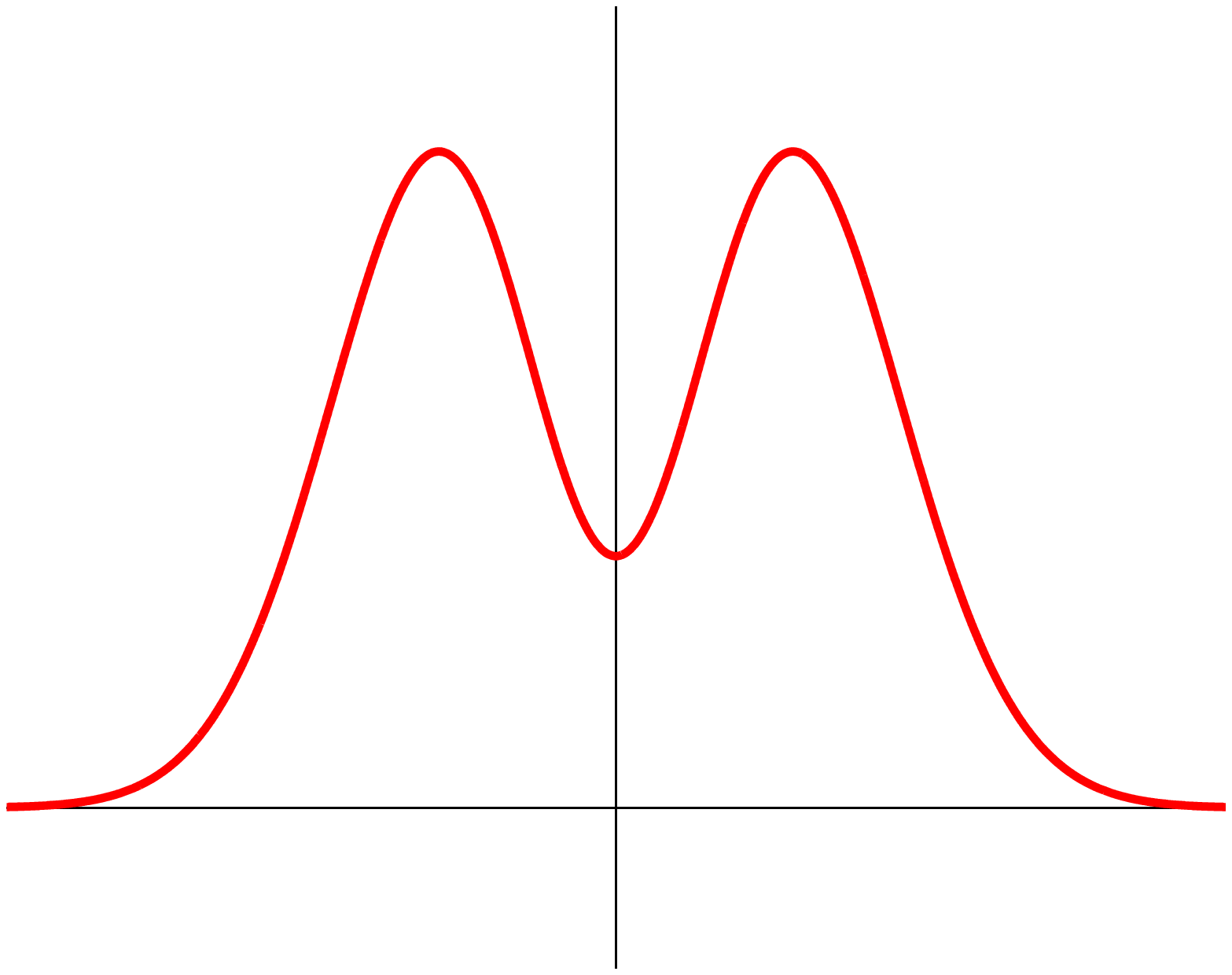}} 
\put(96,-10){\includegraphics[height=2.25cm]{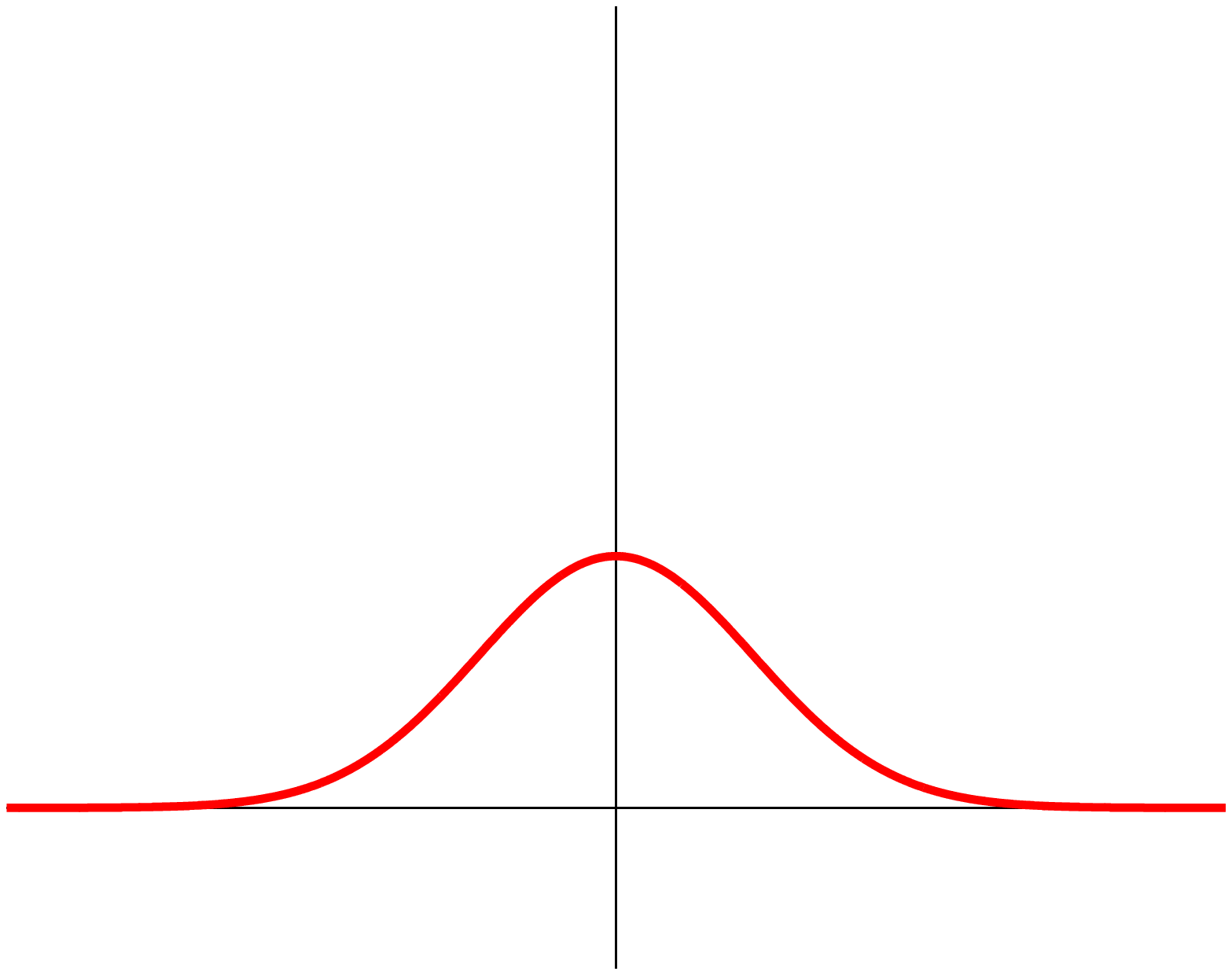}}
\put(46,152){\makebox(0,0){$\aaa$}} \put(140,152){\makebox(0,0){$\bbb$}}
\put(46,62){\makebox(0,0){$\AAA$}} \put(140,62){\makebox(0,0){$\BBB$}}
\put(215,-5){\includegraphics[height=5.5cm]{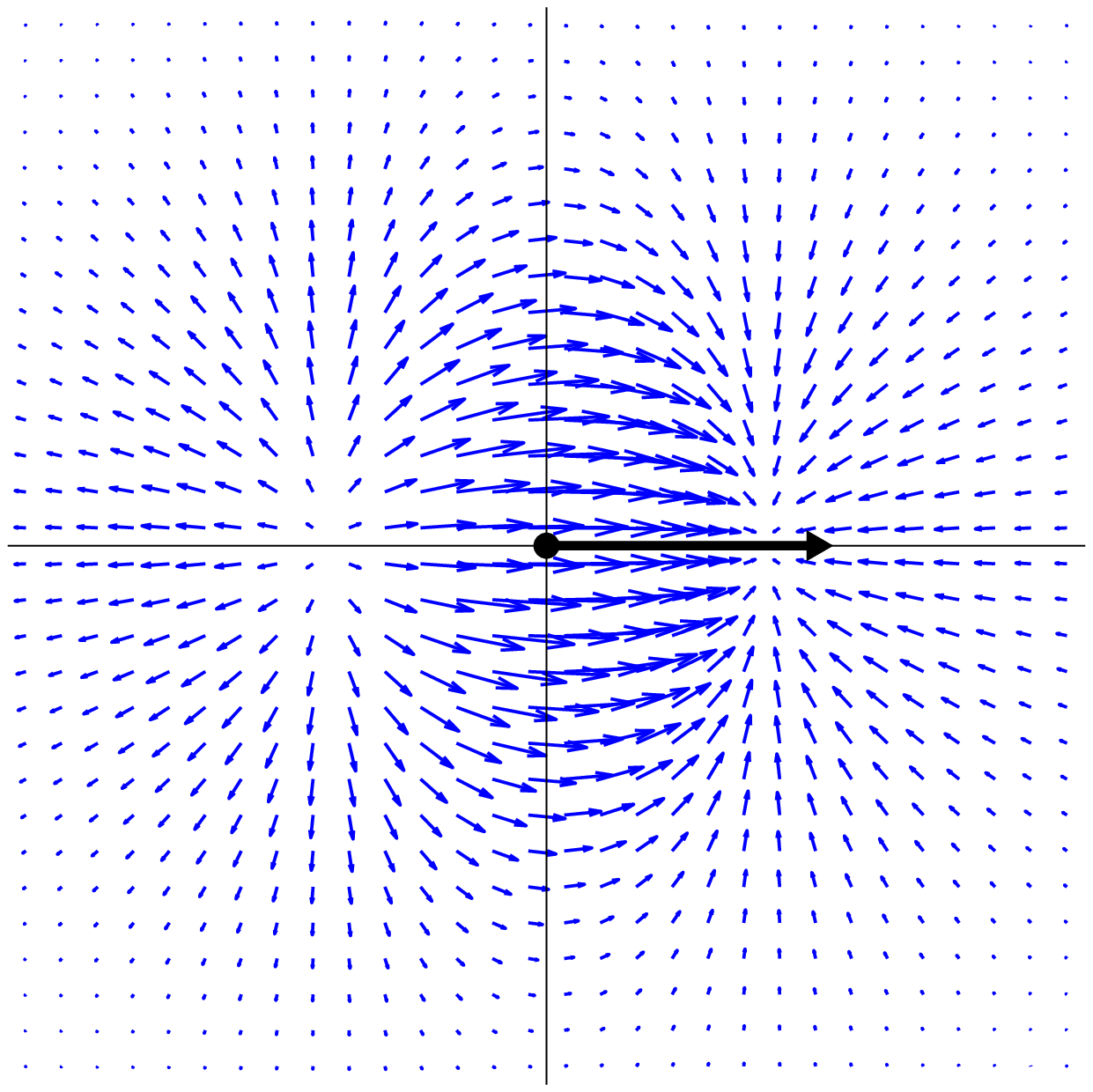}}
\put(295,157){\makebox(0,0){\small$x\mapsto\kk(x)e_1$}}
\thicklines
\end{picture}
\end{center}
\caption{Graphs of~$\aaa$, $\bbb$, $\AAA$, $\BBB$ for Example~\ref{exGauss2} 
and the vector field~$x\mapsto\kk(x)\alpha$, with~$\alpha=e_1$ (shown).
}
\label{FigGauss2}
\end{figure}%
\end{example_n}

\subsection{Inversion formulae}
\label{par_inv}
Let~$M$ be the functional that maps
the coefficients~$(\aaa,\bbb)$ of a matrix-valued
function~$\kk\in L^1(\Rd,\Rdd)$ of the type~\eqref{invker}
to the coefficients~$(\AAA,\BBB)$ of its Fourier transform. That is,
\begin{equation}
\label{funcM}
M: L^1(\mathbb{R}^+,r^{d-1})
\times
L^1(\mathbb{R}^+,r^{d-1})
\longrightarrow
C_0(\mathbb{R}^+) \times
C_0(\mathbb{R}^+) : (\aaa,\bbb)\longmapsto(\AAA,\BBB),
\end{equation}
where~$(\AAA,\BBB)$ are expressed in terms of~$(\aaa,\bbb)$
precisely
by formulae~\eqref{Tpar} and~\eqref{Tper}.
Once again, the symbol~$C_0(\mathbb{R}^+)$ indicates
the set of real-valued continuous functions defined on~$\mathbb{R}^+$
that vanish at~$+\infty$.  
The map~$M$ is \em linear \em
and by the proposition that follows it is in fact an \em involution\em\/,
i.e.~$M^{-1}=M$, in the sense that is specified by the following proposition
(this generalizes the fact that~\eqref{cond_sc2} is an involution).
\begin{proposition} 
\label{invT}
Let~$\kk\in L^1(\Rd,\Rdd)$ be of the type~\eqref{invker}, and also 
assume~$\kkh\in L^1(\Rd,\Rdd)$. Then the 
functions~$(\aaa,\bbb)$ can be computed from~$(\AAA,\BBB)$
as follows:
\begin{subequations}
\begin{align}
\label{iTpar}
\aaa(r)
&
=
\frac{2\pi}{r^\mu}
\int_0^\infty
\varrho^{\mu+1}\,\AAA(\varrho)J_\mu(2\pi\varrho r)\,d\varrho
-
\frac{2\mu+1}{r^{\mu+1}}
\int_0^\infty
\varrho^{\mu}\big(\AAA(\varrho)-\BBB(\varrho)\big)J_{\mu+1}(2\pi\varrho r)\,d\varrho,
\\
\label{iTper}
\bbb(r)
&=
\frac{2\pi}{r^\mu}
\int_0^\infty
\varrho^{\mu+1}\,\BBB(\varrho)J_\mu(2\pi\varrho r)\,d\varrho
+
\frac{1}{r^{\mu+1}}
\int_0^\infty
\varrho^{\mu}\big(\AAA(\varrho)-\BBB(\varrho)\big)J_{\mu+1}(2\pi\varrho r)\,d\varrho;
\end{align}
\end{subequations}
that is,~$(\aaa,\bbb)$ are obtained from~$(\AAA,\BBB)$
by applying once again formulae~\eqref{Tpar} and~\eqref{Tper}.
\end{proposition}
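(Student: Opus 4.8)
The plan is to derive the inversion formulae by applying Theorem~\ref{prop_FT} a \emph{second} time, now to the matrix-valued function $\widehat{\kk}$ in place of $\kk$, and then to identify the result with $\kk$ itself via Fourier inversion. First I would record two preliminary observations. One: every function of the form \eqref{invker} is even, since $\mathrm{Pr}_{-x}^\parallel=\mathrm{Pr}_x^\parallel$ and $\mathrm{Pr}_{-x}^\perp=\mathrm{Pr}_x^\perp$ for $x\neq0$ (see \eqref{proj_op}), so $\kk(-x)=\kk(x)$ for all $x\in\Rd$; consequently, as $\kk$ and $\widehat{\kk}$ both lie in $L^1(\Rd,\Rdd)$, the Fourier inversion theorem \cite{folland,stein_weiss} gives $\widehat{\widehat{\kk}}(x)=\kk(-x)=\kk(x)$. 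Two: by Theorem~\ref{prop_FT}, $\widehat{\kk}$ has the form \eqref{khat_gen}, which is \emph{the same} form as \eqref{invker} with the coefficients $(\aaa,\bbb)$ replaced by $(\AAA,\BBB)$; hence $\widehat{\kk}$ is itself a function of the type \eqref{invker} lying in $L^1(\Rd,\Rdd)$, that is, it meets the standing hypotheses of Theorem~\ref{prop_FT} (in particular, passing to polar coordinates, its coefficients $\AAA,\BBB$ automatically belong to $L^1(\mathbb{R}^+,r^{d-1})$).

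I would then apply Theorem~\ref{prop_FT} verbatim with $\widehat{\kk}$ in the role of $\kk$: its Fourier transform $\widehat{\widehat{\kk}}$ is of the form \eqref{khat_gen}, and its coefficients are obtained from $(\AAA,\BBB)$ by the very formulae \eqref{Tpar} and \eqref{Tper} with $(\AAA,\BBB)$ substituted for $(\aaa,\bbb)$, the role of the auxiliary function $\ccc$ of \eqref{def_ktilde} now being played by $\CCC(r):=\big(\AAA(r)-\BBB(r)\big)/r^2$. Written out, these substituted formulae are precisely the right-hand sides of \eqref{iTpar} and \eqref{iTper}. On the other hand $\widehat{\widehat{\kk}}=\kk$ by the first observation, and the coefficients of $\kk$ in the representation \eqref{invker} are $\aaa$ and $\bbb$; since for $d\geq2$ these coefficients are uniquely determined by $\kk$ (they are its eigenvalues, by the corollary to Lemma~\ref{lem_equiv}: $\aaa(\|x\|)$ with eigenvector $x$, and $\bbb(\|x\|)$ on the orthogonal complement of $x$), equating the two descriptions of $\widehat{\widehat{\kk}}$ yields \eqref{iTpar} and \eqref{iTper}. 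Together with the linearity of $M$ already noted in the text, these identities say exactly that $M\circ M$ is the identity on the relevant domain, i.e.\ that $M^{-1}=M$.

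The part I expect to require the most care is not the algebra — there is none beyond relabeling the symbols in \eqref{Tpar}--\eqref{Tper} — but the analytic bookkeeping around it: checking that $\widehat{\kk}$ genuinely meets the hypotheses of Theorem~\ref{prop_FT}, and making the identity $\widehat{\widehat{\kk}}=\kk$ precise, since Fourier inversion a priori returns $\kk$ only almost everywhere whereas the functions $\aaa,\bbb$ produced by \eqref{iTpar}--\eqref{iTper} are the continuous representatives. This is the sense in which the involution holds ``when inversion makes sense'', in analogy with the scalar identity \eqref{invhank} that follows from \eqref{cond_sc2}.
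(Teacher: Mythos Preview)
Your proposal is correct and follows essentially the same approach as the paper: the paper's proof simply observes that the Fourier transform restricted to even functions is an involution, so the map $M$ from the coefficients of $\kk$ to those of $\kkh$ must itself be an involution. Your write-up is a more careful and explicit unpacking of exactly this idea (applying Theorem~\ref{prop_FT} to $\kkh$ and invoking $\widehat{\widehat{\kk}}=\kk$), together with attention to the analytic side conditions that the paper leaves implicit.
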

\begin{proof}
The claim may be proven by direct computation, i.e.~by 
manipulating formulae~\eqref{Tpar}-\eqref{Tper} and
by inverting Hankel transforms (using formula~\eqref{inv_Htf2} 
in Appendix~\ref{appBessel}).
However, one can simply observe that
the Fourier transform, when restricted to 
functions that are symmetric in their argument,
i.e.~$\kk(x)=\kk(-x)$ for all~$x$ (and TRI kernels
belong to this class), is an involution. 
The map~$M$ computes the functions~$(\AAA,\BBB)$, 
which provide the
eigenvalues of~$\kkh$ in terms of those of~$\kk$, 
therefore it must be itself an involution. 
\end{proof}
The above proposition allows one to 
construct positive kernels by choosing nonnegative functions
$\AAA$ and $\BBB$ in~$L^1(\mathbb{R}^+,r^{d-1})$
and applying the inversion formulae~\eqref{iTpar} and~\eqref{iTper}.
This involves computing Hankel transforms, which may 
be done either by employing tables of 
transforms~\cite{ditkin,erdelyi_tables_v2,oberhettinger},
or numerically. However, in Section~\ref{constr} 
we shall illustrate a very simple, constructive method 
for building arbitrary TRI kernels from scalar kernels. 
\begin{remL1L2}
While it is well known that the Fourier transform is an 
isometry between~$L^2$ and itself, we have assumed that
the kernels are in~$L^1$ (and that, sometimes, so are their Fourier transforms)
to be able to use and manipulate the integral formulas in the definitions and proofs.
We should note that one could employ the usual argument that~$L^1$
is dense in~$L^2$~\cite{stein_weiss} and extend the results,
\em mutatis mutandis\em\/,
to square integrable kernels. 
However, for computational convenience, 
we will keep assuming that the kernels are in~$L^1$
when Fourier transforms are needed
throughout the rest of the paper.
\end{remL1L2}
\subsection{Divergence-free and Curl-free kernels}
\label{sec_divcurl}
In this section we explore some properties of 
the vector fields $x\mapsto \kk(x)\alpha$ generated by the kernels~$\kk$ of RKHS.
In particular we shall compute the functions~$\mathrm{div}(\kk(\cdot)\alpha)$
and~$\mathrm{curl}(\kk(\cdot)\alpha)$, and relate
them to the coefficients~$\AAA$ and~$\BBB$
of the Fourier transform of the kernel~$\kk$.
\begin{proposition}
Let~$\kk\in C^1(\Rd,\Rdd)$ be a~TRI 
kernel. It is the case that
$$
\displaystyle
\mathrm{div}\big(\kk(x)\alpha\big)
=
\frac{
\alpha\cdot x}{\|x\|}
\Big(
(\dd-1)\frac{\aaa(\|x\|)-\bbb(\|x\|)}{\|x\|}+
\frac{d\aaa}{dr}(\|x\|)
\Big),
\qquad x\in\Rd\setminus\{0\}.
$$
\end{proposition}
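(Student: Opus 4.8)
The plan is to compute the divergence directly from the componentwise expression~\eqref{kelement} for the kernel. Fix $x\in\Rd\setminus\{0\}$ and write $r:=\|x\|$. Since $\kk\in C^1(\Rd,\Rdd)$, evaluating $\kk$ along a ray shows that the coefficients $\aaa,\bbb$ (hence also $\ccc$ defined in~\eqref{def_ktilde}) are continuously differentiable on $(0,\infty)$, so all the manipulations below are legitimate. From~\eqref{kelement} the $i$-th component of $\kk(x)\alpha$ is
$$
\big(\kk(x)\alpha\big)^i=\ccc(r)\,x^i\,(x\cdot\alpha)+\bbb(r)\,\alpha^i .
$$
First I would differentiate this, using the chain rule $\partial_i r=x^i/r$, and sum over $i$. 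The first term contributes, after summation, $(x\cdot\alpha)\big(r\,\ccc'(r)+(d+1)\,\ccc(r)\big)$: the factor $r\,\ccc'(r)$ comes from $\sum_i\ccc'(r)(x^i)^2/r=r\,\ccc'(r)$, the summand $d\,\ccc(r)$ from $\sum_i\partial_i x^i=d$, and the extra $\ccc(r)$ from $\sum_i x^i\alpha^i=x\cdot\alpha$. The second term contributes $(x\cdot\alpha)\,\bbb'(r)/r$. Hence
$$
\mathrm{div}\big(\kk(x)\alpha\big)=(x\cdot\alpha)\Big(r\,\ccc'(r)+(d+1)\,\ccc(r)+\tfrac{1}{r}\,\bbb'(r)\Big).
$$

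The second step is to match this with the claimed expression. Substituting $\ccc(r)=(\aaa(r)-\bbb(r))/r^2$ into the statement, the right-hand side of the proposition equals $(x\cdot\alpha)\big((d-1)\ccc(r)+\aaa'(r)/r\big)$. Comparing with the formula just derived, it suffices to verify the identity $r\,\ccc'(r)+2\,\ccc(r)=\big(\aaa'(r)-\bbb'(r)\big)/r$, equivalently $\tfrac{d}{dr}\!\big(r^2\ccc(r)\big)=\tfrac{d}{dr}\!\big(\aaa(r)-\bbb(r)\big)$, which holds at once since $r^2\ccc(r)=\aaa(r)-\bbb(r)$ by the very definition~\eqref{def_ktilde} of $\ccc$.

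There is no genuine obstacle here; the proof is a short calculation. The only mild point of care is the bookkeeping when summing the three terms arising from $\partial_i\big(\ccc(r)\,x^i\,(x\cdot\alpha)\big)$ over $i$, so that the coefficient of $\ccc(r)$ comes out as $d+1$ rather than $d$ — this is exactly what, after the cancellation via the identity above, turns into the $(d-1)$ appearing in the statement.
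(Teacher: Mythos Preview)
Your proof is correct. The approach, however, differs from the paper's. The paper decomposes the vector $\alpha$ as $\alpha=\alpha^\parallel(x)+\alpha^\perp(x)$ via the projections $\mathrm{Pr}_x^\parallel$ and $\mathrm{Pr}_x^\perp$, writes $\kk(x)\alpha=\aaa(\|x\|)\,\alpha^\parallel(x)+\bbb(\|x\|)\,\alpha^\perp(x)$, and then applies the product rule for the divergence to each summand; the main intermediate step is the computation $\mathrm{div}\big(\alpha^\parallel(x)\big)=(d-1)\,(\alpha\cdot x)/\|x\|^2$, after which the result drops out directly in the form stated. Your route instead works from the componentwise form~\eqref{kelement} using the auxiliary function $\ccc$, which makes the summation over $i$ mechanical and avoids having to compute $\mathrm{div}(\alpha^\parallel)$; the price is the small algebraic reconciliation step at the end, which you handle cleanly via the identity $(r^2\ccc)'=\aaa'-\bbb'$. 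Both arguments are short; the paper's is more geometric and reusable (the same decomposition feeds the curl computation that follows), while yours is a bit more self-contained as a pure calculation.
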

\begin{proof} 
For~$x\not=0$ define~$\alpha^\parallel(x):=\mathrm{Pr}_x^\parallel\,\alpha$
and~$\alpha^\perp(x):=\mathrm{Pr}_x^\perp\,\alpha=\alpha-\alpha^\parallel(x)$,
so that~$\alpha=\alpha^\parallel(x)+\alpha^\perp(x)$.
It is the case that:
\begin{align}
\nonumber
\mathrm{div}& \big(\kk(x)\alpha\big)
=
\mathrm{div}\big(\aaa(\|x\|)\,\alpha^\parallel(x)\big)
+
\mathrm{div}\big(\bbb(\|x\|)\,\alpha^\perp(x)\big)
\\
\nonumber
&=
\nabla\big(\aaa(\|x\|)\big)\cdot\alpha^\parallel(x)
+
\aaa(\|x\|)\,\mathrm{div}\big(\alpha^\parallel(x)\big)
+
\nabla\big(\bbb(\|x\|)\big)\cdot\alpha^\perp(x)
+
\bbb(\|x\|)\,\mathrm{div}\big(\alpha^\perp(x)\big)
\\
&=
\frac{d\hpar}{dr}(\|x\|)
\,
\frac{x}{\|x\|}\cdot\alpha^\parallel(x)
+
\aaa(\|x\|)\,\mathrm{div}\big(\alpha^\parallel(x)\big)
+
\bbb(\|x\|)\,\mathrm{div}\big(\alpha^\perp(x)\big),
\label{aux1}
\end{align}
where we have used the fact that
$
\displaystyle
\frac{x}{\|x\|}\cdot \alpha^\perp(x)
=0$.
Also, 
$
\displaystyle
\frac{x}{\|x\|}\cdot \alpha^\parallel(x)
=\frac{x}{\|x\|}\cdot\alpha 
$ and
\begin{align*}
\mathrm{div}\big(\alpha^\parallel(x)\big)
&=
\mathrm{div}\Big(\frac{\alpha\cdot x}{\|x\|^2}\,x\Big)
=
\frac{1}{\|x\|^2}\big(\nabla
(\alpha\cdot x)
\big)\cdot x
+
(\alpha\cdot x)
\Big(\nabla\frac{1}{\|x\|^2}\Big)\cdot x
+
\frac{
\alpha\cdot x
}{\|x\|^2}\,\mathrm{div}x
\\
&=
\frac{
\alpha\cdot x
}{\|x\|^2}
+
(\alpha\cdot x)
\Big(\frac{-2}{\|x\|^3}
\frac{x}{\|x\|}\Big)
\cdot x+
\frac{
\alpha\cdot x
}{\|x\|^2} \dd
=(\dd-1)\frac{
\alpha\cdot x
}{\|x\|^2}.
\end{align*}
Moreover, $\mathrm{div}\big(\alpha^\perp(x)\big)
=-\mathrm{div}\big(\alpha^\parallel(x)\big)$.
Inserting these expressions into~\eqref{aux1} completes the proof.
\end{proof} 
\begin{corollary} 
\label{cor_div}
Let~$\kk\in C^1(\Rd,\Rdd)$ be a TRI kernel. 
The vector field~$x\mapsto \kk(x)\alpha$
is divergence-free 
if and only if
\begin{equation}
\label{divzero}
(\dd-1)\,
\frac{\hpar(r)-\hper(r)}{r} 
+  
\frac{d\hpar}{dr}(r)=0, \qquad\mbox{for all } r>0.
\end{equation}
\end{corollary}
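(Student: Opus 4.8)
The plan is to read the equivalence off directly from the Proposition immediately preceding this Corollary, which gives the closed-form expression
$$
\mathrm{div}\big(\kk(x)\alpha\big)
=
\frac{\alpha\cdot x}{\|x\|}
\Big(
(\dd-1)\frac{\aaa(\|x\|)-\bbb(\|x\|)}{\|x\|}+
\frac{d\aaa}{dr}(\|x\|)
\Big),
\qquad x\in\Rd\setminus\{0\}.
$$
Since the statement is an equivalence, I would argue the two implications in turn.

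For the ``if'' direction, assume that \eqref{divzero} holds for every $r>0$. Then, for each fixed $x\neq0$, evaluating the parenthesized factor at $r=\|x\|$ yields zero, so $\mathrm{div}\big(\kk(x)\alpha\big)=0$ for all $\alpha\in\Rd$ and all $x\neq0$; that is, the vector field $x\mapsto\kk(x)\alpha$ is divergence-free for every choice of $\alpha$.

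For the ``only if'' direction, assume $x\mapsto\kk(x)\alpha$ is divergence-free for all $\alpha\in\Rd$. Fix an arbitrary $r>0$ and choose any $x\in\Rd$ with $\|x\|=r$; taking $\alpha=x$ gives $\alpha\cdot x=\|x\|^2=r^2\neq0$, so the prefactor is nonzero and the vanishing of $\mathrm{div}\big(\kk(x)\alpha\big)$ forces
$$
(\dd-1)\frac{\aaa(r)-\bbb(r)}{r}+\frac{d\aaa}{dr}(r)=0.
$$
As $r>0$ was arbitrary, this is exactly \eqref{divzero}.

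There is essentially no obstacle here, since all the work is already contained in the preceding Proposition; the only point requiring a moment's care is that one must exhibit an $\alpha$ making the scalar prefactor $\alpha\cdot x/\|x\|$ nonzero (for instance $\alpha=x$), so that the bracketed expression itself, and not merely the product, is forced to vanish. One could also note in passing that, because $\aaa$ and $\bbb$ extend continuously to $r=0$ (as remarked after \eqref{kelement}), condition \eqref{divzero} may equivalently be read on $[0,\infty)$, but this refinement is not needed for the proof.
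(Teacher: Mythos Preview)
Your proposal is correct and is exactly the approach the paper has in mind: the corollary is stated without proof in the paper, as it follows immediately from the divergence formula in the preceding Proposition. Your care in exhibiting an $\alpha$ (namely $\alpha=x$) making the prefactor nonzero is the only point worth noting, and you handle it correctly.
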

We want to find a similar result for 
the \em curl \em of the vector field~$x\mapsto\kk(x)\alpha$.
Indicating with~$\Omega^k\Rd$ the space of differential $k$-forms
in~$\Rd$~\cite{Lee:1}, it is convenient
to identify
a vector field~$(v_1,\ldots,v_d)$ in~$\Rd$
with the 1-form $v=\sum v_i\,d_0x^i$, where~$d_0$
is the differential of a function. In fact indicating 
with~$d_k$ the exterior (Cartan) derivative
of a $k$-form
we may write the de~Rahm complex for~$\Rd$ as follows:
$$
\Omega^0\Rd
\;
\stackrel{d_{0}}{\xrightarrow{\hspace*{1cm}}}
\;
\Omega^1\Rd
\;
\stackrel{d_{1}}{\xrightarrow{\hspace*{1cm}}}
\;
\Omega^2\Rd
\;
\stackrel{d_{2}}{\xrightarrow{\hspace*{1cm}}}
\;
\;
\cdots
\;
\;
\stackrel{d_{d-2}}{\xrightarrow{\hspace*{1cm}}}
\;
\Omega^{d-1}\Rd
\;
\stackrel{d_{d-1}}{\xrightarrow{\hspace*{1cm}}}
\;
\Omega^d\Rd.
$$
We define $\boxed{\mathrm{curl}:=d_1}$ (so that it 
corresponds to the classical ``curl'' when~$d=3$).
\begin{proposition}
Let~$\kk\in C^1(\Rd,\Rdd)$ be a~TRI 
kernel. 
It is the case that
%
\begin{equation}
\label{fprop_curl}
\mathrm{curl}\big(\kk(x)\alpha\big)
=\frac{\alpha \wedge x}{\|x\|}
\bigg(
\frac{\aaa(\|x\|)-\bbb(\|x\|)}{\|x\|}-
\frac{d\bbb}{dr}(\|x\|)
\bigg),
\qquad x\in\Rd\setminus\{0\},
\end{equation}
where $\wedge$ indicates the wedge product between 
differential forms.  
\end{proposition}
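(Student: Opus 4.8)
The plan is to mirror the proof of the preceding proposition (the divergence formula), with the exterior derivative $d_1=\mathrm{curl}$ acting on $1$-forms playing the role that $\mathrm{div}$ played there. First I would split $\alpha=\alpha^\parallel(x)+\alpha^\perp(x)$ with $\alpha^\parallel(x):=\mathrm{Pr}_x^\parallel\,\alpha$ and $\alpha^\perp(x):=\mathrm{Pr}_x^\perp\,\alpha$, so that by~\eqref{invker}
$$
\kk(x)\alpha=\aaa(\|x\|)\,\alpha^\parallel(x)+\bbb(\|x\|)\,\alpha^\perp(x),
$$
and then apply the Leibniz rule $d_1(f\,v)=d_0f\wedge v+f\,d_1v$ to each summand, regarding $\alpha^\parallel(x)$, $\alpha^\perp(x)$ and the constant vector $\alpha$ as $1$-forms in the manner described just above the statement (so that, e.g., ``$\alpha$'' stands for $\sum_i\alpha^i\,d_0x^i$ and ``$x$'' for $\sum_i x^i\,d_0x^i$).

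The elementary identities I would record first are: (i) as a $1$-form $x=\sum_i x^i\,d_0x^i=\tfrac12\,d_0(\|x\|^2)$ is exact, hence $d_1x=0$, and moreover $x\wedge x=0$ by antisymmetry; (ii) $d_0\big(\aaa(\|x\|)\big)=\tfrac{d\aaa}{dr}(\|x\|)\,\tfrac{x}{\|x\|}$, and likewise for $\bbb$; (iii) $d_1\alpha=0$ since $\alpha$ has constant coefficients. Using (i) together with $d_0\big(\tfrac{\alpha\cdot x}{\|x\|^2}\big)=\tfrac{\alpha}{\|x\|^2}-\tfrac{2(\alpha\cdot x)}{\|x\|^4}\,x$ (as a $1$-form) I obtain the one auxiliary formula that does the real work,
$$
d_1\alpha^\parallel(x)=d_0\Big(\tfrac{\alpha\cdot x}{\|x\|^2}\Big)\wedge x=\frac{\alpha\wedge x}{\|x\|^2},
$$
and hence, since $\alpha^\perp(x)=\alpha-\alpha^\parallel(x)$ and $d_1\alpha=0$, that $d_1\alpha^\perp(x)=-\tfrac{\alpha\wedge x}{\|x\|^2}$.

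It then remains to substitute. For the first summand, $d_1\big(\aaa(\|x\|)\,\alpha^\parallel(x)\big)=\tfrac{d\aaa}{dr}(\|x\|)\,\tfrac{x}{\|x\|}\wedge\alpha^\parallel(x)+\aaa(\|x\|)\,d_1\alpha^\parallel(x)$; the first term vanishes because $\tfrac{x}{\|x\|}\wedge\alpha^\parallel(x)$ is a multiple of $x\wedge x=0$, leaving $\aaa(\|x\|)\,\tfrac{\alpha\wedge x}{\|x\|^2}$. For the second summand, $\tfrac{x}{\|x\|}\wedge\alpha^\perp(x)=\tfrac{x}{\|x\|}\wedge\alpha=-\tfrac{\alpha\wedge x}{\|x\|}$ (using $\tfrac{x}{\|x\|}\wedge\alpha^\parallel(x)=0$ again), so $d_1\big(\bbb(\|x\|)\,\alpha^\perp(x)\big)=-\tfrac{d\bbb}{dr}(\|x\|)\,\tfrac{\alpha\wedge x}{\|x\|}-\bbb(\|x\|)\,\tfrac{\alpha\wedge x}{\|x\|^2}$. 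Adding the two contributions and factoring out $\tfrac{\alpha\wedge x}{\|x\|}$ gives exactly~\eqref{fprop_curl}.

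There is no deep obstacle here: everything is a short computation in the exterior algebra, valid on $\Rd\setminus\{0\}$ because $\kk\in C^1$ ensures $\aaa,\bbb$ are differentiable for $r>0$. The only point requiring care is the bookkeeping of the vector/$1$-form identification and, relatedly, the consistent use of the antisymmetry of $\wedge$ so that the sign distinguishing $\alpha\wedge x$ from $x\wedge\alpha$ is tracked correctly throughout; getting this right is what pins down the overall sign and the precise combination $\tfrac{\aaa(\|x\|)-\bbb(\|x\|)}{\|x\|}-\tfrac{d\bbb}{dr}(\|x\|)$ in the final formula.
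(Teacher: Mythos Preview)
Your proposal is correct and follows essentially the same approach as the paper's own proof: split $\alpha$ into its parallel and perpendicular parts, apply the Leibniz rule for $d_1$ to each summand, use $x\wedge\alpha^\parallel(x)=0$ to kill the $\aaa'$ term, compute $d_1\alpha^\parallel(x)=\tfrac{\alpha\wedge x}{\|x\|^2}$ (whence $d_1\alpha^\perp(x)=-d_1\alpha^\parallel(x)$), and collect terms. The intermediate identities and the final assembly match the paper step for step.
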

\begin{proof}
As in the previous proof, let~$\alpha^\parallel(x):=\mathrm{Pr}_x^\parallel\,\alpha$
and~$\alpha^\perp(x):=\mathrm{Pr}_x^\perp\,\alpha
=\alpha-\alpha^\parallel(x)$. We consider them
1-forms, i.e.~$\alpha^\parallel(x)=\sum_i\alpha^\parallel(x)_i\,d_0x^i$
and~$\alpha^\perp(x)=\sum_i\alpha^\perp(x)_i\,d_0x^i$.
It is the case that
\begin{align}
\nonumber
\mathrm{curl}\big(\kk(x)\alpha\big)
= d_1\big(\kk(x)\alpha\big)
=\;\,
&d_0\big(\aaa(\|x\|)\big)
\wedge\alpha^\parallel(x)
+
\aaa(\|x\|)\,
d_1\!\big(\alpha^\parallel(x)\big)
\\
&+
d_0\big(\bbb(\|x\|)\big)
\wedge\alpha^\perp(x)
+
\bbb(\|x\|)\,
d_1\!\big(\alpha^\perp(x)\big).
\label{aux2}
\end{align}
The first term 
is zero 
because 
$\displaystyle d_0\big(\aaa(\|x\|)\big)=\frac{d\aaa}{dr}(\|x\|)\frac{x}{\|x\|}$
and
$x\wedge\alpha^\parallel(x)=0$.
We also have
$$
d_1\big(\alpha^\parallel(x)\big)
=
d_1
\Big(
\frac{\alpha\cdot x}{\|x\|^2}
\, x\Big)
=
d_0
\Big(
\frac{\alpha\cdot x}{\|x\|^2}
\Big)
\wedge x
+
\frac{\alpha\cdot x}{\|x\|^2}
\;
d_1x,
$$
where $d_1x=d_1\big(\sum_i x^id_0x^i\big)=
\sum_i d_0x^i\wedge d_0x^i=0$ and
$$
d_0
\Big(
\frac{\alpha\cdot x}{\|x\|^2}
\Big)
=
\frac{1}{\|x\|^2}
d_0(\alpha\cdot x)+
(\alpha\cdot x)
\;
d_0
\frac{1}{\|x\|^2}
=
\frac{\alpha}{\|x\|^2}
-2\,\frac{\alpha\cdot x}{\|x\|^4}\,x.
$$
Since~$x\wedge x=0$
we conclude that
$
\displaystyle 
d_1\big(\alpha^\parallel(x)\big)
=\frac{\alpha\wedge x}{\|x\|^2}$.
Moreover
$x \wedge \alpha^\perp(x) = x \wedge \alpha=-\alpha\wedge x$, whence
$$
\displaystyle
d_0\big(\bbb(\|x\|)\big)
\wedge \alpha^\perp(x)
=
\frac{d\bbb}{dr}(\|x\|)\,\frac{x}{\|x\|}
\wedge \alpha^\perp(x)
=
-
\frac{d\bbb}{dr}(\|x\|)\,
\frac{\alpha\wedge x}{\|x\|}.
$$
Finally, $d_1\big(\alpha^\perp(x)\big)
=-d_1\big(\alpha^\parallel(x)\big)$.
Insertion of such expressions into~\eqref{aux2} completes the proof.
\end{proof}
\begin{corollary}
\label{cor_curl} 
Let~$\kk\in C^1(\Rd,\Rdd)$ be a~TRI 
kernel. 
The vector field $x\mapsto \kk(x)\alpha$
is curl-free 
in~$\Rd$
if and only if
\begin{equation}
\label{curlzero}
\frac{\hpar(r)-\hper(r)}{r} 
-
\frac{d\hper}{dr}(r)=0, \qquad\mbox{for all } r>0.
\end{equation}
\end{corollary}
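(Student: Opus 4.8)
The plan is to read the result straight off the immediately preceding proposition, which already gives a closed form for $\mathrm{curl}\big(\kk(x)\alpha\big)$ when $x\in\Rd\setminus\{0\}$. First I would invoke formula~\eqref{fprop_curl}: for every $\alpha\in\Rd$ and every $x\neq 0$,
\[
\mathrm{curl}\big(\kk(x)\alpha\big)=\frac{\alpha\wedge x}{\|x\|}\bigg(\frac{\aaa(\|x\|)-\bbb(\|x\|)}{\|x\|}-\frac{d\bbb}{dr}(\|x\|)\bigg),
\]
so that the vanishing of the left-hand side is governed by the scalar factor $\frac{\aaa(r)-\bbb(r)}{r}-\frac{d\bbb}{dr}(r)$ with $r=\|x\|$, together with the $2$-form $\alpha\wedge x$ (here $\alpha$ and $x$ are read as $1$-forms, as in the de~Rahm identification fixed above).

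For the ``if'' direction I would argue that if~\eqref{curlzero} holds then the scalar factor vanishes for every $r>0$, hence $\mathrm{curl}\big(\kk(x)\alpha\big)=0$ for all $x\neq0$; since $\kk\in C^1(\Rd,\Rdd)$, the $2$-form $x\mapsto\mathrm{curl}\big(\kk(x)\alpha\big)$ is continuous on all of $\Rd$, so it also vanishes at the origin by continuity, and $x\mapsto\kk(x)\alpha$ is curl-free on $\Rd$.

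For the ``only if'' direction I would fix an arbitrary $r>0$ and choose a point $x$ with $\|x\|=r$ and a vector $\alpha$ with $\alpha\wedge x\neq 0$; this is possible exactly because $d\geq 2$ (for a fixed nonzero $\alpha$ the set $\{x:\alpha\wedge x=0\}$ is the line $\mathbb{R}\alpha$, which meets the sphere of radius $r$ in only two points, so some $x$ of norm $r$ avoids it). Since $\tfrac{1}{\|x\|}\,\alpha\wedge x\neq0$ and $\mathrm{curl}\big(\kk(x)\alpha\big)=0$ by hypothesis, formula~\eqref{fprop_curl} forces $\frac{\aaa(r)-\bbb(r)}{r}-\frac{d\bbb}{dr}(r)=0$; as $r>0$ was arbitrary, this is precisely~\eqref{curlzero}.

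There is essentially no hard step here: the content lies entirely in the preceding proposition, and the argument is the exact analogue of the one used for Corollary~\ref{cor_div}, with the role of $\alpha\cdot x$ played by $\alpha\wedge x$. The only points requiring a little care are the passage to $x=0$ (handled by continuity of $\mathrm{curl}$, which uses $\kk\in C^1$) and the observation that $d\geq2$ lets us select, at each radius, an $\alpha$ with $\alpha\wedge x\neq0$, so that it is the scalar coefficient—and not the exterior-algebra factor—that must vanish.
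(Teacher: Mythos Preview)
Your proposal is correct and matches the paper's intent: the corollary is stated without proof there, as an immediate consequence of formula~\eqref{fprop_curl}, and your argument spells out exactly that immediate consequence. The only minor wording slip is in the ``only if'' direction, where $\alpha$ is fixed by the hypothesis and it is $x$ that you are free to choose on the sphere of radius~$r$ with $\alpha\wedge x\neq0$; your parenthetical makes clear this is what you actually mean.
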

\par
Note that while condition~\eqref{divzero} depends on the dimension~$d$
of the space, condition~\eqref{curlzero} does not.
The following fundamental theorem relates the incompressibility 
and irrotationality 
of vector fields of the 
type~$x\mapsto\kk(x)\alpha$ with the coefficients~$\AAA$
and $\BBB$ of the Fourier transform of the kernel~$\kk$.
\begin{theorem}
\label{fund_th}
Let~$\kk\in C^1(\Rd,\Rdd)\cap L^1(\Rd,\Rdd)$ be a TRI kernel
whose Fourier transform~$\kkh$ is also in~$L^1(\Rd,\Rdd)$.
Then the vector field~$x\mapsto\kk(x)\alpha$ 
is divergence-free for all~$\alpha\in\Rd$ 
if and only if~$\AAA=0$. 
On the other hand,~$x\mapsto\kk(x)\alpha$
is curl-free for all~$\alpha\in\Rd$ if and only if~$\BBB=0$.
\end{theorem}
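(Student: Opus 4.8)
The plan is to pass to the Fourier domain, where the diagonal decomposition \eqref{khat_gen} of $\kkh$ makes both equivalences transparent, rather than to grind the spatial ODE characterizations of Corollaries~\ref{cor_div} and~\ref{cor_curl}. The only external facts required are the differentiation rule for the Fourier transform, its injectivity, and the continuity of the coefficients $\AAA,\BBB$ on $\mathbb{R}^+$ (noted right after Theorem~\ref{prop_FT}).

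First I would record two elementary identities for the projectors in \eqref{proj_op}: for $\xi\ne 0$ one has $\xi^T\mathrm{Pr}_\xi^\perp=0$ and $\xi^T\mathrm{Pr}_\xi^\parallel=\xi^T$, while, identifying vectors of $\Rd$ with $1$-forms as in the text, $\xi\wedge\big(\mathrm{Pr}_\xi^\parallel\alpha\big)=0$ and $\xi\wedge\big(\mathrm{Pr}_\xi^\perp\alpha\big)=\xi\wedge\alpha$ (because $\mathrm{Pr}_\xi^\parallel\alpha$ is a scalar multiple of $\xi$, and $\mathrm{Pr}_\xi^\perp\alpha=\alpha-\mathrm{Pr}_\xi^\parallel\alpha$). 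Inserting the decomposition \eqref{khat_gen} of $\kkh(\xi)$ then yields, for every $\alpha\in\Rd$ and every $\xi\in\Rd\setminus\{0\}$, the pointwise relations $\xi^T\kkh(\xi)\alpha=\AAA(\|\xi\|)\,(\xi\cdot\alpha)$ and $\xi\wedge\big(\kkh(\xi)\alpha\big)=\BBB(\|\xi\|)\,(\xi\wedge\alpha)$.

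Next I would apply the Fourier transform to the vector field $v:=\kk(\cdot)\alpha$. Since $v\in L^1(\Rd,\Rd)$ with $\widehat v=\kkh(\cdot)\alpha\in L^1(\Rd,\Rd)$ and $\kk\in C^1$, the function $\mathrm{div}\,v$ is continuous and, in the tempered-distribution sense, $\widehat{\mathrm{div}\,v}(\xi)=2\pi i\,\xi^T\widehat v(\xi)=2\pi i\,\AAA(\|\xi\|)(\xi\cdot\alpha)$; likewise, viewing $v$ as the $1$-form $\sum_i v_i\,d_0x^i$ and recalling $\mathrm{curl}=d_1$, one gets $\widehat{\mathrm{curl}\,v}(\xi)=2\pi i\,\xi\wedge\widehat v(\xi)=2\pi i\,\BBB(\|\xi\|)(\xi\wedge\alpha)$. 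Injectivity of the Fourier transform now closes the argument. If $\AAA\equiv 0$ then $\widehat{\mathrm{div}\,v}=0$ for every $\alpha$, so each $v=\kk(\cdot)\alpha$ is divergence-free; conversely, if $\mathrm{div}(\kk(\cdot)\alpha)=0$ for all $\alpha$, then $\AAA(\|\xi\|)(\xi\cdot\alpha)=0$ for a.e.\ $\xi$, and running $\alpha$ through the standard basis of $\Rd$ gives $\AAA(\|\xi\|)=0$ for a.e.\ $\xi$, hence $\AAA\equiv 0$ on $\mathbb{R}^+$ by continuity. The curl statement is identical, using that for each $\xi\ne 0$ some basis vector $\alpha$ satisfies $\xi\wedge\alpha\ne 0$. (Alternatively, one could substitute the ODEs \eqref{divzero} and \eqref{curlzero} into the Hankel-transform formulae \eqref{Tpar}--\eqref{Tper}; the Fourier route above sidesteps that computation entirely.)

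The main obstacle is purely one of rigor, not of ideas: with only the $L^1$ hypotheses on $\kk$ and $\kkh$, the function $\mathrm{div}(\kk(\cdot)\alpha)$ need not itself lie in $L^1$, so the identity $\widehat{\mathrm{div}\,v}(\xi)=2\pi i\,\xi^T\widehat v(\xi)$ must be read distributionally (note $\xi\mapsto\xi^T\kkh(\xi)\alpha$ is at least locally integrable), and one must invoke the fact that a \emph{continuous} function vanishing almost everywhere vanishes identically in order to pass from ``$\AAA(\|\xi\|)=0$ a.e.'' to ``$\AAA\equiv 0$''. The algebraic heart of the proof — the two projector identities of the first step — is immediate.
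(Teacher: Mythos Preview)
Your proof is correct and takes a genuinely different route from the paper's.

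The paper works entirely through the explicit Hankel-transform calculus of \S\ref{secTRI}. For the forward directions it substitutes the ODE characterizations \eqref{divzero} and \eqref{curlzero} from Corollaries~\ref{cor_div} and~\ref{cor_curl} into the integrals \eqref{AAAf}, \eqref{BBBf} and integrates by parts using the Bessel identity \eqref{Jrec3}; for the reverse directions it invokes the inversion formulae of Proposition~\ref{invT}, writes $\aaa,\bbb$ as Hankel integrals of $\BBB$ alone (resp.\ $\AAA$ alone), and verifies the ODEs by further Bessel manipulations involving \eqref{Jrec1} and \eqref{Jrec4}. This is exactly the alternative you allude to in your closing parenthesis.

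Your Fourier-multiplier argument is considerably shorter and more conceptual: it makes transparent that the correspondence $(\AAA\leftrightarrow\mathrm{div})$, $(\BBB\leftrightarrow\mathrm{curl})$ is nothing more than the fact that $\mathrm{Pr}_\xi^\parallel$ and $\mathrm{Pr}_\xi^\perp$ are the Leray--Helmholtz projectors on the Fourier side. What the paper's route buys is that it stays self-contained within the Hankel framework already built and never invokes tempered distributions---at the cost of roughly a page of Bessel-function identities. Your route trades that computation for a small amount of soft analysis; as you correctly flag, the identity $\widehat{\mathrm{div}\,v}=2\pi i\,\xi^T\widehat v$ must be read distributionally under the stated hypotheses, and one then uses continuity of $\mathrm{div}\,v$ (from $\kk\in C^1$) and of $\AAA,\BBB$ (from $\kk\in L^1$) to pass from ``zero as a distribution'' to ``identically zero''. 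With that caveat spelled out, your argument is complete.
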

\begin{proof}
It is the case that $\AAA(\varrho)=
2\pi\AAAm(2\pi\varrho)/\varrho^\mu
$
and $\BBB(\varrho)=
2\pi\AAAm(2\pi\varrho)/\varrho^\mu
$,
with~$\AAAm$ and~$\BBBm$ 
respectively given by~\eqref{AAAf} and~\eqref{BBBf},
and with~$\mu=\frac{d}{2}-1$. 
Inserting~\eqref{divzero} into~\eqref{AAAf} yields
\begin{align*}
\AAAm(\varrho)
& =  
\int_{0}^\infty r^{\mu+1}\aaa(r)J_\mu(\varrho r)\,dr
+
\frac{1}{\varrho}
\int_0^\infty r^{\mu+1}\frac{d\aaa}{dr}(r) J_{\mu+1}(\varrho r)\,dr
\end{align*}
(note that $2\mu+1=d-1$). We use integration by parts to compute
the second term on the right:
\begin{align}
\nonumber
\int_0^\infty r^{\mu+1}
&
\frac{d\aaa}{dr}(r) J_{\mu+1}(\varrho r)\,dr
=
\Big[
r^{\mu+1}\aaa(r)J_{\mu+1}(\varrho r)
\Big]_{r=0}^{\infty}
-
\int_0^\infty \aaa(r)\, \frac{d}{dr}\Big\{r^{\mu+1}J_{\mu+1}(\varrho r)\Big\}dr
\\
&=
\label{DerAux}
-\varrho\int_0^\infty r^{\mu+1}
\aaa(r) 
\Big\{\frac{\mu+1}{\varrho r}J_{\mu+1}(\varrho r)+J'_{\mu+1}(\varrho r)\Big\}dr
=
-\varrho\int_0^\infty r^{\mu+1}\aaa(r)J_\mu(\varrho r)\,dr,
\end{align}
where we have used the property~\eqref{Jrec3} of Bessel functions.
Therefore if~\eqref{divzero} holds then~$\AAA=0$.
Similarly,
inserting~\eqref{curlzero} into~\eqref{BBBf} gives
\begin{align*}
\BBBm(\varrho)
& =  
\int_{0}^\infty r^{\mu+1}\bbb(r)J_\mu(\varrho r)\,dr
+
\frac{1}{\varrho}
\int_0^\infty r^{\mu+1}\frac{d\bbb}{dr}(r) J_{\mu+1}(\varrho r)\,dr.
\end{align*}
and an identical integration by parts 
allows us to conclude that if~\eqref{curlzero} holds then~$\BBB=0$.
\par Assume now that~$\AAA=0$. In this case~$\aaa$ and~$\bbb$,
given by~\eqref{iTpar} and~\eqref{iTper}, are simply
\begin{align*}
\aaa(r)
&
=
\frac{2\mu+1}{r^{\mu+1}}
\int_0^\infty
\varrho^{\mu}\BBB(\varrho)J_{\mu+1}(2\pi\varrho r)\,d\varrho,
\\
\bbb(r)
&=
\frac{2\pi}{r^\mu}
\int_0^\infty
\varrho^{\mu+1}\,\BBB(\varrho)J_\mu(2\pi\varrho r)\,d\varrho
-
\frac{1}{r^{\mu+1}}
\int_0^\infty
\varrho^{\mu}\BBB(\varrho)J_{\mu+1}(2\pi\varrho r)\,d\varrho,
\end{align*}
therefore the auxiliary function~\eqref{def_ktilde} is given by
\begin{align*}
\tilde{k}(r)
&=
\frac{2\pi}{r^{\mu+2}}
\int_0^\infty
\varrho^{\mu+1}\BBB(\varrho)
\Big\{
2\frac{\mu+1}{2\pi\varrho r}J_{\mu+1}(2\pi\varrho r)
-J_\mu(2\pi\varrho r)
\Big\}
\,
d\varrho
=
\frac{2\pi}{r^{\mu+2}}
\int_0^\infty
\varrho^{\mu+1}\BBB(\varrho)J_{\mu+2}(2\pi\varrho r)\,d\varrho,
\end{align*}
where we have used property~\eqref{Jrec1} in Appendix~\ref{appBessel}.
On the other hand,
\begin{align*}
&\frac{1}{r}
\frac{d\aaa}{dr}(r)
=
-(2\mu+1)\frac{\mu+1}{r^{\mu+3}}
\int_0^\infty
\varrho^{\mu}\BBB(\varrho)J_{\mu+1}(2\pi\varrho r)\,d\varrho
+
\frac{2\mu+1}{r^{\mu+2}}
\int_0^\infty
2\pi\varrho^{\mu+1}\BBB(\varrho)J'_{\mu+1}(2\pi\varrho r)\,d\varrho
\\
&=
-2\pi\frac{2\mu+1}{r^{\mu+2}}
\!\!
\int_0^\infty
\!\!\!\!
\varrho^{\mu+1}\BBB(\varrho)
\Big\{
\frac{\mu+1}{2\pi\varrho r}
J_{\mu+2}(2\pi\varrho r)
\!-\!
J'_{\mu+1}(2\pi\varrho r)
\Big\}
d\varrho
=
-2\pi\frac{2\mu+1}{r^{\mu+2}}
\!\!\int_0^\infty
\!\!\!\!\varrho^{\mu+1}\BBB(\varrho)J_{\mu+2}(2\pi\varrho r)d\varrho,
\end{align*}
where we have used property~\eqref{Jrec4} in Appendix~\ref{appBessel}.
Since~$2\mu+1=d-1$ we have that~\eqref{divzero} holds and the vector field~$x\mapsto\kk(x)\alpha$
is divergence-free. Similarly, when~$\BBB=0$ it is the case that
\begin{align*}
\aaa(r)
&
=
\frac{2\pi}{r^\mu}
\int_0^\infty
\varrho^{\mu+1}\,\AAA(\varrho)J_\mu(2\pi\varrho r)\,d\varrho
-
\frac{2\mu+1}{r^{\mu+1}}
\int_0^\infty
\varrho^{\mu}\AAA(\varrho)J_{\mu+1}(2\pi\varrho r)\,d\varrho,
\\
\bbb(r)
&=
\frac{1}{r^{\mu+1}}
\int_0^\infty
\varrho^{\mu}\AAA(\varrho)J_{\mu+1}(2\pi\varrho r)\,d\varrho,
\end{align*}
so that
\begin{align*}
\tilde{k}(r)
&=
\frac{2\pi}{r^{\mu+2}}
\int_0^\infty
\varrho^{\mu+1}\AAA(\varrho)
\Big\{
J_\mu(2\pi\varrho r)
-2\frac{\mu+1}{2\pi\varrho r}J_{\mu+1}(2\pi\varrho r)
\Big\}
\,
d\varrho
=
-\frac{2\pi}{r^{\mu+2}}
\int_0^\infty
\varrho^{\mu+1}\BBB(\varrho)J_{\mu+2}(2\pi\varrho r)\,d\varrho,
\end{align*}
by property~\eqref{Jrec1} in Appendix~\ref{appBessel}. On the other hand,
\begin{align*}
\frac{1}{r}
&\frac{d\bbb}{dr}(r)
=
-\frac{\mu+1}{r^{\mu+3}}
\int_0^\infty
\varrho^{\mu}\AAA(\varrho)J_{\mu+1}(2\pi\varrho r)\,d\varrho
+
\frac{1}{r^{\mu+2}}
\int_0^\infty
2\pi\varrho^{\mu+1}\AAA(\varrho)J'_{\mu+1}(2\pi\varrho r)\,d\varrho
\\
&=
\frac{2\pi}{r^{\mu+2}}
\int_0^\infty
\!\!\!\!
\varrho^{\mu+1}
\AAA(\varrho)
\Big\{
-\frac{\mu+1}{2\pi\varrho r}
J_{\mu+2}(2\pi\varrho r)
+
J'_{\mu+1}(2\pi\varrho r)
\Big\}
\,d\varrho
=
-\frac{2\pi}{r^{\mu+2}}
\int_0^\infty
\!\!\!\!
\varrho^{\mu+1}\AAA(\varrho)J_{\mu+2}(2\pi\varrho r)\,d\varrho,
\end{align*}
again by~\eqref{Jrec4}.
Whence~\eqref{curlzero} holds and~$x\mapsto \kk(x)\alpha$ 
is curl-free. 
This completes the proof. 
\end{proof}
\begin{proposition}
Let~$H \hookrightarrow C^1(\Rd,\Rd)$
be a RKHS with a given TRI kernel~$\kk$. 
It is the case that $\mathrm{div}(\kk(\cdot)\alpha)=0$ for all~$\alpha\in\Rd$
if and only if~$\mathrm{div}\,u=0$ for all~$u\in H$. On the other hand, we have that
$\mathrm{curl}(\kk(\cdot)\alpha)=0$ for all~$\alpha\in\Rd$
if and only if~$\mathrm{curl}\,u=0$ for all~$u\in H$.
\end{proposition}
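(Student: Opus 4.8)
The plan is to reduce both equivalences to the differential reproducing property~\eqref{rep_diff}. The hypothesis $H\hookrightarrow C^1(\Rd,\Rd)$ is exactly statement~(\textsc a) of Theorem~\ref{Th_diff} with $\cs=1$, so statement~(\textsc c) holds: for every index~$i$, every $x\in\Rd$ and every $w\in\Rd$ we have $\partial_2^{e_i}K(\cdot,x)w\in H$ and $\langle\partial_2^{e_i}K(\cdot,x)w,u\rangle_H=w\cdot\partial^{e_i}u(x)$ for all $u\in H$; in particular $\partial_i u_j(x)=\langle\partial_2^{e_i}K(\cdot,x)e_j,u\rangle_H$. Two further facts will be used repeatedly: since $K(y,x)=\kk(y-x)$ one has $\partial_2^{e_i}K(y,x)=-(\partial_i\kk)(y-x)$, and by~\eqref{auxrot} applied with $\rot=-\mathbb{I}_d$, combined with $\kk(-x)=\kk(x)^T$, the matrix $\kk(z)$ is symmetric for every $z$, i.e.~$\kk^{ij}=\kk^{ji}$ (this is the computation in the proof of Lemma~\ref{lem_equiv}). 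Note also that $\kk\in C^2(\Rd,\Rdd)$ by the Remark following Theorem~\ref{Th_diff}, so all the derivatives below exist and are continuous everywhere, including at the origin.

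\emph{Divergence.} The ``if'' direction is trivial, because $\kk(\cdot)\alpha=K_0^\alpha\in H$, so $\mathrm{div}\,u\equiv 0$ on $H$ forces $\mathrm{div}(\kk(\cdot)\alpha)=0$. For the converse, write $\mathrm{div}\,u(x)=\sum_i\partial_i u_i(x)=\langle\Phi_x,u\rangle_H$, where $\Phi_x:=\sum_i\partial_2^{e_i}K(\cdot,x)e_i\in H$. Its value at $y$ is $\Phi_x(y)=-\sum_i(\partial_i\kk)(y-x)e_i$, and its $j$-th component is $-\sum_i\partial_i\kk^{ji}(y-x)=-\sum_i\partial_i\kk^{ij}(y-x)=-\mathrm{div}\big(\kk(\cdot)e_j\big)(y-x)$, using the symmetry of $\kk$. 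By hypothesis this vanishes for every $y$, so $\Phi_x=0$ in $H$, whence $\mathrm{div}\,u(x)=0$ for all $x$ and all $u\in H$.

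\emph{Curl.} Again ``if'' is trivial. For the converse, identify $u$ with the $1$-form $\sum_j u_j\,d_0x^j$; then $\mathrm{curl}\,u=d_1 u=\sum_{i<j}(\partial_i u_j-\partial_j u_i)\,d_0x^i\wedge d_0x^j$, and for each pair $i,j$ one has $\partial_i u_j(x)-\partial_j u_i(x)=\langle\Psi^{ij}_x,u\rangle_H$ with $\Psi^{ij}_x:=\partial_2^{e_i}K(\cdot,x)e_j-\partial_2^{e_j}K(\cdot,x)e_i\in H$. Evaluating, $\Psi^{ij}_x(y)=-(\partial_i\kk)(y-x)e_j+(\partial_j\kk)(y-x)e_i$, whose $l$-th component is $-\partial_i\kk^{lj}(y-x)+\partial_j\kk^{li}(y-x)=-\partial_i\kk^{jl}(y-x)+\partial_j\kk^{il}(y-x)=-\big(\mathrm{curl}(\kk(\cdot)e_l)\big)_{ij}(y-x)$, again by symmetry of $\kk$. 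The hypothesis (with $\alpha=e_l$) makes this vanish identically, so $\Psi^{ij}_x=0$ in $H$, hence $\mathrm{curl}\,u=0$ for all $u\in H$.

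\emph{Main obstacle.} The only delicate point is the index bookkeeping that matches the representers $\partial_2^{e_i}K(\cdot,x)e_j$ with partial derivatives of the columns of $\kk$: one must carry the sign from $\partial_2^{e_i}K(y,x)=-(\partial_i\kk)(y-x)$ correctly and, decisively, use the symmetry $\kk(z)=\kk(z)^T$ to swap the two matrix indices so that the resulting function is \emph{literally} the divergence (resp.\ a component of the curl) of $x\mapsto\kk(x)e_j$ (resp.\ $x\mapsto\kk(x)e_l$), which the hypothesis annihilates. An alternative route would go through the Fourier-domain characterization (Theorem~\ref{fund_th}) together with $H=\overline{H}_0$, but the direct argument above is shorter and avoids the $L^1$-Fourier hypotheses.
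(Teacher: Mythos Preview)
Your argument is correct. Both the ``if'' directions and the index bookkeeping in the converse directions check out: in particular, the symmetry $\kk^{ij}=\kk^{ji}$ (which you correctly extract from the TRI hypothesis via $\kk(-x)=\kk(x)$ and $\kk(-x)=\kk(x)^T$) is exactly what is needed to turn the $j$-th component of $\Phi_x$ into $-\mathrm{div}(\kk(\cdot)e_j)$, and similarly for $\Psi^{ij}_x$.

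Your route differs from the paper's. The paper argues by density: it approximates an arbitrary $u\in H$ by a sequence $u_n\in H_0=\mathrm{span}\{\kk(\cdot-x)\alpha\}$, observes that $\mathrm{div}\,u_n=0$ (translates of $\kk(\cdot)\alpha$ inherit the hypothesis), and then uses~\eqref{rep_diff} only to show that $\partial_j u_n\to\partial_j u$ pointwise, so the divergence passes to the limit. You instead use~\eqref{rep_diff} to identify the \emph{representer} $\Phi_x\in H$ of the functional $u\mapsto\mathrm{div}\,u(x)$ and show directly that $\Phi_x=0$ as a function. Your argument is shorter and avoids the approximation step; the paper's argument, on the other hand, never needs the pointwise symmetry of $\kk(z)$, so it would go through verbatim for a merely translation-invariant kernel. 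Since the proposition is stated for TRI kernels, either approach is adequate.
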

\begin{proof}
The ``if'' part is obvious. 
Conversely, 
let~$H_0:=\mathrm{span}\{\kk(\cdot-x)\alpha\,|\,x,\alpha\in\Rd\}$.
Take any~$u\in H$ and a sequence~$\{u_n\}$ 
in~$H_0$ 
with~$u_n\rightarrow u$. For all indices~$i,j\in\{1,\ldots,d\}$,
by formula~\eqref{diff_ti}
we have that $e_i \cdot \partial_j u_n(x)=\langle
-\partial_j\kk(\cdot-x)e_i
,u_n\rangle_V$, 
which converges to
$\langle
-\partial_j\kk(\cdot-x)e_i
,u\rangle_V=e_i \cdot \partial_j u(x)$. 
That is, the sequence of functions~$\partial_ju_n$ converges to~$\partial_ju$ pointwise.
Therefore we have $\mathrm{div}\,u_n\rightarrow\mathrm{div}\,u$
and~$\mathrm{curl}\,u_n\rightarrow\mathrm{curl}\,u$ pointwise.
Since~$u_n\in
H_0$ for all~$n$, if~$\mathrm{div}(\kk(\cdot)\alpha)=0$ 
for all~$\alpha\in\Rd$
then $\mathrm{div}\,u_n=0$, whence~$\mathrm{div}\,u=0$.
A similar argument holds in the case~$\mathrm{curl}(\kk(\cdot)\alpha)=0$.
\end{proof}
The above results justify the following definition.
\begin{definition}
Let~$H\hookrightarrow C^1(\Rd\Rd)$ be a RKHS with a TRI kernel~$\kk$.
We call the space~$H$ and its kernel~$\kk$
\em divergence-free \em (or~\em curl-free\em\/)
if~$\mathrm{div}\, u=0$ (respectively, if $\mathrm{curl}\, u=0$) for all~$u\in H$.
\end{definition}
\begin{table}[t]
\centering{
\begin{tabular}{c|c|c|}
\cline{2-3}
 & 
\hspace*{.7cm}
\bf Conditions on~$(\aaa,\bbb)$ 
\hspace*{.7cm}
 & 
\hspace*{.7cm}
\bf Conditions on~$\displaystyle(\AAA,\BBB)$
\hspace*{.7cm}
 \\
\hline
\multicolumn{1}{ |c| }{
$\begin{array}{c}
\\
\mbox{$\kk$ is \bf div-free}
\\
\mbox{ }
\end{array}$
}
& 
$
\displaystyle
(d-1)\,
\frac{\hpar(r)-\hper(r)}{r} 
+
\frac{d\hpar}{dr}(r)=0
$
& $\AAA=0$ \\
\hline
\multicolumn{1}{ |c| }{
$\begin{array}{c}
\\
\mbox{$\kk$ is \bf curl-free}
\\
\mbox{ }
\end{array}$
}
& 
$
\displaystyle
\frac{\hpar(r)-\hper(r)}{r} 
-
\frac{d\hper}{dr}(r)\equiv0
$
& $\BBB=0$ \\
\hline
\end{tabular}
}
\caption{Conditions on the coefficients~$\kk$ and~$\kkh$ for
the kernel to be divergence-free or curl-free.}
\label{tabdiff}
\end{table}
\par
Table~\ref{tabdiff} summarizes the conditions on the coefficients~$(\aaa,\bbb)$ 
of~$\kk$
and on
the coefficients~$(\AAA,\BBB)$ 
of its Fourier transform
for a Reproducing Kernel Hilbert Space~$H$ with a TRI kernel to be either 
divergence-free or curl-free.  
We should note that if~$\kk$ is \em scalar\em\/, i.e.~of the 
type~$\kk(x)=k(\|x\|)\mathbb{I}_d$, then~$\aaa=\bbb$ and~$\AAA=\BBB$;
therefore by Corollaries~\ref{cor_div} and~\ref{cor_curl} the corresponding
RKHS can  be
\em neither \em divergence-free \em nor \em 
curl-free (unless we are dealing with the trivial case~$\kk=0$).   
\begin{paragraph}{Examples 1 and 2, revisited.} In light 
of the above results, the boundaries of the domains~$D_1$
and~$D_2$ of Examples~1 and~2 in this section 
have an interesting interpretation. 
We already noted that in both cases choosing~$a=0$
(vertical boundary of the domains) yields \em scalar \em kernels,
 i.e.~of the type $\kk(x)=k(\|x\|)\mathbb{I}_2$
with~$k(r)=b\exp(-cr^2)$ (Gaussian).
\par
In  Example~1, when~$b=(d-1)\frac{a}{2c}$ we have
$\ccc(r)=a \exp(-cr^2)$ and~$\aaa(r)=(d-1)\frac{a}{2c} \exp(-cr^2)$;
therefore~\eqref{divzero} holds and the kernel
is divergence-free. Similarly, in the case of
Example~2 when $b=\frac{a}{2c}$ we have
$\ccc(r)=-a \exp(-cr^2)$ and~$\bbb(r)=\frac{a}{2c} \exp(-cr^2)$;
whence~\eqref{curlzero} holds and the kernel
is curl-free.
Thus the boundaries of the two domains may be interpreted
as illustrated in Figure~\ref{wedgeint}.
\par
In the case~$d=2$ we have that~$D_1=D_2$.
Figure~\ref{FigTrans}
shows the vector field~$x\mapsto \kk(x)\alpha$, 
$\alpha=e_1$, for both Example~1 and Example~2,
with~$b=c=1$ and different choices of the parameter~$a$.
In particular, for~$a=0$ the point~$(a,b)$ is on the vertical part 
of the boundary of~$D_1$ and~$D_2$; this corresponds, in both cases,
to a scalar kernel. As we move along the line~$b=1$ towards
higher values of~$a$, we note that we transition to
a vector field~$x\mapsto \kk(x)\alpha$
that is divergence-free (Example~1) or curl-free (Example~2);
this occurs when~$(a,b)$ hits the slanted part of the boundaries 
of~$D_1$ and~$D_2$, i.e.~the line~$a=2bc$.
\end{paragraph}
\par
\begin{figure}
\begin{center}
\begin{tikzpicture}[scale=.75]
\fill [gray!20] (0,0) -- (3.333,2.5) -- (0,2.5);
\draw [->] (-.5,0) -- (5.5,0) node [below] {$a$};
\draw [->] (0,-.5) -- (0,3.25) node [left] {$b$};
\draw [->] (3,1) -- (2.05,1.48);
\draw [->] (-1,1.5) -- (-.05,1.5);
\draw (-.9,1.75) node [left] {\small scalar};
\draw (-.9,1.35) node [left] {\small kernels};
\draw (3,1.25) node [right] {\small$b=(d-1)\frac{a}{2c}$};
\draw (3,.75) node [right] {\small div-free kernels};
\draw (0.05,-.3) node [left] {$0$};
\draw [very thick] (0,0) -- (3.333,2.5);
\draw [very thick] (0,0) -- (0,2.5);
\draw (.45,1.5) node [right] {\large$D_1$};
\draw (2.25,-.75) node [right] {(a)};
\end{tikzpicture}
\hspace{.75cm}
\begin{tikzpicture}[scale=.75]
\fill [gray!20] (0,0) -- (5,2.5) -- (0,2.5);
\draw [->] (-.5,0) -- (5.5,0) node [below] {$a$};
\draw [->] (0,-.5) -- (0,3.25) node [left] {$b$};
\draw [->] (4,1) -- (3.05,1.48);
\draw [->] (-1,1.5) -- (-.05,1.5);
\draw (-.9,1.75) node [left] {\small scalar};
\draw (-.9,1.35) node [left] {\small kernels};
\draw (4,1.25) node [right] {\small$b=\frac{a}{2c}$};
\draw (4,.75) node [right] {\small curl-free kernels};
\draw (0.05,-.3) node [left] {$0$};
\draw [very thick] (0,0) -- (5,2.5);
\draw [very thick] (0,0) -- (0,2.5);
\draw (.75,1.5) node [right] {\large$D_2$};
\draw (2.25,-.75) node [right] {(b)};
\end{tikzpicture}
\end{center}
\vspace*{-.5cm}
\caption{Interpretation of the boundaries of domains~$D_1$ and~$D_2$  
in Examples~\ref{exGauss} and~\ref{exGauss2}.}
\label{wedgeint}
\end{figure}%
\begin{figure}[t]
\begin{center}
\begin{picture}(460,255)
\setlength{\unitlength}{1pt}
\put(9.75,0){\includegraphics[height=3.8cm]{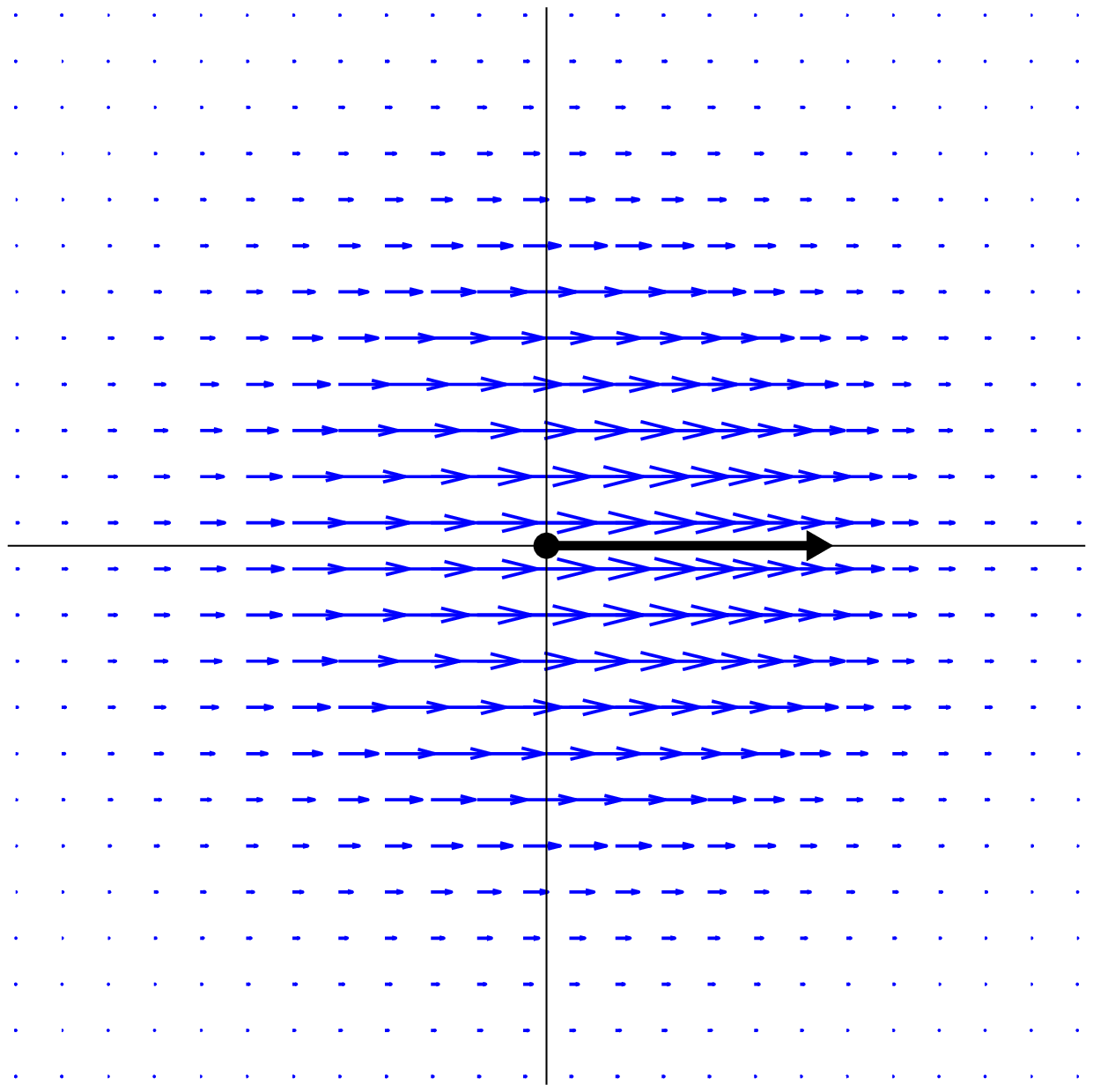}}
\put(124.75,0){\includegraphics[height=3.8cm]{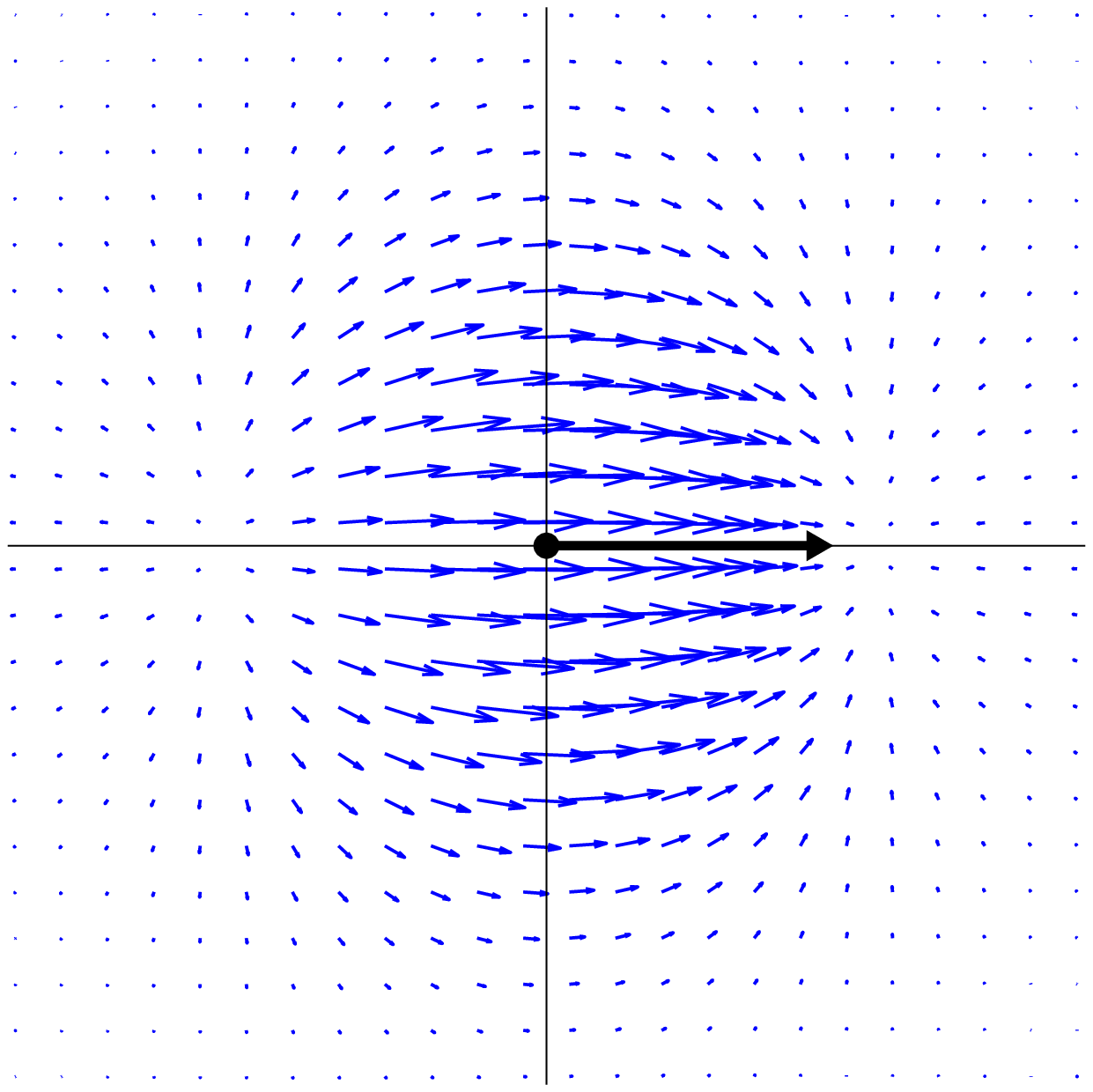}}
\put(239.75,0){\includegraphics[height=3.8cm]{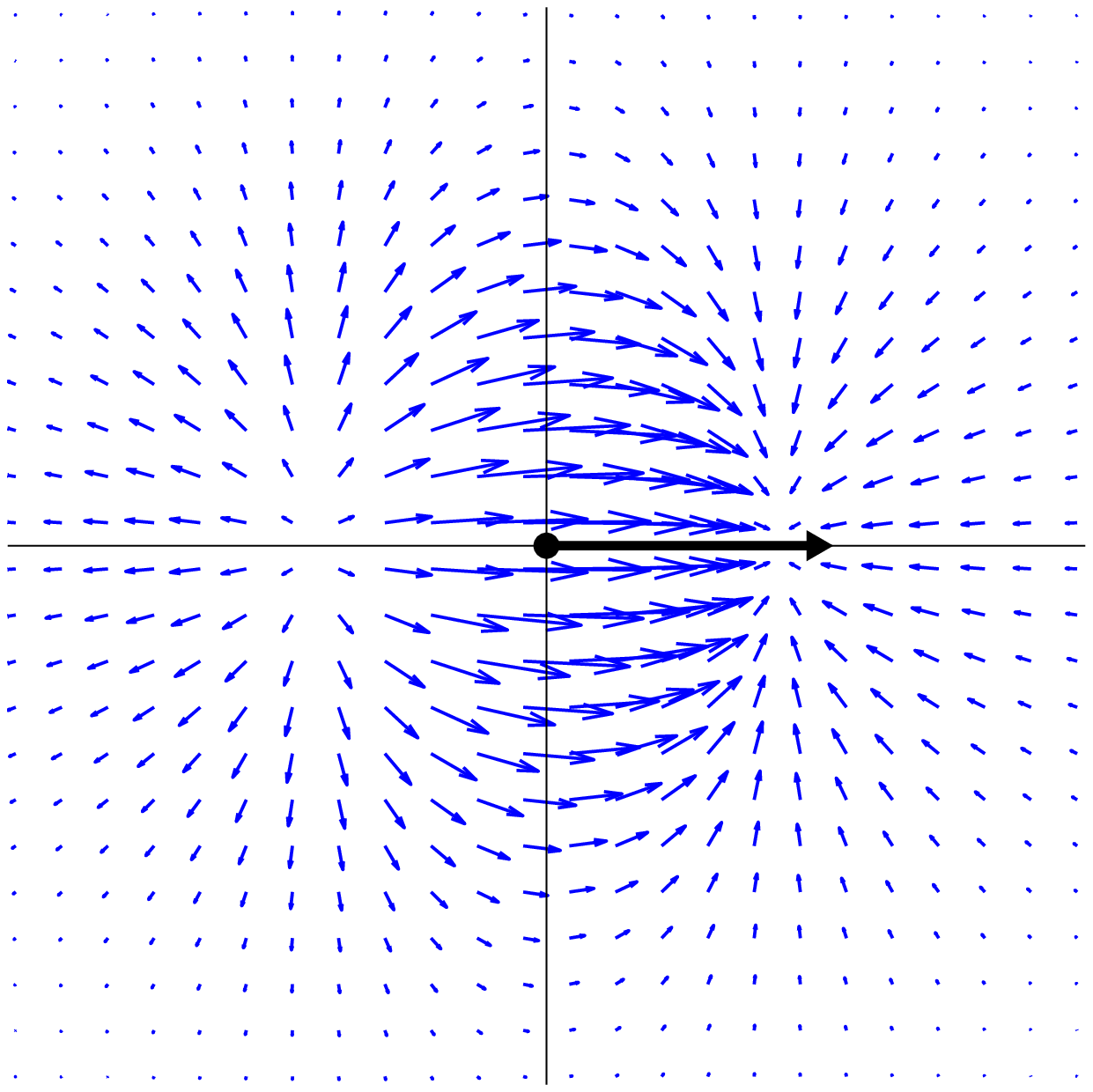}}
\put(354.75,0){\includegraphics[height=3.8cm]{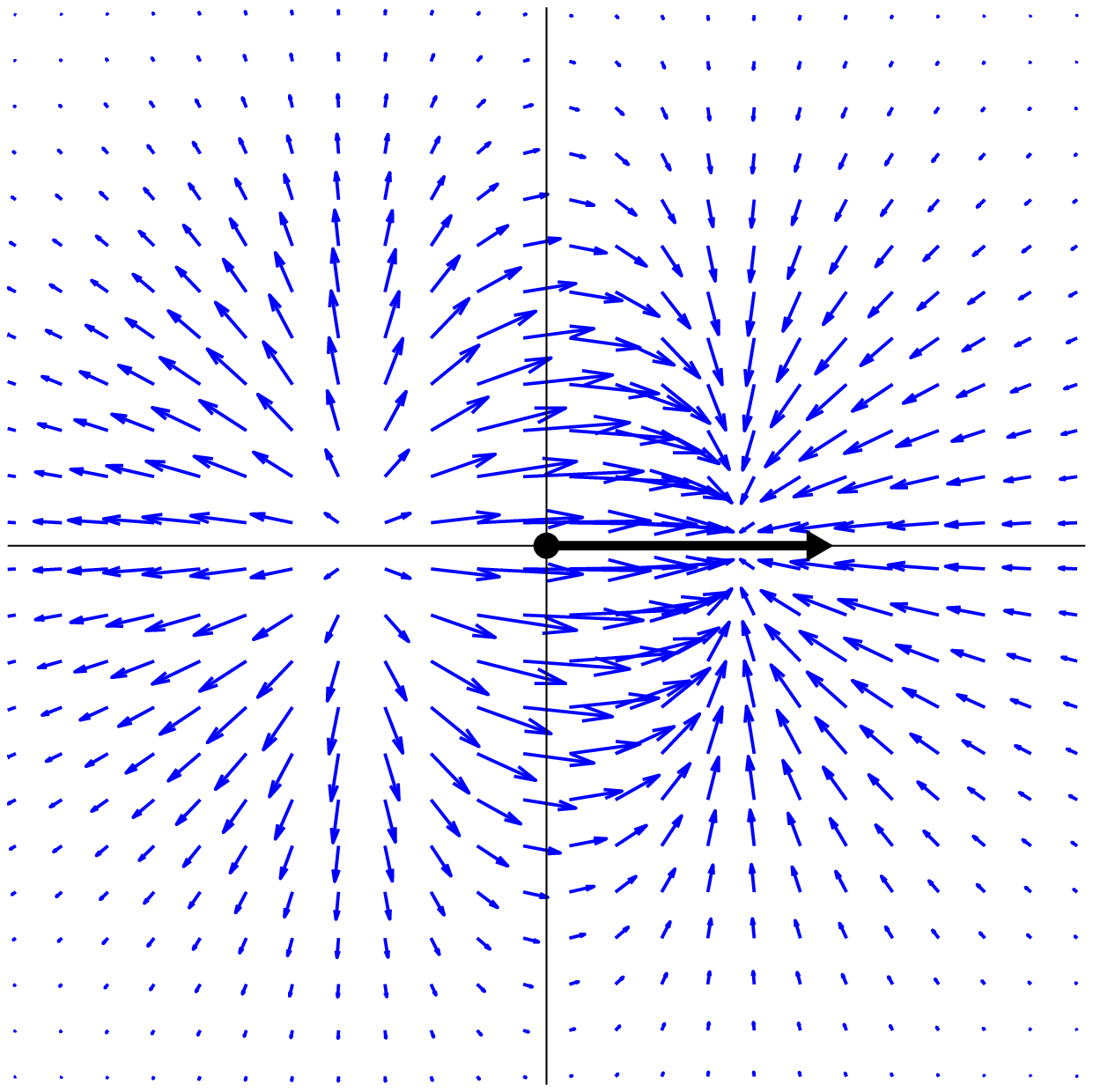}}
\put(64,-5){\makebox(0,0){\small  scalar}}
\put(409,-5){\makebox(0,0){\small curl-free}}
\put(9.75,125){\includegraphics[height=3.8cm]{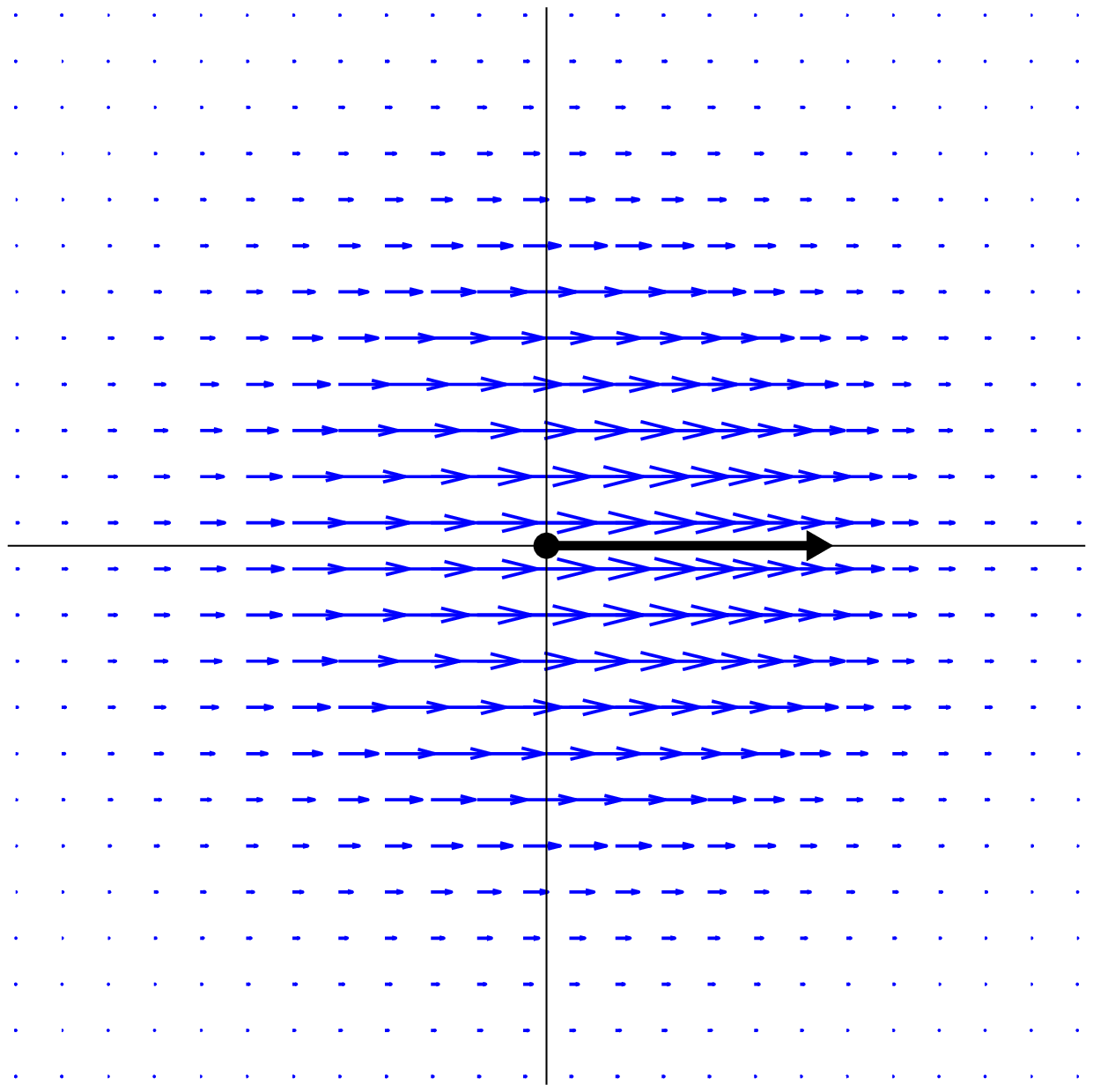}}
\put(124.75,125){\includegraphics[height=3.8cm]{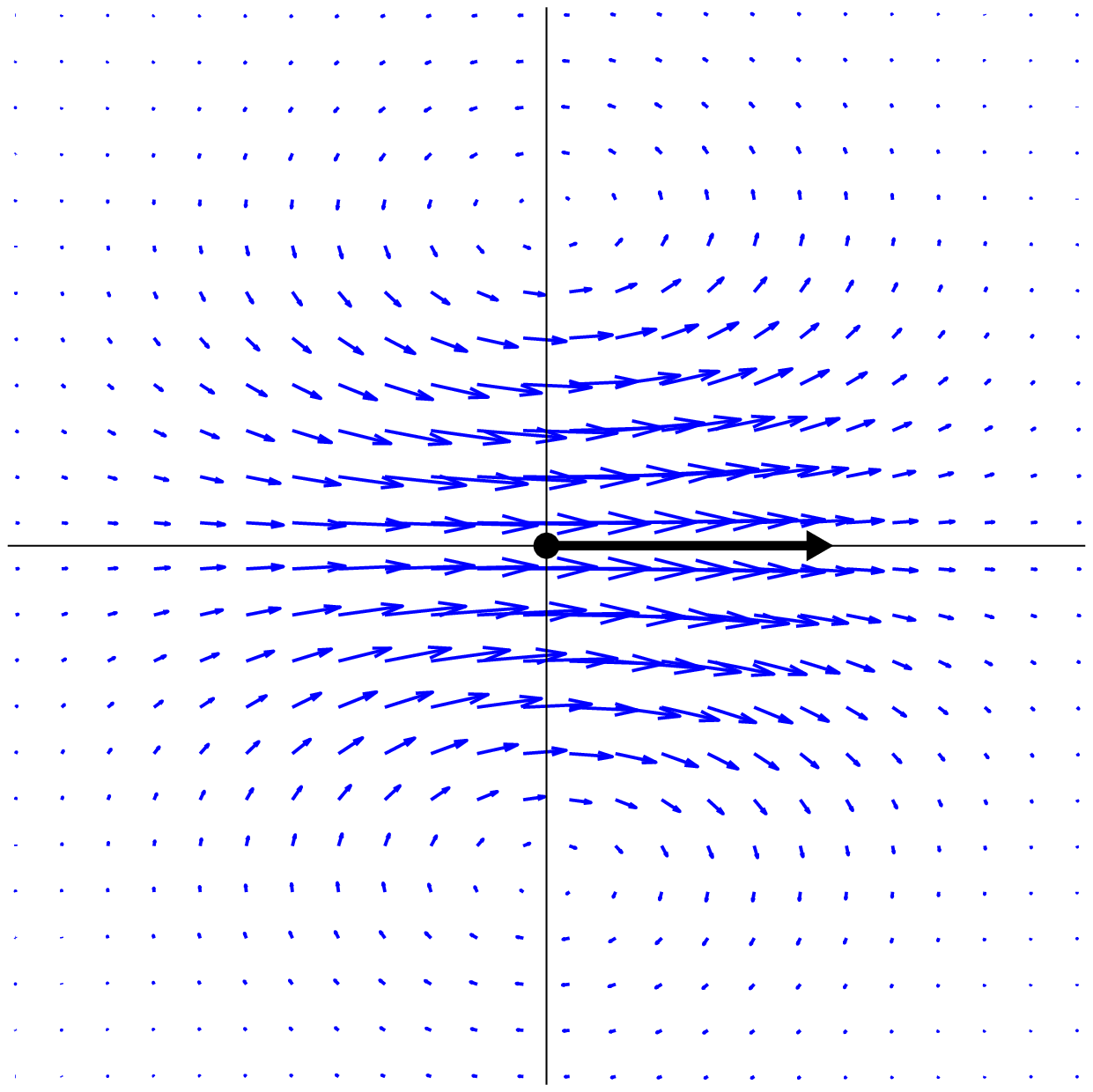}}
\put(239.75,125){\includegraphics[height=3.8cm]{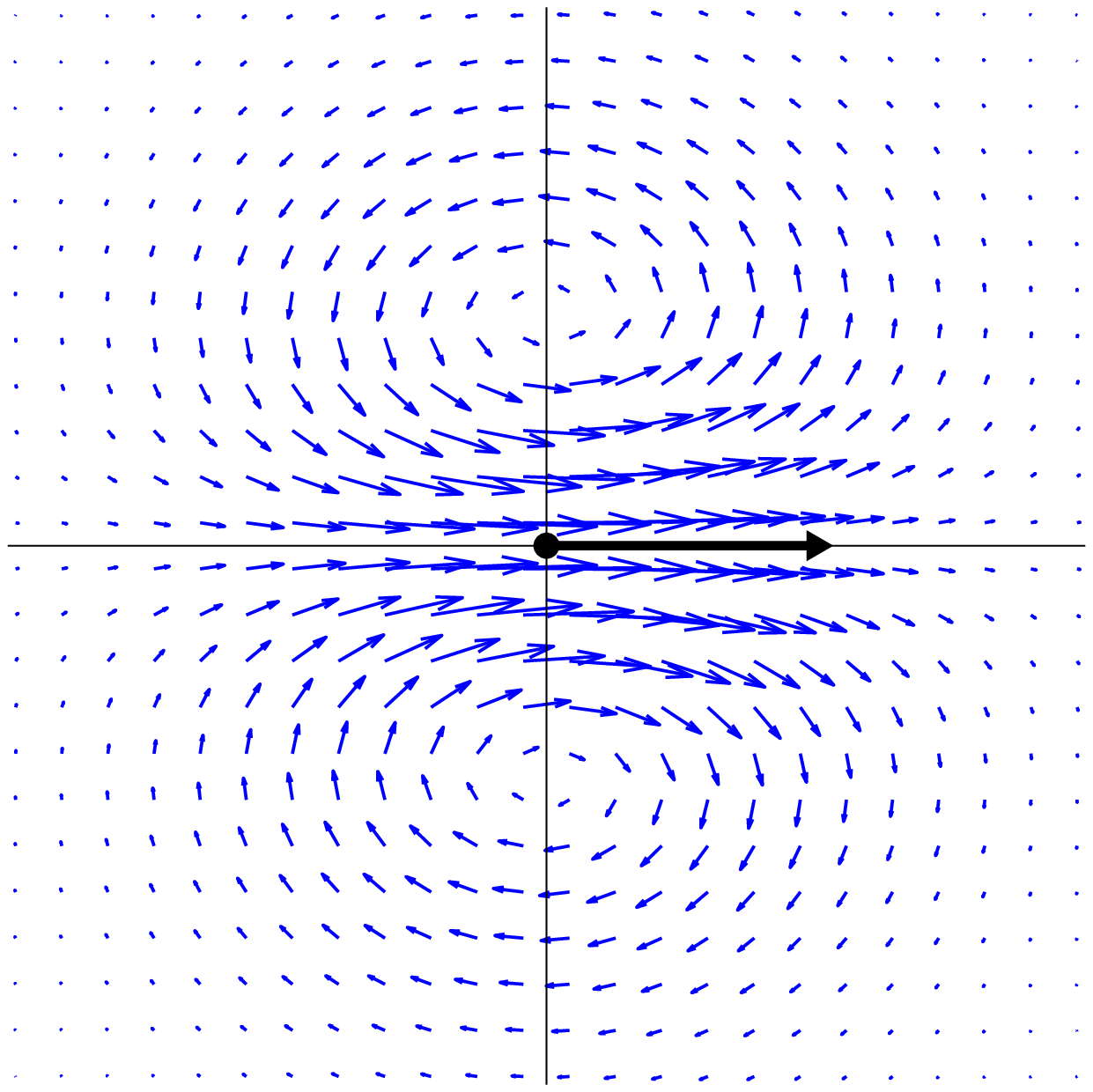}}
\put(354.75,125){\includegraphics[height=3.8cm]{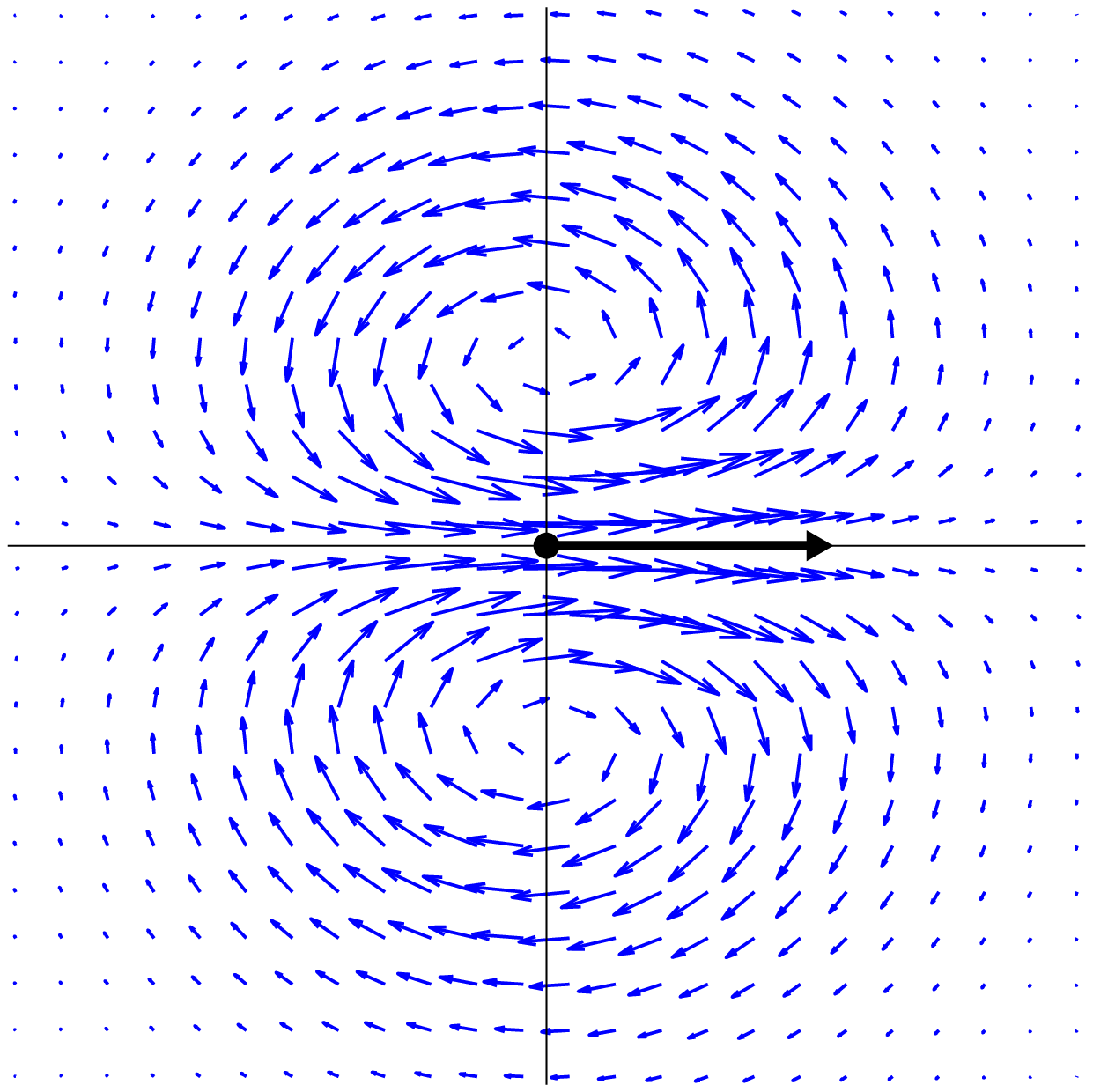}}
\put(64,120){\makebox(0,0){\small scalar}}
\put(409,120){\makebox(0,0){\small divergence-free}}
\put(64,243){\makebox(0,0){$a=0,\,b=1$}}
\put(179,243){\makebox(0,0){$a=2/3,\,b=1$}}
\put(294,243){\makebox(0,0){$a=4/3,\,b=1$}}
\put(409,243){\makebox(0,0){$a=2,\,b=1$}}
\put(-5.5,25){\rotatebox{90}{Example 2}}
\put(-5.5,150){\rotatebox{90}{ Example 1}}
\thinlines
\multiput(8,-11)(115,0){4}{\line(1,0){111}}
\multiput(8,-11)(115,0){4}{\line(0,1){121}}
\multiput(8,110)(115,0){4}{\line(1,0){111}}
\multiput(119,-11)(115,0){4}{\line(0,1){121}}
\multiput(8,114)(115,0){4}{\line(1,0){111}}
\multiput(8,114)(115,0){4}{\line(0,1){121}}
\multiput(8,235)(115,0){4}{\line(1,0){111}}
\multiput(119,114)(115,0){4}{\line(0,1){121}}
\end{picture}
\end{center}
\caption{ Transition from a scalar kernel to divergence-free (Example~1) 
and~curl-free (Example~2) kernels by changing the parameter~$a$ 
in the range~$[0,2]$, with $b=c=1$, in $d=2$ dimensions.
The vector fields~$x\mapsto\kk(x)\alpha$ are shown
for~$\alpha=e_1$, also shown in each graph.}
\label{FigTrans}
\end{figure}
\begin{helmdec}
Let~$\kk\in L^1(\Rd,\Rdd)$ be a TRI kernel
with coefficients~$(\aaa,\bbb)$, and let~$(\AAA,\BBB)$
be the coefficients of its Fourier transform, which we also
assume integrable. Using the 
functional~$M$ introduced in~\eqref{funcM}, we have~$(\aaa,\bbb)=M^{-1}(\AAA,\BBB)$. By the linearity of~$M^{-1}$ we also have
\begin{equation}
\label{dec_vf}
(\aaa,\bbb)
=
(\aaa_1,\bbb_1)
+
(\aaa_2,\bbb_2),
\quad
\mbox{where:}
\quad
\left\{
\begin{array}{l}
(\aaa_1,\bbb_1):=M^{-1}(\AAA,0)
\\
(\aaa_2,\bbb_2):=M^{-1}(0,\BBB)
\end{array}
\right.
.
\end{equation}
With the above definitions, we let~$\kk=\kk_1+\kk_2$, where
\begin{equation}
\label{dec_vf2}
\kk_1(x)
:=
\aaa_1(x)\,\mathrm{Pr}^\parallel_x
+
\bbb_1(x)\,\mathrm{Pr}^\parallel_x
\qquad
\mbox{and}
\qquad
\kk_2(x)
:=
\aaa_2(x)\,\mathrm{Pr}^\parallel_x
+
\bbb_2(x)\,\mathrm{Pr}^\parallel_x;
\end{equation}
by Theorem~\ref{fund_th}
we have that for all $\alpha\in\Rd$ we may write
$\kk(x)\alpha=\kk_1(x)\alpha+\kk_2(x)\alpha$, $x\in\Rd$,
where \em 
the first term is a curl-free vector field \em and \em the second term is  instead
div-free\em\/.
In other words, 
for any integrable TRI kernel~$\kk$ and any~$\alpha\in\Rd$
this procedure allows one to perform
the \em Hodge decomposition\em~\cite{chorinmarsden,janich}
of the vector field~$x\mapsto \kk(x)\alpha$: the two terms,
curl-free and divergence-free, 
may be respectively computed 
from the coefficients~$\AAA$ and~$\BBB$ of the Fourier transform of~$\kk$
precisely 
%
by using the inversion formulae~\eqref{iTpar}
and~\eqref{iTper} twice, the first time with~$\BBB=0$
(to compute~$\kk_1$)
and then with~$\AAA=0$ (to compute~$\kk_2$);
note that since~$\kk$ vanishes at infinity there is no harmonic 
component in the Hodge decomposition. 
We shall call~$\kk_1$ and~$\kk_2$ the \em curl-free \em and 
\em divergence-free \em components of the kernel~$\kk$.
\par
Incidentally, we note that if~$\kk\in L^1\cap L^2$
then for any~$\alpha\in\Rd$ the orthogonality
in~$L^2(\Rd,\Rd)$ of
the Hodge components~$\kk_1(\cdot)\alpha$
 and~$\kk_2(\cdot)\alpha$
 may be easily verified via Plancherel's theorem as follows:
%
$$
\big\langle\kk_1\alpha,\kk_2\alpha\big\rangle_{L^2}
=
\big\langle\kkh_1\alpha,\kkh_2\alpha\big\rangle_{L^2}
=
\int_{\Rd}
\big(
\AAA(\|\xi\|)
\mathrm{Pr}_\xi^\parallel\alpha
\big)
\cdot
\big(
\BBB(\|\xi\|)
\mathrm{Pr}_\xi^\perp\alpha
\big)
\,
d\xi=0.
$$
The computation
of~$\kk_1$ and~$\kk_2$ from~$\kk$ via formulae~\eqref{dec_vf} and~\eqref{dec_vf2} may  
be viewed as an orthogonal
projection (in the~$L^2$ sense) of~$\kk$ onto the spaces of curl-free 
and devergence-free kernels, respectively.
\end{helmdec}
\begin{example_n} We now consider the Gaussian scalar kernel~$\kk(x)=k(\|x\|)\mathbb{I}_d$, $x\in\Rd$,
with~$k(r)=e^{-cr^2}$ for some $c>0$, and compute its Hodge decomposition. We saw earlier that its Fourier 
transform is
\begin{equation}
\label{hGauss}
\kkh(\xi)=h(\|\xi\|)\mathbb{I}_d,
\;\xi\in\Rd
\qquad
\mbox{with}
\qquad
h(\varrho)=\Big(\frac{\pi}{c}\Big)^{\mu+1}\exp\!\Big(\!-\!\frac{\pi^2\varrho^2}{c}\Big),
\qquad
\mbox{where }\mu=\frac{d}{2}-1.
\end{equation}
\par
We let~$\kk=\kk_1+\kk_2$, with~$\kkh_1(\xi)=h(\|\xi\|)\,\mathrm{Pr}^\parallel_\xi$
and~$\kkh_2(\xi)=h(\|\xi\|)\,\mathrm{Pr}^\perp_\xi$.
The coefficients~$(\aaa_1,\bbb_2)$ of~$\kk_1$ are
given by 
inversion formulae~\eqref{iTpar} and~\eqref{iTper} with~$\AAA=h$ and~$\BBB=0$, i.e.
\begin{subequations}
\begin{align}
\label{2iTpar}
\aaa_1(r)
&
=
\frac{2\pi}{r^\mu}
\int_0^\infty
\varrho^{\mu+1}\,h(\varrho)\,J_\mu(2\pi\varrho r)\,d\varrho
-
\frac{2\mu+1}{r^{\mu+1}}
\int_0^\infty
\varrho^{\mu}\,h(\varrho)\,J_{\mu+1}(2\pi\varrho r)\,d\varrho,
\\
\label{2iTper}
\bbb_1(r)
&=
\frac{1}{r^{\mu+1}}
\int_0^\infty
\varrho^{\mu}\,h(\varrho)\,J_{\mu+1}(2\pi\varrho r)\,d\varrho,
\end{align}
\end{subequations}
We compute the integral~\eqref{2iTper}
with~$h$ as in~\eqref{hGauss} 
using the Hankel transform~\eqref{HtfG2}, which yields
\begin{equation}
\label{d1kperp}
\bbb_1(r)=\frac{1}{2c^{\mu+1}r^{2\mu+2}}
\;\gamma\big(\mu+1,cr^2\big),
\qquad~r>0,
\end{equation}
where~$\gamma(\nu,x)=\int_0^x e^{-t}\,t^{\nu-1}dt$, $\Re\nu>0$,  is the  
lower incomplete gamma function~\cite[\S6.5.2]{abramowitz}. The other coefficients
of the kernels~$\kk_1$ and~$\kk_2$ are obtained from~\eqref{d1kperp}
simply as follows:
\begin{align}
\label{othks}
\aaa_1
&=
k-(2\mu+1)\bbb_1,
&
\aaa_2
&=
k-\aaa_1=(2\mu+1)\bbb_1,
&
\bbb_2
&=
k-\bbb_1;
\end{align}
the first one is derived from~\eqref{2iTpar}, while the other two
follow from~$k=\aaa_1+\aaa_2$ and~$k=\bbb_1+\bbb_2$ (which, in turn,
are a consequence of~$\kk=\kk_1+\kk_2$ and the fact that~$\kk$ is scalar). 
\par
When~$d=2$ (i.e.~$\mu=0$), since~$\gamma(1,x)=1-e^{-x}$
we simply have~$\bbb_1(r)=\frac{1}{2cr^2}(1-e^{-cr^2})$, and the other coefficients
are again derived from formulae~\eqref{othks} with~$\mu=0$. 
It is interesting to note that~$\kk_1$ and~$\kk_2$ are
\em not \em Gaussian like the 
curl-free and divergence-free kernels that we derived in 
Examples~\ref{exGauss2} and~\ref{exGauss},
respectively---in fact they
have much heavier ``tails'' (they decay
like~$1/r^2$ as~$r\rightarrow\infty$) than
the original scalar and Gaussian kernel~$\kk$;
however, these heavy tails cancel each other when summed. 
In the case~$c=1$ and~$d=2$
the Hodge decomposition of~$x\mapsto\kk(x)e_1$ into its curl-free 
and
divergence-free terms
is illustrated in Figure~\ref{FigHDec}; as observed above,
the two vector fields are orthogonal in the~$L^2$ sense.
\begin{figure}[t]
\noindent{}\begin{picture}(560,120)
\setlength{\unitlength}{1.20pt}
\put(10,0){\includegraphics[height=4.56cm]{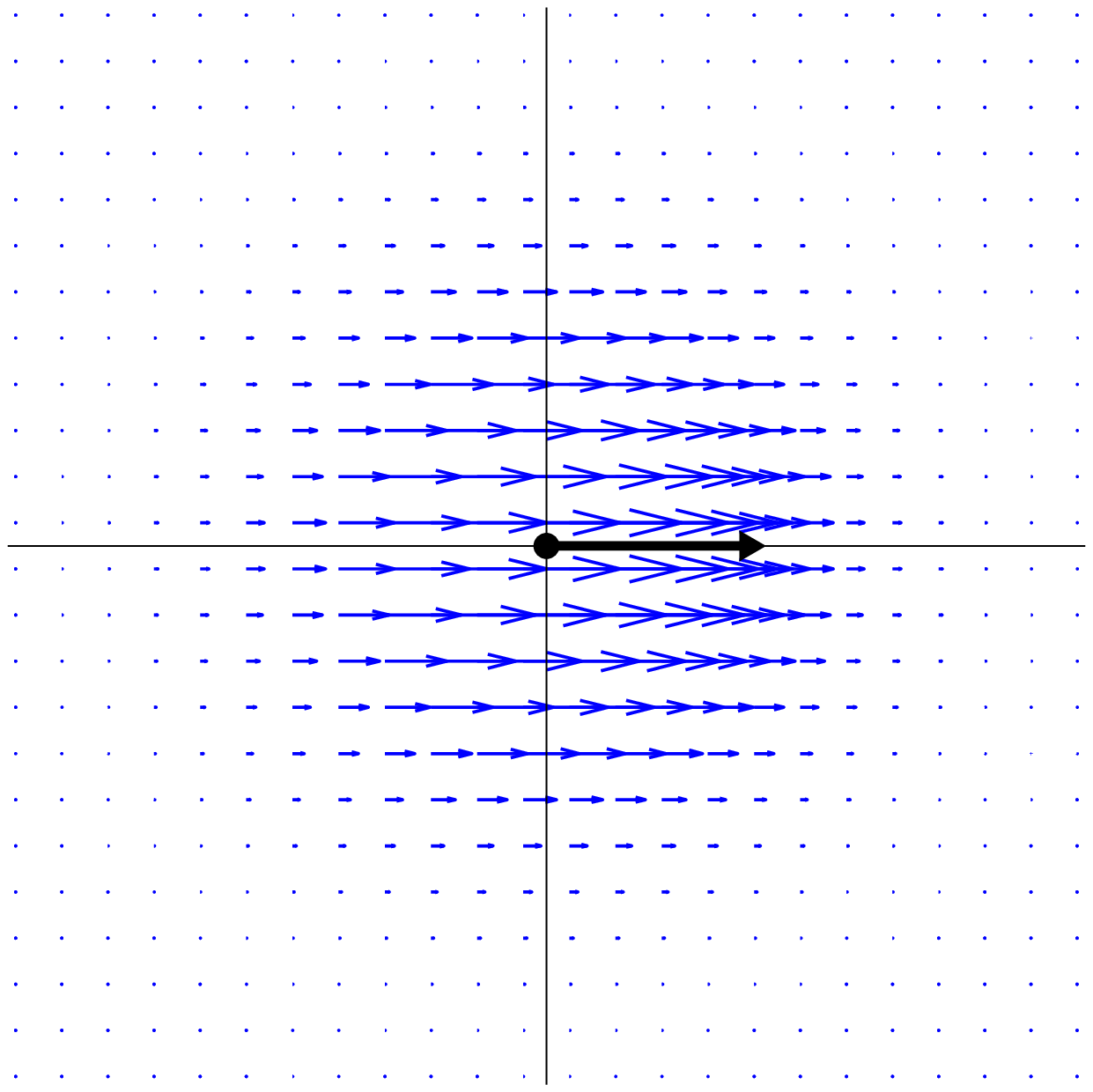}}
\put(155,0){\includegraphics[height=4.56cm]{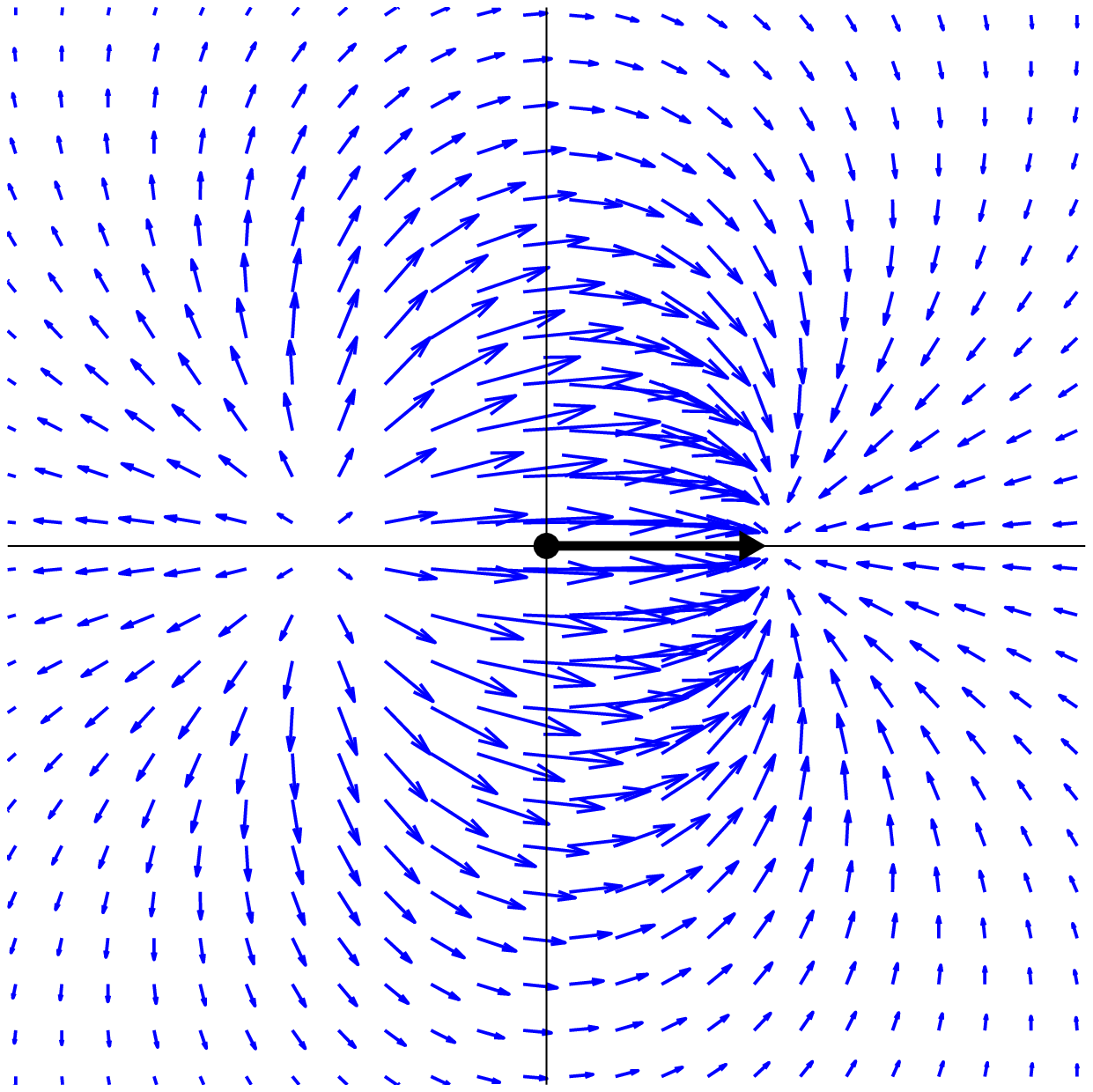}}
\put(300,0){\includegraphics[height=4.56cm]{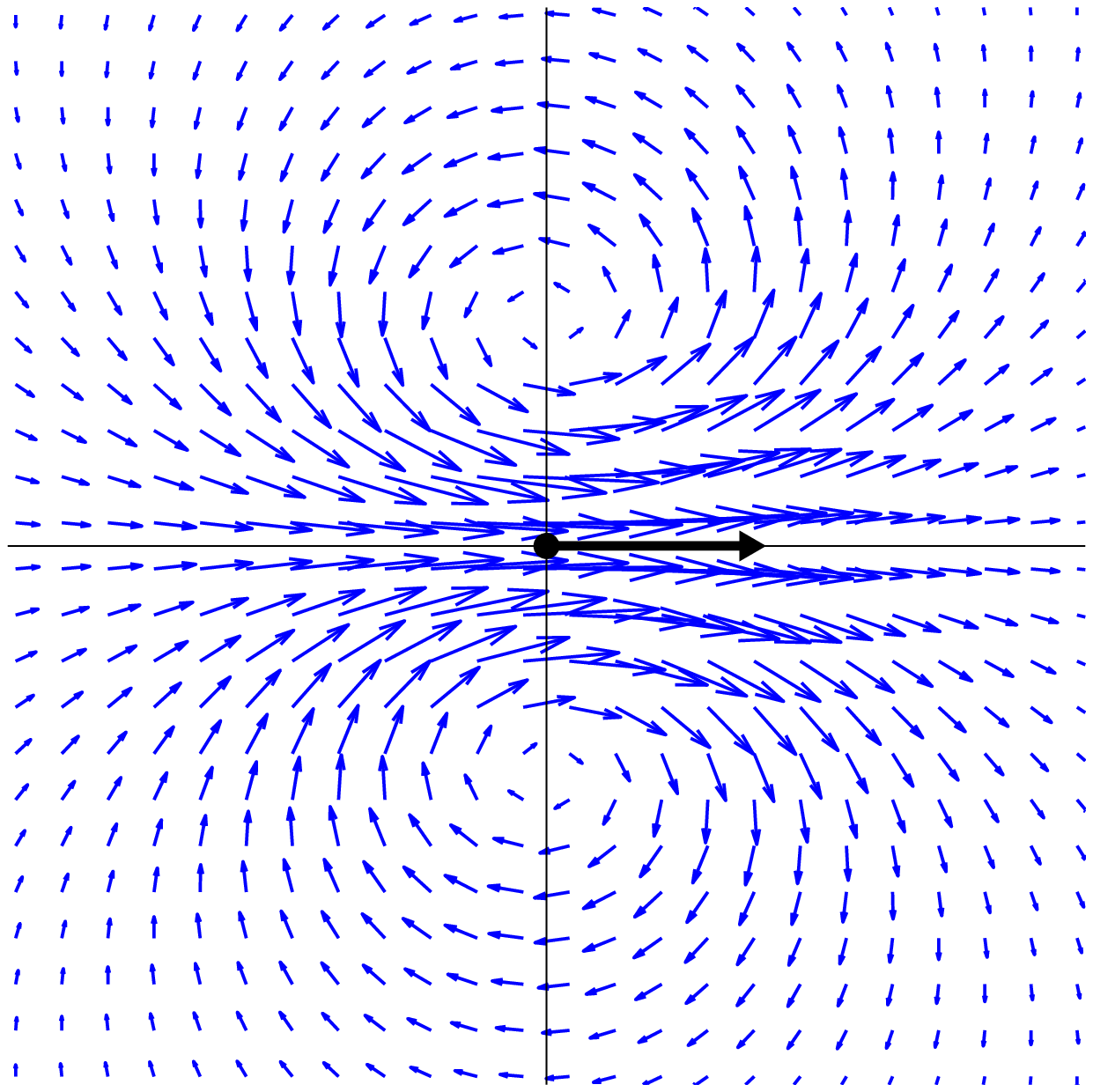}}
\put(65,-5){\makebox(0,0){\small  scalar}}
\put(210,-5){\makebox(0,0){\small curl-free}}
\put(355,-5){\makebox(0,0){\small divergence-free}}
\put(65,117){\makebox(0,0){\small $x\mapsto\kk(x) e_1$}}
\put(210,117){\makebox(0,0){\small $x\mapsto\kk_1(x) e_1$}}
\put(355,117){\makebox(0,0){\small $x\mapsto\kk_2(x) e_1$}}
\put(136,53){\makebox(0,0){\huge=}}
\put(281,53){\makebox(0,0){\huge+}}
\thinlines
\multiput(8,-11)(145,0){3}{\line(1,0){111}}
\multiput(8,-11)(145,0){3}{\line(0,1){121}}
\multiput(8,110)(145,0){3}{\line(1,0){111}}
\multiput(119,-11)(145,0){3}{\line(0,1){121}}
\end{picture}
\hfill\mbox{}
\caption{Hodge decomposition of~$x\mapsto \kk(x)e_1$, when~$\kk$ is scalar and Gaussian.}
\label{FigHDec}
\end{figure}
\end{example_n}
The discussion above also suggests that a seemingly simple 
way to generate curl-free
(or divergence-free) kernels is 
to choose functions~$\AAA$ (respectively,~$\BBB$)
in~$L^1(\mathbb{R}^+,r^{d-1})$ and apply formulae~\eqref{iTpar}
and~\eqref{iTper} with $\BBB=0$ (respectively,~$\AAA=0$).
This is rather cumbersome because it involves 
the computation of Hankel transforms, analytically or numerically; 
in the next section
we shall introduce a simple technique for generating all~TRI kernels 
of interest while avoiding such tedious calculations.
\section{Construction of matrix-valued kernels from scalar kernels}
\label{constr}
In this section we illustrate a method for constructing 
matrix-valued kernels from scalar kernels---in fact, it will turn out that 
virtually all matrix-valued kernels of interest can be built this way. 
We remind the reader that a~$\cs$-admissible Hilbert space 
of $\Rd$-valued functions defined in~$\Omega$ is also a RKHS, and its kernel is such 
that~$K(\cdot,x)\alpha\in C_0^\cs(\Omega,\Rd)$ for all $x\in\Omega$  
and $\alpha\in \Rd$ (see Section~\ref{vv_rkhs}).
\subsection{Curl-free component}
\label{seccfc}
Given a $\cs$-admissible Hilbert space $H$ 
of \em scalar\em\/-valued differentiable functions defined on~$\Rd$, 
one can define the space~$V:=\{\nabla f\,|\,f\in H\}$.
Note that (i) if $\cs\geq 2$ all the elements of~$V$ are curl-free and (ii)~for all~$v\in V$
there is a unique $f\in H$ such that 
$v=\nabla f$ (in fact if~$v=\nabla f_1=\nabla f_2$ 
then~$f_1-f_2\in H$ is a constant that must be zero because 
$H\subset C_0^\cs(\Rd,\mathbb{R})$, i.e.~it vanishes at infinity).
The proposition that follows shows that 
the kernel of~$H$ induces a reproducing kernel
on~$V$.
\begin{notation}
Given a \em scalar\em\/-valued 
differentiable function~$F:\Rd\times\Rd\rightarrow \mathbb{R}$
we indicate with~$\partial_{n,i}F$, $n\in\{1,2\}$, 
$i\in\{1,\ldots,d\}$  the derivative of~$F$ 
with respect to the $i^\mathrm{\,th}$ 
variable of the $n^\mathrm{th}$
set of variables; with~$\nabla_nF$, where $n\in\{1,2\}$, the gradient 
of~$F$ with respect to the $n^\mathrm{th}$
set of variables; and with~$\nabla_1\nabla_2^TF$ 
the matrix-valued
function whose $(i,j)^\mathrm{th}$ 
entry is~$\partial_{1,i}\partial_{2,j}F$, for~$1\leq i,j\leq d$.
Finally, for a vector-valued function~$\mathbf{ f}:\Rd\rightarrow\Rd$, we indicate 
with~$\nabla \mathbf{ f}$ the matrix
whose $(i,j)^\mathrm{th}$ 
entry is~$\partial_{i}\mathbf{ f}_j$, for~$1\leq i,j\leq d$.
\end{notation}
\begin{proposition}
\label{grad_H}
For fixed $r\geq1$, consider a Hilbert space 
$H\hookrightarrow C_0^\cs(\Rd,\mathbb{R})$ 
of scalar-valued functions,
with kernel $K_H:\Rd\times\Rd\rightarrow \mathbb{R}$.
Let~$V:=\{\nabla f\,|\,f\in H\}$
be endowed with the inner product
$\langle v_1,v_2  \rangle_V:=\langle f_1,f_2\rangle_H$,
where $v_1=\nabla f_1$ and $v_2=\nabla f_2$.
Then~$V\hookrightarrow C_0^{\cs-1}(\Rd,\Rd)$, i.e.~it 
is a $(\cs-1)$-admissible Hilbert space of $\Rd$-valued functions,
and its kernel is given by~$K_V=\nabla_1\nabla^T_2K_H$.
\end{proposition}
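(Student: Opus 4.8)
The statement has three assertions to verify: (1) that $V$ with the given inner product is a well-defined Hilbert space, (2) that $V\hookrightarrow C_0^{\cs-1}(\Rd,\Rd)$, and (3) that its reproducing kernel is $K_V=\nabla_1\nabla_2^TK_H$. The first is essentially immediate from the remark preceding the proposition: the map $f\mapsto\nabla f$ from $H$ to $V$ is a linear bijection (injectivity because a constant in $C_0^\cs$ vanishes), so transporting the inner product makes it an isometry, and $V$ inherits completeness from $H$. For (2), since $H\hookrightarrow C_0^\cs(\Rd,\mathbb{R})$ we have for $f\in H$ that $\partial^q f\in C_0(\Rd,\mathbb{R})$ for $|q|\le\cs$; each component of $\nabla f$ is some $\partial_i f$, so $\partial^p(\nabla f)_i=\partial^{p+e_i}f\in C_0$ whenever $|p|\le\cs-1$, giving $\nabla f\in C_0^{\cs-1}(\Rd,\Rd)$. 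Continuity of the embedding follows from $\|\nabla f\|_{\cs-1,\infty}\le\|f\|_{\cs,\infty}\le C\|f\|_H=C\|\nabla f\|_V$, so $V$ is $(\cs-1)$-admissible and hence a RKHS by the discussion in Section~\ref{vv_rkhs}.

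**Identifying the kernel.** For (3) I would proceed by directly checking the reproducing property. Fix $x\in\Rd$ and $\alpha\in\Rd$, and define the candidate representer $R_x^\alpha(\cdot):=\big(\nabla_1\nabla_2^TK_H\big)(\cdot,x)\alpha$. The key observation is that $\nabla_1\nabla_2^TK_H(\cdot,x)\alpha=\nabla\big(\alpha\cdot\nabla_2 K_H(\cdot,x)\big)$: indeed the $i$-th component of the left side is $\sum_j\partial_{1,i}\partial_{2,j}K_H(\cdot,x)\alpha_j=\partial_i\big(\sum_j\alpha_j\partial_{2,j}K_H(\cdot,x)\big)$. Now by Theorem~\ref{Th_diff}, applied to the \emph{scalar} RKHS $H$ (with $d=1$ there), the function $g_x^\alpha(\cdot):=\alpha\cdot\nabla_2 K_H(\cdot,x)=\sum_j\alpha_j\,\partial_2^{e_j}K_H(\cdot,x)$ lies in $H$, and for every $f\in H$ we have, by linearity and~\eqref{rep_diff},
\[
\langle g_x^\alpha, f\rangle_H=\sum_j\alpha_j\,\partial^{e_j}f(x)=\alpha\cdot\nabla f(x).
\]
Consequently $R_x^\alpha=\nabla g_x^\alpha\in V$, and for any $v=\nabla f\in V$,
\[
\langle R_x^\alpha, v\rangle_V=\langle \nabla g_x^\alpha,\nabla f\rangle_V=\langle g_x^\alpha, f\rangle_H=\alpha\cdot\nabla f(x)=\alpha\cdot v(x).
\]
This is exactly the reproducing property for $V$, so $R_x^\alpha$ is the representer of $\delta_x^\alpha$ in $V$, and therefore $K_V(y,x)\alpha=R_x^\alpha(y)=\nabla_1\nabla_2^TK_H(y,x)\alpha$ for all $\alpha$, i.e.\ $K_V=\nabla_1\nabla_2^TK_H$. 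By uniqueness of the reproducing kernel (Theorem~\ref{exist}), this determines $K_V$.

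**Where the work concentrates.** None of the three parts is deep, but the one requiring care is the passage invoking Theorem~\ref{Th_diff} for the scalar space $H$: I need to know that $H\hookrightarrow C^{\cs}(\Rd,\mathbb{R})$ (which is given, since $H\hookrightarrow C_0^\cs$) so that statement (c) of that theorem applies and gives both $\partial_2^{e_j}K_H(\cdot,x)\in H$ and the differentiated reproducing identity~\eqref{rep_diff}. One should double-check that $\cs\ge1$ is genuinely needed for $\nabla f$ and the first derivatives of $K_H$ to make sense — the hypothesis ``$r\ge1$'' in the statement is presumably a typo for $\cs\ge1$, and I would state the proof under $\cs\ge1$. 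A minor additional point worth a sentence: to confirm $V$ is genuinely a Hilbert space (not merely a pre-Hilbert space) one notes that the isometry $f\mapsto\nabla f$ carries Cauchy sequences to Cauchy sequences and limits to limits, so completeness of $H$ transfers verbatim. The rest is bookkeeping with multi-indices, which I would not spell out in full.
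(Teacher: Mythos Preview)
Your proof is correct and follows essentially the same approach as the paper: both invoke Theorem~\ref{Th_diff} to obtain $\partial_{2,j}K_H(\cdot,x)\in H$ with the differentiated reproducing identity, then use the isometry $\nabla:H\to V$ to transfer this into the reproducing property for $V$. You spell out the well-definedness, completeness, and continuous embedding of $V$ in more detail than the paper (which simply asserts these are ``easily verified''), and you correctly flag the typo $r\ge1$ for $\cs\ge1$.
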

\begin{proof}
One can easily verify
that~$\langle\cdot,\cdot\rangle_V$ is an inner product,
that~$V$ is complete and continuously 
embedded in~$C_0^{\cs-1}(\Rd,\Rd)$.
So~$V$ is a RKHS, and for all~$x,\alpha\in\Rd$ and~$v=\nabla f\in V$, 
its kernel~$K_V$ must satisfy
\begin{align*}
\big\langle
&
K_V(\cdot,x)\alpha,
v
\big\rangle_V
=
\textstyle
\alpha\cdot v(x) 
= 
\alpha\cdot\nabla f(x) 
=
\textstyle 
\sum_{i=1}^d \alpha_i\, \partial_if(x)
=
\mbox{(by Theorem~\ref{Th_diff})}
\\
&=
\textstyle
\sum_{i=1}^d \alpha_i
\big\langle
\partial_{2,i}K_H(\cdot,x),f
\big\rangle_H
=
\big\langle
\alpha\cdot\nabla_2K_H(\cdot,x),f
\big\rangle_H
=
\big\langle
\nabla
\big(\alpha\cdot\nabla_2 K_H(\cdot,x)
\big),
v
\big\rangle_V,
\end{align*}
therefore $K_V(\cdot,x)\alpha=\nabla
\big(\nabla_2 K_H(\cdot,x)
\cdot\alpha
\big)=\big(\nabla_1\nabla_2^T K_H(\cdot,x)
\big)\alpha$,
where we have used the the property that $\nabla(\mathbf{ f}\cdot\alpha)=(\nabla \mathbf{ f})\alpha$
for any differentiable vector-valued function~$\mathbf{ f}:\Rd\rightarrow\Rd$.
%
%
By the arbitrariness of~$\alpha\in\Rd$ we conclude
that~$K_V(y,x)=\nabla_1\nabla^T_2K_H(y,x)$, for $y,x\in\Rd$. 
\end{proof}
\begin{remark}
Before proceeding to the case of translation- and rotation-invariant 
kernels 
 we note that
if the kernel~$K$ of a~$\cs$-admissible Hilbert space is translation-invariant,
i.e.~$K(x,y)=\kk(x-y)$, then~$\kk$ is differentiable~$2\cs$ times (see the remark
at the end of Section~\ref{gfik}).
\end{remark}
\begin{proposition} 
\label{HesskH}
Under the assumptions of Proposition~\ref{grad_H},
if $K_H(x,y)=\kk_H(x-y)$ for some function~$\kk_H:\Rd\rightarrow \mathbb{R}$,
then $K_V(x,y)=\kk_V(x-y)$, with $\kk_V=-\mathrm{Hess}\,\kk_H$ 
(i.e.~$\kk_V^{ij}=-\partial_i\partial_j \kk_H$). 
\end{proposition}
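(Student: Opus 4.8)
The plan is to compute the kernel $K_V = \nabla_1\nabla_2^T K_H$ from Proposition~\ref{grad_H} explicitly under the extra hypothesis that $K_H$ is translation-invariant, $K_H(x,y) = \kk_H(x-y)$. First I would observe that, since $\kk_H$ is differentiable $2\cs$ times by the preceding remark (and $\cs\geq 1$, so $2\cs \geq 2$ suffices for the Hessian to exist), all the partial derivatives below make sense. The key computation is simply the chain rule applied twice: writing $z = x - y$, we have $\partial_{1,i} K_H(x,y) = (\partial_i \kk_H)(x-y)$ and then $\partial_{2,j}\big[(\partial_i\kk_H)(x-y)\big] = -(\partial_j\partial_i\kk_H)(x-y)$, the sign coming from $\partial z^k/\partial y^j = -\delta^{kj}$. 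Hence the $(i,j)$ entry of $K_V(x,y)$ is $\partial_{1,i}\partial_{2,j}K_H(x,y) = -(\partial_i\partial_j\kk_H)(x-y)$, which is exactly $-\mathrm{Hess}\,\kk_H$ evaluated at $x-y$.

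Then I would package this: define $\kk_V := -\mathrm{Hess}\,\kk_H$, i.e.\ $\kk_V^{ij} := -\partial_i\partial_j\kk_H$, so that the displayed identity reads $K_V(x,y) = \kk_V(x-y)$, which is the claim. It is worth remarking that $\kk_V$ is symmetric as a matrix-valued function entrywise (since mixed partials of the scalar $\kk_H$ commute, $\kk_H$ being $C^2$), consistent with $K_V$ being the kernel of a RKHS and with the translation-invariance statement of Theorem~\ref{trinv}. One should also note for cleanliness that $V\hookrightarrow C_0^{\cs-1}(\Rd,\Rd)$ and the fact that $K_V$ is a genuine kernel are already delivered by Proposition~\ref{grad_H}; the present proposition adds only the translation-invariant \emph{form} of that kernel, so nothing about positive-definiteness or completeness needs to be re-proved here.

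There is essentially no obstacle: the proof is a two-line application of the chain rule to the formula $K_V = \nabla_1\nabla_2^T K_H$ established in Proposition~\ref{grad_H}, combined with the differentiability bookkeeping from the remark. The only point requiring the slightest care is the sign bookkeeping in differentiating $\kk_H(x-y)$ with respect to the second set of variables, and confirming that the Hessian is well-defined, i.e.\ that $\kk_H\in C^2$ — which follows from $\cs\geq 1$ and the "twice as smooth" remark. So I would write it as a short direct computation.
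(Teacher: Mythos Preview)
Your proof is correct and follows essentially the same approach as the paper: a direct chain-rule computation of $\nabla_1\nabla_2^T K_H$ under the substitution $K_H(x,y)=\kk_H(x-y)$, noting that the derivative in the second slot contributes a minus sign. The paper's version differentiates in the order $\partial_{2,j}$ then $\partial_{1,i}$ rather than your $\partial_{1,i}$ then $\partial_{2,j}$, but this is an immaterial difference.
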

\begin{proof} 
We have that~$\partial_{2,j}K_H(x,y)=-\partial_j\kk_H(x-y)$ 
and~$\partial_{1,i}\partial_{2,j}K_H(x,y)=-\partial_i\partial_j\kk_H(x-y)$,
for all $x,y\in\Rd$.
The result follows by applying Proposition~\ref{grad_H}.
\end{proof}
\begin{corollary}
\label{corCF}
Under the assumptions of Propositions~\ref{grad_H} with~$\cs\geq2$ and~\ref{HesskH}, 
for any~$\alpha\in\Rd$
the vector field~$x\mapsto\kk_V(x)\alpha$ is curl-free.
\end{corollary}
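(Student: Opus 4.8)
The plan is to reduce the statement to the elementary fact that the curl of a gradient vanishes. First I would invoke Proposition~\ref{HesskH}: since $\kk_V=-\mathrm{Hess}\,\kk_H$, i.e.\ $\kk_V^{ij}=-\partial_i\partial_j\kk_H$, the $i$-th component of the vector field $x\mapsto\kk_V(x)\alpha$ is
\[
\big(\kk_V(x)\alpha\big)^i = -\sum_{j=1}^d \partial_i\partial_j\kk_H(x)\,\alpha_j = -\partial_i\big(\nabla\kk_H(x)\cdot\alpha\big),
\]
so that $\kk_V(\cdot)\alpha=\nabla g$ with $g:=-(\nabla\kk_H)\cdot\alpha$. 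Equivalently, one may observe that $\kk_V(\cdot)\alpha=K_V(\cdot,0)\alpha$ is the representer of the evaluation functional $\delta_0^\alpha$ on $V$, hence an element of $V=\{\nabla f\mid f\in H\}$, so it is a gradient by the very construction of $V$.

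Next I would check that the scalar potential is regular enough for the identity "$\mathrm{curl}$ of a gradient is zero" to apply. By the remark following Proposition~\ref{grad_H} (which is the remark at the end of Section~\ref{gfik} specialized to $H$), the translation-invariant kernel $\kk_H$ of the $\cs$-admissible space $H$ is $2\cs$ times continuously differentiable; since $\cs\geq2$ we have $2\cs\geq4$, hence $g\in C^{2\cs-1}(\Rd,\mathbb{R})\subseteq C^2(\Rd,\mathbb{R})$. Alternatively, in the second viewpoint above, $\kk_V(\cdot)\alpha=\nabla f$ with $f\in H\subset C^\cs(\Rd,\mathbb{R})$ and $\cs\geq2$, so again the potential is $C^2$. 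Identifying the vector field $\kk_V(\cdot)\alpha$ with the $1$-form $d_0 g=\sum_i\partial_i g\,d_0x^i$, we obtain
\[
\mathrm{curl}\big(\kk_V(\cdot)\alpha\big) = d_1\big(d_0 g\big) = 0,
\]
because $d_1\circ d_0=0$ in the de~Rham complex for $C^2$ functions. This establishes the corollary for arbitrary $\alpha\in\Rd$.

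There is no real obstacle here: the content is entirely carried by Propositions~\ref{grad_H} and~\ref{HesskH}, and the only point requiring a moment's care is confirming that the scalar potential is twice continuously differentiable so that $d_1\circ d_0=0$ may be legitimately applied — which is exactly what the smoothness bookkeeping ($\kk_H\in C^{2\cs}$, $\cs\geq2$, equivalently the elements of $H$ are in $C^\cs$ with $\cs\geq2$) provides. In the write-up I would in fact favour the shorter route: note that $\kk_V(\cdot)\alpha\in V$ and that, as already observed right after the definition of $V$ in Section~\ref{seccfc}, every element of $V$ is curl-free when $\cs\geq2$; the $d_1\circ d_0=0$ computation can then be left as a one-line parenthetical justification.
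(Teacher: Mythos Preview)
Your proof is correct and follows essentially the same route as the paper: write $\kk_V(x)\alpha=-(\nabla\nabla^T\kk_H(x))\alpha=-\nabla(\nabla\kk_H(x)\cdot\alpha)$ and conclude that the curl of a gradient vanishes. The paper's version is terser (it omits the regularity bookkeeping you spell out and does not mention the alternative ``$\kk_V(\cdot)\alpha\in V$'' viewpoint), but the argument is the same.
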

\begin{proof} Using the fact that~$(\nabla \mathbf{ f})\alpha=\nabla(\mathbf{ f}\cdot\alpha)$
for any differentiable vector-valued function~$\mathbf{ f}:\Rd\rightarrow\Rd$, we have
$\kk_V(x)\alpha=(-\mathrm{ Hess}\,\kk_H(x))\alpha=-(\nabla\nabla^T\kk_H(x))\alpha=-\nabla(\nabla\kk_H(x)\cdot\alpha)$, and we conclude immediately.
\end{proof}
\begin{proposition}
\label{prkV1}
Under the assumptions of Propositions~\ref{grad_H} 
and~\ref{HesskH},
if $\kk_H(x)=k_H(\|x\|)$ for some $k_H:\mathbb{R}^+\rightarrow \mathbb{R}$
then~the kertnel of~$V$ is TRI, i.e.~$\kk_V(x)=k_V^\parallel(\|x\|)\,\mathrm{Pr}_x^\parallel
+k_V^\perp(\|x\|)\,\mathrm{Pr}_x^\perp$ 
with
\begin{align}
\label{kV1}
\aaa_V(r)
&=
-\frac{d^2\kscalar}{dr^2}(r)
&
&
\mbox{and }
&
\bbb_V(r)
&=
-\frac{1}{r}\frac{d \kscalar }{dr}(r), \quad r>0.
\end{align}
\end{proposition}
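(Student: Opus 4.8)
The plan is to turn the statement into a short direct computation of the Hessian of a radial function, since Proposition~\ref{HesskH} has already identified the kernel of $V$ as $\kk_V=-\mathrm{Hess}\,\kk_H$, that is $\kk_V^{ij}(x)=-\partial_i\partial_j\kk_H(x)$.

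First I would record the regularity available: by the remark following Proposition~\ref{grad_H}, the translation-invariant profile $\kk_H$ is twice as smooth as the elements of $H$, in particular of class $C^2$ on all of $\Rd$; hence $\mathrm{Hess}\,\kk_H$ exists, is continuous, and the identity $\kk_V^{ij}=-\partial_i\partial_j\kk_H$ is meaningful pointwise.

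Next I would differentiate $\kk_H(x)=k_H(\|x\|)$ for $x\neq0$. From $\partial_i\|x\|=x^i/\|x\|$ and the chain rule, $\partial_i\kk_H(x)=k_H'(\|x\|)\,x^i/\|x\|$; differentiating again, using $\partial_j k_H'(\|x\|)=k_H''(\|x\|)\,x^j/\|x\|$ and $\partial_j\big(x^i/\|x\|\big)=\delta^{ij}/\|x\|-x^ix^j/\|x\|^3$, gives
$$
\partial_i\partial_j\kk_H(x)
=k_H''(\|x\|)\,\frac{x^ix^j}{\|x\|^2}
+\frac{k_H'(\|x\|)}{\|x\|}\Big(\delta^{ij}-\frac{x^ix^j}{\|x\|^2}\Big),
\qquad x\in\Rd\setminus\{0\}.
$$
By~\eqref{proj_op}, $x^ix^j/\|x\|^2$ is the $(i,j)$ entry of $\mathrm{Pr}_x^\parallel$ and $\delta^{ij}-x^ix^j/\|x\|^2$ is the $(i,j)$ entry of $\mathrm{Pr}_x^\perp$; negating, I get
$$
\kk_V(x)=-k_H''(\|x\|)\,\mathrm{Pr}_x^\parallel-\frac{k_H'(\|x\|)}{\|x\|}\,\mathrm{Pr}_x^\perp,
\qquad x\in\Rd\setminus\{0\},
$$
which is exactly the form~\eqref{invker} with coefficients $\aaa_V(r)=-\frac{d^2k_H}{dr^2}(r)$ and $\bbb_V(r)=-\frac{1}{r}\frac{dk_H}{dr}(r)$, i.e.~\eqref{kV1}.

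Finally I would confirm that $\kk_V$ is genuinely a TRI kernel. From the formula above, $\kk_V$ has the form~\eqref{invker} for $x\neq0$, and at the origin $\kk_V(0)=-\mathrm{Hess}\,\kk_H(0)=-k_H''(0)\,\mathbb{I}_d$, since a radial function that is $C^2$ at $0$ has isotropic Hessian there; so Lemma~\ref{lem_equiv} (direction ii $\Rightarrow$ i) yields the rotation-covariance $\kk_V(\rot x)=\rot\,\kk_V(x)\rot^{-1}$ of~\eqref{auxrot}. Since $K_V(x,y)=\kk_V(x-y)$ is translation-invariant by Proposition~\ref{HesskH}, Theorem~\ref{trinv} makes all translations isometries of $V$ and Theorem~\ref{rotinv} then makes all rotations isometries; hence $V$ has a translation- and rotation-invariant inner product, so $\kk_V$ is TRI in the sense of Definition~\ref{def_tri}. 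I do not expect a genuine obstacle: the argument is essentially careful bookkeeping with the chain rule, the only point to watch being that the split into $\mathrm{Pr}_x^\parallel$ and $\mathrm{Pr}_x^\perp$, hence~\eqref{kV1}, holds only for $x\neq0$, i.e.\ for $r>0$ --- precisely the range over which the claim is stated.
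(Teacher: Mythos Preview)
Your proof is correct and follows essentially the same approach as the paper: a direct chain-rule computation of $-\partial_i\partial_j\kk_H(x)$ for the radial function $\kk_H(x)=k_H(\|x\|)$, followed by recognition of the projection operators $\mathrm{Pr}_x^\parallel$ and $\mathrm{Pr}_x^\perp$ in the result. Your additional paragraph verifying the TRI property via Lemma~\ref{lem_equiv} and Theorems~\ref{trinv}--\ref{rotinv} is extra care that the paper omits, but the core argument is the same.
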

\begin{proof}
First note that we have 
$\displaystyle \partial_{j} \kk_H(x)=\kscalar'(\|x\|)
\, x_j/\|x\|
$ and 
$$
-\partial_{i}\partial_{j} \kk_H(x)
=
-\kscalar''(\|x\|)\frac{x_ix_j}{\|x\|^2}
-\kscalar'(\|x\|)\frac{\partial}{\partial x^i}\frac{x_j}{\|x\|}
=
-\kscalar''(\|x\|)\frac{x_ix_j}{\|x\|^2}
-\kscalar'(\|x\|)
\Big(\frac{\delta_{ij}}{\|x\|}-\frac{x_ix_j}{\|x\|^3}\Big),
$$
where~$\delta_{ij}$ is Kronecker's symbol. 
Therefore by Proposition~\ref{HesskH}
it must be the case that
\begin{align*}
\kk_V(x)
&=
-\kscalar''(\|x\|)\frac{xx^T}{\|x\|}-\frac{\kscalar'(\|x\|)}{\|x\|}
\Big(\mathbb{I}_d-\frac{xx^T}{\|x\|^2}\Big)
=
-\kscalar''(\|x\|)
\,\mathrm{Pr}^\parallel_x
-
\frac{\kscalar'(\|x\|)}{\|x\|}
\,\mathrm{Pr}^\perp_x.
 \qedhere
\end{align*}
\end{proof}
\begin{corollary} 
\label{corkCF}
Under the assumptions of Proposition~\ref{prkV1},
the kernel~$\kk_V$ is curl-free. 
\end{corollary}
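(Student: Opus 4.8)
The plan is to read off the conclusion from the explicit coefficients of $\kk_V$ produced by Proposition~\ref{prkV1} together with the characterization of curl-free kernels in Corollary~\ref{cor_curl}. By Proposition~\ref{prkV1}, $\kk_V$ is a TRI kernel whose coefficients are $\aaa_V(r)=-\kscalar''(r)$ and $\bbb_V(r)=-\kscalar'(r)/r$; under the standing hypotheses $\kk_H$ is differentiable enough (in particular $\cs\ge 2$, so that $V\hookrightarrow C^1(\Rd,\Rd)$ and $\kk_V\in C^1(\Rd,\Rdd)$) for Corollary~\ref{cor_curl} to be applicable. Hence it suffices to verify that the pair $(\aaa_V,\bbb_V)$ satisfies the curl-free condition~\eqref{curlzero}.

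That verification is immediate: $\bigl(\aaa_V(r)-\bbb_V(r)\bigr)/r = -\kscalar''(r)/r + \kscalar'(r)/r^{2}$, while $\frac{d}{dr}\bbb_V(r)=\frac{d}{dr}\!\bigl(-\kscalar'(r)/r\bigr)= -\kscalar''(r)/r + \kscalar'(r)/r^{2}$, and subtracting the two gives $0$ for every $r>0$, which is precisely~\eqref{curlzero}. By Corollary~\ref{cor_curl}, $x\mapsto\kk_V(x)\alpha$ is therefore curl-free for all $\alpha\in\Rd$; since $V\hookrightarrow C^1(\Rd,\Rd)$, the proposition identifying ``$\mathrm{curl}(\kk(\cdot)\alpha)=0$ for all $\alpha$'' with ``$\mathrm{curl}\,u=0$ for all $u\in V$'' then shows that $\kk_V$ is \emph{curl-free} in the sense of the corresponding definition.

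I do not expect any genuine difficulty; the only point needing a little care is the smoothness bookkeeping, i.e.\ ensuring $\kk_V\in C^1$ so that Corollary~\ref{cor_curl} legitimately applies, which is inherited from the hypotheses of Corollary~\ref{corCF}. An even quicker route is to invoke directly the computation already carried out in the proof of Corollary~\ref{corCF}, where $\kk_V(x)\alpha=-\nabla\bigl(\nabla\kk_H(x)\cdot\alpha\bigr)$ was exhibited as a gradient field and hence is automatically curl-free; combined with the same proposition this yields the claim with no further calculation.
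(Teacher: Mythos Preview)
Your approach is essentially the paper's: both verify that the coefficients~\eqref{kV1} satisfy the curl-free condition~\eqref{curlzero}, and both note that for $\cs\geq 2$ one may alternatively invoke Corollary~\ref{corCF} directly. The only substantive difference is that you impose $\cs\geq 2$ in order to have $\kk_V\in C^1$ and apply Corollary~\ref{cor_curl}, whereas the paper's whole point in verifying~\eqref{curlzero} algebraically is precisely to \emph{extend} the conclusion to $\cs=1$: the hypotheses of Proposition~\ref{prkV1} (inherited from Proposition~\ref{grad_H}) allow $\cs\geq 1$, and checking~\eqref{curlzero} for the pair $(-k_H'',\,-k_H'/r)$ requires only that $k_H$ be twice differentiable, which holds already when $\cs=1$ because the translation-invariant kernel of a $1$-admissible space is $C^2$ (Theorem~\ref{Th_diff} and the remark at the end of Section~\ref{gfik}). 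So your computation is correct, but by insisting on $\cs\geq 2$ you cover strictly less than the corollary claims; to close that gap, simply observe---as the paper does---that your verification of~\eqref{curlzero} already makes sense and goes through at $\cs=1$.
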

When~$\cs\geq2$, the above follows from Corollary~\ref{corCF}. However, one can verify 
that~$\aaa_V$ and~$\bbb_V$, given by~\eqref{kV1}, satisfy equation~\eqref{curlzero},
and that the latter operation only involves \em two \em derivatives of~$k_H$
(and~$k_H$ is twice differentiable when~$s=1$, by Theorem~\ref{Th_diff} and the remark
at the end of Section~\ref{gfik}). Therefore Corollary~\ref{corkCF} also holds when~$s=1$.
The curl-free property may be seen in the Fourier domain as follows.
\begin{proposition}
\label{prkV1f}
Under the assumptions of Proposition~\ref{prkV1}, if~$\kk_H$ is integrable then
$\widehat{\kk}_H(\xi)=h_H(\|\xi\|)$
and 
$\widehat{\kk}_V(\xi)=\AAA_V(\|\xi\|)\,\mathrm{Pr}_\xi^\parallel
+\BBB_V(\|\xi\|)\,\mathrm{Pr}_\xi^\perp$, 
where~$\BBB_V=0$ and
\begin{equation}
\label{hparkh}
\AAA_V(\varrho)
=
(2\pi\varrho)^2\,h_H(\varrho),
\quad\mbox{with }\quad
h_H(\varrho)=
\frac{2\pi}{\varrho^{\mu}}
\int_0^\infty
\!\!\!
r^{\mu+1}\,\kscalar (r)\,J_\mu(2\pi\varrho r)\,dr
\qquad
(\mbox{as usual, } {\textstyle\mu=\frac{d}{2}-1}).
\end{equation} 
\end{proposition}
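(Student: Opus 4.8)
The plan is to leverage Proposition~\ref{HesskH}, which already gives $\kk_V=-\mathrm{Hess}\,\kk_H$, i.e.~$\kk_V^{j\ell}=-\partial_j\partial_\ell\kk_H$ for $1\le j,\ell\le d$, and then to read everything off in the Fourier domain, where a double spatial derivative turns into multiplication by a quadratic monomial in the frequency variable. With the normalization $\widehat{f}(\xi)=\int_{\Rd}f(x)e^{-2\pi i x\cdot\xi}\,dx$ used throughout the paper, one has $\widehat{\partial_j f}(\xi)=2\pi i\,\xi^j\widehat{f}(\xi)$, hence $\widehat{\partial_j\partial_\ell f}(\xi)=-(2\pi)^2\xi^j\xi^\ell\widehat{f}(\xi)$; applied entrywise this yields $\widehat{\kk}_V^{j\ell}(\xi)=(2\pi)^2\,\xi^j\xi^\ell\,\widehat{\kk}_H(\xi)$.

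I would then invoke Theorem~\ref{FT}: because $\kk_H(x)=k_H(\|x\|)$ is radial and integrable, its Fourier transform is radial as well, $\widehat{\kk}_H(\xi)=h_H(\|\xi\|)$, with $h_H$ precisely the Hankel-type integral displayed in the statement (with $\mu=\tfrac d2-1$). Substituting and recollecting in matrix form gives $\widehat{\kk}_V(\xi)=(2\pi)^2 h_H(\|\xi\|)\,\xi\xi^T=(2\pi\|\xi\|)^2 h_H(\|\xi\|)\,\mathrm{Pr}_\xi^\parallel$, where I have used $\xi\xi^T=\|\xi\|^2\,\mathrm{Pr}_\xi^\parallel$. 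Since $\kk_V$ is TRI (Proposition~\ref{prkV1}), Lemma~\ref{lem_equiv} lets us write $\widehat{\kk}_V(\xi)=\AAA_V(\|\xi\|)\,\mathrm{Pr}_\xi^\parallel+\BBB_V(\|\xi\|)\,\mathrm{Pr}_\xi^\perp$, and matching the two expressions (the right-hand side above has no $\mathrm{Pr}_\xi^\perp$ part) forces $\BBB_V\equiv0$ and $\AAA_V(\varrho)=(2\pi\varrho)^2 h_H(\varrho)$, which is the assertion.

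The one delicate point is that we only assume $\kk_H\in L^1(\Rd)$, whereas $\kk_V=-\mathrm{Hess}\,\kk_H$ need not be integrable; the differentiation identity should therefore be read in the sense of tempered distributions. This is harmless here: $\kk_H$ is a tempered distribution and, by the remark at the end of Section~\ref{gfik}, it is in fact of class $C^{2\cs}$ (so at least $C^2$), while the right-hand side $(2\pi)^2\xi^j\xi^\ell h_H(\|\xi\|)$ is a genuine continuous function --- $h_H$ is continuous and bounded because $\kk_H\in L^1$ --- so $\widehat{\kk}_V$ is represented by that function and the pointwise formula makes sense. As a self-contained cross-check one may instead substitute the coefficients $\aaa_V(r)=-k_H''(r)$ and $\bbb_V(r)=-\tfrac1r k_H'(r)$ of Proposition~\ref{prkV1} into formulae~\eqref{Tpar}--\eqref{Tper} of Theorem~\ref{prop_FT} and integrate by parts twice, exactly as in the proof of Theorem~\ref{fund_th}; the resulting $\BBB_V\equiv0$ is moreover consistent with Corollary~\ref{corkCF} (curl-freeness of $\kk_V$) through Theorem~\ref{fund_th}. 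This distributional bookkeeping is really the only obstacle --- the algebra itself is immediate.
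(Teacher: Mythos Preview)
Your argument is correct and follows essentially the same route as the paper: both compute $\widehat{\kk}_V^{j\ell}(\xi)=(2\pi)^2\xi_j\xi_\ell\,\widehat{\kk}_H(\xi)$ from $\kk_V=-\mathrm{Hess}\,\kk_H$ via the standard Fourier differentiation rule, then invoke Theorem~\ref{FT} to write $\widehat{\kk}_H(\xi)=h_H(\|\xi\|)$ and collect the result as $(2\pi\|\xi\|)^2 h_H(\|\xi\|)\,\mathrm{Pr}_\xi^\parallel$. Your extra remarks on the distributional justification and the alternative cross-check via~\eqref{Tpar}--\eqref{Tper} are sound but go slightly beyond what the paper itself records.
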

\begin{proof} 
Under the sole assumption of translation-invariance, for all~$j,\ell\in\{1,\ldots,d\}$
it is the case that
\begin{align*}
\widehat{\kk}^{j\ell}_V(\xi)
=
-\int_{\Rd}\partial_j\partial_\ell\kk_H(x)\,e^{-2\pi i \xi\cdot x}dx
=
-(2\pi i \xi_j)(2\pi i \xi_\ell)\int_{\Rd}\kk_H(x)\,e^{-2\pi i \xi\cdot x}dx
=(2\pi)^2\xi_j\xi_\ell\,\widehat{\kk}_H(\xi),
\end{align*}
from which we have $\widehat{\kk}_V(\xi)
=
(
2\pi\|\xi\|
)^2
\,
\widehat{\kk}_H(\xi)\,\mathrm{Pr}_\xi^\parallel$.
When~$\kk_H$ is also rotation-invariant,
by Proposition~\ref{FT} 
we must have
$\widehat{\kk}_H(\xi)=h_H(\|\xi\|)$ with 
$h_H(\varrho)$ as in~\eqref{hparkh}, 
and we conclude immediately.
\end{proof}
\begin{figure}
\hspace{.5cm}
\begin{tikzpicture}[scale=.75]
\draw (-10,3) node [right] {\small\it Space~$H$:};
\draw (-10,0) node [right] {\small\it Space~$V$:};
\draw (-4.9,3) node [right] {$\kk_H(x)=k_H(\|x\|)$};
\draw (5.6,3) node [right] {$\kkh_H(x)=h_H(\|x\|)$};
\draw (-6.45,0) node [right] {$\kk_V: (k^\parallel_{V},k^\perp_{V})
=\big(-k''_H,-\frac{1}{r}k'_H\big)$};
\draw (4,0) node [right] {$\kkh_{V}: (h^\parallel_{V},h^\perp_{V})
=\big((2\pi\varrho)^2h_H,0\big)$};
%
\draw [|->] (1.2,3) -- (3.5,3);
\draw [|->] (1.2,0) -- (3.5,0);
\draw (2.3,3.35) node {\small$\mathcal{F}$};
\draw (2.3,.4) node {\small$\mathcal{F}$};
\draw [|->] (-3,2.3) -- (-3,.6);
\draw [|->] (7.5,2.3) -- (7.5,.6);
\draw (-3,1.45) node [right] {\small$-\mathrm{Hess}$};
\draw (7.5,1.45) node [right] {\small$(-\mathrm{Hess})^\wedge$};
\end{tikzpicture}
\caption{Commutative diagram illustrating the construction, from 
the scalar-valued kernel~$\kk_H:\Rd\rightarrow \mathbb{R}$,
of the matrix-valued, curl-free kernel~$\kk_V:\Rd\rightarrow\Rdd$,
with interpretation in the Fourier domain.}
\label{FigCurlF}
\end{figure}
The construction of the curl-free kernel~$\kk_V$ 
from the scalar-valued kernel~$\kk_H$ 
is summarized  in Figure~\ref{FigCurlF}. We have used
the compact notation~$(\aaa_V,\bbb_V)$ to represent
the coefficients of~$\kk_V$ and~$(\AAA_V,\BBB_V)$ to represent
those of its Fourier transform~$\kkh_V$. The dual operator of the negative Hessian
in the Fourier domain is the map~$h_H\mapsto((2\pi\varrho)^2h_H,0)$.
Under the assumptions of Proposition~\ref{prkV1f}
we may actually compute 
the corresponding ``initial'' kernel~$\kk_H$
in terms of the non-zero 
coefficient~$\AAA_V$
of~$\kkh_V$; in fact, using 
the inversion formula~\eqref{invhank}
and the fact that~$h_H(\varrho)=(2\pi\varrho)^{-2}\AAA_V(\varrho)$ yields:
\begin{equation}
\label{kHhV}
\kk_H(x)=k_H(\|x\|),\;\; x\in\Rd,
\qquad
\mbox{with}
\qquad
k_H(r)
=
\frac{1}{2\pi r^\mu}
\int_0^\infty
\varrho^{\mu-1}\,\AAA_V(\varrho)\, J_\mu(2\pi r \varrho) \,d\varrho,
\;\;
r>0.
\end{equation}
\par
We conclude that 
any~integrable, TRI, curl-free kernel~$\kk_V$  (i.e.~with nonnegative~$\AAA_V$,
and~\mbox{$\BBB_V=0$}) of a~$(\cs-1)$-admissible Hilbert space~$V$
may be derived with the procedure described in Propositions~\ref{grad_H} and~\ref{prkV1} 
(ultimately,
by applying formulae~\eqref{kV1}) from the kernel~\mbox{$\kk_H(x)=k_H(\|x\|)$}
of a $\cs$-admissible Hilbert space of scalar-valued functions, where~$k_H$
is given by formula~\eqref{kHhV}. This proves the
generality of the above method for constructing RKHS of curl-free vector fields
with TRI kernels.
\begin{paragraph}{Example~\ref{exGauss2}, revisited again.} 
Consider a RKHS of scalar valued functions defined in~$\Rd$,
denoted by~$H$,
with kernel~$\kk_H(x)=k_H(\|x\|)$, where
$k_H(r)=\frac{b}{2c}e^{-cr^2}$. As long as~$b>0$ and~$c>0$
it is the case that the Hilbert space is~$\cs$-admissible, for all~$\cs$. Formulae~\eqref{kV1}
yield
\begin{equation}
\label{cfGauss}
\aaa_V(r)=be^{-cr^2}
\qquad
\mbox{and}
\qquad
\bbb_V(r)=(b-2bcr^2)e^{-cr^2},
\end{equation} 
which are precisely the equations of the coefficients
of the kernel in Example~\ref{exGauss2} with~$a=2bc$;
that is, along the diagonal line of the domain~$D_2$,
which corresponds to curl-free kernels. See Figure~\ref{wedgeint}(b). 
\end{paragraph}
\begin{example_n}[Bessel-type curl-free kernels] 
Consider the 
scalar-valued $\kk_H(x)=k_H(\|x\|)$,
with 
\begin{equation}
\label{kscbess}
k_H(r) = C_0\,
\Big(\frac{r}{\sigma}\Big)^{\ce-\frac{\dd}{2}}\,
K_{\ce-\frac{\dd}{2}}\!\Big(\frac{r}{\sigma}\Big), \qquad r\geq 0,
\end{equation}
where~$K_\nu$ is a modified Bessel function of order~$\nu$.
When 
$C_0=C(\sigma,d,\ce):=(2^{\ce+\frac{\dd}{2}-1}\pi^{\frac{\dd}{2}}\,\Gamma(\ce)
\,\sigma^\dd)^{-1}$, as in~\eqref{k_bess}, it is 
the Green's function of~$L=(1-\sigma^2\Delta)^\ell$. 
The corresponding
Hilbert space~$H$ is $\cs$-admissible, i.e.~$H\hookrightarrow C_0^\cs(\Rd,\mathbb{R})$,
if~$\ce\geq\cs+d/2$. 
To apply our procedure, we must have at least~$s\geq1$. 
By Lemma~\ref{lembess} in Appendix~\ref{appBessel}, 
applying~\eqref{kV1} to the kernel~\eqref{kscbess} yields
\begin{align}
\label{cfBess}
\aaa_V(r)
&
=
\frac{C_0}{\sigma^2}
\Big(\frac{r}{\sigma}\Big)^{\nu-1}
\Big\{
(2\nu-1)\,
K_{\nu-1}
\Big(\frac{r}{\sigma}\Big)
-
\frac{r}{\sigma}\,
K_\nu
\Big(\frac{r}{\sigma}\Big)
\Big\}
&
&\mbox{and}
&
\bbb_V(r)
&=
\frac{C_0}{\sigma^2}
\Big(\frac{r}{\sigma}\Big)^{\nu-1}K_{\nu-1}\Big(\frac{r}{\sigma}\Big),
\end{align}
where~$\nu=\ce-\frac{d}{2}$. From the asymptotic expansion for large arguments 
of~$K_\nu$, given by~\eqref{KnuLarge} in Appendix~\ref{appBessel},
one sees that the kernel with coefficients~\eqref{cfBess}
has heavier tails than the ``Gaussian'' case~\eqref{cfGauss}.
\end{example_n}
\subsection{Divergence-free component}
We have seen how one can create the matrix-valued kernel
a curl-free RKHS by taking the ``double gradient'' of a scalar-valued kernel.
We will now illustrate how a the kernel of a divergence-free RKHS 
may be created by taking the ``double curl'' of a matrix-valued scalar kernel
(i.e.~with equal diagonal entries). For simplicity 
we shall limit ourselves to the case~$d=3$, which is the most relevant in applications.
\begin{lemma}
\label{Lem_inj}
A 1-admissible Hilbert space~$W\hookrightarrow C_0^1(\Rt,\Rt)$
is divergence-free if and only if the linear map 
\mbox{$\mathrm{curl}:W\rightarrow C_0(\Rt,\Rt)$}
is injective.
\end{lemma}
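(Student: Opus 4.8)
The plan is to prove the two implications separately, relying on the basic identity that for $u\in W\hookrightarrow C_0^1(\Rt,\Rt)$ the composition $\mathrm{div}\circ\mathrm{curl}$ vanishes identically (this is $d_2\circ d_1=0$ in the de~Rham complex, using the identification of vector fields with $1$-forms introduced before Corollary~\ref{cor_curl}). First I would note that $W$ being divergence-free means, by the definition just given, that $\mathrm{div}\,u=0$ for all $u\in W$. So the statement to prove is: $\mathrm{div}\,u=0$ for all $u\in W$ if and only if $\mathrm{curl}:W\rightarrow C_0(\Rt,\Rt)$ has trivial kernel.

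For the direction ``divergence-free $\Rightarrow$ $\mathrm{curl}$ injective'': suppose $u\in W$ with $\mathrm{curl}\,u=0$ on $\Rt$. Since $u$ is a $C^1$ curl-free vector field on the simply connected domain $\Rt$, there is a scalar potential $f\in C^2(\Rt,\mathbb{R})$ with $u=\nabla f$; moreover $\Delta f=\mathrm{div}\,u=0$ by hypothesis, so $f$ is harmonic. Now I would invoke the decay: $u\in C_0(\Rt,\Rt)$, so $\nabla f$ vanishes at infinity, which forces $f$ to be a bounded harmonic function on $\Rt$ (up to an additive constant it is the integral of $\nabla f$ along rays, and $\|\nabla f(x)\|\to 0$ gives sublinear growth; combined with harmonicity and a Liouville-type argument $f$ is constant). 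Hence $u=\nabla f=0$, and $\mathrm{curl}$ is injective. The cleanest way to run the Liouville step is: a harmonic function whose gradient tends to $0$ at infinity has gradient components that are themselves harmonic and bounded, hence constant by the classical Liouville theorem, and the constants must be $0$ since $\nabla f\to 0$; therefore $u=0$.

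For the converse, ``$\mathrm{curl}$ injective $\Rightarrow$ divergence-free'': this is where I expect the main obstacle, since injectivity of $\mathrm{curl}$ is a priori much weaker than a structural constraint on all of $W$. The idea is to argue by contraposition: suppose $W$ is \emph{not} divergence-free, so there exists $u\in W$ with $\mathrm{div}\,u\not\equiv 0$. I want to produce a nonzero element of $\ker(\mathrm{curl})$. Because $W$ is a RKHS with a TRI kernel $\kk$, Theorem~\ref{fund_th} and the Hodge-decomposition discussion give a decomposition $\kk=\kk_1+\kk_2$ with $\kk_1$ curl-free and $\kk_2$ divergence-free, and correspondingly (pointwise, for each $\alpha$) $\kk(x)\alpha=\kk_1(x)\alpha+\kk_2(x)\alpha$. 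If $W$ fails to be divergence-free then by the proposition preceding the definition of divergence-free kernel, $\mathrm{div}(\kk(\cdot)\alpha)\neq 0$ for some $\alpha$, which by Theorem~\ref{fund_th} means the coefficient $\AAA$ of $\kkh$ is not identically zero, so $\kk_1\neq 0$. Then $\kk_1(\cdot-x)\alpha$ is a nonzero element of the span $H_0$ generating $W$, it lies in $W$, and it is curl-free: it is a nonzero element of $\ker(\mathrm{curl})$, contradicting injectivity. The one technical point to verify carefully here is that $\kk_1(\cdot-x)\alpha$ genuinely belongs to $W$ (not merely to an auxiliary curl-free space) — this should follow because $\kk_1$ is a TRI positive kernel with $\kk_1\le\kk$ in the positive-definite order, so its RKHS embeds in $W$, and in fact $\kk_1(\cdot-x)\alpha = \mathrm{Pr}$-type projection applied to $\kk(\cdot-x)\alpha$ lands back in $W$ by the orthogonal-projection interpretation of the Hodge splitting given in the Hodge decomposition paragraph. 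I would spell out that the curl-free component is obtained by an $L^2$-orthogonal projection that maps $W$ into itself, so $\kk_1(\cdot-x)\alpha\in W$.

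Alternatively, and perhaps more simply for the write-up, the converse can be done directly without the full Hodge machinery: if $\mathrm{div}\,u_0\not\equiv 0$ for some $u_0\in W$, one can in the Fourier domain split $\widehat{u_0}(\xi) = \mathrm{Pr}_\xi^\parallel\widehat{u_0}(\xi) + \mathrm{Pr}_\xi^\perp\widehat{u_0}(\xi)$ and note (by the same computation as in Theorem~\ref{fund_th} and the proof of Proposition~\ref{prkV1f}, where $\widehat{\mathrm{curl}\,u}$ is $2\pi i\,\xi\wedge\widehat{u}$) that the curl-free part corresponds to the $\mathrm{Pr}^\parallel$ component and is nonzero precisely when $\mathrm{div}\,u_0\neq 0$; showing this component is again in $W$ uses that $W$ has a TRI kernel, so the $\mathrm{Pr}^\parallel$/$\mathrm{Pr}^\perp$ decomposition is an orthogonal decomposition of $W$ itself. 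Either way, the crux is the same: the TRI structure makes the curl-free/divergence-free splitting an \emph{orthogonal} splitting \emph{of the space $W$}, so a nonzero divergence means a nonzero curl-free summand living inside $W$, which kills injectivity. I expect the Liouville argument in the forward direction to be routine, and the ``stays inside $W$'' verification in the converse to be the only place needing care.
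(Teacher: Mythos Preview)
Your forward direction (divergence-free $\Rightarrow$ $\mathrm{curl}$ injective) is essentially the paper's argument: $\mathrm{curl}\,w=0$ gives $w=\nabla f$, then $\Delta f=\mathrm{div}\,w=0$, and a Liouville-type step forces $w=0$. Your version of the Liouville step (applying the classical theorem to the harmonic, vanishing-at-infinity components of $\nabla f$) is in fact a bit more careful than the paper's one-line ``$f$ is harmonic and vanishes at infinity''.

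For the converse you take a genuinely different and much heavier route. The paper argues directly, with no kernels and no TRI hypothesis: for arbitrary $w\in W$, take the classical Hodge decomposition $w=w_1+w_2$ (with $\mathrm{curl}\,w_1=0$, $\mathrm{div}\,w_2=0$); since $\mathrm{curl}\,w=\mathrm{curl}\,w_2$, injectivity yields $w=w_2$, hence $\mathrm{div}\,w=\mathrm{div}\,w_1=0$. That is the whole argument. Your contraposition instead passes through Theorem~\ref{fund_th} and the Hodge decomposition \emph{of the kernel} $\kk=\kk_1+\kk_2$, which imports the TRI structure; but the lemma is stated for an arbitrary $1$-admissible Hilbert space and does not assume a TRI kernel, so your argument proves a weaker statement than the one claimed.

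There is also a gap in your justification that the curl-free piece lies in $W$. Saying that ``the $L^2$-orthogonal projection maps $W$ into itself'' is not correct as stated: the Hodge projection is orthogonal for the $L^2$ inner product, not for $\langle\cdot,\cdot\rangle_W$, so there is no a priori reason it preserves $W$. What would actually close your kernel-level argument is Aronszajn's theorem on sums of positive kernels (if $\kk=\kk_1+\kk_2$ with both summands positive definite then $H_{\kk_1}\subseteq H_{\kk}$), which you hint at with ``$\kk_1\le\kk$ in the positive-definite order'' but never invoke. The paper's short argument faces the analogous question (why $w_1\in W$) but localizes it to a single clean application of Hodge on the element $w$ itself, without entangling it with the TRI machinery or Fourier projections; that is what your approach loses.
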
 
\begin{proof}
First assume that~$W$ is divergence-free. If~$\mathrm{curl}\,w=0$ then
$w=\nabla f$ for some scalar function~$f$; therefore
$\Delta f=\mathrm{div}\,w=0$ because~$W$ is divergence-free,
and in fact~$f$ must be zero because it 
is harmonic and it vanishes at infinity.
Vice versa, assume that curl is injective on~$W$. Take an arbitrary~$w\in W$
and let $w=w_1+w_2$ be its Hodge decomposition
(i.e.~$\mathrm{curl}\, w_1=0$ and $\mathrm{div}\, w_2=0$). We have
$\mathrm{curl}\, w=\mathrm{curl}\, w_2$, so by the injectivity assumption
$w=w_2$, whence~$w_1=0$. Therefore $\mathrm{div}\, w=\mathrm{div}\, w_1=0$.
\end{proof} 
\par
Let~$\cs\geq 1$ and consider a~$\cs$-admissible Hilbert 
space~$W \hookrightarrow C_0^\cs(\Rt,\Rt)$, and the 
space of vector fields $V:=\{\mathrm{curl}\, w\,|\,w\in W\}$. 
By Lemma~\eqref{Lem_inj}
the ``primitive''~$w\in W$ of~$v=\mathrm{curl}\,w\in V$
is unique if and only if~$W$ is divergence-free.
With this condition,
we can
certainly follow a path that is similar to the one described in the previous section
and endow~$V$ with a reproducing kernel induced by
the kernel of~$W$.  
\begin{notation}
Given a differentiable matrix-valued function~$G:\Rt\times\Rt\rightarrow\Rtt$
we indicate with $\mathrm{curl}^\mathrm{C}_n G$
the matrix-valued funcion whose~$i^\mathrm{\,th}$ column (with $i=1,2,3$)
is the curl of the~$i^\mathrm{\,th}$ column of~$G$; 
the curl is computed with respect to the~$n^\mathrm{th}$ 
set of variables of~$G$ (with $n=1$ or $2$). 
The definition of $\mathrm{curl}^\mathrm{R}_n\, G$
is obtained by substituting the word ``column'' with the word ``row''.
If the matrix-valued functions of only three variables
$\mathbf{ g}:\Rt\rightarrow\Rtt$ is differentiable,
$\mathrm{curl}^\mathrm{C} \mathbf{ g}$ (respectively, $\mathrm{curl}^\mathrm{R} \mathbf{ g}$)
simply indicates the matrix whose~$i^\mathrm{\,th}$
column (row) is the curl of the~$i^\mathrm{\,th}$ column (row) of~$\mathbf{ g}$,
for $i=1,2,3$.
\end{notation}
\begin{proposition}
\label{curl_W}
For fixed $\cs\geq1$, consider a Hilbert 
space $W\hookrightarrow C_0^\cs(\Rt,\Rt)$ 
of divergence-free vector fields,
with kernel $K_W:\Rt\times\Rt\rightarrow \Rt$.
Let~$V:=\{\mathrm{curl}\, w\,|\,w\in W\}$
be endowed with the inner product
$\langle v_1,v_2  \rangle_V:=\langle w_1,w_2\rangle_W$,
where $v_1=\mathrm{curl}\, w_1$ and $v_2=\mathrm{curl}\, w_2$.
Then~$V\hookrightarrow C_0^{\cs-1}(\Rt,\Rt)$, i.e.~it is a 
$(\cs-1)$-admissible Hilbert 
space of $\Rt$-valued functions, and its
kernel is given 
by~$K_V=\mathrm{curl}_1^\mathrm{C}\,
\mathrm{curl}_2^\mathrm{R}\, K_W$.
\end{proposition}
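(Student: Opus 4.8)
The plan is to follow closely the pattern of Proposition~\ref{grad_H}, with $\mathrm{curl}$ playing the role of $\nabla$ and Lemma~\ref{Lem_inj} supplying the injectivity needed to make the construction well posed. First I would verify that $\langle\cdot,\cdot\rangle_V$ is an inner product: since $W$ is divergence-free, Lemma~\ref{Lem_inj} says that $\mathrm{curl}\colon W\to V$ is injective, hence a bijection onto $V$, so each $v\in V$ has a \emph{unique} primitive $w\in W$ with $v=\mathrm{curl}\,w$; the inner-product axioms for $\langle\cdot,\cdot\rangle_V$, positive-definiteness included (as $\langle v,v\rangle_V=0$ forces $w=0$ and then $v=0$), follow from those of $\langle\cdot,\cdot\rangle_W$. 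By construction $\mathrm{curl}\colon W\to V$ is a surjective isometry, so $V$ inherits completeness from $W$. Each component of $\mathrm{curl}\,w$ is a fixed linear combination of first-order partials of the components of $w$, whence $w\in C_0^{\cs}(\Rt,\Rt)$ gives $\mathrm{curl}\,w\in C_0^{\cs-1}(\Rt,\Rt)$ together with $\|\mathrm{curl}\,w\|_{\cs-1,\infty}\leq C\|w\|_{\cs,\infty}\leq C'\|w\|_W=C'\|v\|_V$; therefore $V\hookrightarrow C_0^{\cs-1}(\Rt,\Rt)$, so $V$ is $(\cs-1)$-admissible and, by the discussion at the end of Section~\ref{vv_rkhs}, a RKHS with a reproducing kernel $K_V$ for which $K_V(\cdot,x)\alpha\in C_0^{\cs-1}(\Rt,\Rt)$.

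To identify $K_V$, I would fix $x,\alpha\in\Rt$ and an arbitrary $v=\mathrm{curl}\,w\in V$ and transfer the two curls onto the kernel one at a time, starting from the reproducing property $\langle K_V(\cdot,x)\alpha,v\rangle_V=\alpha\cdot v(x)$. Writing $\varepsilon_{ijk}$ for the Levi-Civita symbol, one has $(\mathrm{curl}\,w)_i(x)=\sum_{j,k}\varepsilon_{ijk}\,\partial_j w_k(x)$, and, as $\cs\geq1$, the differential reproducing property~\eqref{rep_diff} of Theorem~\ref{Th_diff} gives $\partial_j w_k(x)=\langle\partial_{2,j}K_W(\cdot,x)e_k,w\rangle_W$ with $\partial_{2,j}K_W(\cdot,x)e_k\in W$. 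Summing against $\alpha$ yields $\alpha\cdot v(x)=\langle G,w\rangle_W$, where $G:=\sum_{i,j,k}\alpha_i\varepsilon_{ijk}\,\partial_{2,j}K_W(\cdot,x)e_k\in W$; inspecting its $m$-th component shows that $G_m=\sum_i\alpha_i\,\bigl(\mathrm{curl}_2^{\mathrm{R}}K_W(\cdot,x)\bigr)_{mi}$, i.e.~$G=\bigl(\mathrm{curl}_2^{\mathrm{R}}K_W(\cdot,x)\bigr)\alpha$ --- the curl of the \emph{rows} of $K_W$ in the second set of variables. Because $W$ is divergence-free, $w$ is the unique primitive of $v$ and $G$ is the unique primitive of $\mathrm{curl}\,G$, so the very definition of $\langle\cdot,\cdot\rangle_V$ gives $\langle G,w\rangle_W=\langle\mathrm{curl}\,G,\mathrm{curl}\,w\rangle_V=\langle\mathrm{curl}\,G,v\rangle_V$. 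Finally, since $G$ is the matrix $\mathrm{curl}_2^{\mathrm{R}}K_W(\cdot,x)$ applied to $\alpha$, its curl (now in the first set of variables) is $\bigl(\mathrm{curl}_1^{\mathrm{C}}\mathrm{curl}_2^{\mathrm{R}}K_W(\cdot,x)\bigr)\alpha$; hence $\langle K_V(\cdot,x)\alpha,v\rangle_V=\bigl\langle(\mathrm{curl}_1^{\mathrm{C}}\mathrm{curl}_2^{\mathrm{R}}K_W(\cdot,x))\alpha,v\bigr\rangle_V$ for all $v\in V$, and uniqueness of the representer together with the arbitrariness of $\alpha$ gives $K_V(y,x)=\mathrm{curl}_1^{\mathrm{C}}\mathrm{curl}_2^{\mathrm{R}}K_W(y,x)$.

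The step I expect to be the main obstacle is the index bookkeeping establishing that the inner (second-variable) curl acts on the \emph{rows} of $K_W$ while the outer (first-variable) curl acts on the \emph{columns}, and that they compose in exactly the order stated: the curl in $v=\mathrm{curl}\,w$, once pushed through~\eqref{rep_diff}, differentiates the columns $K_W(\cdot,x)e_k$ along the second variable and so produces a row-wise curl, whereas transferring the remaining curl off $w$ onto $G$ produces a column-wise one. The two transfers of a differential operator through an inner product are the other points requiring care: the first is legitimate only because $\partial_2^pK_W(\cdot,x)\alpha\in W$ by Theorem~\ref{Th_diff}, and the second only because the $V$-inner product is \emph{defined} through primitives whose uniqueness is precisely what Lemma~\ref{Lem_inj} guarantees.
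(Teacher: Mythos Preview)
Your proposal is correct and follows essentially the same route as the paper: verify that $\langle\cdot,\cdot\rangle_V$ is a genuine inner product (the paper simply asserts this is easy, while you invoke Lemma~\ref{Lem_inj} explicitly), deduce completeness and the embedding into $C_0^{\cs-1}$, then compute the kernel by expanding $\alpha\cdot\mathrm{curl}\,w(x)$, pushing the differentiation onto $K_W$ via Theorem~\ref{Th_diff} to obtain $\big\langle(\mathrm{curl}_2^{\mathrm R}K_W(\cdot,x))\alpha,w\big\rangle_W$, and finally passing back to $V$ to pick up the outer $\mathrm{curl}_1^{\mathrm C}$. The only cosmetic difference is that the paper writes out the three components of $\alpha\cdot\mathrm{curl}\,w(x)$ explicitly rather than using the Levi-Civita symbol, and uses the identity $\mathrm{curl}(\mathbf g\alpha)=(\mathrm{curl}^{\mathrm C}\mathbf g)\alpha$ in the last step where you argue directly.
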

\begin{proof}
We indicate with~$K_W^{i,\cdot}$ and~$K_W^{\cdot,j}$  the $i^{\mathrm{th}}$ row
and the~$j^{\mathrm{th}}$ column of~$K$, respectively.
One can easily verify
that~$\langle\cdot,\cdot\rangle_V$ is an inner product, $V$ is complete, 
and~$V \hookrightarrow C_0^{\cs-1}(\Rt,\Rt)$;
therefore $V$ is a RKHS.
For arbitrary~$w\in W$ let~$v=\mathrm{curl}\, w$;
for all~$x,\alpha\in\Rt$ the 
kernel~$K_V$ must be such that
\begin{align*}
&\big\langle
K_V(\cdot,x)\alpha,
v
\big\rangle_V
=
\alpha\cdot v(x) 
= 
\alpha\cdot\mathrm{curl}\, w(x) 
\\
&=
\alpha_1\big(\partial_2w_3(x)-\partial_3w_2(x)\big)
+
\alpha_2\big(\partial_3w_1(x)-\partial_1w_3(x)\big)
+
\alpha_3\big(\partial_1w_2(x)-\partial_2w_1(x)\big)
=\mbox{(by Theorem~\ref{Th_diff})}
\\
&=
\big\langle
\alpha_1\big(\partial_{2,2}K_W^{\cdot,3}(\cdot,x)-\partial_{2,3}K_W^{\cdot,2}(\cdot,x)\big)
+
\alpha_2\big(\partial_{2,3}K_W^{\cdot,1}(\cdot,x)-\partial_{2,1}K_W^{\cdot,3}(\cdot,x)\big)
\\
&
\hspace{6.115cm}+
\alpha_3\big(\partial_{2,1}K_W^{\cdot,2}(\cdot,x)-\partial_{2,2}K_W^{\cdot,1}(\cdot,x)\big)
,w
\big\rangle_W
\\
&=
\big\langle
\big(
\mathrm{curl}_2^\mathrm{R}\,K_W(\cdot,x)
\big)\alpha,w
\big\rangle_W
=
\big\langle
\mathrm{curl}\big[\big(
\mathrm{curl}_2^\mathrm{R}\,K_W(\cdot,x)
\big)\alpha\big],v
\big\rangle_V
=
\big\langle
\big[
\mathrm{curl}^\mathrm{C}_1\,
\mathrm{curl}^\mathrm{R}_2\,
K_W(\cdot,x)
\big]
\alpha
,v
\big\rangle_V,
\end{align*}
where in the last step we have used the 
fact that~$\mathrm{curl}(\mathbf{ g}\alpha)
=
(\mathrm{curl}^\mathrm{C}\mathbf{ g})\alpha$
for any differentiable matrix-valued function~$\mathbf{ g}:\Rt\rightarrow\Rtt$
and any $\alpha\in\Rt$. This concludes the proof.
\end{proof}
In the translation-invariant case we have the following immediate consequence.
\begin{proposition}
\label{curl2}
Under the assumptions of Proposition~\ref{curl_W},
if $K_W(x,y)=\kk_W(x-y)$ for some function~$\kk_W:\Rt\rightarrow \mathbb{R}^{3\times 3}$,
then $K_V(x,y)=\kk_V(x-y)$ 
with $\kk_V=-\mathrm{curl}^\mathrm{C}\mathrm{curl}^\mathrm{R}\, \kk_W.$
\end{proposition}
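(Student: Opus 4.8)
The plan is to reduce Proposition~\ref{curl2} to Proposition~\ref{curl_W} by the same elementary observation that was used to pass from Proposition~\ref{grad_H} to Proposition~\ref{HesskH}: differentiation with respect to the first set of variables picks up a $(+1)$ while differentiation with respect to the second set picks up a $(-1)$, because $K_W(x,y)=\kk_W(x-y)$ depends only on the difference. First I would record the component-level version of this fact: for every pair of indices $i,j\in\{1,2,3\}$ and all $x,y\in\Rt$ one has $\partial_{1,i}K_W^{\cdot,j}(x,y)=(\partial_i\kk_W^{\cdot,j})(x-y)$ and $\partial_{2,i}K_W^{\cdot,j}(x,y)=-(\partial_i\kk_W^{\cdot,j})(x-y)$, and similarly for rows. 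This uses only the chain rule and requires $\kk_W$ to be differentiable twice, which holds by the remark at the end of Section~\ref{gfik} since $W\hookrightarrow C_0^\cs(\Rt,\Rt)$ with $\cs\geq1$.

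Next I would assemble these into the matrix operators $\mathrm{curl}^\mathrm{R}_2$ and $\mathrm{curl}^\mathrm{C}_1$. The operator $\mathrm{curl}^\mathrm{R}_2$ acts on each row of $K_W$ by the curl in the second set of variables; by the sign rule just stated, applying it to $K_W(\cdot,\cdot)=\kk_W(\cdot-\cdot)$ produces $-(\mathrm{curl}^\mathrm{R}\kk_W)(x-y)$, where $\mathrm{curl}^\mathrm{R}$ is the ordinary (three-variable) row-wise curl. Then applying $\mathrm{curl}^\mathrm{C}_1$ — the column-wise curl in the first set of variables — to the matrix-valued function $(x,y)\mapsto -(\mathrm{curl}^\mathrm{R}\kk_W)(x-y)$ picks up a $(+1)$ and differentiates the argument, yielding $-(\mathrm{curl}^\mathrm{C}\mathrm{curl}^\mathrm{R}\kk_W)(x-y)$. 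By Proposition~\ref{curl_W} the kernel of $V$ is $K_V=\mathrm{curl}^\mathrm{C}_1\mathrm{curl}^\mathrm{R}_2 K_W$, so $K_V(x,y)=-(\mathrm{curl}^\mathrm{C}\mathrm{curl}^\mathrm{R}\kk_W)(x-y)$, which is exactly the claim with $\kk_V=-\mathrm{curl}^\mathrm{C}\mathrm{curl}^\mathrm{R}\kk_W$; in particular $K_V$ is itself translation-invariant.

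I do not expect a serious obstacle here — the proposition is a direct corollary of Proposition~\ref{curl_W} once the translation-invariance bookkeeping is carried out. The one point that needs a little care is the ordering of the two curl operations and which sign each one contributes: $\mathrm{curl}^\mathrm{R}_2$ contributes $-1$ (derivative in the second argument) while $\mathrm{curl}^\mathrm{C}_1$ contributes $+1$ (derivative in the first argument), so the two signs do not cancel and the net factor is $(-1)\cdot(+1)=-1$, giving the single minus sign in $\kk_V=-\mathrm{curl}^\mathrm{C}\mathrm{curl}^\mathrm{R}\kk_W$. One should also verify that $\mathrm{curl}^\mathrm{C}$ and $\mathrm{curl}^\mathrm{R}$ commute as operators on a sufficiently smooth matrix-valued function of three variables (they act on disjoint structure — one on columns, one on rows — and both are first-order constant-coefficient differential operators, so they commute by equality of mixed partials), which is implicit in writing $\mathrm{curl}^\mathrm{C}\mathrm{curl}^\mathrm{R}$ without further qualification. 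Beyond these minor points the proof is the short computation sketched above, exactly parallel to the proof of Proposition~\ref{HesskH}.
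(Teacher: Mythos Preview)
Your proposal is correct and is exactly the argument the paper has in mind: the paper states Proposition~\ref{curl2} as an ``immediate consequence'' of Proposition~\ref{curl_W} in the translation-invariant case, without writing out a proof, and your sign-tracking computation (parallel to the proof of Proposition~\ref{HesskH}) is precisely what makes it immediate. The commutativity of $\mathrm{curl}^\mathrm{C}$ and $\mathrm{curl}^\mathrm{R}$ is recorded separately in Lemma~\ref{rotrotk} but is not actually needed here, since the order is already fixed by Proposition~\ref{curl_W}.
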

\begin{corollary}
\label{corcurl2}
Under the assumptions of Propositions~\ref{curl_W} with~$r\geq2$ and~\ref{curl2}, 
for any choice of~$\alpha\in\Rd$
the vector field~$x\mapsto\kk_V(x)\alpha$ is divergence-free.
\end{corollary}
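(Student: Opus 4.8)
The plan is to verify that $\kk_V=-\mathrm{curl}^\mathrm{C}\mathrm{curl}^\mathrm{R}\,\kk_W$ produces divergence-free vector fields by a direct computation at the level of vector fields, exactly paralleling the argument for Corollary~\ref{corCF}. The key observation is that the kernel action can be rewritten as a genuine curl. First I would note that by Proposition~\ref{curl2} we have $\kk_V(x)\alpha = -(\mathrm{curl}^\mathrm{C}\mathrm{curl}^\mathrm{R}\,\kk_W(x))\alpha$, and then apply the identity (used already in the proof of Proposition~\ref{curl_W}) that $\mathrm{curl}^\mathrm{C}(\mathbf{g}\alpha) = \mathrm{curl}(\mathbf{g}\alpha)$ — more precisely that for a matrix-valued function $\mathbf{g}$, $(\mathrm{curl}^\mathrm{C}\mathbf{g})\alpha = \mathrm{curl}(\mathbf{g}\alpha)$. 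Hence $\kk_V(x)\alpha = -\mathrm{curl}\big[\big(\mathrm{curl}^\mathrm{R}\,\kk_W(x)\big)\alpha\big]$, i.e.\ the vector field $x\mapsto\kk_V(x)\alpha$ is (minus) the curl of the vector field $x\mapsto\big(\mathrm{curl}^\mathrm{R}\,\kk_W(x)\big)\alpha$.

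The second step is to invoke the standard vector-calculus fact that $\mathrm{div}\circ\mathrm{curl}=0$ in $\Rt$ — equivalently $d_2\circ d_1=0$ in the de~Rham complex — which requires the argument of the outer curl to be $C^1$ (so that its curl is $C^1$ and its divergence is well-defined). Since $\cs\geq2$, Proposition~\ref{curl_W} gives $\kk_W$ differentiable enough (it is $2\cs$ times differentiable by the remark at the end of Section~\ref{gfik}, the translation-invariant case), so $\mathrm{curl}^\mathrm{R}\,\kk_W$ is at least $C^2$, hence $\kk_V(\cdot)\alpha$ is $C^1$ and its divergence vanishes identically. Therefore $\mathrm{div}\big(\kk_V(x)\alpha\big) = -\mathrm{div}\,\mathrm{curl}\big[\big(\mathrm{curl}^\mathrm{R}\,\kk_W(x)\big)\alpha\big] = 0$ for all $x\in\Rt$ and all $\alpha\in\Rt$, which is the claim.

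I expect the only genuine subtlety to be bookkeeping about which differentiations act on which slot and on columns versus rows: one must be careful that the ``$\mathrm{curl}^\mathrm{R}$'' applied to the (single, translation-invariant) variable of $\kk_V$ really is the spatial curl acting on the vector field $\big(\mathrm{curl}^\mathrm{R}\,\kk_W\big)\alpha$, and that the column/row distinction is exactly what makes $(\mathrm{curl}^\mathrm{C}\mathbf{g})\alpha=\mathrm{curl}(\mathbf{g}\alpha)$ hold (column operation commutes with right-multiplication by the constant vector $\alpha$). Once the kernel action is unwound into an honest curl, the divergence-free conclusion is immediate from $\mathrm{div}\circ\mathrm{curl}=0$; alternatively, one could cross-check via Theorem~\ref{fund_th} by showing the associated coefficient $\AAA_V$ vanishes, but the direct route above is cleaner and is the one I would present.
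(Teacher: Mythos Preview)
Your proposal is correct and follows essentially the same approach as the paper: rewrite $\kk_V(x)\alpha$ via the identity $(\mathrm{curl}^\mathrm{C}\mathbf{g})\alpha=\mathrm{curl}(\mathbf{g}\alpha)$ as $-\mathrm{curl}\big[(\mathrm{curl}^\mathrm{R}\kk_W(x))\alpha\big]$, then conclude by $\mathrm{div}\circ\mathrm{curl}=0$. The paper's proof is just the one-line version of what you wrote; your added remarks on regularity and the column/row bookkeeping are sound but not needed beyond what the paper already assumes.
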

\begin{proof} Since~$(\mathrm{curl}^\mathrm{C} \mathbf{ g})\alpha
=\mathrm{curl}(\mathbf{ g}\alpha)$
for any differentiable matrix-valued function~\mbox{$\mathbf{ g}:\Rt\rightarrow\Rtt$}, we have
$\kk_V(x)\alpha=(-\mathrm{curl}^\mathrm{C} \mathrm{curl}^\mathrm{R}\kk_W(x))\alpha
=-\mathrm{curl}[(\mathrm{curl}^\mathrm{R}\kk_W(x))\alpha]$, and we conclude immediately.
\end{proof}
\begin{lemma}
\label{rotrotk}
For any twice differentiable matrix-valued
function~$\mathbf{ g}=[g_{ij}]_{1\leq i,j \leq 3}:
\mathbb{R}^3\rightarrow \mathbb{R}^{3\times3}$ we have:
\par\vspace*{-.7cm}
\begin{align*}
&\mathrm{curl}^\mathrm{C}\mathrm{curl}^\mathrm{R}\,\mathbf{ g}
=
\mathrm{curl}^\mathrm{R}\mathrm{curl}^\mathrm{C}\,\mathbf{ g} =
\\
&
\!
\small
\left[
\!\!
\begin{array}{ccc}
\partial_2^2g_{33} 
-
\partial_2\partial_3(g_{23}+g_{32})
+
\partial_3^2g_{22} 
&
\!\!
-
\partial_1\partial_2g_{33}
\!+\!
\partial_1\partial_3g_{23}
\!+\!
\partial_2\partial_3g_{31}
\!-\!
\partial_3^2 g_{21}
&
\partial_1\partial_2 g_{32}
\!-\!
\partial_1\partial_3 g_{22}
\!-\!
\partial_2^2 g_{31}
\!+\!
\partial_2\partial_3 g_{21}
\\
-
\partial_1\partial_2g_{33}
\!+\!
\partial_1\partial_3g_{32}
\!+\!
\partial_2\partial_3g_{13}
\!-\!
\partial_3^2 g_{12}
&
\partial_1^2g_{33} 
-
\partial_1\partial_3(g_{13}+g_{31})
+
\partial_3^2g_{11} 
&
-
\partial_1^2 g_{32}
\!+\!
\partial_1\partial_2 g_{31}
\!+\!
\partial_1\partial_3 g_{12}
\!-\!
\partial_2\partial_3 g_{11}
\\
\partial_1\partial_2 g_{23}
\!-\!
\partial_1\partial_3 g_{22}
\!-\!
\partial_2^2 g_{13}
\!+\!
\partial_2\partial_3 g_{12}
&
-
\partial_1^2 g_{23}
\!+\!
\partial_1\partial_2 g_{13}
\!+\!
\partial_1\partial_3 g_{21}
\!-\!
\partial_2\partial_3 g_{11}
&
\partial_1^2g_{22} 
-
\partial_1\partial_2(g_{12}+g_{21})
+
\partial_2^2g_{11} 
\end{array}
\!\!
\right]
\!.
\end{align*}
\end{lemma}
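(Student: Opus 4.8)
The plan is to reduce the whole statement to the symmetry of mixed partial derivatives by passing to index notation. I would introduce the Levi-Civita symbol $\epsilon_{abc}$ (totally antisymmetric, $\epsilon_{123}=1$), so that for a differentiable vector field $v=(v_1,v_2,v_3)$ on $\Rt$ one has $(\mathrm{curl}\,v)_a=\sum_{b,c}\epsilon_{abc}\,\partial_b v_c$ (consistent with $\mathrm{curl}=d_1$ as defined earlier). Since $\mathrm{curl}^{\mathrm{C}}$ applies $\mathrm{curl}$ to each column of $\mathbf{g}=[g_{ij}]$ and $\mathrm{curl}^{\mathrm{R}}$ to each row, the definitions from the notation box read, with $(r,s)$ denoting the (row, column) index of the output,
\[
(\mathrm{curl}^{\mathrm{C}}\mathbf{g})_{rs}=\sum_{b,c}\epsilon_{rbc}\,\partial_b g_{cs},
\qquad
(\mathrm{curl}^{\mathrm{R}}\mathbf{g})_{rs}=\sum_{p,q}\epsilon_{spq}\,\partial_p g_{rq}.
\]

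Next I would compose the two operations in both orders, substituting one formula into the other:
\[
(\mathrm{curl}^{\mathrm{C}}\mathrm{curl}^{\mathrm{R}}\,\mathbf{g})_{rs}
=\sum_{b,c,p,q}\epsilon_{rbc}\,\epsilon_{spq}\,\partial_b\partial_p g_{cq},
\qquad
(\mathrm{curl}^{\mathrm{R}}\mathrm{curl}^{\mathrm{C}}\,\mathbf{g})_{rs}
=\sum_{b,c,p,q}\epsilon_{rbc}\,\epsilon_{spq}\,\partial_p\partial_b g_{cq}.
\]
The two right-hand sides differ only in the order in which the first-order operators $\partial_b$ and $\partial_p$ are applied; since $\mathbf{g}$ is twice differentiable, Schwarz's theorem gives $\partial_b\partial_p g_{cq}=\partial_p\partial_b g_{cq}$, and hence $\mathrm{curl}^{\mathrm{C}}\mathrm{curl}^{\mathrm{R}}\,\mathbf{g}=\mathrm{curl}^{\mathrm{R}}\mathrm{curl}^{\mathrm{C}}\,\mathbf{g}$. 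This is the only substantive point of the lemma; everything else is expansion of the common expression.

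Finally, to recover the displayed $3\times3$ array I would expand $\sum_{b,c,p,q}\epsilon_{rbc}\epsilon_{spq}\,\partial_b\partial_p g_{cq}$ entry by entry. For each fixed free index $r$ the inner sum $\sum_{b,c}\epsilon_{rbc}(\cdots)$ has exactly two nonzero terms, namely the two orderings of $\{1,2,3\}\setminus\{r\}$ with opposite signs, and likewise for $s$; so each entry is a sum of four terms. For instance $r=s=1$ gives $(b,c),(p,q)\in\{(2,3),(3,2)\}$, producing $\partial_2^2g_{33}-\partial_2\partial_3 g_{32}-\partial_3\partial_2 g_{23}+\partial_3^2g_{22}=\partial_2^2g_{33}-\partial_2\partial_3(g_{23}+g_{32})+\partial_3^2g_{22}$, and $r=1,s=2$ gives $(p,q)\in\{(3,1),(1,3)\}$, yielding $-\partial_1\partial_2g_{33}+\partial_1\partial_3g_{23}+\partial_2\partial_3g_{31}-\partial_3^2g_{21}$ — exactly the $(1,1)$ and $(1,2)$ entries of the matrix in the statement. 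Rather than grind through all nine entries I would remark that the three diagonal entries are obtained from one another by a cyclic permutation of $\{1,2,3\}$, and the off-diagonal ones by the corresponding relabelling, so it suffices to check one diagonal and one off-diagonal entry explicitly. The only real obstacle is bookkeeping: keeping the row/column conventions for $\mathrm{curl}^{\mathrm{C}}$ and $\mathrm{curl}^{\mathrm{R}}$ straight and carrying out the sign accounting without error; there is no analytic content beyond Clairaut's theorem once the Levi-Civita formulation is in place.
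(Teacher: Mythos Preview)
Your proposal is correct and is essentially the same approach the paper takes: the paper simply states that the lemma ``may be proven by direct computation'' without giving any details. Your Levi--Civita formulation is a clean way to organize that computation and makes the commutativity $\mathrm{curl}^{\mathrm{C}}\mathrm{curl}^{\mathrm{R}}=\mathrm{curl}^{\mathrm{R}}\mathrm{curl}^{\mathrm{C}}$ immediate from Schwarz's theorem, which is more than the paper spells out.
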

\par The 
above lemma may be proven by direct computation.
We note that~$\mathrm{curl}^\mathrm{R}$ and $\mathrm{curl}^\mathrm{C}$ 
\em commute\em\/, 
and if~$\mathbf{ g}$ is symmetric 
(i.e.~$g_{ij}=g_{ji}$ for~$1\leq i,j \leq 3$) then so is the matrix-valued function
$\mathrm{curl}^\mathrm{C}\mathrm{curl}^\mathrm{R}\,\mathbf{ g}$. 
\par
\begin{remark} Before proceeding further we should 
note that the above procedure and results require the 
$\cs$-admissible space of vector fields~$W$ 
(that we use to build the new space~$V$)
to be divergence-free. 
This sounds like a severe limitation because, as we said at the beginning
of the section, our goal here is to formulate a procedure
that \em generates \em RKHS of divergence-free vector fields.
Having to start from such a space does not seem very useful.  
The next results, that refer to the TRI case, solve this problem.
In order to interpret the results in the Fourier domain,
we shall assume that the kernels are integrable.
\end{remark}
\begin{proposition}
\label{prSameCC}
Let~$\kk_{1}$, $\kk_{2}:\Rt\rightarrow\Rtt$ be the TRI, integrable kernels of two
$\cs$-admissible Hilbert
spaces, with $\cs\geq 2$.
If their Fourier transforms have the 
the same divergence-free coefficient~$\BBB$, i.e.
$$
\kkh_{1}(\xi)
=
\AAA_{1}(\|\xi\|)
\,\mathrm{Pr}_\xi^\parallel
+
\BBB_{1}(\|\xi\|)
\,\mathrm{Pr}_\xi^\perp
\quad
\mbox{and}
\quad
\kkh_{2}(\xi)
=
\AAA_{2}(\|\xi\|)
\,\mathrm{Pr}_\xi^\parallel
+
\BBB_{2}(\|\xi\|)
\,\mathrm{Pr}_\xi^\perp,
\quad
\mbox{with }
\BBB_{1}=\BBB_{2}:=\BBB,
$$
then~$-\mathrm{curl}^\mathrm{C}\mathrm{curl}^\mathrm{R} \kk_{1}=
-\mathrm{curl}^\mathrm{C}\mathrm{curl}^\mathrm{R} \kk_2=:\kk$,
and~$\kk$ is a divergence-free kernel.
\end{proposition}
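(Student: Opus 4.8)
The plan is to reduce the statement to the Fourier-domain identity already obtained for the ``double curl'' operation on translation-invariant kernels, and then to exploit the fact that this identity kills the curl-free part of the kernel entirely. First I would compute the symbol of $-\mathrm{curl}^{\mathrm C}\mathrm{curl}^{\mathrm R}$ acting on a TRI matrix-valued kernel. Exactly as in the proof of Proposition~\ref{prkV1f}, differentiation turns into multiplication by $2\pi i\xi$ componentwise, so for any integrable translation-invariant $\kk_W$ one gets a fixed matrix $P(\xi)\in\Rtt$ (built from the entries $2\pi i\xi_j$ via the explicit formula in Lemma~\ref{rotrotk}) such that
\begin{equation}
\label{plan_symbol}
\widehat{\big(-\mathrm{curl}^{\mathrm C}\mathrm{curl}^{\mathrm R}\kk_W\big)}(\xi)
=
P(\xi)\,\widehat{\kk}_W(\xi)\,P(\xi)^T
\quad\text{or, more simply,}\quad
=Q(\xi)\,\widehat{\kk}_W(\xi),
\end{equation}
where $Q(\xi)=(2\pi)^2\big(\|\xi\|^2\idmatrix_3-\xi\xi^T\big)=(2\pi\|\xi\|)^2\,\prper{\xi}$; this is the matrix version of the scalar identity $\widehat{\mathrm{curl}\,\mathrm{curl}\,v}=(2\pi)^2(\|\xi\|^2 I-\xi\xi^T)\widehat v$ on divergence-free $v$, and the cross terms in Lemma~\ref{rotrotk} collapse precisely because $\xi\xi^T$ is rank one. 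The key point to verify carefully is that $Q(\xi)=(2\pi\|\xi\|)^2\prper{\xi}$ really is the correct symbol; I would do this by applying Lemma~\ref{rotrotk} with $g_{ij}=\kk_W^{ij}$, replacing each $\partial_a\partial_b$ by $-(2\pi)^2\xi_a\xi_b$, and collecting terms.

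Next I would feed in the TRI form $\widehat{\kk}_{m}(\xi)=\AAA_m(\|\xi\|)\prpar{\xi}+\BBB_m(\|\xi\|)\prper{\xi}$ for $m=1,2$. Since $\prper{\xi}\prpar{\xi}=0$ and $\prper{\xi}\prper{\xi}=\prper{\xi}$, relation~\eqref{plan_symbol} gives
\begin{equation}
\label{plan_result}
\widehat{\big(-\mathrm{curl}^{\mathrm C}\mathrm{curl}^{\mathrm R}\kk_{m}\big)}(\xi)
=(2\pi\|\xi\|)^2\,\prper{\xi}\Big(\AAA_m(\|\xi\|)\prpar{\xi}+\BBB_m(\|\xi\|)\prper{\xi}\Big)
=(2\pi\|\xi\|)^2\BBB_m(\|\xi\|)\,\prper{\xi}.
\end{equation}
Thus the curl-free coefficient $\AAA_m$ is annihilated, and the Fourier transform of the resulting kernel depends only on $\BBB_m$. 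By hypothesis $\BBB_1=\BBB_2=:\BBB$, so the right-hand side of~\eqref{plan_result} is the same for $m=1$ and $m=2$; since an $L^1$ function is determined by its Fourier transform, this yields $-\mathrm{curl}^{\mathrm C}\mathrm{curl}^{\mathrm R}\kk_1=-\mathrm{curl}^{\mathrm C}\mathrm{curl}^{\mathrm R}\kk_2$, which we name $\kk$. (Here I use $\cs\geq2$ and integrability to guarantee that $\kk_m$ is $C^2$, that the curl-of-curl is a well-defined integrable function, and that Proposition~\ref{curl2} applies; one also notes $\kk\in L^1$ because $\widehat\kk(\xi)=(2\pi\|\xi\|)^2\BBB(\|\xi\|)\prper{\xi}$ together with $\widehat{\kk}_m\in L^1$ forces $\kk$ to be the inverse transform of an $L^1$ function — more carefully, $\kk=-\mathrm{curl}^{\mathrm C}\mathrm{curl}^{\mathrm R}\kk_m$ is a fixed continuous $L^1$ matrix field by Proposition~\ref{curl2}.)

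Finally, that $\kk$ is a divergence-free kernel. From~\eqref{plan_result}, $\widehat\kk(\xi)=(2\pi\|\xi\|)^2\BBB(\|\xi\|)\prper{\xi}$ has the TRI form~\eqref{khat_gen} with coefficients $\AAA_{\kk}(\varrho)=0$ and $\BBB_{\kk}(\varrho)=(2\pi\varrho)^2\BBB(\varrho)$. Since $\BBB\geq0$ (it is a divergence-free coefficient of the positive kernel $\kk_1$, hence nonnegative by the Corollary to Theorem~\ref{prop_FT}), both coefficients of $\widehat\kk$ are nonnegative, so by Bochner's theorem (Theorem~\ref{bochner2}, via that same Corollary) $\kk$ is positive definite, i.e.\ a kernel; and by Theorem~\ref{fund_th}, $\AAA_{\kk}=0$ means exactly that $x\mapsto\kk(x)\alpha$ is divergence-free for every $\alpha\in\Rt$, so $\kk$ is a divergence-free kernel in the sense of the Definition preceding Table~\ref{tabdiff}. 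Alternatively, divergence-freeness is immediate from Corollary~\ref{corcurl2}. I expect the main obstacle to be the bookkeeping in establishing~\eqref{plan_symbol}–\eqref{plan_result}: one must confirm that Lemma~\ref{rotrotk}, after the substitution $\partial_a\partial_b\mapsto-(2\pi)^2\xi_a\xi_b$ and using the symmetry of $\widehat{\kk}_m$, genuinely produces the clean projector $(2\pi\|\xi\|)^2\prper{\xi}$ rather than some messier matrix; everything after that is formal.
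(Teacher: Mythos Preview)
Your proposal is correct and the computation in~\eqref{plan_result} goes through exactly as you anticipate: with $P(\xi)=2\pi[\xi]_\times$ (the skew matrix of the cross product), the symbol of $-\mathrm{curl}^{\mathrm C}\mathrm{curl}^{\mathrm R}$ is $P(\xi)\,\widehat{\kk}\,P(\xi)^T$, and since $[\xi]_\times\mathrm{Pr}_\xi^\parallel=0$ and $[\xi]_\times^2=-\|\xi\|^2\mathrm{Pr}_\xi^\perp$, this collapses on a TRI kernel to $(2\pi\|\xi\|)^2\BBB(\|\xi\|)\mathrm{Pr}_\xi^\perp$. (Your one-sided form $Q(\xi)\widehat{\kk}$ is \emph{not} valid for a general translation-invariant $\kk$, only for the TRI class where $\widehat{\kk}$ is built from the two projectors; you seem aware of this, but it is worth stating clearly.)

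The paper takes a different route. Rather than computing the symbol of the operator, it argues in two steps: first, if a TRI kernel has $\BBB=0$ then by Theorem~\ref{fund_th} it is curl-free, so $\mathrm{curl}^{\mathrm C}\kk=0$ columnwise and hence (by the commutativity in Lemma~\ref{rotrotk}) $\mathrm{curl}^{\mathrm C}\mathrm{curl}^{\mathrm R}\kk=0$; second, by the linearity of $M^{-1}$ each $\kk_i$ splits as a curl-free piece (coming from $(\AAA_i,0)$) plus a common divergence-free piece (coming from $(0,\BBB)$), and applying the double curl kills the first and leaves the second, which is the same for $i=1,2$. Divergence-freeness is then read off from $\kk\alpha=\mathrm{curl}[(\mathrm{curl}^{\mathrm R}\kk_i)\alpha]$.

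Your approach is more self-contained and computational: it bypasses Theorem~\ref{fund_th} and the Hodge splitting entirely, and as a bonus it makes the positive-definiteness of $\kk$ explicit via Bochner (the paper's proof leaves this implicit). The paper's argument, by contrast, avoids the index bookkeeping of Lemma~\ref{rotrotk} in Fourier space and instead leverages the conceptual picture already established (curl-free $\Leftrightarrow\BBB=0$, plus linearity of the decomposition). Both are short; yours is arguably the more direct demonstration of what the operator actually does.
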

\begin{proof}
We first observe that if the Fourier transform a kernel~$\kk$ has coefficient~$\BBB=0$
then, for all~$\alpha\in\Rt$, it is the case that~$\mathrm{curl}(\kk\alpha)=0$;
but~$\mathrm{curl}(\kk\alpha)=(\mathrm{curl}^\mathrm{C}\kk)\alpha$, therefore
$\mathrm{curl}^\mathrm{C}\kk=0$ and~$\mathrm{curl}^\mathrm{C}
\mathrm{curl}^\mathrm{R}\kk=0$ (by the commutativity of~$\mathrm{curl}^\mathrm{C}$
and~$\mathrm{curl}^\mathrm{R}$, see Lemma~\ref{rotrotk}). Now, 
by the linearity of the operator~$M$ (and its inverse), introduced by equation~\eqref{funcM},
for~$i=1,2$ we have
\begin{align}
\kk_i(x)
&=\aaa_{i}(\|x\|)
\mathrm{Pr}_\xi^\parallel
+
\bbb_{i}(\|x\|)
\mathrm{Pr}_x^\perp,
\quad\mbox{with }
\nonumber
\\
\label{2comps}
(\aaa_i,\bbb_i)&=M^{-1}(\AAA_i,\BBB)=M^{-1}(\AAA_i,0)+M^{-1}(0,\BBB);
\end{align}
note that~$\BBB_1=\BBB_2=\BBB$.
We now apply~$\mathrm{curl}^\mathrm{C}
\mathrm{curl}^\mathrm{R}$ to both~$\kk_1$
and~$\kk_2$ and, by the first part of the proof,
we have that the component that corresponds to the 
first term on the right-hand side of~\eqref{2comps} vanishes for both~$\kk_1$ and~$\kk_2$;
what survives is the second component, which is the same for both kernels. 
Therefore~$\mathrm{curl}^\mathrm{C}\mathrm{curl}^\mathrm{R} \kk_{1}=
\mathrm{curl}^\mathrm{C}\mathrm{curl}^\mathrm{R} \kk_2$. The resulting kernel is divergence-free
because, for~$i=1,2$, we have
$(\mathrm{curl}^\mathrm{C}\mathrm{curl}^\mathrm{R} \kk_{i})\alpha=
\mathrm{curl}[(\mathrm{curl}^\mathrm{R} \kk_i)\alpha]$, for all~$\alpha\in\Rd$.
\end{proof}
Consequently,  
if the coefficients of the Fourier transform~$\kkh$ of a kernel~$\kk$ are~$(\AAA,\BBB)$,
then the coefficients of the Fourier transform of~$\mathrm{curl}^\mathrm{C}
\mathrm{curl}^\mathrm{R}\kk$ 
must be of the type~$(0,h_\ast)$, where~$h_\ast$ only depends on~$\BBB$.
Going back to our original problem of inducing a RKHS
structure on~$V:=\{\mathrm{curl}\, w\,|\,w\in W\}$ from the one on~$W$ (Propositions~\ref{curl_W} 
and~\ref{curl2}),
this suggests an alternative way of computing the kernel~$\kk_V$.
\par
Namely, we build a \em new \em auxiliary  
Hilbert space~$W_0\hookrightarrow C_0^\cs(\Rt,\Rt)$,
with a \em scalar \em kernel, 
i.e.~of the form~$\kk_{W_0}(x)=k_{W_0}(\|x\|)\mathbb{I}_3$, $x\in\Rt$.
%
Assuming~$\kk_W$ is integrable and divergence-free, 
the coefficients of its Fourier transform~$\kkh_W$ are~$(\AAA_W,\BBB_W)=(0,h)$, 
for some nonegative function~$h:\mathbb{R}^+\rightarrow \mathbb{R}$.
We define the scalar kernel~$\kk_{W_0}$ via its Fourier transform, as 
\begin{equation}
\label{khw0}
\kkh_{W_0}(\xi):=h_{W_0}(\|\xi\|)\mathbb{I}_3,
\;\;\xi\in\Rt,\quad \mbox{ with}\quad h_{W_0}:=h; 
\end{equation}
in other words, the coefficients of~$\kk_{W_0}$ and~$\kkh_{W_0}$ 
are 
respectively~$\aaa_{W_0}=\bbb_{W_0}=k_{W_0}$ 
and~$\AAA_{W_0}=\BBB_{W_0}=h_{W_0}$, where we have set~$h_{W_0}=h$,
i.e.~equal to the non-zero coefficient of the Fourier transform of~$\kk_W$.
Equivalently, and perhaps more interestingly, 
$\kk_{W_0}$ is \em the only scalar kernel whose divergence-free
component is precisely\em\/~$\kk_W$.
The next proposition
relates the kernels~$\kk_W$ and~$\kk_{W_0}$
explicitly in the spatial domain (as opposed to the Fourier domain); this is summarized by the first and third rows of the diagram of Figure~\ref{FigScW2}.
\begin{figure}
\hspace{.15cm}
\begin{tikzpicture}[scale=.75]
\draw (-13,3) node [right] {\small\it Space~$W$:};
\draw (-13,0) node [right] {\small\it Space~$V$:};
\draw (-13,-3) node [right] {\small\it Space~$W_0$:};
%
%
\draw (-7.8,3) node [right] {$\kk_W: (\aaa_W,\bbb_W)$};
\draw (3,3) node [right] {$\kkh_W: (\AAA_W,\BBB_W)=(0,h)$};
\draw (-9.25,0) node [right] {$\kk_{V}: (k^\parallel_{V},k^\perp_{V})
=\big(
\textstyle
-\frac{2}{r}k'_{W_0}\,,\,-\frac{1}{r}k'_{W_0}\!\!-\!k''_{W_0}
\big)$};
\draw (3,0) node [right] {$\kkh_{V}: (h^\parallel_{V},h^\perp_{V})
=\big(0,(2\pi\varrho)^2h\big)$};
\draw (-9.5,-3) node [right] {$\kk_{W_0}(x)=k_{W_0}(\|x\|)\mathbb{I}_3$,
\, $k_{W_0}=\frac{1}{2}\aaa_W+\bbb_W$};
\draw (3,-3) node [right] {$\kkh_{W_0}: (\AAA_{W_0},\BBB_{W_0})=(h,h)$};
%
%
%
\draw [|->] (0.4,3) -- (2.7,3);
\draw [|->] (0.4,0) -- (2.7,0);
\draw [|->] (0.4,-3) -- (2.7,-3);
%
\draw (1.5,3.35) node {\small$\mathcal{F}$};
\draw (1.5,.4) node {\small$\mathcal{F}$};
\draw (1.5,-2.65) node {\small$\mathcal{F}$};
%
\draw [|->] (-5.8,2.3) -- (-5.8,.6);
\draw [|->] (5.95,2.3) -- (5.95,.6);
\draw [|->] (-5.8,-2.4) -- (-5.8,-.7);
\draw [|->] (5.95,-2.4) -- (5.95,-.7);
\draw (-5.8,1.45) node [right] {\small$-\mathrm{curl}^\mathrm{C}\mathrm{curl}^\mathrm{R}$};
\draw (5.95,1.45) node [right] {\small$(-\mathrm{curl}^\mathrm{C}\mathrm{curl}^\mathrm{R})^\wedge$};
\draw (-5.8,-1.55) node [right] {\small$-\mathrm{curl}^\mathrm{C}\mathrm{curl}^\mathrm{R}$};
\draw (5.95,-1.55) node [right] {\small$(-\mathrm{curl}^\mathrm{C}\mathrm{curl}^\mathrm{R})^\wedge$};
\end{tikzpicture}
\caption{Construction of the divergence-free 
kernel~$\kk_{V}$ 
from the divergence-free kernel~$\kk_{W}$, and (equivalently)
from the auxiliary scalar 
kernel~$\kk_{W_0}$, 
with interpretation in the Fourier domain.}
\label{FigScW2}
\end{figure}
\begin{proposition}
\label{prOpS}
Under the assumptions of Proposition~\ref{curl_W}, 
if the kernel of~$W$ is TRI and integrable, 
with Fourier transform~$\kkh_{W}=h(\|\xi\|)\,\mathrm{Pr}_\xi^\perp$,
$\xi\in\Rd$, then the kernel~$\kk_{W_0}$, defined in the Fourier domain 
by~\eqref{khw0}, has the form~$\kk_{W_0}=k_{W_0}(\|x\|)\mathbb{I}_3$, $x\in\Rt$, with
%
$$
k_{W_0}=\frac{1}{2}\,\aaa_W+\bbb_W.
$$
\end{proposition}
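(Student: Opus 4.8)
The plan is to track the coefficient pairs through the Fourier/inversion machinery already set up in Section~\ref{secTRI}. The hypothesis $\kkh_{W}(\xi)=h(\|\xi\|)\,\mathrm{Pr}_\xi^\perp$ says that the coefficients of $\kkh_W$ are $(\AAA_W,\BBB_W)=(0,h)$ — consistent, of course, with $\kk_W$ being divergence-free (Theorem~\ref{fund_th}), which is part of the standing hypotheses of Proposition~\ref{curl_W}. First I would apply Proposition~\ref{invT}: the coefficients $(\aaa_W,\bbb_W)$ of $\kk_W$ are obtained from $(0,h)$ by the inversion formulae~\eqref{iTpar}--\eqref{iTper}, which in this case reduce to
\[
\aaa_W(r)=\frac{2\mu+1}{r^{\mu+1}}\int_0^\infty\varrho^{\mu}h(\varrho)J_{\mu+1}(2\pi\varrho r)\,d\varrho,
\qquad
\bbb_W(r)=\frac{2\pi}{r^{\mu}}\int_0^\infty\varrho^{\mu+1}h(\varrho)J_{\mu}(2\pi\varrho r)\,d\varrho-\frac{1}{r^{\mu+1}}\int_0^\infty\varrho^{\mu}h(\varrho)J_{\mu+1}(2\pi\varrho r)\,d\varrho,
\]
with $\mu=\tfrac d2-1$. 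Next I would identify $\kk_{W_0}$: since $\kkh_{W_0}(\xi)=h(\|\xi\|)\mathbb{I}_3$ is a scalar radial matrix-valued function, its inverse Fourier transform is again scalar and radial (Theorem~\ref{FT} applied componentwise, together with the involution property of the radial transform recorded after~\eqref{invhank}); hence $\kk_{W_0}(x)=k_{W_0}(\|x\|)\mathbb{I}_3$ with $k_{W_0}(r)=\frac{2\pi}{r^{\mu}}\int_0^\infty\varrho^{\mu+1}h(\varrho)J_{\mu}(2\pi\varrho r)\,d\varrho$, i.e.\ precisely the scalar inversion~\eqref{invhank} with coefficient $h$ (the integrals converge because $\kkh_W\in L^1(\Rt,\Rtt)$ forces $h\in L^1(\mathbb{R}^+,r^2)$).

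With these three formulae in hand, the conclusion is one line of algebra:
\[
\tfrac12\,\aaa_W(r)+\bbb_W(r)=k_{W_0}(r)+\Big(\tfrac{2\mu+1}{2}-1\Big)\frac{1}{r^{\mu+1}}\int_0^\infty\varrho^{\mu}h(\varrho)J_{\mu+1}(2\pi\varrho r)\,d\varrho .
\]
Now specialize to $d=3$: then $\mu=\tfrac12$, so $2\mu+1=2$ and the parenthesised factor is zero, leaving $k_{W_0}=\tfrac12\aaa_W+\bbb_W$, which is the claim. A computation-free variant of the last step: $\kk_{W_0}$ is scalar, so the coefficients of its curl-free and divergence-free Hodge components satisfy the relations $\aaa_2=(2\mu+1)\bbb_1$ and $\bbb_2=k_{W_0}-\bbb_1$ of~\eqref{othks}; but the divergence-free component of $\kk_{W_0}$ is by construction $\kk_W$ itself, i.e.\ $(\aaa_2,\bbb_2)=(\aaa_W,\bbb_W)$, and eliminating $\bbb_1$ gives $k_{W_0}=\aaa_W/(2\mu+1)+\bbb_W=\tfrac12\aaa_W+\bbb_W$.

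There is no serious obstacle: the proposition is a short bookkeeping calculation once Propositions~\ref{invT} and~\ref{curl_W} and the inversion formulae are available, and the single point that makes the clean relation $k_{W_0}=\tfrac12\aaa_W+\bbb_W$ hold is the arithmetic identity $2\mu+1=2$ valid only in dimension $d=3$ — in any other dimension $\tfrac12\aaa_W+\bbb_W$ and $k_{W_0}$ would differ by the stray $J_{\mu+1}$-integral above, which is exactly why the construction in this subsection is stated only for $d=3$. The only things deserving a line of justification are the existence and scalar--radial form of $\kk_{W_0}$ (Theorem~\ref{FT}) and the absolute convergence of the Hankel-type integrals (the $L^1$ hypotheses on $\kk_W$ and $\kkh_W$).
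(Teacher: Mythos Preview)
Your proposal is correct and follows essentially the same route as the paper: apply the inversion formulae~\eqref{iTpar}--\eqref{iTper} with $(\AAA_W,\BBB_W)=(0,h)$, identify $k_{W_0}$ via the scalar radial inversion~\eqref{invhank}, and read off the algebraic identity (the paper phrases it as $\bbb_W=k_{W_0}-\tfrac12\aaa_W$, which is your computation rearranged). Your added remark that the relation is special to $d=3$ because $2\mu+1=2$, and your alternative via~\eqref{othks}, are correct and useful glosses not present in the paper's terse proof.
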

\begin{proof} We have that~$
\displaystyle 
k_{W_0}(r)=\frac{2\pi}{r^{\mu}}
\int_0^\infty\varrho^{\mu+1}\,h(\varrho)\,J_{\mu}(2\pi\varrho r)d\varrho$,
with $\displaystyle \mu=\frac{d}{2}-1=\frac{1}{2}$ (since~$d=3$). In addition,
since~$(\AAA_W,\BBB_W)=(0,h)$,
by~Proposition~\ref{invT}
it is the case that
\begin{align*}
\aaa_W(r)
&
=
\frac{2\mu+1}{r^{\mu+1}}
\int_0^\infty
\varrho^{\mu}\,h(\varrho)\,J_{\mu+1}(2\pi\varrho r)\,d\varrho
\qquad \mbox{(where~$2\mu+1=d-1=2$)}
\\
\mbox{and}\quad
\bbb_W(r)
&
=
\frac{2\pi}{r^\mu}
\int_0^\infty
\varrho^{\mu+1}\,h(\varrho)\,J_\mu(2\pi\varrho r)\,d\varrho
-
\frac{1}{r^{\mu+1}}
\int_0^\infty
\varrho^{\mu}\,h(\varrho)\,J_{\mu+1}(2\pi\varrho r)\,d\varrho
\;=\; k_{W_0}(r)-\frac{1}{2} \aaa_W(r).\;\;\qedhere
\end{align*}
\end{proof}
By Proposition~\ref{prSameCC}, since
$\kk_W$ and~$\kk_{W_0}$ have the same coefficient~$\BBB$ in the Fourier domain, 
it must be the case that $\mathrm{curl}^\mathrm{C}\mathrm{curl}^\mathrm{R} \kk_{W}=
\mathrm{curl}^\mathrm{C}\mathrm{curl}^\mathrm{R} \kk_{W_0}$;
the right-hand side is simpler to compute because~$\kk_{W_0}$
is characterized by only one coefficient (i.e.~the function~$k_{W_0}$) in the spatial domain. 
This is done next.
\begin{proposition}
\label{p_ccW0}
If~$W_0\hookrightarrow C_0^\cs(\Rt,\Rt)$, with~$\cs\geq 1$, has scalar 
kernel~$\kk_{W_0}(x)=k_{W_0}(\|x\|)\mathbb{I}_3$, $x\in\Rt$, 
and we define~$\kk_V:=-\mathrm{curl}^\mathrm{C}\mathrm{curl}^\mathrm{R} \kk_{W_0}$, 
then~$\kk_V(x)=k_V^\parallel(\|x\|)\mathrm{Pr}_x^\parallel
+k_V^\perp(\|x\|)\mathrm{Pr}_x^\perp$ 
with
\begin{align}
\label{kV2}
\aaa_V(r)
&=
-\frac{2}{r}\frac{dk_{W_0}}{dr}(r)
&
&\mbox{and}
&
\bbb_V(r)
&=
-\frac{1}{r}\frac{d k_{W_0} }{dr}(r)
-\frac{d^2 k_{W_0}}{dr^2}(r),
\quad
r>0.
\end{align}
\end{proposition}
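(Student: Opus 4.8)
The plan is to reduce everything to the explicit formula of Lemma~\ref{rotrotk}, specialized to the scalar‑valued matrix $\kk_{W_0}$, and then to recycle the radial Hessian computation already carried out in the proof of Proposition~\ref{prkV1}. First I would observe that, since $W_0\hookrightarrow C_0^\cs(\Rt,\Rt)$ with $\cs\geq1$ and $\kk_{W_0}$ is translation‑invariant, the remark at the end of Section~\ref{gfik} gives $\kk_{W_0}\in C^{2\cs}(\Rt,\Rtt)$, so $k_{W_0}$ is (at least) twice continuously differentiable on $(0,\infty)$ and the derivatives appearing in~\eqref{kV2} make sense. Writing $g(x):=k_{W_0}(\|x\|)$, so that $(\kk_{W_0})^{ij}=\delta^{ij}g$, I would substitute $g_{ij}=\delta^{ij}g$ into the matrix of Lemma~\ref{rotrotk}: on the diagonal the $(\ell,\ell)$ entry collapses to $(\partial_i^2+\partial_j^2)g=(\Delta-\partial_\ell^2)g$ with $\{i,j\}=\{1,2,3\}\setminus\{\ell\}$ (all cross terms $g_{ij}$, $i\ne j$, vanish), while off the diagonal the $(i,j)$ entry collapses to $-\partial_i\partial_j g$. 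Collecting the nine entries gives
$$
\mathrm{curl}^\mathrm{C}\mathrm{curl}^\mathrm{R}\,\kk_{W_0}=(\Delta g)\,\mathbb{I}_3-\mathrm{Hess}\,g,
\qquad\text{hence}\qquad
\kk_V=-\mathrm{curl}^\mathrm{C}\mathrm{curl}^\mathrm{R}\,\kk_{W_0}=\mathrm{Hess}\,g-(\Delta g)\,\mathbb{I}_3.
$$

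Next I would import from the proof of Proposition~\ref{prkV1} the fact that for the radial function $g(x)=k_{W_0}(\|x\|)$ one has $\mathrm{Hess}\,g(x)=k_{W_0}''(\|x\|)\,\mathrm{Pr}_x^\parallel+\frac{k_{W_0}'(\|x\|)}{\|x\|}\,\mathrm{Pr}_x^\perp$ for $x\ne0$; taking the trace and using $\mathrm{tr}\,\mathrm{Pr}_x^\parallel=1$ and $\mathrm{tr}\,\mathrm{Pr}_x^\perp=d-1=2$ (recall $d=3$) yields $\Delta g(x)=k_{W_0}''(\|x\|)+\frac{2k_{W_0}'(\|x\|)}{\|x\|}$. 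Substituting both into the expression for $\kk_V$ above and using $\mathbb{I}_3=\mathrm{Pr}_x^\parallel+\mathrm{Pr}_x^\perp$, the coefficient of $\mathrm{Pr}_x^\parallel$ becomes $k_{W_0}''-k_{W_0}''-\frac{2}{r}k_{W_0}'=-\frac{2}{r}k_{W_0}'$ and the coefficient of $\mathrm{Pr}_x^\perp$ becomes $\frac{1}{r}k_{W_0}'-k_{W_0}''-\frac{2}{r}k_{W_0}'=-\frac{1}{r}k_{W_0}'-k_{W_0}''$, with $r=\|x\|$; in particular $\kk_V$ has the TRI form $\aaa_V(\|x\|)\,\mathrm{Pr}_x^\parallel+\bbb_V(\|x\|)\,\mathrm{Pr}_x^\perp$ with $(\aaa_V,\bbb_V)$ exactly as in~\eqref{kV2}.

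The main obstacle — more of a bookkeeping nuisance than a genuine difficulty — is the first step: one must verify, entry by entry, that inserting $g_{ij}=\delta^{ij}g$ into the large matrix of Lemma~\ref{rotrotk} really produces $(\Delta g)\mathbb{I}_3-\mathrm{Hess}\,g$. This is mechanical, because every term of every entry of Lemma~\ref{rotrotk} has the form $\partial_a\partial_b g_{cd}$ and only the terms with $c=d$ survive the substitution; still, the signs and the index patterns of the surviving diagonal monomials must be tracked with some care. Everything after that is the radial computation already performed in Proposition~\ref{prkV1}, so no new technical machinery is required.
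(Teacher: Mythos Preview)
Your argument is correct, but it follows a different route from the paper. The paper's proof computes $\mathrm{curl}^{\mathrm{R}}\kk_{W_0}$ directly (obtaining an explicit antisymmetric matrix with scalar prefactor $k_{W_0}'(\|x\|)/\|x\|$), and then applies $\mathrm{curl}$ to each column via the product rule $\mathrm{curl}(fv)=\nabla f\times v+f\,\mathrm{curl}\,v$; the result is then regrouped into the $\mathrm{Pr}_x^\parallel$/$\mathrm{Pr}_x^\perp$ form. You instead specialize Lemma~\ref{rotrotk} to the scalar case $g_{ij}=\delta^{ij}g$ to obtain the identity $\mathrm{curl}^{\mathrm{C}}\mathrm{curl}^{\mathrm{R}}(g\mathbb{I}_3)=(\Delta g)\mathbb{I}_3-\mathrm{Hess}\,g$ --- which is precisely the content of Lemma~\ref{rotrotk2}, stated and proved \emph{after} Proposition~\ref{p_ccW0} in the paper --- and then recycle the radial Hessian formula from the proof of Proposition~\ref{prkV1}, together with its trace to get $\Delta g$. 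Your approach is arguably more economical, since it reduces everything to computations already performed and avoids the cross-product bookkeeping; the paper's direct computation, on the other hand, is self-contained and does not rely on the large matrix of Lemma~\ref{rotrotk}.
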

\begin{proof}
We first compute the curl ``by rows'' of~$\kk_{W_0}$:
$$
\mathrm{curl}^\mathrm{R}\, \kk_{W_0}(x)
=\frac{\kvector'(\|x\|)}{\|x\|}
\left[
\begin{array}{ccc}
0 & x_3 & -x_2 
\\
-x_3 & 0 & x_1 
\\
x_2 & -x_1 & 0 
\end{array}
\right]
=f(x)\;
\Big[
\,v_1(x)
\,\Big|\,
v_2(x)
\,\Big|\,
v_3(x)
\, \Big],
$$
where~$f(x):=k_{W_0}'(\|x\|)/\|x\|$,
 and the~$v_i$'s are the columns of the above matrix. Now we have
\begin{align*}
\nabla f(x)
&=\Big(
\frac{k_{W_0}''(\|x\|)}{\|x\|^2}
-
\frac{k_{W_0}'(\|x\|)}{\|x\|^3}
\Big)
x
\quad
\mathrm{and}
\quad
\mathrm{curl}\, v_i(x) = 2e_i, \mbox{ for } i=1,2,3,
\end{align*}
so that 
\begin{align*}
\displaystyle
\mathrm{curl} (fv_1)
&=\nabla f\times v_1 +f\, \mathrm{curl}\, v_1
=
\Big(
\frac{k_{W_0}''(\|x\|)}{\|x\|^2}
-
\frac{k_{W_0}'(\|x\|)}{\|x\|^3}
\Big)
\left[
\begin{array}{c}
x_2^2+x_3^2
\\
-x_1x_2
\\
-x_1x_3
\end{array}
\right]+2\,\frac{k_{W_0}'(\|x\|)}{\|x\|} e_1.
\end{align*}
Similar computations hold for $\mathrm{curl} (fv_2)$
and
$\mathrm{curl} (fv_3)$, which lead to
\begin{align*}
\kk_V(x)
=-
\mathrm{curl}^\mathrm{C}\,
\mathrm{curl}^\mathrm{R}\,
\kk_{W_0}
&=
-
\Big(
\frac{k_{W_0}''(\|x\|)}{\|x\|^2}
-
\frac{k_{W_0}'(\|x\|)}{\|x\|^3}
\Big)
\Big(
\|x\|^2\mathbb{I}_3
- 
xx^T
\Big)
-2\,\frac{ k_{W_0}'(\|x\|)}{\|x\|}\mathbb{I}_3
\\
&=
-2\,\frac{k_{W_0}'(\|x\|)}{\|x\|} \mathrm{Pr}^\parallel_{x}
-\Big(
k_{W_0}''(\|x\|)+\frac{k_{W_0}'(\|x\|)}{\|x\|}
\Big)
 \mathrm{Pr}^\perp_{x}.
 \qedhere
\end{align*}
\end{proof}
\begin{corollary}
\label{corkDF}
Under the assumptions of Proposition~\ref{p_ccW0}, the elements of~$V$ are divergence-free.
\end{corollary}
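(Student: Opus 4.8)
The plan is to follow the same strategy as for the curl-free statement (Corollary~\ref{corkCF}): recognize each generating vector field $x\mapsto\kk_V(x)\alpha$ as (minus) a curl, hence divergence-free, and then pass to the whole space $V$ by a density argument. Essentially all the work is already packaged in Proposition~\ref{p_ccW0}.

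First I would record the differentiability bookkeeping: since $W_0$ is $\cs$-admissible with $\cs\geq1$, Theorem~\ref{Th_diff} together with the remark at the end of Section~\ref{gfik} gives $\kk_{W_0}\in C^{2\cs}(\Rt,\Rtt)\subseteq C^2$, so $\mathrm{curl}^\mathrm{R}\kk_{W_0}\in C^{2\cs-1}$ and the double curl $\kk_V=-\mathrm{curl}^\mathrm{C}\mathrm{curl}^\mathrm{R}\kk_{W_0}$ of Proposition~\ref{p_ccW0} is classically meaningful. Then, for every $\alpha\in\Rt$, the elementary identity $(\mathrm{curl}^\mathrm{C}\mathbf{g})\alpha=\mathrm{curl}(\mathbf{g}\alpha)$ already used in the proof of Corollary~\ref{corcurl2} yields
$$
\kk_V(\cdot)\alpha=-\mathrm{curl}^\mathrm{C}\big[(\mathrm{curl}^\mathrm{R}\kk_{W_0})\alpha\big]=-\mathrm{curl}\big[(\mathrm{curl}^\mathrm{R}\kk_{W_0})\alpha\big],
$$
so $x\mapsto\kk_V(x)\alpha$ is the curl of a $C^1$ vector field and is therefore divergence-free, because $\mathrm{div}\circ\mathrm{curl}=0$ (equivalently $d_2\circ d_1=0$ in the de~Rham complex). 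For $\cs\geq2$ one has $V\hookrightarrow C^1(\Rt,\Rt)$, and the proposition proved just before Table~\ref{tabdiff} (namely $\mathrm{div}(\kk_V(\cdot)\alpha)=0$ for all $\alpha$ if and only if $\mathrm{div}\,u=0$ for all $u\in V$) closes the argument; alternatively, this case is immediate from Corollary~\ref{corcurl2} with $\kk_{W_0}$ playing the role of $\kk_W$, which is legitimate since by Proposition~\ref{prSameCC} the kernel $-\mathrm{curl}^\mathrm{C}\mathrm{curl}^\mathrm{R}\kk_{W_0}$ coincides with the kernel $\kk_V=\mathrm{curl}^\mathrm{C}_1\mathrm{curl}^\mathrm{R}_2\kk_W$ of the divergence-free space $W$ from Proposition~\ref{curl_W}.

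The case $\cs=1$ I would treat exactly as in the remark following Corollary~\ref{corkCF}: verify directly, by inserting \eqref{kV2} into the left-hand side of \eqref{divzero} with $d=3$, that
$$
2\,\frac{\aaa_V(r)-\bbb_V(r)}{r}+\frac{d\aaa_V}{dr}(r)=\frac{2}{r}\Big(k_{W_0}''(r)-\frac{k_{W_0}'(r)}{r}\Big)+\Big(\frac{2k_{W_0}'(r)}{r^2}-\frac{2k_{W_0}''(r)}{r}\Big)=0,
$$
an identity involving only the first two derivatives of $k_{W_0}$, which exist since $\kk_{W_0}\in C^{2\cs}=C^2$; one then concludes by Corollary~\ref{cor_div}. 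I do not anticipate a genuine obstacle, as the statement is essentially a consequence of the structure already exhibited in Proposition~\ref{p_ccW0} and of $d_2\circ d_1=0$; the only point deserving a little care is precisely the differentiability bookkeeping needed to make the double curl meaningful at the admissibility order assumed in the hypothesis, which is why I would split the argument into the $\cs\geq2$ and $\cs=1$ cases above.
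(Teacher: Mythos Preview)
Your proposal is correct and mirrors the paper's own argument: for $\cs\geq2$ the paper invokes Corollary~\ref{corcurl2} (i.e.\ the observation that $\kk_V(\cdot)\alpha$ is a curl, hence divergence-free), and for $\cs=1$ it does exactly the direct verification of~\eqref{divzero} from~\eqref{kV2} that you carried out explicitly. Your treatment is in fact slightly more careful than the paper's, since you spell out the differentiability bookkeeping and the passage from the generating fields $\kk_V(\cdot)\alpha$ to all of~$V$ via the proposition preceding Table~\ref{tabdiff}.
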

When~$\cs\geq2$, we already knew that the above was true from Corollary~\ref{corcurl2}. However (similarly to what happens in the construction of curl-free kernel, described in Section~\ref{seccfc}) one can easily verify 
that~$\aaa_V$ and~$\bbb_V$, given by~\eqref{kV2}, satisfy equation~\eqref{divzero} for~$d=3$,
and that once again the latter operation only involves \em two \em derivatives of~$k_{W_0}$,
which is twice differentiable when~$r=1$. Therefore Corollary~\ref{corkDF} also holds when~$s=1$.
The kernel~$\kk_V$ can be characterized in the Fourier domain as follows.
\begin{lemma}
\label{rotrotk2}
If~$g:\Rt\rightarrow \mathbb{R}$
is a twice-differentiable scalar-valued function and we define~$\mathbf{ g}=g\mathbb{I}_3$,
then it is the case 
that~$\mathrm{curl}^\mathrm{C}\mathrm{curl}^\mathrm{R} 
\mathbf{ g}= (\Delta \mathbb{I}_3-\mathrm{Hess})g.$ 
\end{lemma}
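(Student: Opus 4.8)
The plan is to reduce the statement to a short bookkeeping exercise built on Lemma~\ref{rotrotk}, which already records $\mathrm{curl}^\mathrm{C}\mathrm{curl}^\mathrm{R}\mathbf{g}$ entrywise for an arbitrary twice-differentiable $\mathbf{g}=[g_{ij}]$. First I would specialize to $g_{ij}=g\,\delta_{ij}$, so that the three diagonal entries of $\mathbf{g}$ equal $g$ and all six off-diagonal entries vanish, and then substitute these values into the explicit $3\times 3$ matrix in the conclusion of Lemma~\ref{rotrotk}.

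On the diagonal, the $(1,1)$ entry $\partial_2^2 g_{33}-\partial_2\partial_3(g_{23}+g_{32})+\partial_3^2 g_{22}$ collapses to $(\partial_2^2+\partial_3^2)g$, and by the cyclic symmetry of that matrix the $(2,2)$ and $(3,3)$ entries become $(\partial_1^2+\partial_3^2)g$ and $(\partial_1^2+\partial_2^2)g$. These equal $\Delta g-\partial_1^2 g$, $\Delta g-\partial_2^2 g$, $\Delta g-\partial_3^2 g$ respectively, i.e.\ exactly the diagonal of $(\Delta\mathbb{I}_3-\mathrm{Hess})g$. On the off-diagonal, the $(1,2)$ entry $-\partial_1\partial_2 g_{33}+\partial_1\partial_3 g_{23}+\partial_2\partial_3 g_{31}-\partial_3^2 g_{21}$ reduces to $-\partial_1\partial_2 g$, and the analogous substitution in every off-diagonal slot gives $-\partial_i\partial_j g$ in position $(i,j)$; this matches $-(\mathrm{Hess}\,g)_{ij}$, since $\Delta\mathbb{I}_3 g=(\Delta g)\mathbb{I}_3$ contributes nothing off the diagonal. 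Comparing the two matrices entry by entry establishes $\mathrm{curl}^\mathrm{C}\mathrm{curl}^\mathrm{R}\mathbf{g}=(\Delta\mathbb{I}_3-\mathrm{Hess})g$, and the hypothesis that $g$ is twice differentiable is exactly what is needed for all second derivatives appearing here to exist (and mixed partials to be interpreted in Lemma~\ref{rotrotk}'s convention).

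There is no real obstacle: the whole content is the correct accounting of which $g_{ij}$ survive the substitution and keeping the sign conventions of Lemma~\ref{rotrotk} straight. If one prefers to bypass that lemma, an alternative is to argue column-by-column using the classical vector identity $\mathrm{curl}\,\mathrm{curl}\,F=\nabla(\mathrm{div}\,F)-\Delta F$: the $i$-th row of $\mathbf{g}=g\mathbb{I}_3$ is $g\,e_i$, whose curl is $\nabla g\times e_i$, and applying the column-curl to the resulting matrix reproduces $\nabla\nabla^{T}g-(\Delta g)\mathbb{I}_3$ up to the overall sign; but the direct substitution into Lemma~\ref{rotrotk} is the shortest and least error-prone route, so that is the one I would write up.
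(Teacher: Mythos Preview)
Your proposal is correct and matches the paper's own proof essentially line for line: the paper also just substitutes $g_{ij}=g\,\delta_{ij}$ into the explicit matrix of Lemma~\ref{rotrotk} and reads off the diagonal entries $(\partial_j^2+\partial_k^2)g$ and off-diagonal entries $-\partial_i\partial_j g$, then identifies the result as $(\Delta\mathbb{I}_3-\mathrm{Hess})g$.
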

\begin{proof}
By Lemma~\ref{rotrotk}, 
$
\mathrm{curl}^\mathrm{C}\mathrm{curl}^\mathrm{R}\mathbf{ g}
=
\!\left[\!
\begin{array}{ccc}
(\partial_2^2\!+\!\partial_3^2)g & -\partial_1\partial_2g & -\partial_1\partial_3g \\
-\partial_1\partial_2g & (\partial_1^2\!+\!\partial_3^2)g & -\partial_2\partial_3g \\
-\partial_1\partial_3g & -\partial_2\partial_3g & (\partial_1^2\!+\!\partial_2^2)g
\end{array}\!\right]
=\big(\Delta \mathbb{I}_3-\mathrm{Hess}\big)g
$.
\end{proof}
\begin{proposition} 
\label{prCCF}
Under the assumptions of Proposition~\ref{p_ccW0},
if~$\kk_{W_0}$ is integrable and~$\kkh_{W_0}(\xi)=h(\|\xi\|)\mathbb{I}_3$, $\xi\in\Rt$,
then $\kkh_V(\xi)=\AAA_V(\|\xi\|)\,\mathrm{Pr}_\xi^\parallel+
\BBB_V(\|\xi\|)\,\mathrm{Pr}_\xi^\perp$, with
$\AAA_V=0$ and $\BBB_V(\varrho)=(2\pi\varrho)^2h(\varrho)$.
\end{proposition}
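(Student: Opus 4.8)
The plan is to reduce the statement to Lemma~\ref{rotrotk2} and then pass to the Fourier transform term by term. First I would recall from Proposition~\ref{p_ccW0} that $\kk_V=-\mathrm{curl}^\mathrm{C}\mathrm{curl}^\mathrm{R}\kk_{W_0}$, where $\kk_{W_0}(x)=k_{W_0}(\|x\|)\mathbb{I}_3$ is a scalar kernel and, since $W_0$ is $\cs$-admissible with $\cs\geq1$, the function $k_{W_0}$ is twice differentiable (by Theorem~\ref{Th_diff} together with the remark at the end of Section~\ref{gfik}). Hence Lemma~\ref{rotrotk2} applies with $g=k_{W_0}$ and $\mathbf{g}=k_{W_0}\mathbb{I}_3=\kk_{W_0}$, giving
\[
\kk_V=-\mathrm{curl}^\mathrm{C}\mathrm{curl}^\mathrm{R}\kk_{W_0}=-\bigl(\Delta\mathbb{I}_3-\mathrm{Hess}\bigr)k_{W_0}=\mathrm{Hess}\,k_{W_0}-(\Delta k_{W_0})\,\mathbb{I}_3.
\]

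Next I would take the Fourier transform of the two terms on the right using the standard differentiation rule $\widehat{\partial_j\partial_\ell f}(\xi)=-(2\pi)^2\,\xi_j\xi_\ell\,\widehat f(\xi)$, which follows from $\widehat{\partial_j f}(\xi)=2\pi i\,\xi_j\,\widehat f(\xi)$ and the Fourier-transform convention used throughout the paper. Applied entrywise to the Hessian, and using $\kkh_{W_0}(\xi)=h(\|\xi\|)\mathbb{I}_3$, this yields $\widehat{\mathrm{Hess}\,k_{W_0}}(\xi)=-(2\pi)^2\,\xi\xi^T\,h(\|\xi\|)$; applied to the Laplacian it gives $\widehat{\Delta k_{W_0}}(\xi)=-(2\pi)^2\|\xi\|^2\,h(\|\xi\|)$, so $\widehat{(\Delta k_{W_0})\mathbb{I}_3}(\xi)=-(2\pi)^2\|\xi\|^2\,h(\|\xi\|)\,\mathbb{I}_3$. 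The integrability hypothesis on $\kk_{W_0}$ (and the smoothness $\cs\geq1$) is what guarantees that these identities hold in the classical, not merely distributional, sense.

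Subtracting, I obtain
\[
\kkh_V(\xi)=(2\pi)^2\|\xi\|^2\,h(\|\xi\|)\,\mathbb{I}_3-(2\pi)^2\,\xi\xi^T\,h(\|\xi\|)=(2\pi\|\xi\|)^2\,h(\|\xi\|)\Bigl(\mathbb{I}_3-\tfrac{\xi\xi^T}{\|\xi\|^2}\Bigr)=(2\pi\|\xi\|)^2\,h(\|\xi\|)\,\mathrm{Pr}_\xi^\perp,
\]
which is precisely the asserted form, with $\AAA_V\equiv0$ and $\BBB_V(\varrho)=(2\pi\varrho)^2h(\varrho)$. As a consistency check, $\AAA_V=0$ matches Theorem~\ref{fund_th} (a TRI kernel is divergence-free exactly when the parallel coefficient of its Fourier transform vanishes) and hence Corollary~\ref{corkDF}.

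The step I expect to require the most care is the bookkeeping rather than any real difficulty: keeping the factors of $2\pi$ and the signs straight so the coefficient comes out as $(2\pi\varrho)^2h(\varrho)$, and justifying that the derivative/Fourier interchange is legitimate in the classical sense (whence the integrability assumption). An alternative, purely spatial-domain route — substituting the coefficients~\eqref{kV2} of Proposition~\ref{p_ccW0} into the Hankel-transform formulae~\eqref{Tpar}--\eqref{Tper} for $(\AAA,\BBB)$ in terms of $(\aaa,\bbb)$, and simplifying via the Bessel recurrences in Appendix~\ref{appBessel} — would also prove the claim but is far more laborious; I would relegate it to a remark at most.
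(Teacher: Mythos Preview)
Your proof is correct and follows essentially the same route as the paper: apply Lemma~\ref{rotrotk2} to rewrite $\kk_V$ as $(\mathrm{Hess}-\Delta\mathbb{I}_3)$ of the scalar function, then Fourier-transform the Hessian and Laplacian terms using the standard differentiation rule and collect into $(2\pi\|\xi\|)^2h(\|\xi\|)\,\mathrm{Pr}_\xi^\perp$. The only cosmetic difference is that the paper introduces $g(x):=k_{W_0}(\|x\|)$ to distinguish the radial profile from the function on $\Rt$, whereas you overload the symbol $k_{W_0}$; this is harmless.
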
 
\begin{proof}
We have~$\kk_{W_0}(x)=g(x)\mathbb{I}_3$ with~$g(x):=k_{W_0}(\|x\|)$, $x\in\Rt$;
by Lemma~\ref{rotrotk2}, 
$\kk_V=-\mathrm{curl}^\mathrm{C}\mathrm{curl}^\mathrm{R}\,\kk_{W_0}=(\mathrm{Hess}-\Delta \mathbb{I}_3)g$.
Since~$
\int_{\Rd}\partial_j\partial_\ell g(x)\, e^{-2\pi i\xi\cdot x}dx
=
-(2\pi)^2\xi_j\xi_\ell\,\widehat{g}(\xi)$ and $\widehat{g}(\xi)=h(\|\xi\|)$,
we may finally compute 
\begin{align*}
\widehat{\kk}_V(\xi)&
=
(2\pi)^2\big(\|\xi\|^2\mathbb{I}_3-\xi\xi^T\big)\,h(\|\xi\|)
=(2\pi\|\xi\|)^2\,h(\|\xi\|)\,\mathrm{Pr}_\xi^\perp.
\qedhere
\end{align*}
\end{proof}
The situation is completely 
illustrated by the commutative diagram in Figure~\ref{FigScW2}.
In summary, given an arbitrary $\cs$-admissible Hilbert space~$W$ 
of divergence-free vector fields with kernel~$\kk_W$
(i.e.~given the corresponding coefficient in the frequency domain~$\BBB_W=h$), 
we can always build
another space~$W_0$
with a \em scalar \em kernel~$\kk_{W_0}$ such that~$\mathrm{curl}^\mathrm{C}\mathrm{curl}^\mathrm{R} \kk_{W}=
\mathrm{curl}^\mathrm{C}\mathrm{curl}^\mathrm{R} \kk_{W_0}$. 
Then by Proposition~\ref{curl2} 
the 
kernel of the space~$V:=\{\mathrm{curl}\, w\,|\,w\in W\}$
may simply be computed as~$\kk_V=-\mathrm{curl}^\mathrm{C}\mathrm{curl}^\mathrm{R} \kk_{W_0}$, i.e.~by
equations~\eqref{kV2}.
\par
In fact, given the arbitrariness of~$\BBB_{W}=h$, we may start from any
Hilbert space~$W_0\hookrightarrow C_0^\cs(\Rt,\Rt)$
with a scalar kernel, compute the new coefficients 
via~\eqref{kV2} and these 
will correspond to the kernel of $V:=\{\mathrm{curl}\, w\,|\,w\in W\}$ for \em some \em 
$\cs$-admissible space~$W$ of divergence-free vector fields, whose kernel~$\kk_W$ 
\em we do not need to
make explicit\em\/. This provides us with a practical procedure for
building kernels of divergence-free vector fields. 
Since~$h(\varrho)=(2\pi\varrho)^{-2}\AAA_V(\varrho)$,
we finally note that
by formula~\eqref{invhank} we have
%
\begin{equation}
\label{kHhV2}
\kk_{W_0}(x)=k_{W_0}(\|x\|)\mathbb{I}_3,\;\; x\in\Rt,
\qquad
\mbox{with}
\qquad
k_{W_0}(r)
=
\frac{1}{2\pi r^\mu}
\int_0^\infty
\varrho^{\mu-1}\,\AAA_V(\varrho)\, J_\mu(2\pi r \varrho) \,d\varrho,
\;\;
r>0
\end{equation}
(where~$\mu=\frac{d}{2}-1=\frac{1}{2}$). 
Therefore
the procedure 
is general, in the sense that 
any integrable, TRI, divergence-free kernel~$\kk_V$
of a~$(\cs-1)$-admissible Hilbert space~$V$
may be derived~by applying formulae~\eqref{kV2}
to the a scalar kernel~$\kk_{W_0}$ of a certain Hilbert space~$W_0\hookrightarrow C_0^{\cs}(\Rt,\Rt)$,
in fact
given by formula~\eqref{kHhV2}.
\begin{paragraph}{Generalization to arbitrary dimensions.}
Thanks to the interpretation of the differential 
operator~$-\mathrm{curl}^\mathrm{C}\mathrm{curl}^\mathrm{R}$
in the Fourier domain provided by Proposition~\ref{prCCF}, 
we may actually extend the above procedure,
and formulae~\eqref{kV2}, 
to~$\Rdd$-valued TRI kernels, with arbitrary~$d$. 
\begin{proposition}
Let~$\kk_{W_0}\hookrightarrow C_0^\cs(\Rd,\Rd)$ be integrable 
and~of the form~$\kk_{W_0}(x)=k_{W_0}(\|x\|)\mathbb{I}_d$,
with Fourier transform~$\kkh_{W_0}(\xi)=h(\|\xi\|)\mathbb{I}_d$.
If we define~$\kk_V$ via its Fourier transform 
as~$\kkh_V(\xi)=\BBB_V(\|\xi\|)\mathrm{Pr}_\xi^\perp$
with~$\BBB_V(\varrho)=(2\pi\varrho)^2h(\varrho)$, $\varrho>0$,
then
the coefficients of~$\kk_V$ are given by
\begin{align}
\label{kV3}
\aaa_V(r)
&=
-\frac{d-1}{r}\frac{dk_{W_0}}{dr}(r)
&
&\mbox{and}
&
\bbb_V(r)
&=
-\frac{d-2}{r}\frac{d k_{W_0} }{dr}(r)
-\frac{d^2 k_{W_0}}{dr^2}(r),
\quad
r>0.
\end{align}
\end{proposition}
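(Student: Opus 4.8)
The plan is to follow the same route as in the three-dimensional case (Lemma~\ref{rotrotk2} and Proposition~\ref{prCCF}): identify the spatial-domain differential operator whose Fourier multiplier is $(2\pi\|\xi\|)^2\,\mathrm{Pr}_\xi^\perp$, apply it to the radial profile $k_{W_0}$, and read off the coefficients. The natural candidate is the operator $g\mapsto\mathrm{Hess}\,g-(\Delta g)\,\mathbb{I}_d$, which generalizes the identity $-\mathrm{curl}^\mathrm{C}\mathrm{curl}^\mathrm{R}(g\,\mathbb{I}_3)=(\mathrm{Hess}-\Delta\,\mathbb{I}_3)g$ of Lemma~\ref{rotrotk2} to arbitrary dimension. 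Since $W_0$ is $\cs$-admissible with $\cs\ge1$, its kernel is $2\cs$-times continuously differentiable (the remark at the end of Section~\ref{gfik}, via Theorem~\ref{Th_diff}), so $g:=k_{W_0}(\|\cdot\|)$ is $C^{2\cs}(\Rd)$, in particular $C^2$, and the Hessian below makes classical sense.

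First I would record the Fourier symbol: for a $C^2$ scalar function $g:\Rd\to\mathbb{R}$ one has, entrywise, $\widehat{\partial_j\partial_\ell g-\delta_{j\ell}\Delta g}(\xi)=-(2\pi)^2\xi_j\xi_\ell\,\widehat g(\xi)+\delta_{j\ell}(2\pi)^2\|\xi\|^2\widehat g(\xi)=(2\pi)^2\widehat g(\xi)\,(\|\xi\|^2\delta_{j\ell}-\xi_j\xi_\ell)$, i.e., in matrix form (using \eqref{proj_op}) $\widehat{(\mathrm{Hess}-\Delta\,\mathbb{I}_d)g}(\xi)=(2\pi\|\xi\|)^2\,\widehat g(\xi)\,\mathrm{Pr}_\xi^\perp$; these are just the standard differentiation rules for $\mathcal{F}$ on $\mathcal{S}'(\Rd)$. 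Taking $g=k_{W_0}(\|\cdot\|)$, so that $\widehat g(\xi)=h(\|\xi\|)$ by hypothesis, the right-hand side becomes $(2\pi\|\xi\|)^2h(\|\xi\|)\,\mathrm{Pr}_\xi^\perp=\BBB_V(\|\xi\|)\,\mathrm{Pr}_\xi^\perp=\widehat{\kk}_V(\xi)$; hence, by injectivity of the Fourier transform (both sides being continuous, by the regularity above and integrability of $\widehat{\kk}_V$), $\kk_V=(\mathrm{Hess}-\Delta\,\mathbb{I}_d)(k_{W_0}(\|\cdot\|))$. The multiplier has no $\mathrm{Pr}_\xi^\parallel$ part, so $\AAA_V=0$, consistently with Theorem~\ref{fund_th} and the divergence-free claim of Corollary~\ref{corkDF}.

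It then remains to compute $(\mathrm{Hess}-\Delta\,\mathbb{I}_d)$ on a radial function, exactly as in the proof of Proposition~\ref{p_ccW0} but in $\Rd$. Writing $r=\|x\|$, one has $\partial_j\partial_\ell\,k_{W_0}(r)=k_{W_0}''(r)\,\tfrac{x_jx_\ell}{r^2}+k_{W_0}'(r)\big(\tfrac{\delta_{j\ell}}{r}-\tfrac{x_jx_\ell}{r^3}\big)$, hence $\mathrm{Hess}\,(k_{W_0}(\|\cdot\|))=k_{W_0}''(r)\,\mathrm{Pr}_x^\parallel+\tfrac{k_{W_0}'(r)}{r}\,\mathrm{Pr}_x^\perp$ and $\Delta\,(k_{W_0}(\|\cdot\|))=k_{W_0}''(r)+(d-1)\tfrac{k_{W_0}'(r)}{r}$ (the trace of the Hessian). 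Subtracting and grouping the $\mathrm{Pr}_x^\parallel$ and $\mathrm{Pr}_x^\perp$ parts gives $\aaa_V(r)=k_{W_0}''(r)-\big(k_{W_0}''(r)+(d-1)\tfrac{k_{W_0}'(r)}{r}\big)=-\tfrac{d-1}{r}k_{W_0}'(r)$ and $\bbb_V(r)=\tfrac{k_{W_0}'(r)}{r}-\big(k_{W_0}''(r)+(d-1)\tfrac{k_{W_0}'(r)}{r}\big)=-k_{W_0}''(r)-\tfrac{d-2}{r}k_{W_0}'(r)$, which are precisely \eqref{kV3}; for $d=3$ this recovers \eqref{kV2}. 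As a consistency check one could instead invert directly via Proposition~\ref{invT}: with $(\AAA_V,\BBB_V)=(0,(2\pi\varrho)^2h(\varrho))$ the formulae \eqref{iTpar}--\eqref{iTper} express $\aaa_V,\bbb_V$ as Hankel integrals of $(2\pi\varrho)^2h(\varrho)$, and differentiating the representation $k_{W_0}(r)=\tfrac{2\pi}{r^\mu}\int_0^\infty\varrho^{\mu+1}h(\varrho)J_\mu(2\pi\varrho r)\,d\varrho$ from \eqref{invhank} (using $\tfrac{d}{dz}(z^{-\nu}J_\nu(z))=-z^{-\nu}J_{\nu+1}(z)$ and the recurrence \eqref{Jrec1}) turns these into the same expressions. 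The only real obstacle is the analytic bookkeeping — justifying the distributional Fourier identities and, in the alternative route, the differentiation under the integral sign — but the first route keeps this to a minimum, needing only $k_{W_0}(\|\cdot\|)\in C^2$ and the elementary behaviour of $\mathcal{F}$ under differentiation.
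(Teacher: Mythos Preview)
Your proof is correct but takes a genuinely different route from the paper. The paper works entirely on the Hankel-transform side: it applies the inversion formula~\eqref{iTpar} with $\AAA=0$ and $\BBB=(2\pi\varrho)^2h(\varrho)$ to write $\aaa_V$ as a Hankel integral, then shows (via an integration by parts of the type~\eqref{DerAux}) that $\tfrac{1}{r}k_{W_0}'(r)$ has Hankel transform $-2\pi h(\varrho)$ of order $\mu+1$, and invokes the involution property of~\eqref{cond_sc2} to identify the two; $\bbb_V$ is then obtained not from~\eqref{iTper} but from the divergence-free relation~\eqref{divzero}, rewritten as $\bbb_V=\aaa_V+\tfrac{r}{d-1}\,\tfrac{d\aaa_V}{dr}$.

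You instead identify the spatial operator outright as $\mathrm{Hess}-\Delta\,\mathbb{I}_d$ (the $d$-dimensional generalization of Lemma~\ref{rotrotk2}), verify its Fourier symbol is $(2\pi\|\xi\|)^2\mathrm{Pr}_\xi^\perp$, and then compute $\mathrm{Hess}$ and $\Delta$ on a radial profile directly. This is more elementary---no Hankel manipulations or appeals to the involution are needed---and it makes the structural parallel with Proposition~\ref{prCCF} explicit in all dimensions. The paper's approach, by contrast, stays within the $M$/$M^{-1}$ machinery of Section~\ref{par_inv} and showcases how the inversion formulae~\eqref{iTpar}--\eqref{iTper} can be used constructively. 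You even sketch the paper's route as your ``consistency check'' at the end, so you have both arguments in hand.
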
 
\begin{proof} By~\eqref{iTpar} with~$\AAA=0$ we 
have~$\displaystyle\aaa_V(r)=\frac{2\mu+1}{r^{\mu+1}}
(2\pi)^2\int_0^\infty
\varrho^{\mu+2}h(\varrho)
J_{\mu+1}(2\pi\varrho r)\,d\varrho$. 
We note that
\begin{align*}
\frac{2\pi}{\varrho^{\mu+1}}
\int_0^\infty
r^{\mu+2}
\Big(
\frac{1}{r}
\frac{dk_{W_0}}{dr}(r)
\Big)J_{\mu+1}(2\pi\varrho r)\,d\varrho
\stackrel{(\ast)}{=}
-\frac{(2\pi)^2}{\varrho^\mu}
\int_0^\infty
r^{\mu+1}
k_{W_0}(r)\,
J_{\mu}(2\pi\varrho r)\,d\varrho
=-2\pi h(\varrho);
\end{align*}
in step~$(\ast)$ one 
uses an argument 
that is in all similar to the computation in~\eqref{DerAux}. Since 
the transformation~\eqref{cond_sc2}, with~$\mu+1$ 
instead of~$\mu$, is an involution,
it must be the case that
$$
\frac{1}{r}\frac{dk_{W_0}}{dr}(r)
=
\frac{2\pi}{r^{\mu+1}}
\int_0^\infty
\!\!
\varrho^{\mu+2}
\,
\big(\!-\!2\pi h(\varrho)\big)\,J_{\mu+1}(2\pi\varrho r)\, d\varrho,
$$
which allows us to complete the computation of~$\aaa_V$
(note that~$2\mu+1=d-1$). 
To calculate~$\bbb_V$ we simply use
formula~\eqref{divzero}, which is valid 
for divergence-free kernels, 
rewritten as
$
\bbb_V=\aaa_V+\frac{r}{d-1}
\,d\aaa_V/dr$.
\end{proof}
\noindent By considerations that are in all similar to those above, 
we conclude that 
all divergence-free kernels, in arbitrary dimensions, may be obtained 
from scalar ones via formulae~\eqref{kV3}, which generalize~\eqref{kV2}.
\end{paragraph}
\begin{paragraph}{Example~\ref{exGauss}, revisited again.} 
Let~$W_0\hookrightarrow C_0^\cs(\Rd,\Rd)$ be a RKHS 
and~$\kk_{W_0}(x)=k_{W_0}(\|x\|)\mathbb{I}_d$, where
$k_{W_0}(r)=\frac{b}{2c(d-1)}e^{-cr^2}$, with~$b>0$,~$c>0$;
$W_0$ is~$\cs$-admissible, for all~$\cs$. Formulae~\eqref{kV3}
yield
\begin{equation}
\label{dfGauss}
\aaa_V(r)=be^{-cr^2}
\qquad
\mbox{and}
\qquad
\bbb_V(r)=\Big(b-\frac{2bc}{d-1}\,r^2\Big)e^{-cr^2},
\end{equation}
which are precisely the equations of the coefficients
of the kernel in Example~\ref{exGauss} with~$a=2bc/(d-1)$,
which is 
the line on the boundary of~$D_1$ that
corresponds to divergence-free kernels. See Figure~\ref{wedgeint}(a). 
\end{paragraph}
\begin{example_n}[Bessel-type divergence-free kernels] 
Consider the 
scalar kernel $\kk_{W_0}(x)=k_{W_0}(\|x\|)\mathbb{I}_d$,
with $k_{W_0}$ given by the right-hand side of~\eqref{kscbess};
if~$\ce>\cs+d/2$ then~$W_0\hookrightarrow C_0^\cs(\Rd,\Rd)$.
Using Lemma~\ref{lembess}, if we apply formulae~\eqref{kV3} to such 
kernel~$k_{W_0}$
we get:
\begin{equation}
\label{dfBess}
\aaa_V(r)
=
\frac{C_0}{\sigma^2}(d-1)
\Big(\frac{r}{\sigma}\Big)^{\nu-1}
\!K_{\nu-1}\Big(\frac{r}{\sigma}\Big),
\,\mbox{ and }\,\,
\bbb_V(r)
=
\frac{C_0}{\sigma^2}
\Big(\frac{r}{\sigma}\Big)^{\nu-1}
\Big\{
(2\nu+d-3)\,
K_{\nu-1}
\Big(\frac{r}{\sigma}\Big)
-
\frac{r}{\sigma}\,
K_\nu
\Big(\frac{r}{\sigma}\Big)
\Big\},
\end{equation}
where~$\nu=\ce-\frac{d}{2}$. Once again, the kernel with 
coefficients~\eqref{dfBess}
has heavier tails than the Gaussian case~\eqref{dfGauss}.
\end{example_n}
\section{Application: matching of landmark points}
\label{secApp}
To illustrate an application of the tools that we developed thus far,
in this section we shall study the problem of matching feature points, or
``landmarks''~\cite{glaunes:1,joshi,MMM1,Younes10}. 
Let~$\Omega$ be an open subset of~$\Rm$.
The set of~$N$~\em labeled 
landmark points  in~$\Omega$ \em is defined as: 
\begin{equation}
\mathcal{L}^N(\Omega)
:=\Big\{
(P_1,\ldots,P_N)\,:\,P_a\in\Omega,\, P_a\not=P_b \mbox{ for }a\not=b
\Big\},
\end{equation}
which is in fact a manifold of dimension~$n=Nm$.
The generic element of~$\mathcal{L}^N(\Omega)$
is called \em landmark set\em\/. 
For any pair of elements of~$\mathcal{L}^N(\Omega)$ we will look for 
a time-dependent velocity field of ``minimal energy'' (to be defined)
that deforms the first landmark set  into the other; we shall consider  
velocity fields in RKHS of the type described in this paper. This will
result in the formulation of a Riemannian distance in~$\mathcal{L}^N(\Omega)$.
Before  proceeding, we first review some some known results. 
\par
\subsection{Ordinary differential equations and groups of diffeomorphisms}
\label{rODE}
We briefly summarize some facts on ordinary differential equations
where the time-dependent velocity fields take their values in RKHS of the type described
in this paper, and on the deformations that such dynamical systems generate. 
The interested reader is referred to~\cite{Younes10} for further 
details and results.
\par
\par
Consider a 
$1$-admissible Hilbert space~$V\hookrightarrow C_0^1(\Omega,\Rd)$,
where~$\Omega$ is an open subset of~$\Rm$;
for now we shall not introduce further assumptions 
of invariance.
We indicate with $\mathcal{X}_V^p:=L^p([0,1],V)$
the space of~$V$-valued 
time-dependent vector fields $t\mapsto v(t,\cdot)\in V$ 
such that
$$
\|v\|_{\mathcal{X}_V^p}:=
\Big(
\int_0^1\|v(t,\cdot)\|_V^p\,dt
\Big)%
^{\!\frac{1}{p}}<\infty.
$$
In particular,~$\mathcal{X}_V^2$ 
is a subset of~$\mathcal{X}_V^1$
and is Hilbert itself with the inner product
$
\langle u,v\rangle_{\mathcal{X}_V^2}
:=\int_0^1\langle u,v\rangle_{V}dt
$.
\par
If~$V\hookrightarrow C_0^1(\Omega,\Rd)$
then it is the case~\cite[\S8.2]{Younes10} that for any~$v\in \mathcal{X}_V^1$, $t_0\in[0,1]$ and~$x_0\in\Omega$
the initial value problem
$
\dot{z}=v(t,z), $
$z(t_0)=x_0
$
has a unique solution 
of the type~$z(t)=\psi(t,t_0,x)$, for~$t\in[0,1]$.
We define~$\varphi_{st}^v(x):=\psi(t,s,x)$
and call it
the \em flow \em associated to~$v$; it has the
group property $\varphi_{st}^v=\varphi_{rt}^v\circ\varphi_{sr}^v$
for all $r,s,t\in[0,1]$ and~$v\in \mathcal{X}_V^1$.
In fact, under the assumption that~$V\hookrightarrow C_0^1(\Omega,\Rd)$, 
we have
that for all~$v\in \mathcal{X}_{V}^1$ and $s,t\in[0,1]$ the map
$\varphi_{st}^v:\Omega\rightarrow\Omega$ is a \em diffeomorphism \em of~$\Omega$.
\par 
If~$V\hookrightarrow C_0^1(\Omega,\Rd)$, 
we denote with~$\mathcal{G}_V:=\{ \varphi_{01}^v : v\in \mathcal{X}_V^1 \}$
the set of diffeomorphisms provided by flows associated to 
elements of~$v\in\mathcal{X}_V^1$
at time~1. 
For any $\psi_1,\psi_2\in\mathcal{G}_V$ we define
$$
d_V(\psi_1,\psi_2)
:=
\inf_{v\in \mathcal{X}_V^1}
\Big\{
\|v\|_{\mathcal{X}_V^1}
\,
:
\,
\psi_2=\psi_1\circ\varphi_{01}^v
\Big\};
$$
it was proven by A.~Trouv\'e~\cite[Theorem~8.15]{Younes10} that the function~$d_V$ 
is a \em distance \em on~$\mathcal{G}_V$, and that~$(\mathcal{G}_V,d_V)$
is a complete metric space. For any $\psi_1,\psi_2\in\mathcal{G}_V$,
their distance may in fact be computed as follows: 
$
d_V(\psi_1,\psi_2)
=
\inf_{v\in \mathcal{X}_V^2}
\{
\|v\|_{\mathcal{X}_V^2}:\psi_2=\psi_1\circ\varphi_{01}^v
\};
$
that is, the search of~$v$ may be restricted 
to the set~$\mathcal{X}_V^2\subset \mathcal{X}_V^1$. 
The infimum attained in~$\mathcal{X}_V^2$: 
more precisely, 
for all \mbox{$\psi_1,\psi_2\in\mathcal{G}_V$} there exists a
time-dependent vector field~$v\in\mathcal{X}_V^2$
such that~$d_V(\psi_1,\psi_2)=\|v\|_{\mathcal{X}_V^1}=\|v\|_{\mathcal{X}_V^2}$,
and it is such that~$\|v(t,\cdot)\|_V$
is constant with respect to~$t\in[0,1]$. See~\cite[Chapter~8]{Younes10} for details.
\par
\subsection{Interpolation of vector fields}
Let~$V$ be a 
non-degenerate 
RKHS with kernel~$K:\Omega\times\Omega\rightarrow\Rdd$, where~$\Omega$ is
an open 
subset of~$\Rm$;
for now we shall introduce no further assumption of translation- and rotation-invariance, or differentiability, of the kernel.
We consider the following problem. 
\begin{IP}[IP$_1$]
For any
landmark set~$S=(x_1,\ldots,x_N)\in \mathcal{L}^N(\Omega)$ 
and any $N$-tuple
of vectors~$(\beta_1,\ldots,\beta_N)\in(\Rd)^N$, find
a vector field~$u\in V$
such that
$$
\left\{
\begin{array}{l}
u(x_a)=\beta_a \mbox{ for }1\leq a\leq N, \mbox{ and}
\\
\|u\|_{V} \mbox{ is minimal}.
\end{array}
\right.
$$
\end{IP}
\par
For this purpose, for a fixed~$S=(x_1,\ldots,x_N)\in \mathcal{L}^N(\Omega)$
we define the linear subspace of~$V$:
$$
V_S:=
\big\{
u\in V
\,\big|
\,
u(x_a)=0\mbox{ for all }a=1,\ldots,N
\big\}.
$$
Its orthogonal complement with respect to
the inner product~$\langle\cdot,\cdot\rangle_V$
is given by the following result.
\begin{lemma} If~$S=(x_1,\ldots,x_N)\in \mathcal{L}^N(\Omega)$,
then~$V_S^\perp=\mathrm{span}\big\{
K(\cdot,x_a)\alpha\,\big|\,a=1,\ldots,N,\,\alpha\in\Rd
\big\}$. 
\end{lemma}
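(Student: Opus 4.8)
The plan is to characterize $V_S^\perp$ as the span of the representers $K(\cdot,x_a)\alpha$ by a standard orthogonality argument. Write $U:=\mathrm{span}\{K(\cdot,x_a)\alpha\mid a=1,\ldots,N,\ \alpha\in\Rd\}$. Since $V$ is a Hilbert space and $V_S$ is a closed subspace (it is the intersection of the kernels of the continuous functionals $u\mapsto \alpha\cdot u(x_a)$, which are continuous because $V$ is a RKHS), we have $V_S=(V_S^\perp)^\perp$, so it suffices to prove $V_S^\perp=U$, and for this it is enough to show $V_S=U^\perp$, because then $V_S^\perp=(U^\perp)^\perp=\overline{U}=U$ (the last equality holds since $U$ is finite-dimensional, hence closed).

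First I would show $V_S=U^\perp$ directly from the reproducing property. For $u\in V$ and any $a$, $\alpha$, equation~\eqref{eq:repprop} gives $\langle K(\cdot,x_a)\alpha,u\rangle_V=\langle K_{x_a}^\alpha,u\rangle_V=\alpha\cdot u(x_a)$. Therefore $u\in U^\perp$ if and only if $\alpha\cdot u(x_a)=0$ for all $a=1,\ldots,N$ and all $\alpha\in\Rd$, which is exactly the condition $u(x_a)=0$ for all $a$, i.e.\ $u\in V_S$. This establishes $V_S=U^\perp$.

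Then I would conclude: $V_S^\perp=(U^\perp)^\perp$. Since $U$ is spanned by the finitely many vectors $K(\cdot,x_a)e_i$ ($a=1,\ldots,N$, $i=1,\ldots,d$), it is a finite-dimensional, hence closed, subspace of $V$, so $(U^\perp)^\perp=U$. This gives $V_S^\perp=U=\mathrm{span}\{K(\cdot,x_a)\alpha\mid a=1,\ldots,N,\ \alpha\in\Rd\}$, as claimed.

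I do not expect a serious obstacle here; the only points to be careful about are that the evaluation functionals are genuinely continuous (guaranteed by the RKHS hypothesis on $V$) so that $V_S$ is closed, and that $U$ is finite-dimensional so that no closure is needed in the final step. Non-degeneracy of $V$ is not actually needed for the identity itself (though it ensures, via Proposition~\ref{Kposdef}, that the natural spanning set of $U$ is in fact a basis, i.e.\ $\dim V_S^\perp=Nd$); I would mention this only as a remark if at all, to stay aligned with the hypothesis stated for $V$.
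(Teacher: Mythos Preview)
Your proof is correct and follows essentially the same route as the paper: both arguments use the reproducing property to show $V_S=U^\perp$ and then invoke finite-dimensionality of $U$ to conclude $V_S^\perp=(U^\perp)^\perp=U$. Your version is slightly more explicit about why $U$ is closed and correctly notes that non-degeneracy is not needed for the identity itself.
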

\begin{proof}
We have~$u\in V_S$ if and only 
if~$\sum_{a=1}^N \alpha_a\cdot 
u(x_a)=\langle\sum_{a=1}^N K(\cdot,x_a)\alpha_a,u \rangle_V=0$
for all $\alpha_1,\ldots,\alpha_N\in\Rd$.
So~$V_S=\mathrm{span}\{K(\cdot,x_a)\alpha\,
\big|\,a=1,\ldots,N,\,\alpha\in\Rd\}^{\perp}$.
But $\{K(\cdot,x_a)\alpha\,
\big|\,a=1,\ldots,N,\,\alpha\in\Rd\}$ has finite dimension
(and  whence it is closed), so the claim follows immediately.
\end{proof}
\begin{proposition} 
\label{prIP1}
Fix~$S=(x_1,\ldots,x_N)\in \mathcal{L}^N(\Omega)$.
For any~$u\in V$, there exists a unique~$u_\ast\in V_S^\perp$
such that~$u_\ast(x_i)=u(x_i)$ for all~$i=1,\ldots,N$; it is given by the
orthogonal projection of~$u$ onto~$V_S^\perp$.
\end{proposition}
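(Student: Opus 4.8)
The plan is to recognize this as a standard fact about reproducing kernels (it is the vector-valued analogue of the classical minimum-norm interpolation result of Wahba), and to prove it using the orthogonal decomposition $V = V_S \oplus V_S^\perp$ established by the preceding lemma. The key observations are that $V_S$ is a \emph{closed} subspace (being the kernel of the continuous linear map $u \mapsto (u(x_1),\ldots,u(x_N))$, which is continuous because $V$ is a RKHS and hence the evaluation functionals are bounded), so the orthogonal projection onto $V_S^\perp$ is well defined; and that the values $u(x_i)$ depend only on the $V_S^\perp$-component of $u$.

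First I would fix $u \in V$ and write $u = u_\ast + u_0$ with $u_\ast \in V_S^\perp$ and $u_0 \in V_S$, which is possible and unique since $V_S$ is closed. For each $i = 1,\ldots,N$ and each $\alpha \in \Rd$ we have, by the reproducing property~\eqref{eq:repprop},
\[
\alpha \cdot u_\ast(x_i) = \langle K(\cdot,x_i)\alpha, u_\ast \rangle_V = \langle K(\cdot,x_i)\alpha, u \rangle_V - \langle K(\cdot,x_i)\alpha, u_0 \rangle_V = \alpha \cdot u(x_i),
\]
because $K(\cdot,x_i)\alpha \in V_S^\perp$ by the lemma and $u_0 \in V_S$, so the middle inner product vanishes; equivalently, $\langle K(\cdot,x_i)\alpha, u_0\rangle_V = \alpha\cdot u_0(x_i) = 0$ by the definition of $V_S$. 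Since $\alpha$ is arbitrary, $u_\ast(x_i) = u(x_i)$ for all $i$, which gives existence.

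For uniqueness, suppose $w \in V_S^\perp$ also satisfies $w(x_i) = u(x_i)$ for all $i$. Then $u_\ast - w \in V_S^\perp$ and $(u_\ast - w)(x_i) = 0$ for all $i$, so $u_\ast - w \in V_S \cap V_S^\perp = \{0\}$, whence $w = u_\ast$. Finally, $u_\ast$ is by construction the orthogonal projection of $u$ onto $V_S^\perp$, as claimed. I do not anticipate any real obstacle here; the only point requiring a little care is to note explicitly that $V_S$ is closed (so that the orthogonal projection exists and $V = V_S \oplus V_S^\perp$), and this follows because the evaluation functionals $\delta_{x_a}^\alpha$ are continuous on the RKHS $V$. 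The connection to the interpolation problem IP$_1$ — that among all $u$ with prescribed values $\beta_a$ the minimizer of $\|u\|_V$ is exactly this $u_\ast$, since $\|u\|_V^2 = \|u_\ast\|_V^2 + \|u_0\|_V^2 \ge \|u_\ast\|_V^2$ — would naturally be drawn out in the discussion following the proposition.
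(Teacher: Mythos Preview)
Your proof is correct and follows essentially the same approach as the paper: both take the orthogonal projection $u_\ast$ of $u$ onto $V_S^\perp$, observe that $u-u_\ast\in V_S$ forces agreement at the points $x_a$, and argue uniqueness via $V_S\cap V_S^\perp=\{0\}$. Your version is slightly more explicit (you spell out the reproducing-property computation and justify closedness of $V_S$), but the argument is the same.
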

\begin{proof}
Let~$u_\ast$  be the
orthogonal projection of~$u$ onto~$V^\perp_S$;
then~$u-u_\ast\in V_S$, i.e.~$u(x_a)=u_\ast(x_a)$
for~$a=1,\ldots,N$. 
On the other hand, if~$u_\ast\in V^\perp_S$ is such
that~$u(x_a)=u_\ast(x_a)$ for~$a=1,\ldots,N$, then it is the case
that~$u-u_\ast\in V_S$, i.e.~$u_\ast$ is the orthogonal projection of~$u$
onto~$V^\perp_S$. 
%
\end{proof}
\begin{corollary}
\label{cIP1}
Let~$V$ be a RKHS.
If a solution~$u$ to~$\mathrm{IP}_1$ exists, 
then~$u\in V_S^\perp$.
If~$u\in V_S^\perp$ is a solution to~$\mathrm{IP}_1$ 
restricted to~$V_S^\perp$,
then it is a solution to~$\mathrm{IP}_1$ on all of~$V$.
\end{corollary}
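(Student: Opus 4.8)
The plan is to establish both assertions of Corollary~\ref{cIP1} as immediate consequences of Proposition~\ref{prIP1}, which already contains essentially all the content. First I would address the statement that any solution $u$ to $\mathrm{IP}_1$ must lie in $V_S^\perp$. Suppose $u\in V$ solves $\mathrm{IP}_1$; in particular $u(x_a)=\beta_a$ for all $a$, and $\|u\|_V$ is minimal among all such vector fields. Let $u_\ast$ be the orthogonal projection of $u$ onto $V_S^\perp$. By Proposition~\ref{prIP1}, $u_\ast(x_a)=u(x_a)=\beta_a$ for all $a=1,\ldots,N$, so $u_\ast$ also satisfies the interpolation constraints. Since $u = u_\ast + (u-u_\ast)$ with $u-u_\ast\in V_S$ orthogonal to $u_\ast\in V_S^\perp$, the Pythagorean identity gives $\|u\|_V^2 = \|u_\ast\|_V^2 + \|u-u_\ast\|_V^2 \geq \|u_\ast\|_V^2$. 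But $u$ is a minimizer, so $\|u\|_V\leq\|u_\ast\|_V$, forcing $\|u-u_\ast\|_V=0$, i.e.~$u=u_\ast\in V_S^\perp$.

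Next I would handle the second assertion. Suppose $u\in V_S^\perp$ solves $\mathrm{IP}_1$ restricted to $V_S^\perp$; that is, $u(x_a)=\beta_a$ for all $a$ and $\|u\|_V\leq\|w\|_V$ for every $w\in V_S^\perp$ with $w(x_a)=\beta_a$. I claim $u$ is then a minimizer over all of $V$. Let $w\in V$ be arbitrary with $w(x_a)=\beta_a$ for all $a$. By Proposition~\ref{prIP1}, its orthogonal projection $w_\ast$ onto $V_S^\perp$ satisfies $w_\ast(x_a)=w(x_a)=\beta_a$, so $w_\ast$ competes in the restricted problem and hence $\|u\|_V\leq\|w_\ast\|_V$. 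Again by the Pythagorean identity, $\|w_\ast\|_V\leq\|w\|_V$, so $\|u\|_V\leq\|w\|_V$. Since $w$ was an arbitrary admissible vector field in $V$, $u$ solves $\mathrm{IP}_1$ on all of $V$.

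There is no serious obstacle here: the entire argument rests on the orthogonal decomposition $V = V_S\oplus V_S^\perp$ (valid because $V_S^\perp$ is finite-dimensional, hence $V_S$ is closed, as established in the preceding lemma), together with the fact—already proven in Proposition~\ref{prIP1}—that orthogonal projection onto $V_S^\perp$ preserves the values at the landmark points $x_a$. The only point requiring a line of care is invoking the Pythagorean identity correctly, i.e.~noting that $u-u_\ast\in V_S$ is orthogonal to everything in $V_S^\perp$; this is exactly the defining property of the orthogonal projection. Thus the corollary follows in a few lines, and I would present it as such rather than reproving any of the projection machinery.
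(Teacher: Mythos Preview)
Your proposal is correct and is exactly the argument the paper intends: the corollary is stated without proof precisely because it follows immediately from Proposition~\ref{prIP1} via the orthogonal decomposition and the Pythagorean identity, just as you have written it out.
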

Therefore if the solution to~$\mathrm{IP}_1$ exists then
it must be in~$V_S^\perp$, i.e.~of the form
\begin{equation}
\label{sIP1}
u(\cdot)=\sum_{b=1}^NK(\cdot,x_b)\alpha_b,
\end{equation}
for some choice of vectors~$\alpha_1,\ldots,\alpha_N\in\Rd$.
The squared norm of such vector fields may be written as
$\|u\|^2_V=\sum_{a,b=1}^N\alpha_a\cdot K(x_a,x_b)\alpha_b$.
Inserting~$x_1,\ldots,x_N$ in the above expression 
and imposing the conditions~$u(x_a)=\beta_a$, $a=1,\ldots,N$, we see that we are 
looking for vectors~$\alpha_1,\ldots,\alpha_N$ such that
\begin{equation}
\label{intcond}
\beta_a = \sum_{b=1}^NK(x_a,x_b)\alpha_b,\qquad
a=1,\ldots,N.
\end{equation}
\par
Introducing now the vectors in~$\mathbb{R}^{Nd}$ and the matrix 
in~$\mathbb{R}^{Nd\times Nd}$
\begin{equation}
\label{VecNot}
\boldsymbol{\alpha}:=
\left[
\!\!
\begin{array}{c}
\alpha_1
\\
\vdots
\\
\alpha_N
\end{array}
\!\!
\right],
\quad
\boldsymbol{\beta}:=
\left[
\!\!
\begin{array}{c}
\beta_1
\\
\vdots
\\
\beta_N
\end{array}
\!\!
\right],
\quad
\mathbf{ K}(\boldsymbol{x})
:=
\left[\!
\begin{array}{cccc}
K(x_1,x_1) & K(x_1,x_2) & \cdots & K(x_1,x_N) \\
K(x_2,x_1) & K(x_2,x_2) & \cdots & K(x_2,x_N) \\
\vdots & \vdots & \ddots & \vdots \\
K(x_N,x_1) & K(x_N,x_2) & \cdots & K(x_N,x_N)
\end{array}
\!
\right],
\end{equation}
we may rewrite~\eqref{intcond} simply as $\boldsymbol{\beta}
=\mathbf{ K}(\boldsymbol{x})
\boldsymbol{\alpha}$, while~$\|u\|_V^2=\boldsymbol{\alpha}
\cdot\mathbf{ K}(\boldsymbol{x})
\boldsymbol{\alpha}$.
Note that the matrix $\mathbf{ K}(\boldsymbol{x})
$ is invertible (in fact 
if~$\mathbf{ K}(\boldsymbol{x})
\boldsymbol{\alpha}=0$ for some~$\boldsymbol{\alpha}
\in \mathbb{R}^{Nd}$
then~$\boldsymbol{\alpha}\cdot \mathbf{ K}(\boldsymbol{x})
\boldsymbol{\alpha}=0$; 
but $\boldsymbol{\alpha}\cdot 
\mathbf{ K}(\boldsymbol{x})
\boldsymbol{\alpha}$ is the left-hand side of~\eqref{eq:Gpos},
so if~$V$ is non-degenerate we must have~$\boldsymbol{\alpha}=0$).
Therefore the only solution 
to~$\boldsymbol{\beta}
=\mathbf{ K}(\boldsymbol{x})
\boldsymbol{\alpha}$ 
is~$\boldsymbol{\alpha}=\mathbf{ K}(\boldsymbol{x})^{-1}\boldsymbol{\beta}$. 
In conclusion, there exists a unique solution to~$\mathrm{IP}_1$
when this is restricted to~$V_S^\perp$, which is the vector field~\eqref{sIP1} 
with~$\boldsymbol{\alpha}=
\mathbf{ K}(\boldsymbol{x})^{-1}\boldsymbol{\beta}$. 
By Corollary~\ref{cIP1} this is the only solution 
to~$\mathrm{IP}_1$ (on all of~$V$).
\subsection{Landmark matching via diffemorphisms, with geometric intepretation}
We will now formulate and provide a method for solving 
the more difficult problem of
matching landmark sets with flows of diffeomorphisms. 
For~$\Omega\subseteq\Rd$ open and connected,
we introduce the further assumption that the non-degenerate
RKHS is~1-admissible, i.e.~$V\hookrightarrow C_0^1(\Omega,\Rd)$.
\begin{IP}[IP$_2$]
For any
two landmark sets~$I=(x_1,\ldots,x_N)$ 
and $J=(y_1,\ldots,y_N)$
in~$\mathcal{L}^N(\Omega)$, find a time-dependent 
vector field~$v\in \mathcal{X}_V^2$
such that
$$
\left\{
\begin{array}{l}
\varphi_{01}^v(x_a)=y_a \mbox{ for }1\leq a\leq N, \mbox{ and}
\\
\|v\|_{\mathcal{X}_V^2} \mbox{ is minimal}.
\end{array}
\right.
$$
\end{IP}
For~$v\in \mathcal{X}_V^2$, $S=(x_1,\ldots,x_N)\in\mathcal{L}^N(\Omega)$
and~$s,t\in[0,1]$,
we let~$\varphi_{st}^v(S):=(\varphi_{st}^v(x_1),\ldots,\varphi_{st}^v(x_N))$,
where~$\varphi_{st}^v$ is the flow associated to~$v$.
Since~$\varphi_{st}^v$ is a diffeomorphism we have 
$\varphi_{st}^v(x_a)\not=\varphi_{st}^v(x_b)$ for~$a\not=b$, 
whence~$\varphi_{st}^v(S)\in\mathcal{L}^N(\Omega)$.
We also introduce the subset of~$\mathcal{X}_V^2$: 
$$
\mathcal{X}_V^2(S):=
\big\{
v\in \mathcal{X}_V^2
\,\big|\,
v(t,\cdot)\in V^\perp_{\varphi_{0t}^v(S)},
\mbox{ for all }t\in[0,1]
\big\}.
$$
\begin{theorem}
Assume that~$V$ is a 1-admissible Hilbert space, and 
let~$S=(x_1,\ldots,x_N)\in\mathcal{L}^N(\Omega)$. 
For any~$v\in \mathcal{X}_V^2$ there exists a vector field
$v_\ast\in \mathcal{X}_V^2(S)$ such that 
$\varphi_{0t}^{v_\ast}(x_a)=\varphi_{0t}^{v}(x_a)$
for all~$t\in[0,1]$
and $a=1\ldots,N$,
and~$\|v_\ast(t,\cdot)\|_V\leq\|v(t,\cdot)\|_V$
for all~$t\in[0,1]$.
\end{theorem}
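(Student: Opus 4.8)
The plan is to construct $v_\ast$ pointwise in time from $v$ by projecting each slice $v(t,\cdot)$ onto the orthogonal complement of the ``vanishing subspace'' associated with the current configuration of the landmarks. Concretely, let $S(t):=\varphi^v_{0t}(S)=(\varphi^v_{0t}(x_1),\ldots,\varphi^v_{0t}(x_N))$ be the trajectory of the landmark set under the flow of $v$, and for each $t\in[0,1]$ let $v_\ast(t,\cdot)$ be the orthogonal projection of $v(t,\cdot)$ onto $V^\perp_{S(t)}$, in the sense of Proposition~\ref{prIP1} (equivalently, using the Lemma preceding it, $v_\ast(t,\cdot)$ is the unique element of $\mathrm{span}\{K(\cdot,x)\alpha\mid x\in S(t),\alpha\in\Rd\}$ that agrees with $v(t,\cdot)$ at the points of $S(t)$). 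Since orthogonal projection does not increase the norm, $\|v_\ast(t,\cdot)\|_V\leq\|v(t,\cdot)\|_V$ for every $t$, which immediately gives the norm inequality once measurability is established; in particular $v_\ast\in\mathcal{X}_V^2$, so in fact $v_\ast\in\mathcal{X}_V^2(S)$ by construction \emph{provided} we check that the subspaces defining $v_\ast$ are the right ones, i.e.\ that $S(t)$ coincides with the trajectory of $S$ under the flow of $v_\ast$ itself.

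That last point is the crux: the definition of $\mathcal{X}_V^2(S)$ requires $v_\ast(t,\cdot)\in V^\perp_{\varphi^{v_\ast}_{0t}(S)}$, not $V^\perp_{\varphi^{v}_{0t}(S)}$, so I must show that $v$ and $v_\ast$ drive the landmark points along \emph{the same} trajectories. This follows from the uniqueness theorem for the initial value problem $\dot z=w(t,z)$, $z(0)=x_a$ (valid because $V\hookrightarrow C_0^1(\Omega,\Rd)$, as recalled in Section~\ref{rODE}): the key observation is that by construction $v_\ast(t,P_a(t))=v(t,P_a(t))$ for each $t$, where $P_a(t):=\varphi^v_{0t}(x_a)$ is the $a$-th component of $S(t)$. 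Hence the curve $t\mapsto P_a(t)$ satisfies $\dot P_a(t)=v(t,P_a(t))=v_\ast(t,P_a(t))$ with $P_a(0)=x_a$, so by uniqueness $\varphi^{v_\ast}_{0t}(x_a)=P_a(t)=\varphi^v_{0t}(x_a)$ for all $t$ and all $a$. This simultaneously establishes the matching condition $\varphi^{v_\ast}_{0t}(x_a)=\varphi^v_{0t}(x_a)$ asserted in the theorem and shows $\varphi^{v_\ast}_{0t}(S)=S(t)$, so that $v_\ast(t,\cdot)\in V^\perp_{S(t)}=V^\perp_{\varphi^{v_\ast}_{0t}(S)}$ really does put $v_\ast$ in $\mathcal{X}_V^2(S)$.

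The remaining work is of a technical ``regularity/measurability'' nature: one must verify that $t\mapsto v_\ast(t,\cdot)$ is strongly measurable into $V$ so that $v_\ast$ is a legitimate element of $\mathcal{X}_V^2=L^2([0,1],V)$. For this I would note that the projection $v_\ast(t,\cdot)=\sum_{b=1}^N K(\cdot,P_b(t))\,\alpha_b(t)$ with $\boldsymbol\alpha(t)=\mathbf{K}(S(t))^{-1}\boldsymbol\beta(t)$, where $\boldsymbol\beta(t)=(v(t,P_1(t)),\ldots,v(t,P_N(t)))$ and $\mathbf{K}$ is the block Gram matrix of~\eqref{VecNot}; the entries $P_b(t)$ depend continuously on $t$ (they solve an ODE with continuous right-hand side), the map $\boldsymbol\beta(t)$ is measurable because $v$ is measurable and evaluation at a point is continuous on $V$ (admissibility), $\mathbf{K}(S(t))$ is invertible for every $t$ since $V$ is non-degenerate and the $P_b(t)$ are distinct, and $\mathbf{K}(S(t))^{-1}$ depends continuously on those entries; hence $\boldsymbol\alpha(t)$ is measurable and so is $t\mapsto v_\ast(t,\cdot)$ into $V$. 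The main obstacle I anticipate is precisely marshalling this measurability argument cleanly — the pointwise-in-$t$ projection and the fixed-point-like consistency between $v$ and $v_\ast$'s flows are conceptually the heart of the matter, but they are essentially immediate once the uniqueness theorem from Section~\ref{rODE} is invoked; it is the bookkeeping needed to land inside $L^2([0,1],V)$ rather than merely having a pointwise family of vector fields that requires care.
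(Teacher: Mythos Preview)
Your proposal is correct and follows essentially the same approach as the paper: define $v_\ast(t,\cdot)$ as the orthogonal projection of $v(t,\cdot)$ onto $V^\perp_{\varphi^v_{0t}(S)}$, obtain the norm inequality from the projection, and then use uniqueness of ODE solutions to conclude that the flows of $v$ and $v_\ast$ agree on the landmarks, which in turn shows $V^\perp_{\varphi^v_{0t}(S)}=V^\perp_{\varphi^{v_\ast}_{0t}(S)}$. The paper's proof is in fact terser than yours and does not address the measurability of $t\mapsto v_\ast(t,\cdot)$ at all; your explicit formula $\boldsymbol\alpha(t)=\mathbf{K}(S(t))^{-1}\boldsymbol\beta(t)$ and the accompanying argument fill a gap that the paper leaves implicit.
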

\begin{proof}
Fix $v\in \mathcal{X}_V^2$,
and for all~$t\in[0,1]$ let~$v_\ast(t,\cdot)$ be the projection
of~$v(t,\cdot)\in V$ onto~$V_{\varphi_{0t}^v(S)}^\perp$; 
by Corollary~\ref{cIP1}
it is in fact the vector field of minimal norm~$\|\cdot\|_V$ such 
that, for all $a=1,\ldots,N$, $v_\ast(t,\varphi_{0t}^v(x_a))
=v(t,\varphi_{0t}^v(x_a))$. 
Therefore,
by the uniqueness of solutions to ordinary 
differential equations (discussed in~\S\ref{rODE}), 
it must be the case that~$\varphi_{0t}^{v_\ast}(x_a)
=\varphi_{0t}^{v}(x_a)$,
for all~$t\in[0,1]$ and~$a=1,\ldots,N$. 
This also implies that
$V_{\varphi_{0t}^{v_\ast}(S)}
=V_{\varphi_{0t}^{v}(S)}$, 
therefore~$v_\ast(t,\cdot)\in V_{\varphi_{0t}^{v_\ast}(S)}$
for all~$t\in[0,1]$, i.e.~$v_\ast\in \mathcal{X}_V^2(S)$.
Also, by construction we have that
~$\|v_\ast(t,\cdot)\|_V\leq\|v(t,\cdot)\|_V$
for all~$t\in[0,1]$.
\end{proof}
\par
Whence the solutions of the problem~$\mathrm{IP}_2$ must 
be searched among the vector fields of the form
\begin{equation}
\label{sIP2}
v(t,\cdot)
=
\sum_{b=1}^NK\big(\,\cdot\,,x_b(t)\big)\alpha_b(t),
\qquad t\in[0,1],
\end{equation}
for some set of functions~$\alpha_a:[0,1]\rightarrow \Rd$, 
$a=1,\ldots,N$,
where we have introduced for simplicity the notation
$x_a(t):=\varphi_{0t}^v(x_a)$, $a=1,\ldots,N$
(the above equation is precisely expression~\eqref{eqone} 
in the introduction).
Note therefore that expression~\eqref{sIP2} for~$v$ 
is implicit, in that
the trajectories~$x_a$, $a=1,\ldots,N$, also depend on~$v$. 
So we can parametrize the search space~$\mathcal{X}_V^2(S)$
of the solution to~$\mathrm{IP}_2$ 
by the 
functions~$\alpha_a$, $a=1,\ldots,N$. 
Similarly to earlier, for all~$t\in[0,1]$ we introduce the vectors 
in~$\mathbb{R}^{Nd}$ and matrix  in~$\mathbb{R}^{Nd\times Nd}$: 
\begin{equation}
\label{VecNot2}
\boldsymbol{\alpha}(t)
\!
:=
\!
\left[
\!\!
\begin{array}{c}
\alpha_1(t)
\\
\vdots
\\
\alpha_N(t)
\end{array}
\!\!
\right]\!,
\;\;
\boldsymbol{x}(t)
\!:=
\!
\left[
\!\!\!
\begin{array}{c}
x_1(t)
\\
\vdots
\\
x_N(t)
\end{array}
\!\!\!
\right]\!,
\;\;
\mathbf{ K}\big(\boldsymbol{x}(t)\big)
\!
:=
\!
\left[\!
\begin{array}{ccc}
K\big(x_1(t),x_1(t)\big) &  \cdots & K\big(x_1(t),x_N(t)\big) \\
K\big(x_2(t),x_1(t)\big) &  \cdots & K\big(x_2(t),x_N(t)\big) \\
\vdots & \ddots & \vdots \\
K\big(x_N(t),x_1(t)\big) &  \cdots & K\big(x_N(t),x_N(t)\big)
\end{array}
\!
\right]\!.
\end{equation}
\par
Note that since we assumed that~$V\hookrightarrow C_0^1(\Omega,\Rd)$ 
is non-degenerate, the 
matrix~$\mathbf{ K}(\boldsymbol{x}(t))$ is invertible for all~$t\in[0,1]$.
Note also that the norm of the vector fields of the type~\eqref{sIP2}
may be written as $\|v(t,\cdot)\|_V=
\boldsymbol{\alpha}(t)
\cdot \mathbf{ K}(\boldsymbol{x}(t))
\boldsymbol{\alpha}(t)$, by the reproducing property of~$K$.
Therefore the solutions to~$\mathrm{IP}_2$
must be searched in the space~$\mathcal{X}^2_V(S)$,
whose elements have norm that may also be parametrized 
by the functions~$\alpha_a$, $a=1,\ldots,N$, as follows:
\begin{equation}
\label{nrg}
\|v\|^2_{\mathcal{X}_V^2}
=
\int_0^1
\sum_{a,b=1}^N
\alpha_a(t)\cdot K\big(x_a(t),x_b(t)\big)\alpha_b(t)\,dt
=\int_0^1
\boldsymbol{\alpha}(t)
\cdot \mathbf{ K}(\boldsymbol{x}(t))
\boldsymbol{\alpha}(t)
\,dt,
\end{equation}
where, we remind the reader,~$x_a(t):=\varphi_{0t}^v(x_a)$,
for~$t\in[0,1]$ and~$a=1,\ldots,N$. Therefore, by definition, it must be the case 
that $\dot{x}_a(t)=v(t,x_a(t))$;  by expression~\eqref{sIP2},
we must have
$$
\dot{x}_a(t)
=
\sum_{b=1}^NK\big(x_a(t),x_b(t)\big)\alpha_b(t),\qquad
t\in[0,1],\quad a=1,\ldots,N,
$$
or, with the compact notation introduced in~\eqref{VecNot2}, 
$\dot{\boldsymbol{x}}(t)=\mathbf{ K}(\boldsymbol{x}(t))\boldsymbol{\alpha}(t)$, 
for~$t\in[0,1]$.
We conclude that the squared norm~\eqref{nrg} 
of vector fields in~$\mathcal{X}_V^2$ of the type~\eqref{sIP2} 
may be rewritten as:
\begin{equation}
\label{nrg2}
\|v\|^2_{\mathcal{X}_V^2}
=
\int_0^1 \dot{\boldsymbol{x}}(t)\cdot \mathbf{ K}(\boldsymbol{x}(t))^{-1}\dot{\boldsymbol{x}}(t)\,dt,
\end{equation}
where~$x_a:[0,1]\rightarrow \mathcal{L}^N(\Omega)$,
and~$a=1,\ldots,N$, are the  trajectories  determined by the
vector field~$v$.
\par
We note that the right-hand side of~\eqref{nrg2}
may be interpreted in differential-geometric  terms~\cite{jost,lee:2} 
as the \em energy of a curve \em 
$t\mapsto \boldsymbol{x}(t)$
on the $Nd$-dimensional manifold~$\mathcal{L}^N(\Omega)$ 
with respect
to a Riemannian metric
(remember that~$\Omega\subseteq\Rd$ in this section).  
In the coordinates of the 
landmark points, which we indicate here generically 
as~$(q^1,\ldots,q^N)\in \mathcal{L}^N(\Omega)$ 
with~$q^a=(q^{a1},\ldots,q^{ad})\in\Rd$ for~$a=1,\ldots,N$, 
the corresponding metric tensor
is written as 
the 
$Nm\times Nm$
matrix-valued function:
\begin{equation}
\label{gtensor}
g(q^1,\ldots,q^N):=
\left[\!
\begin{array}{cccc}
K(q^1,q^1) & K(q^1,q^2) & \cdots & K(q^1,q^N) \\
K(q^2,q^1) & K(q^2,q^2) & \cdots & K(q^2,q^N) \\
\vdots & \vdots & \ddots & \vdots \\
K(q^N,q^1) & K(q^N,q^2) & \cdots & K(q^N,q^N)
\end{array}
\!
\right]^{-1}.
\end{equation}
We now introduce notation from 
mechanics~\cite{arnold:1},
namely~$q^a(t):=x_a(t)\in \mathbb{R}^{Nd}$ and~$p_a(t):=\alpha_a(t)\in \mathbb{R}^{Nd}$,
with $t\in[0,1]$ and $a=1,\ldots,N$, 
for landmark \em positions \em and \em momenta\em\/, respectively;
also, we shall write $q(t):= \boldsymbol{x}(t)$ and~$p(t):= \boldsymbol{\alpha}(t)$, 
$t\in[0,1]$.
Whence~$\dot{\boldsymbol{x}}(t)=
\mathbf{ K}(\boldsymbol{x}(t))\boldsymbol{\alpha}(t)$
translates into  
\begin{equation}
\label{sharp}
\dot{q}^a(t) = \sum_{b=1}^N
K\big(q^a(t),q^b(t)\big)\,p_b(t),
\qquad
t\in[0,1],\quad a=1,\ldots,N,
\end{equation}
which is nothing but the ``sharp'' isomorphism of differential geometry
(raising of the indices) with
respect to the metric tensor~\eqref{gtensor}, that maps cotangent 
vectors into a tangent vectors (this justifies the
upper location of the index in~$q^a$, $a=1,\ldots,N$).  From the 
energy of the curve~\eqref{nrg2} we deduce immediately
the form of the Lagrangian function~\cite{arnold:1,jost}
for landmarks, namely
$
L(q,\dot{q})=\frac{1}{2}\dot{q}\cdot g(q)\dot{q},
$
and the Hamiltonian function which we may write as  follows:
\begin{equation}
\label{Hamiltonian}
H(p,q)
=
\frac{1}{2}\,p\cdot \big(g(q)\big)^{-1}p
=
\frac{1}{2}\sum_{a,b=1}^N
p_a\cdot K(q^a,q^b)p_b
=
\frac{1}{2}
\sum_{a,b=1}^N
\sum_{i,j=1}^d
K^{ij}(q^a,q^b)p_{ai}p_{bj},
\end{equation}
where we have explicitly written the components of the 
momenta~$p_a=(p_{a1},\ldots,p_{ad})$, 
$a=1,\ldots,N$,
and we have indicated with~$K^{ij}$ the~$(i,j)^\mathrm{th}$
element of the matrix-valued kernel~$K$.
We shall use~\eqref{Hamiltonian} 
in the next section to study the dynamics induced by the landmark
matching problem.
\begin{discussion} In light of the above,
the landmark trajectories resulting
from vector fields~$v\in \mathcal{X}_V^2$
that solve~$\mathrm{IP}_2$ will be 
length-minimizing geodesics in~$\mathcal{L}^N(\Omega)$,
with respect to the metric tensor~\eqref{gtensor},
that connect the landmark sets~$I=(x_1,\ldots,x_N)$
and~$J=(y_1,\ldots,y_N)$. Once such length-minimizing
geodesics $q:[0,1]\rightarrow\mathcal{L}^N(\Omega)$
are known, the momenta are computed via the inversion
of the equations~\eqref{sharp} (i.e.~by
$p(t)=\mathbf{ K}(q(t))^{-1}\dot{q}(t)$, $t\in[0,1]$),
while the corresponding~$v\in \mathcal{X}_V^2$
is obtained via formula~\eqref{sIP2}.
We also have, by the results summarized in~\S\ref{rODE},
that the resulting geodesic distance between the sets
$I=(x_1,\ldots,x_N)$
and~$J=(y_1,\ldots,y_N)$ in~$\mathcal{L}^N(\Omega)$
is  equal to~$d_V(\varphi_{01}^v,\mathrm{id})$,
i.e.~the distance in the group of diffeomorphisms~$\mathcal{G}_V$
between the~$\varphi_{01}^v$ and the identity diffeomorphism. 
\par
Finally, we note that the shape of the geodesics in~$\mathcal{L}^N(\Omega)$
must clearly depend on the initial choice of the Hilbert 
space~$V\hookrightarrow C_0^1(\Omega,\Rd)$, i.e.~on its kernel;
some examples will be seen later.
We refer the reader to~\cite{MMM1} for a study of the
geometry (in particular, of the sectional curvature) of~$\mathcal{L}^N(\Rd)$
equipped with the metric~\eqref{gtensor}
in the case of scalar kernels, i.e.~of the form~$K(x,y)=k(\|x-y\|)\mathbb{I}_d$.
\end{discussion}

\subsection{Hamiltonian dynamics}
In the previous section we derived the Hamiltonian
induced by the matching problem for landmark sets.
We shall now develop Hamilton's equations, i.e.~the geodesic
equations in~$\mathcal{L}^N(\Omega)$ with respect to the metric
tensor~\eqref{gtensor}. Also, we will consider the particular case of~TRI kernels.
\begin{proposition}
Let~$V\hookrightarrow C_0^1(\Omega,\Rd)$, with~$\Omega\subseteq\Rd$
open and connected, be a RKHS with kernel~$K$.
Hamilton's equations for the manifold~$\mathcal{L}^N(\Rd)$ 
equipped with the metric tensor~\eqref{gtensor} are given by:
\begin{subequations}
\begin{align}
\label{gham_1}
\dot{q}^{a} & = \sum_{b=1}^N
K(q^a,q^b)\,p_{b} \\
\label{gham_2}
\dot{p}_{ai} &= - \sum_{b=1}^N
\sum_{j,k=1}^{\dd}
\partial_{1,i}K^{jk}(q^a,q^b)\,
p_{aj}
p_{bk} 
\end{align}
\end{subequations}
for $a=1,\ldots,N$ and $i=1,\ldots, \dd$.
\end{proposition}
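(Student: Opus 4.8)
The plan is to derive Hamilton's equations directly from the Hamiltonian~\eqref{Hamiltonian}, which we have already established is
$$
H(p,q) = \frac{1}{2}\sum_{a,b=1}^N\sum_{i,j=1}^d K^{ij}(q^a,q^b)\,p_{ai}p_{bj}.
$$
Recall that Hamilton's equations on the cotangent bundle are $\dot{q}^{ai} = \partial H/\partial p_{ai}$ and $\dot{p}_{ai} = -\partial H/\partial q^{ai}$, for $a=1,\ldots,N$ and $i=1,\ldots,d$. So the proof reduces to two routine partial-derivative computations, keeping careful track of indices.

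First I would compute $\partial H/\partial p_{ai}$. Differentiating the double sum, the variable $p_{ai}$ appears in two families of terms: those indexed by $(a,b)$ with the first momentum factor being $p_{ai}$, and those indexed by $(b,a)$ with the second momentum factor being $p_{ai}$. Using the symmetry $K(x,y)=K(y,x)^T$, i.e.\ $K^{ij}(x,y)=K^{ji}(y,x)$, these two contributions coincide, cancelling the factor $1/2$, and one obtains $\dot{q}^{ai} = \sum_{b=1}^N\sum_{j=1}^d K^{ij}(q^a,q^b)p_{bj}$, which is precisely~\eqref{gham_1} in components. Next I would compute $\partial H/\partial q^{ai}$. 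The position variable $q^{ai}$ enters $H$ only through the kernel evaluations $K^{jk}(q^a,q^b)$ and $K^{jk}(q^b,q^a)$; differentiating, one picks up $\partial_{1,i}K^{jk}(q^a,q^b)$ from the first and $\partial_{2,i}K^{jk}(q^b,q^a)$ from the second. Again invoking the transpose symmetry of $K$ (so that $\partial_{2,i}K^{jk}(q^b,q^a) = \partial_{1,i}K^{kj}(q^a,q^b)$) and relabelling summation indices, the two contributions combine to cancel the $1/2$, yielding $\dot{p}_{ai} = -\sum_{b=1}^N\sum_{j,k=1}^d \partial_{1,i}K^{jk}(q^a,q^b)p_{aj}p_{bk}$, which is~\eqref{gham_2}. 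One minor point worth noting: when $b=a$ the term $K^{jk}(q^a,q^a)p_{aj}p_{ak}$ receives a derivative with respect to $q^{ai}$ acting in \emph{both} slots, but this is automatically and correctly accounted for by the chain rule, so no separate case analysis is needed.

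The only genuine prerequisite is that $H$ be differentiable in $q$, which requires $K$ to be $C^1$; since we have assumed $V\hookrightarrow C_0^1(\Omega,\Rd)$, Theorem~\ref{Th_diff} guarantees that $\partial_1^p\partial_2^pK$ exists and is continuous for $|p|\le 1$, and in particular that $\partial_{1,i}K^{jk}$ exists and is continuous, so the formal computation is justified. I would also remark that equation~\eqref{gham_1} is nothing but~\eqref{sharp}, already derived, so the real content is~\eqref{gham_2}. There is no serious obstacle here; the main care required is purely bookkeeping—using $K(x,y)=K(y,x)^T$ consistently to merge the two symmetric contributions in each derivative and thereby absorb the factor $1/2$. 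I would present the two computations as a short displayed calculation for each, then state that this is exactly the pair~\eqref{gham_1}--\eqref{gham_2}.
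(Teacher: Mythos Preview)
Your proposal is correct and follows essentially the same approach as the paper: both compute $\dot q^{ai}=\partial H/\partial p_{ai}$ and $\dot p_{ai}=-\partial H/\partial q^{ai}$ directly from~\eqref{Hamiltonian}, use the symmetry $K^{jk}(x,y)=K^{kj}(y,x)$ to convert the $\partial_{2,i}$ contribution into a $\partial_{1,i}$ one, and relabel indices to absorb the factor $1/2$. Your added remarks on the diagonal term $b=a$ and on the $C^1$ regularity of $K$ via Theorem~\ref{Th_diff} are welcome but not present in the paper's own proof.
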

\noindent{As} in Section~\ref{constr},
 $\partial_{n,i}K$ in~\eqref{gham_2}
indicates the partial derivative of~$(x_1,\ldots,x_\dd,y_1,\ldots,y_\dd)\mapsto K(x,y)$
with respect to the~$i$-th component ($i=1,\ldots,\dd$) of the~$n$-th 
variable ($n=1,2$).
\begin{proof}
Computing~$\dot{q}^{ai}=\frac{\partial H}{\partial p_{ai}}$ yields the first equation immediately.
Also:
\begin{align*}
\frac{\partial}{\partial q^{ai}}
\,
K^{jk}
&
\big(q^{b1},\ldots,q^{bD},q^{c1},\ldots,q^{cD}\big)
=
\sum_{\ell=1}^D
\Big(
\partial_{1,\ell} K^{jk}
\,
\frac{\partial q^{b\ell}}{\partial q^{ai}}
+
\partial_{2,\ell} K^{jk}
\,
\frac{\partial q^{c\ell}}{\partial q^{ai}}
\Big)
\\
&=
\sum_{\ell=1}^D
\Big(
\partial_{1,\ell} K^{jk}
\,
\delta_a^b\delta_i^\ell
+
\partial_{2,\ell} K^{jk}
\,
\delta_a^c\delta_i^\ell
\Big)
=
\partial_{1,i} K^{jk}
(q^b,q^c)
\,
\delta_a^b
+
\partial_{2,i} K^{jk}
(q^b,q^c)
\,
\delta_a^c.
\end{align*}
Since $K^{jk}(x,y)=K^{kj}(y,x)$ it is also 
the case that~$\partial_{2,i}^{jk}K(x,y)=\partial_{1,i}^{kj}K(y,x)$, whence
\begin{align*}
&\dot{p}_{ai}
=-
\frac{\partial H}{\partial q^{ck}}
=
-\frac{1}{2}
\sum_{b,c=1}^N\sum_{j,k=1}^D
\Big(
\partial_{1,i} K^{jk}
(q^b,q^c)
\,
\delta_a^b
+
\partial_{1,i} K^{kj}
(q^c,q^b)
\,
\delta_a^c
\Big)
p_{bj}
p_{ck}
\\
&\;\;=
-
\frac{1}{2}
\!
\sum_{j,k=1}^D
\!\!
\Big(
\sum_{c=1}^N
\partial_{1,i}K^{jk}(q^a,q^c)p_{aj}p_{ck}
+
\sum_{b=1}^N
\partial_{1,i}K^{kj}(q^a,q^b)p_{bj}p_{ak}
\Big)
=
-\!
\sum_{j,k=1}^D
\sum_{c=1}^N
\partial_{1,i}K^{jk}(q^a,q^c)p_{aj}p_{ck}
\end{align*}
where, in the last step, we have simply relabeled the indices. This concludes the proof.
\end{proof}
We now assume that~$\Omega=\Rd$ and that the kernel of~$V$
is translation-invariant, i.e.~$K(x,y)=\kk(x-y)$, for
some function~$\kk:\Rd\rightarrow\Rdd$.
In this case Hamilton's equations obviously become:
\begin{subequations}
\begin{align}
\label{ham_1}
\dot{q}^a & = \sum_{b=1}^N
\kk(q^a-q^b)\,p_b \\
\label{ham_2}
\dot{p}_{ai} &= 
- \sum_{b=1}^N
p_a\cdot
\frac{\partial \kk}{\partial x^i}(q^a-q^b)\,p_b 
\end{align}
\end{subequations}
with $a=1,\ldots,N$ and $i=1,\ldots, \dd$.
In the translation- \em and \em rotation-invariant case (TRI kernels)
the equations take a form that is determined by differentiating
the general expression~\eqref{invker}.
\begin{proposition}
\label{parderk}
Consider a Hilbert space~$V\hookrightarrow C_0^1(\Rd,\Rd)$
with a TRI kernel. Then
\begin{equation}
\label{partialK}
\frac{\partial \kk}{\partial x^i}(x)
=
\frac{x^i}{\|x\|}
\Big[
\frac{d\aaa}{dr}
(\|x\|)\,
\prparx
+
\frac{d\bbb}{dr}
(\|x\|)\,
\prperx
\Big]
\!+\!
\|x\|\,\tilde{k}(\|x\|)
\Big[
\frac{e_ix^T+xe_i^T}{\|x\|}
-2\frac{x^i}{\|x\|}
\prparx
\Big],
\end{equation}
for $i=1,\ldots,\dd$, where $\tilde{k}$ is given by~\eqref{def_ktilde}.
\end{proposition}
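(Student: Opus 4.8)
The plan is to compute the partial derivative $\partial\kk/\partial x^i$ directly from the componentwise expression~\eqref{kelement}, namely $\kk^{j\ell}(x)=\ccc(\|x\|)\,x^jx^\ell+\bbb(\|x\|)\,\delta^{j\ell}$, and then reassemble the resulting matrix into the projection-operator form. First I would differentiate: using $\partial\|x\|/\partial x^i=x^i/\|x\|$, one gets
$$
\frac{\partial\kk^{j\ell}}{\partial x^i}(x)
=\ccc'(\|x\|)\frac{x^i}{\|x\|}x^jx^\ell
+\ccc(\|x\|)\big(\delta^{ij}x^\ell+x^j\delta^{i\ell}\big)
+\bbb'(\|x\|)\frac{x^i}{\|x\|}\delta^{j\ell}.
$$
In matrix form this reads
$$
\frac{\partial\kk}{\partial x^i}(x)
=\frac{x^i}{\|x\|}\Big(\ccc'(\|x\|)\,\|x\|^2\,\mathrm{Pr}_x^\parallel+\bbb'(\|x\|)\,\mathbb{I}_d\Big)
+\ccc(\|x\|)\big(e_ix^T+xe_i^T\big),
$$
where I have used $xx^T=\|x\|^2\mathrm{Pr}_x^\parallel$. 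The next step is to rewrite the first bracket in terms of $\aaa'$ and $\bbb'$ and the two projectors. From $\aaa(r)-\bbb(r)=r^2\ccc(r)$ (definition~\eqref{def_ktilde}) one has $\aaa'-\bbb'=2r\ccc+r^2\ccc'$, so $r^2\ccc'=\aaa'-\bbb'-2r\ccc$; substituting and writing $\mathbb{I}_d=\mathrm{Pr}_x^\parallel+\mathrm{Pr}_x^\perp$ gives
$$
\ccc'(\|x\|)\|x\|^2\mathrm{Pr}_x^\parallel+\bbb'(\|x\|)\mathbb{I}_d
=\aaa'(\|x\|)\mathrm{Pr}_x^\parallel+\bbb'(\|x\|)\mathrm{Pr}_x^\perp
-2\|x\|\,\ccc(\|x\|)\,\mathrm{Pr}_x^\parallel.
$$
Finally I would note $\ccc(\|x\|)=\|x\|\,\tilde k(\|x\|)/\|x\|\cdot\|x\|\cdot\|x\|^{-1}$— more precisely, since the target expression carries the factor $\|x\|\tilde k(\|x\|)$, I would collect the term $\ccc(\|x\|)(e_ix^T+xe_i^T)=\|x\|\tilde k(\|x\|)\,(e_ix^T+xe_i^T)/\|x\|$ and combine the leftover $-2\frac{x^i}{\|x\|}\|x\|\ccc(\|x\|)\mathrm{Pr}_x^\parallel=-2\frac{x^i}{\|x\|}\|x\|\tilde k(\|x\|)\mathrm{Pr}_x^\parallel$ into the second square bracket of~\eqref{partialK}. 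Matching terms yields exactly the claimed formula.

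There is essentially no serious obstacle here — it is a bookkeeping computation. The one place to be careful is the algebraic juggling between $\ccc,\ccc'$ and $\aaa',\bbb'$ via the relation $\aaa-\bbb=r^2\ccc$, and making sure the $\mathrm{Pr}_x^\parallel$ contributions from $\ccc'\|x\|^2\mathrm{Pr}_x^\parallel$, from splitting $\mathbb{I}_d$, and from the $-2\|x\|\ccc\,\mathrm{Pr}_x^\parallel$ correction term all land with the right coefficients. I would also remark at the outset that $\kk\in C^1$ (guaranteed since $V\hookrightarrow C_0^1$ forces $\kk\in C^2$ by the remark at the end of Section~\ref{gfik}) so that all these derivatives exist and the computation is legitimate for $x\neq 0$; the formula is stated only away from the origin anyway, consistent with the fact that $\mathrm{Pr}_x^\parallel,\mathrm{Pr}_x^\perp$ are defined there.
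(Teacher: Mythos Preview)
Your proposal is correct and is essentially the paper's own proof: both differentiate the componentwise expression~\eqref{kelement} and then reassemble into projection-operator form. The only cosmetic difference is that the paper starts from the form $[\aaa-\bbb]\,x^jx^k/\|x\|^2+\bbb\,\delta^{jk}$ (so it has to differentiate $x^jx^k/\|x\|^2$ via the quotient rule) whereas you start from the equivalent $\ccc\,x^jx^\ell+\bbb\,\delta^{j\ell}$ and then convert $\ccc'$ back using $\aaa'-\bbb'=2r\ccc+r^2\ccc'$; the net computation is the same. (Note incidentally that $\ccc$ \emph{is} $\tilde k$ by definition~\eqref{def_ktilde}, so your juggling of factors of $\|x\|$ at the end is just the trivial rewrite $\ccc(\|x\|)=\|x\|\tilde k(\|x\|)\cdot\|x\|^{-1}$.)
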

\begin{proof}
By differentiating the generic matrix element~\eqref{kelement} with respect to~$x^i$ we get
\begin{align*}
\frac{\partial\kk^{jk}}{\partial x^i}&(x)
=
\frac{x^jx^k}{\|x\|^2}
\frac{\partial}{\partial x^i}
\big[\hpar(\|x\|)-\hper(\|x\|)\big]
+
\big[\hpar(\|x\|)-\hper(\|x\|)\big]
\frac{\partial}{\partial x^i}
\frac{x^jx^k}{\|x\|^2}
+
\frac{\partial}{\partial x^i}
\hper(\|x\|)\,\delta^{jk}
\\
=&
\Big[\frac{d\hpar}{dr}(\|x\|)-\frac{d\hper}{dr}(\|x\|)\Big]
\frac{x^ix^jx^k}{\|x\|^3}
+
\|x\|^2\tilde{k}(\|x\|)
\Big(
\frac{\delta^j_ix^k+\delta^k_ix^j}{\|x\|^2}-2\frac{x^ix^jx^k}{\|x\|^4}
\Big)
+
\frac{d\hper}{dr}(\|x\|)\,\frac{x^i}{\|x\|}\,\delta^{jk},
\end{align*}
where~$\delta_i^j$ is Kronecker's delta.
We note that we have $\big(e_ix^T\big)^{jk}=\delta_i^jx^k$ and
$\big(xe_i^T\big)^{jk}=x^j\delta_i^k$, so 
we can rewrite the above in matrix form as follows:
$$
\frac{\partial\kk}{\partial x^i} (x)
= 
\frac{x^i}{\|x\|}
\Big[\frac{d\hpar}{dr}(\|x\|)
\!
-
\!
\frac{d\hper}{dr}(\|x\|)\Big]
\frac{xx^T}{\|x\|^2}
+
\!
\|x\|\,\tilde{k}(\|x\|)
\Big(
\frac{e_ix^T+xe_i^T}{\|x\|}-\frac{2x^i}{\|x\|}\frac{xx^T}{\|x\|^2}
\Big)
\!
+
\!
\frac{d\hper}{dr}(\|x\|)\,\frac{x^i}{\|x\|}\,\mathbb{I}_\dd ;
$$
the proof is completed by using the definitions of the projection operators~\eqref{proj_op}.
\end{proof}
%
%
%
\begin{corollary}
Under the assumptions of Proposition~\ref{parderk}, 
if~$V$ is also divergence-free, i.e.~if condition~\eqref{divzero}
holds, 
the partial derivatives~\eqref{partialK} become
\begin{equation}
\label{pK_2}
\begin{aligned}
\frac{\partial \kk}{\partial x^i}(x)
=&
\frac{x^i}{\|x\|}
\Big[
\frac{\dd+1}{\dd-1}
\frac{d\aaa}{dr}
(\|x\|)\,
\prparx
+
\frac{d\bbb}{dr}
(\|x\|)\,
\prperx
\Big]
-\frac{1}{\dd-1}
\frac{d\hpar}{dr}(\|x\|)\,
\frac{e_ix^T+xe_i^T}{\|x\|}.
\end{aligned}
\end{equation}
\end{corollary}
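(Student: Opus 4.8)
The plan is to obtain~\eqref{pK_2} simply by substituting the divergence-free condition~\eqref{divzero} into the general formula~\eqref{partialK} of Proposition~\ref{parderk} and then collecting terms; no genuinely new computation is required. First I would rewrite~\eqref{divzero}, which holds for all $r>0$, as
$$
\frac{\aaa(r)-\bbb(r)}{r}
=
-\frac{1}{\dd-1}\,\frac{d\aaa}{dr}(r),
$$
and observe that the left-hand side is exactly $r\,\tilde{k}(r)$, with $\tilde{k}$ the auxiliary function~\eqref{def_ktilde}. Hence $\|x\|\,\tilde{k}(\|x\|)=-\tfrac{1}{\dd-1}\tfrac{d\aaa}{dr}(\|x\|)$ for every $x\in\Rd\setminus\{0\}$.

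Next I would insert this identity into the term $\|x\|\,\tilde{k}(\|x\|)\big[\tfrac{e_ix^T+xe_i^T}{\|x\|}-2\tfrac{x^i}{\|x\|}\prparx\big]$ appearing on the right-hand side of~\eqref{partialK}. This splits into a piece $-\tfrac{1}{\dd-1}\tfrac{d\aaa}{dr}(\|x\|)\tfrac{e_ix^T+xe_i^T}{\|x\|}$, which is already the last term of~\eqref{pK_2}, together with a piece proportional to $\tfrac{x^i}{\|x\|}\prparx$. Combining the latter with the $\prparx$-contribution already present in the first bracket of~\eqref{partialK} produces the coefficient
$$
\frac{d\aaa}{dr}(\|x\|)\Big(1+\frac{2}{\dd-1}\Big)
=
\frac{\dd+1}{\dd-1}\,\frac{d\aaa}{dr}(\|x\|),
$$
while the $\prperx$-term, which involves only $\tfrac{d\bbb}{dr}$, is left untouched. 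Reassembling the three resulting pieces gives precisely~\eqref{pK_2}.

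There is no real obstacle here: the computation is entirely routine, and the only points requiring a little care are the sign coming from~\eqref{divzero}, the elementary identity $1+\tfrac{2}{\dd-1}=\tfrac{\dd+1}{\dd-1}$, and keeping $\tfrac{d\aaa}{dr}$ and $\tfrac{d\bbb}{dr}$ evaluated at $\|x\|$ throughout. (One could alternatively eliminate $\bbb$ from~\eqref{kelement} via~\eqref{divzero} and differentiate afresh, but starting from~\eqref{partialK} is shorter since that formula is already established.)
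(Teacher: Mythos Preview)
Your proposal is correct and is exactly the intended argument: the corollary is stated in the paper without proof precisely because it follows by the routine substitution of~\eqref{divzero} into~\eqref{partialK} that you describe. Your tracking of the sign from~\eqref{divzero} and the identity $1+\tfrac{2}{\dd-1}=\tfrac{\dd+1}{\dd-1}$ are the only points to watch, and you handle both correctly.
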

\begin{corollary}
Under the assumptions of Proposition~\ref{parderk}, 
if~$V$ is also curl-free, i.e.~if condition~\eqref{curlzero}
holds, 
the partial derivatives~\eqref{partialK} become
\begin{equation}
\label{pK_3}
\frac{\partial \kk}{\partial x^i}(x)
=
\frac{x^i}{\|x\|}
\Big[
\Big(
\frac{d\aaa}{dr}
(\|x\|)
-2
\frac{d\bbb}{dr}
(\|x\|)
\Big)
\prparx
+
\frac{d\bbb}{dr}
(\|x\|)\,
\prperx
\Big]
+\frac{d\hper}{dr}(\|x\|)\,
\frac{e_ix^T+xe_i^T}{\|x\|}.
\end{equation}
\end{corollary}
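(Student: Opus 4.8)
The plan is to start from the general formula \eqref{partialK} in Proposition~\ref{parderk} and impose the curl-free condition \eqref{curlzero}, namely $\frac{d\bbb}{dr}(r)=\frac{\aaa(r)-\bbb(r)}{r}$ for all $r>0$. First I would rewrite the auxiliary function $\tilde k$ appearing in \eqref{partialK}: by definition \eqref{def_ktilde} we have $\tilde k(r)=\frac{\aaa(r)-\bbb(r)}{r^2}$, so the curl-free condition is exactly $r\,\tilde k(r)=\frac{1}{r}\frac{d\bbb}{dr}(r)$, equivalently $\|x\|\,\tilde k(\|x\|)=\frac{1}{\|x\|}\frac{d\bbb}{dr}(\|x\|)$. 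Substituting this into the second bracketed term of \eqref{partialK} turns the coefficient of $\big[\tfrac{e_ix^T+xe_i^T}{\|x\|}-2\tfrac{x^i}{\|x\|}\mathrm{Pr}_x^\parallel\big]$ into $\frac{1}{\|x\|}\frac{d\bbb}{dr}(\|x\|)$.

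Next I would collect the terms proportional to $\mathrm{Pr}_x^\parallel$. From \eqref{partialK} we originally have a $\frac{x^i}{\|x\|}\frac{d\aaa}{dr}(\|x\|)\mathrm{Pr}_x^\parallel$ contribution, plus the $-2\frac{x^i}{\|x\|}\mathrm{Pr}_x^\parallel$ piece which, after the substitution above, carries the factor $\frac{1}{\|x\|}\frac{d\bbb}{dr}(\|x\|)$, i.e. it contributes $-2\frac{x^i}{\|x\|}\frac{d\bbb}{dr}(\|x\|)\mathrm{Pr}_x^\parallel$. Combining, the coefficient of $\frac{x^i}{\|x\|}\mathrm{Pr}_x^\parallel$ becomes $\frac{d\aaa}{dr}(\|x\|)-2\frac{d\bbb}{dr}(\|x\|)$, which is exactly the $\mathrm{Pr}_x^\parallel$ coefficient in \eqref{pK_3}. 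The $\mathrm{Pr}_x^\perp$ term is untouched, still $\frac{x^i}{\|x\|}\frac{d\bbb}{dr}(\|x\|)\mathrm{Pr}_x^\perp$, and the leftover $\frac{e_ix^T+xe_i^T}{\|x\|}$ term carries the factor $\frac{1}{\|x\|}\frac{d\bbb}{dr}(\|x\|)\cdot\|x\| = \frac{d\bbb}{dr}(\|x\|)$ — wait, I should be careful here: the surviving coefficient of $\frac{e_ix^T+xe_i^T}{\|x\|}$ is exactly $\|x\|\tilde k(\|x\|)=\frac{1}{\|x\|}\frac{d\bbb}{dr}(\|x\|)$, so I need to check this matches the $+\frac{d\bbb}{dr}(\|x\|)\frac{e_ix^T+xe_i^T}{\|x\|}$ in \eqref{pK_3}. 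In fact the factor $\|x\|\tilde k(\|x\|)$ equals $\frac{\aaa-\bbb}{\|x\|}=\frac{d\bbb}{dr}$ by \eqref{curlzero}, so these agree. This bookkeeping is the entire content of the proof; there is no real obstacle, only the need to track the $\mathrm{Pr}_x^\parallel$ coefficient carefully through the cancellation of the $-2\frac{x^i}{\|x\|}\mathrm{Pr}_x^\parallel$ term.

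I would therefore write the proof as: starting from \eqref{partialK}, use \eqref{curlzero} in the form $\|x\|\tilde k(\|x\|)=\frac{1}{\|x\|}\frac{d\bbb}{dr}(\|x\|)$ and $\aaa(r)-\bbb(r)=r\frac{d\bbb}{dr}(r)$, substitute, and regroup the coefficients of $\mathrm{Pr}_x^\parallel$, $\mathrm{Pr}_x^\perp$, and $\frac{e_ix^T+xe_i^T}{\|x\|}$; the resulting expression is precisely \eqref{pK_3}. The mildly delicate point — the only place one can slip — is remembering that the $-2\frac{x^i}{\|x\|}\mathrm{Pr}_x^\parallel$ inside the last bracket of \eqref{partialK} must be multiplied by $\|x\|\tilde k(\|x\|)$ before it merges with the $\frac{d\aaa}{dr}$-term, which is why the parallel coefficient ends up as $\frac{d\aaa}{dr}-2\frac{d\bbb}{dr}$ rather than anything else. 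This is a short, purely computational corollary, entirely parallel in structure to the divergence-free corollary that precedes it (where \eqref{divzero} gives $\frac{d\aaa}{dr}=-(d-1)\frac{\aaa-\bbb}{r}$ and one obtains \eqref{pK_2} instead).
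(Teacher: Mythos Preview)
Your proposal is correct and is exactly the intended argument: the paper states this result as an immediate corollary without proof, and the natural (and only) route is to substitute the curl-free relation \eqref{curlzero}, i.e.\ $\|x\|\,\tilde k(\|x\|)=\frac{d\bbb}{dr}(\|x\|)$, into \eqref{partialK} and regroup the $\mathrm{Pr}_x^\parallel$, $\mathrm{Pr}_x^\perp$, and $\frac{e_ix^T+xe_i^T}{\|x\|}$ coefficients. Your momentary hesitation about the last coefficient is unnecessary---you resolve it correctly---and the bookkeeping you describe is precisely what yields \eqref{pK_3}.
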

For any given choice of differentiable coefficients~$(\aaa,\bbb)$,
the expression~\eqref{partialK} 
can be inserted into~\eqref{ham_2},
and Hamilton's equations may be solved numerically
to yield the landmark trajectories. Once these are known,
the time dependent velocity field~\eqref{sIP2}
can be computed (with~$x_a=q^a$ and~$\alpha_a=p_a$), and 
numerical integration of the differential 
equation~$\partial_t\varphi_{0t}^v(t,x)=v(t,\varphi^v_{0t}(x))$,
with initial condition~\mbox{$\varphi^v_{00}(x)=x$},
yields the diffeomorphisms~$\varphi_{01}^v$.
This is shown in the next section
for different choices of the TRI kernel~$\kk$.%
\subsection{Numerical results}
\begin{figure}
\begin{center}
\begin{picture}(450,130)
\put(0,0){\includegraphics[height=5 cm]{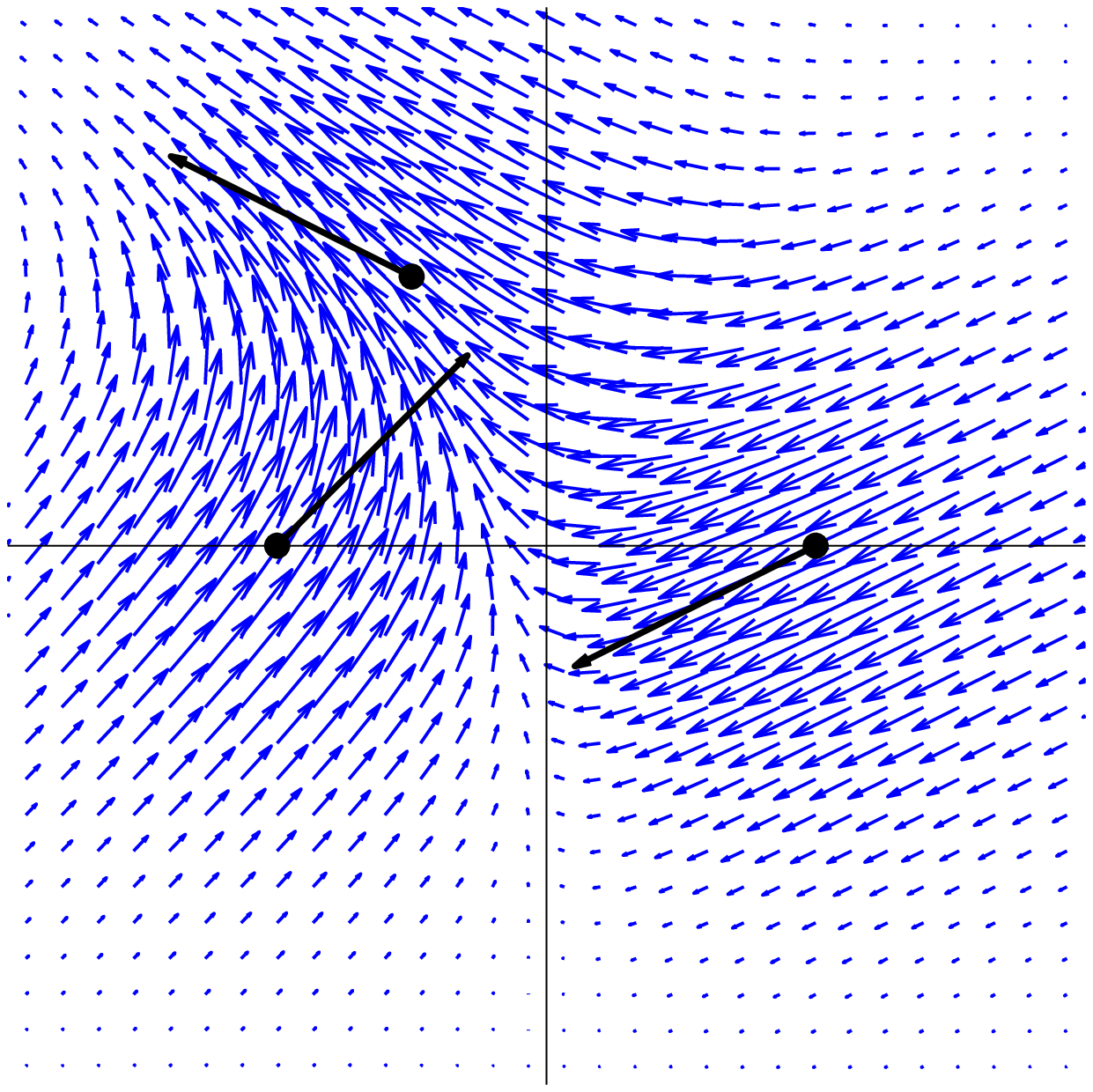}}
\put(150,0){\includegraphics[height=5 cm]{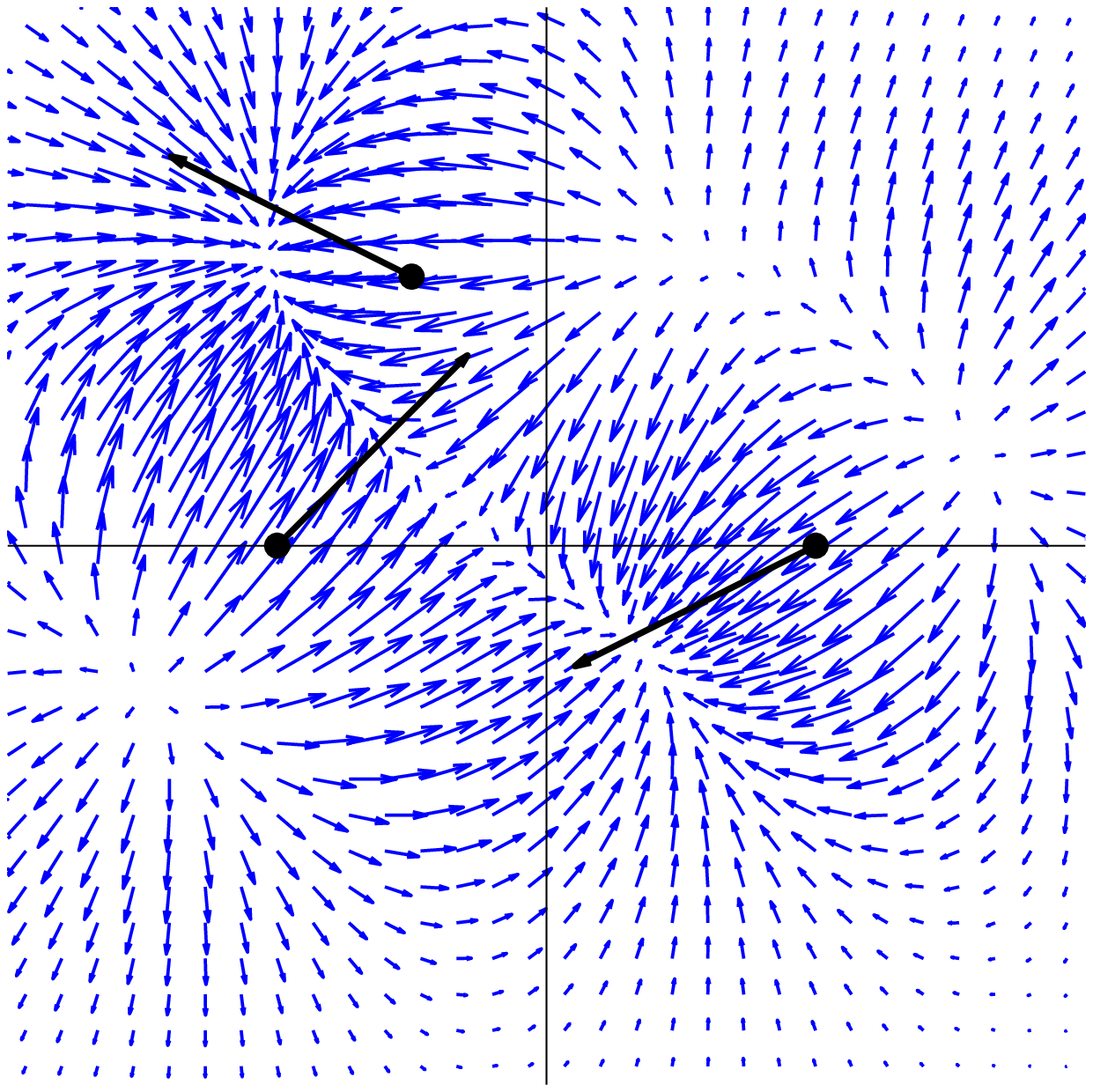}}
\put(300,0){\includegraphics[height=5 cm]{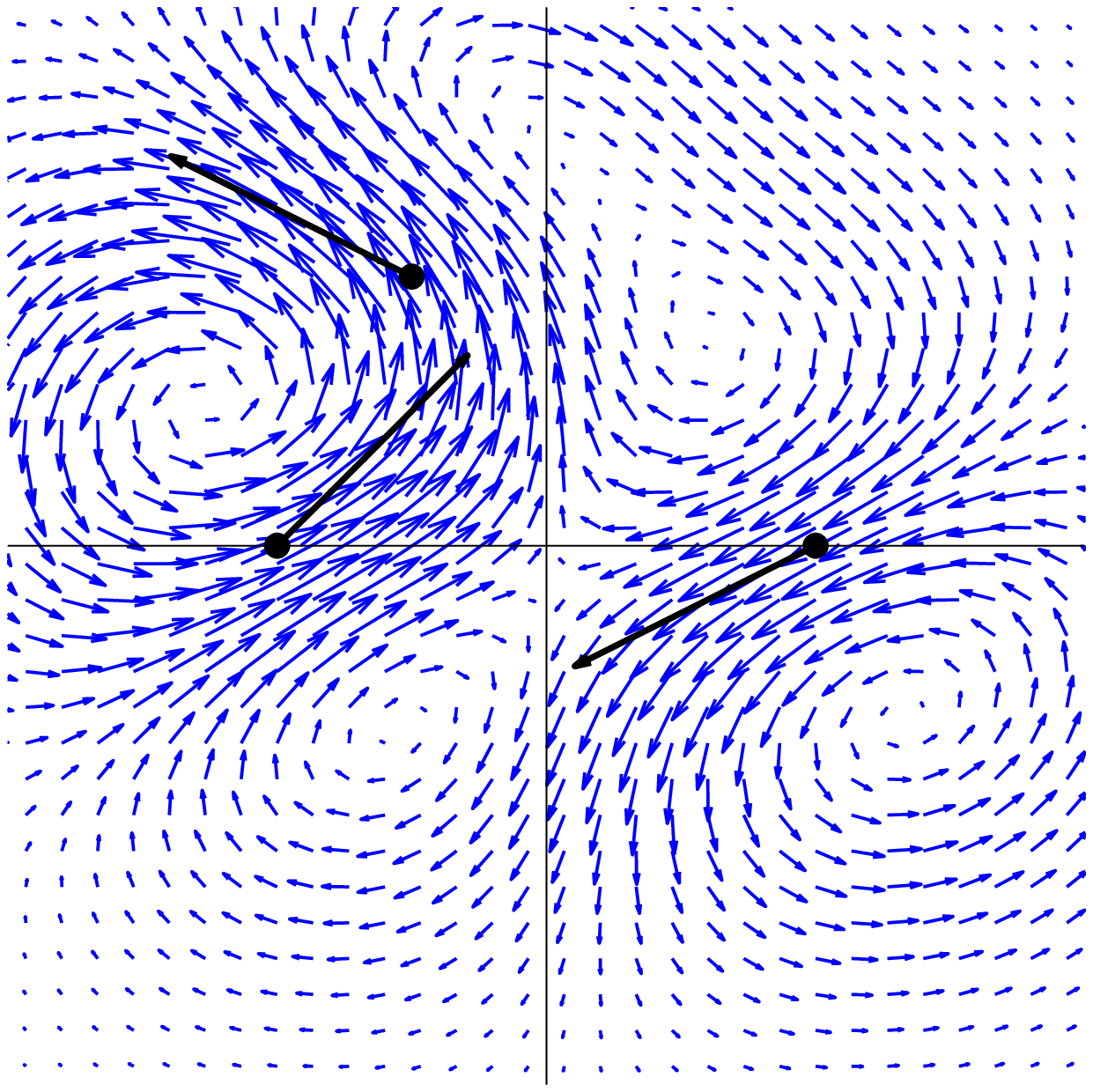}}
\put(70,-10){\makebox(0,0){\small scalar kernel}}
\put(220,-10){\makebox(0,0){\small curl-free kernel}}
\put(370,-10){\makebox(0,0){\small divergence-free kernel}}
\end{picture}
\end{center}
\caption{
Plots of the vector field~$
v(x)=\sum_{a=1}^3\!\kk(x-q^a)\alpha_a$ for three different choices of the kernel,  
with the same landmarks~$q^a$ and momenta~$\alpha_a$, $a=1,2,3$.}
\label{Fig3fields}
\end{figure}
Figure~\ref{Fig3fields}
shows the vector field~$u(x)=\sum_{a=1}^3\kk(x-q^a)\alpha_a$,
which is of the type~\eqref{sIP2}, with landmarks 
\mbox{$q^1=(-1,0)$},
$q^2=(-1/2,1)$, 
$q^3=(1,0)$
in~$\mathcal{L}^3(\mathbb{R}^2)$,
and momenta
$\alpha_1=(1/\sqrt{2},1/\sqrt{2})$,
$\alpha_2=(-2/\sqrt{5},1/\sqrt{5})$, 
$\alpha_3=(-2/\sqrt{5},-1/\sqrt{5})$
(so in this case~$\|\alpha_a\|_{\mathbb{R}^2}=1$ for $a=1,2,3$), 
but for three different choices of
the TRI kernel~$\kk$. The first one is the Gaussian
scalar kernel~$\kk(x)=k(\|x\|)\mathbb{I}_2$
where~$k(r)=e^{-cr^2}$, with~$c=1$,
the second one is the kernel~\eqref{coe_cf}
from Example~\ref{exGauss2}, 
and the third one is the kernel~\eqref{coe_df}
from Example~\ref{exGauss};
in the last two cases, $a=2$, $b=1$, $c=1$ and $d=2$,
so that the two kernels are, respectively, curl-free and divegence-free. It 
should be apparent 
that given the completely different nature of the kernels
and the corresponding vector fields,
so should be 
the landmark trajectories obtained by integrating
Hamilton's equation~\eqref{ham_1} and~\eqref{ham_2}
as well as 
the corresponding diffeomorphisms of the ambient space. 
Here we shall give examples of such 
trajectories and diffeomorphisms for the above choices of the kernel.
\par
\begin{figure}[t]
\begin{center}
\begin{picture}(500,320)
\put(0,0){\includegraphics[height=5.7 cm]{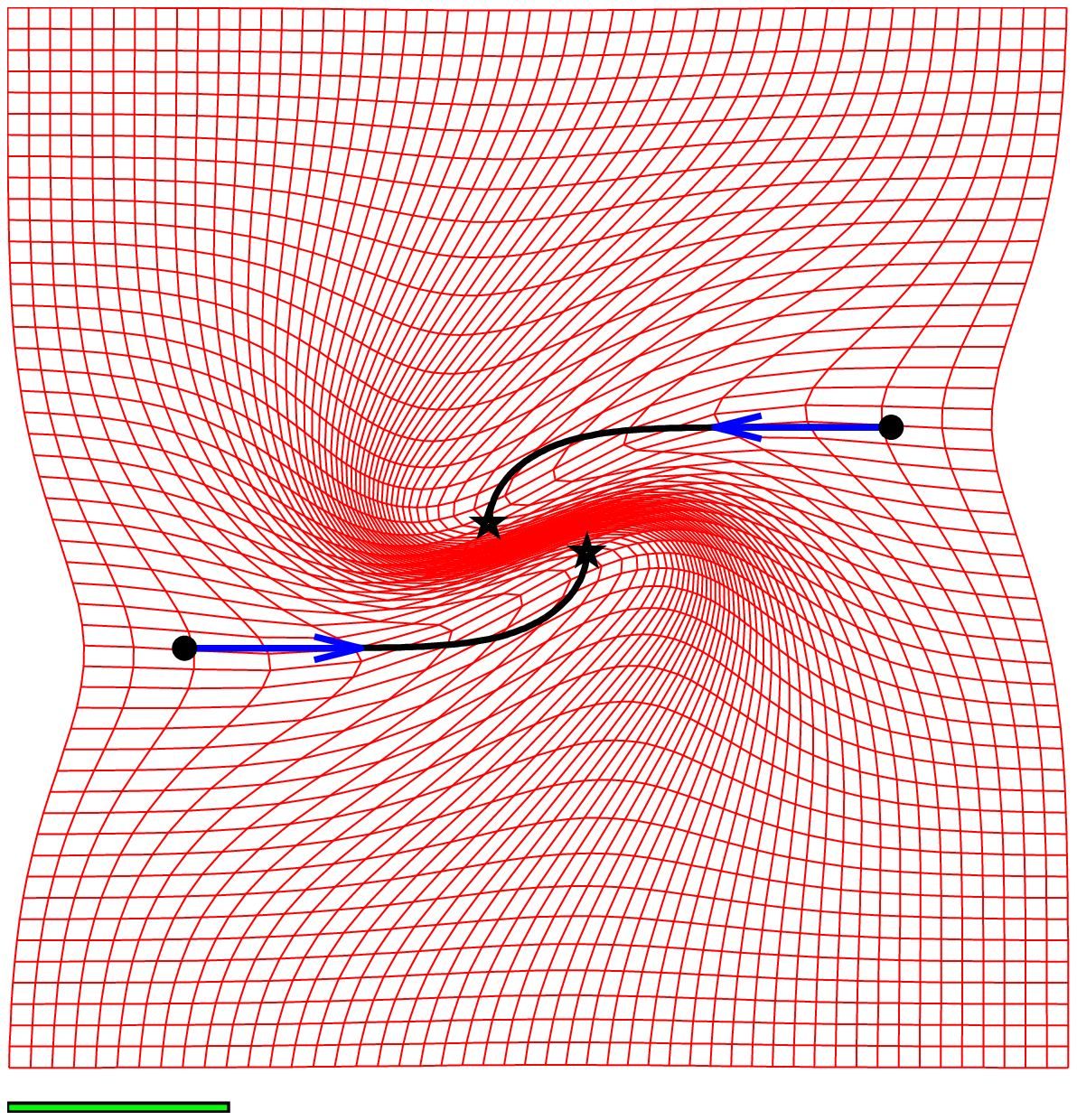}}
\put(170,0){\includegraphics[height=5.7 cm]{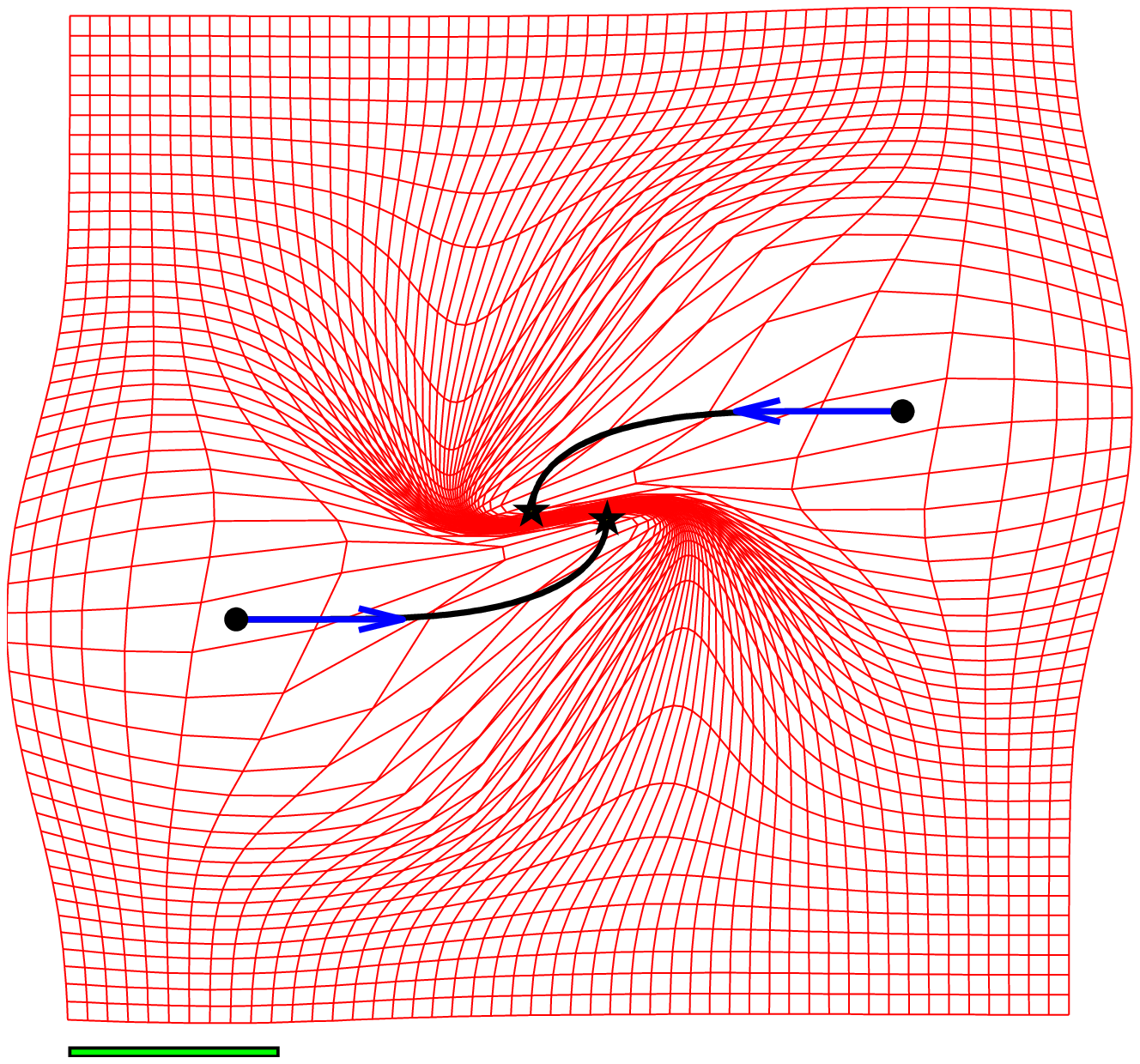}}
\put(340,0){\includegraphics[height=5.7 cm]{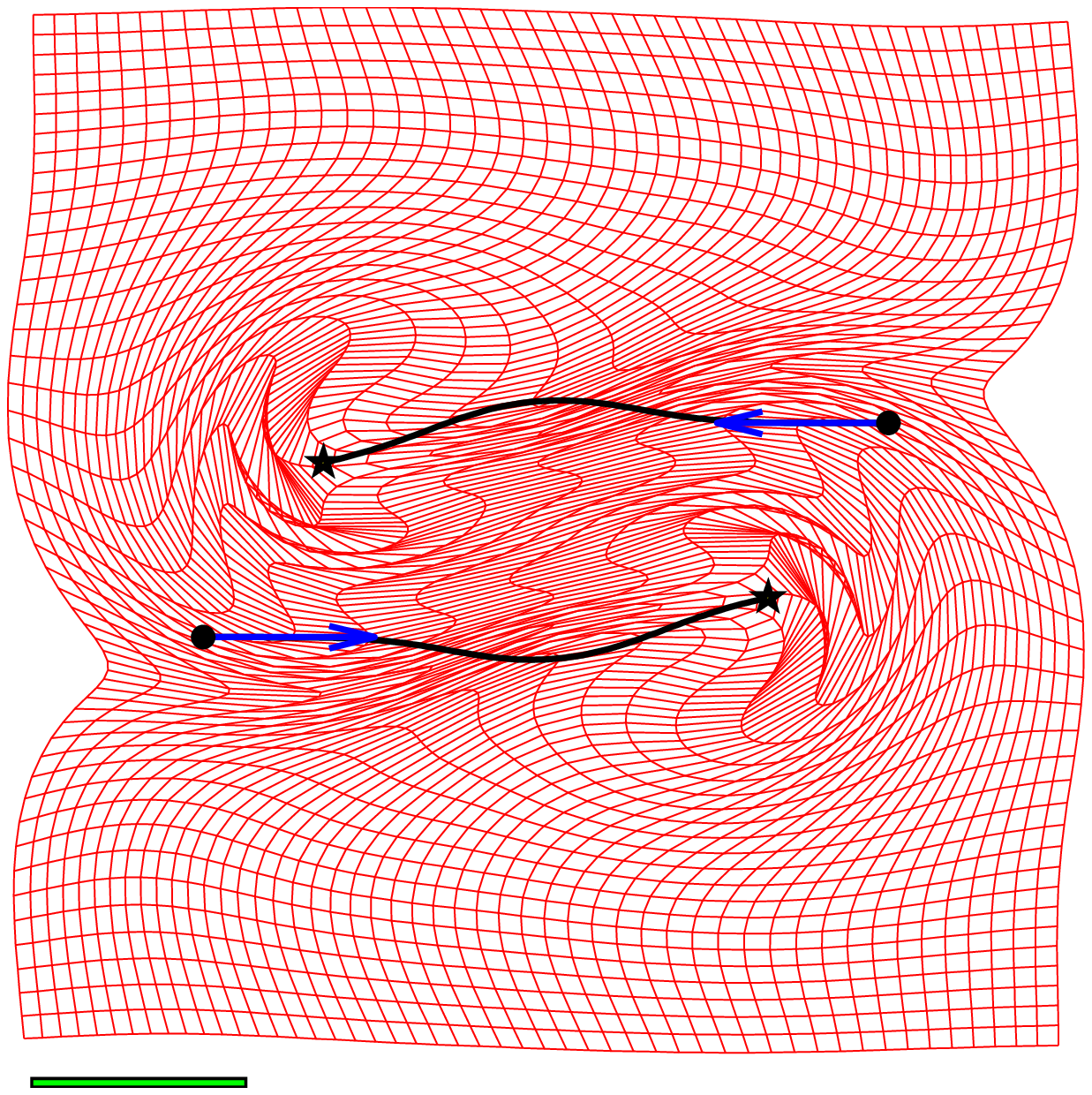}}
\put(80,-10){\makebox(0,0){\small scalar kernel}}
\put(250,-10){\makebox(0,0){\small curl-free kernel}}
\put(420,-10){\makebox(0,0){\small divergence-free kernel}}
\put(-10,80){\makebox(0,0){\small (B)}}
\put(0,170){\includegraphics[height=5.7 cm]{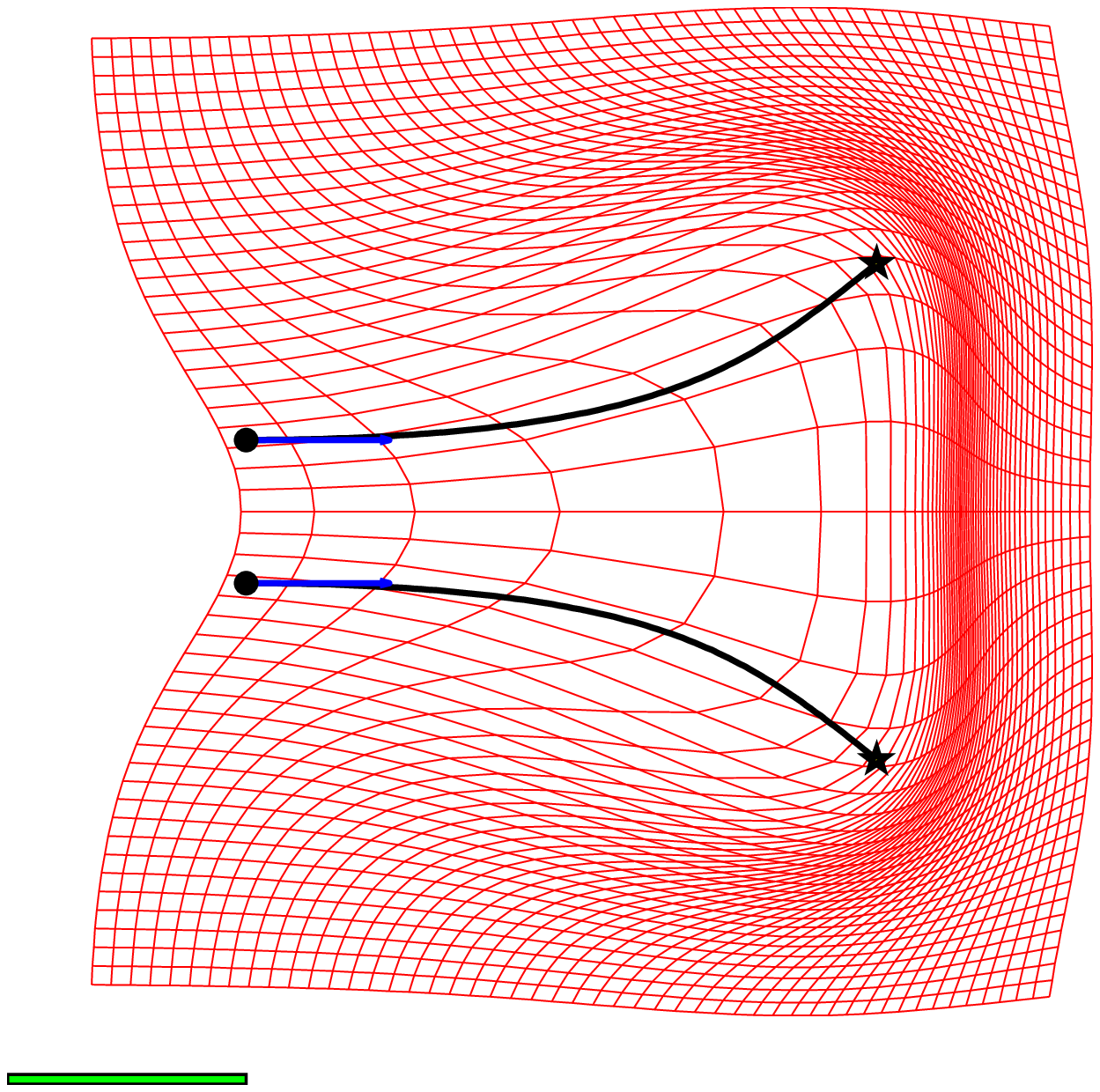}}
\put(170,170){\includegraphics[height=5.7 cm]{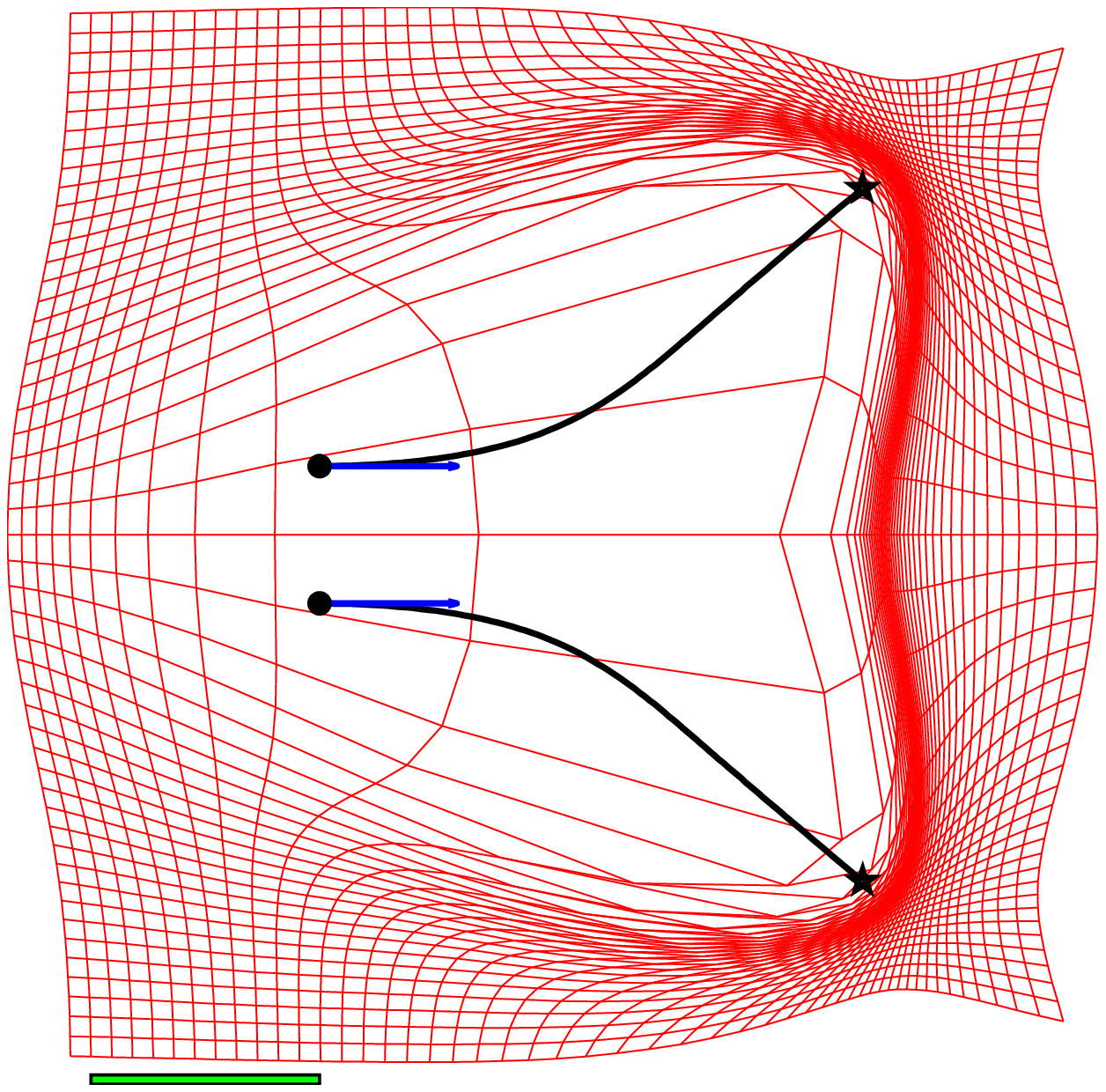}}
\put(340,170){\includegraphics[height=5.7 cm]{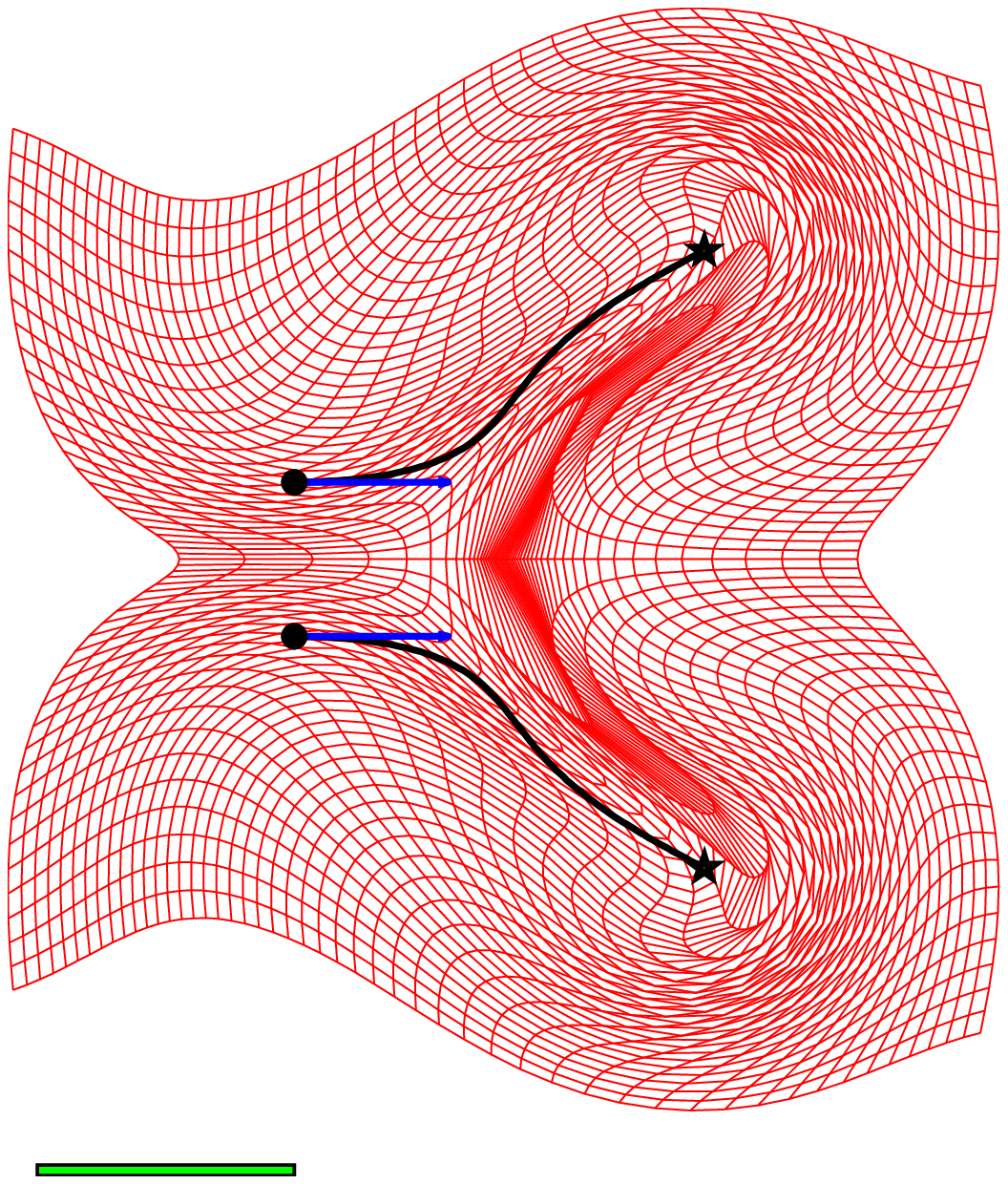}}
\put(-10,250){\makebox(0,0){\small (A)}}
\end{picture}
\end{center}
\caption{
Landmark trajectories in~$\mathcal{L}^2(\mathbb{R}^2)$ for three different choices of the 
kernel (scalar, curl-free, and divergence-free); bullets~($\bullet$) and stars~($\star$)
are the initial and final configurations, respectively, and the arrows are the initial momenta. The grid 
represents the diffeomorphism~$\varphi_{01}^v$ of the ambient space.
The bar at the bottom left of each graph represents the ``width''~$1/\sqrt{c}$
of each kernel.
}
\label{Fig3diff}
\end{figure}
\par
{\bf Shooting landmarks.} Figure~\ref{Fig3diff} 
shows the landmark trajectories that result from 
shooting landmarks, i.e.~from solving Hamilton's
equations~\eqref{ham_1} and~\eqref{ham_2} as an initial value problem, 
namely with specified initial landmark sets
in~$\mathcal{L}^2(\mathbb{R}^2)$ and initial momenta;
the grid in each graph represent the 
corresponding diffeomorphism~$\varphi_{01}^v$,
which is obtained
from the landmark trajectories by 
integrating numerically 
the differential 
equation~$\partial_t\varphi_{0t}^v(t,x)=v(t,\varphi^v_{0t}(x))$
with initial condition~\mbox{$\varphi^v_{00}(x)=x$},
where~$v$ is given by equation~\eqref{sIP2}.
This is done
for the above three choices of Gaussian kernels (scalar, curl-free, and
divergence-free), with parameters~$c=16$, $b=1/(2c)$, $a=2bc$ and obviously~$d=2$
since we are working in two dimensions. 
Specifically, row~(A) shows 
the numerical solution of the above problem
when the initial landmark set is~$(q^1,q^2)=((0,0),(0,0.15))$
and the momenta are parallel and both 
oriented to the right, namely~$(p_1,p_2)=((15,0),(15,0))$. 
The geodesics in the scalar case had already been
examined in~\cite{MMM1}; we note that their behavior in the curl-free case is qualitatively similar.
By Liouville's Theorem~\cite{arnold:1}
a time-dependent divergence-free vector field~$v$ 
generates a flow~$\varphi_{0t}^v$ that preserves volume; such preservation
shows clearly for the diffeomorphism 
in the last figure on the right, which is distinctly different from the previous two
in the shape of the landmark geodesics as well.
On the other hand, row~(B)
shows the geodesic curves resulting from
shooting landmarks in parallel but opposite directions: that is,
with initial configuration~$(q^1,q^2)=((-0.4,-0.125),(0.4,0.125))$ 
and initial momenta~$(p_1,p_2)=((20,0),(-20,0))$.
Once again, we notice that volume is preserved in the diffeomorphism
induced by the divergence-free kernel;
the path followed by the landmarks
in this case is also characteristically different, in that in order to 
avoid compression 
the two landmarks do not spiral but tend to steer away from each other.

{\bf Exponential map.} Finally, Figure~\ref{FigExp} shows
exponential maps $\exp_q\!\big(t\dot{q}(0)\big)$~\cite{jost} in~$\mathcal{L}^2(\mathbb{R}^2)$, 
for the above choices of the Gaussian 
kernel (scalar, curl-free, and divergence-free);
with the same choice of parameters as in the landmark shooting examples.
In all three cases, the \em initial \em landmark positions are fixed 
at~$q=(q^1,q^2)=((0,-0.125),(0,0.125))$,
whereas the \em initial \em momenta are of the type
$p=(p_1,p_2)=(50(\cos\theta,\sin\theta),50(\cos\theta,-\sin\theta))$
with~$\theta\in[-\frac{\pi}{2},\frac{\pi}{2}]$
(the first of Hamilton's equations~\eqref{ham_1} relates the momenta~$p$ and the 
landmark velocities),
and the paramenter~$t$ is sampled in~[0,1].
We observe that the exponential maps for the Gaussian scalar and curl-free kernels
are qualitatively similar, in that they have one ``folding'': this 
illustrates the existence of {\it conjugate points\/}, i.e.~landmark sets 
in~$\mathcal{L}^2(\mathbb{R}^2)$ 
that are connected to the initial configuration~$q=(q^1,q^2)$ 
by distinct geodesic paths. The situation becomes
considerably more complex in the divergence-free kernel case, where we observe
\em multiple \em foldings: 
the exponential map keeps adding extra folds as~$t$ 
increases, and the existence of several distinct trajectories connecting 
landmark sets to the initial configuration is apparent. 
\begin{figure}[t]
\begin{center}
\begin{picture}(500,240)
\setlength{\unitlength}{1pt}
\put(0,-10){\includegraphics[height=8.5cm]{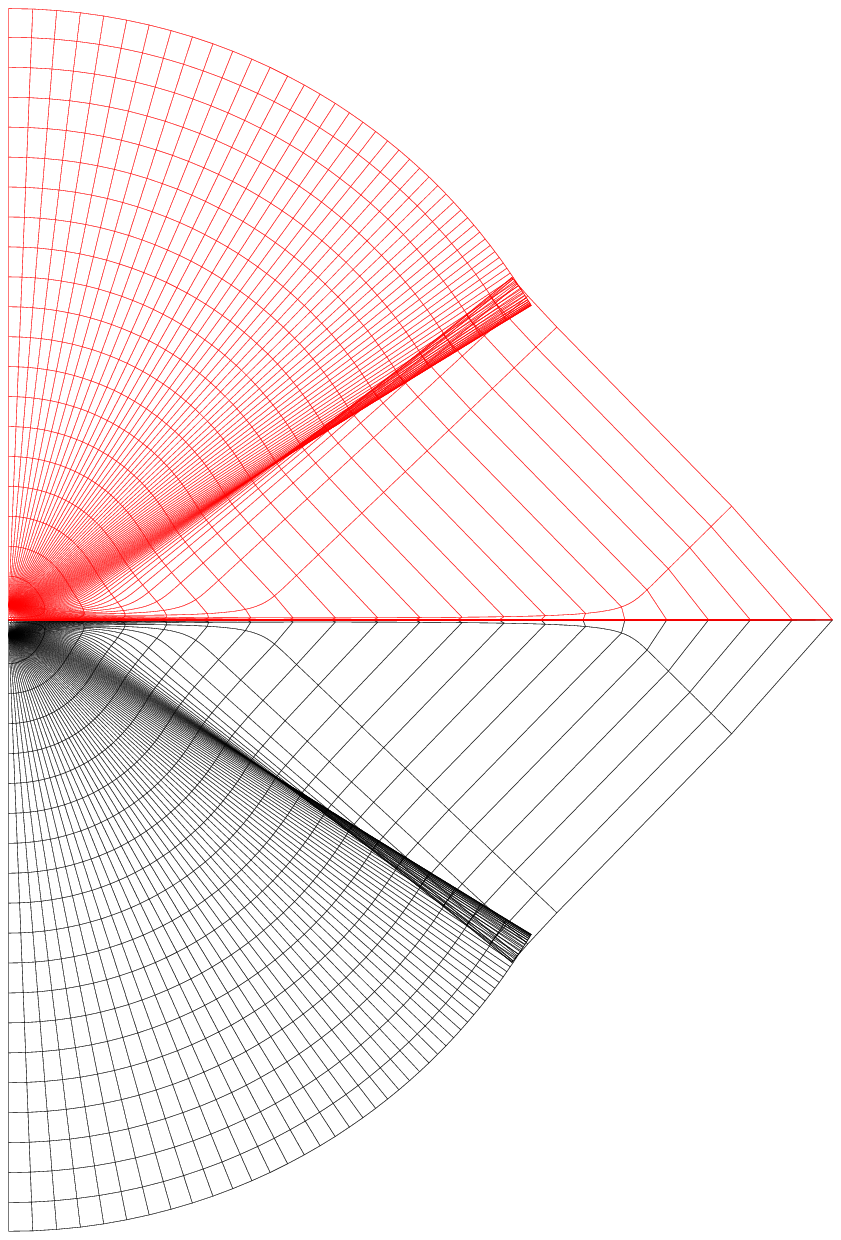}}
\put(170,-10){\includegraphics[height=8.5cm]{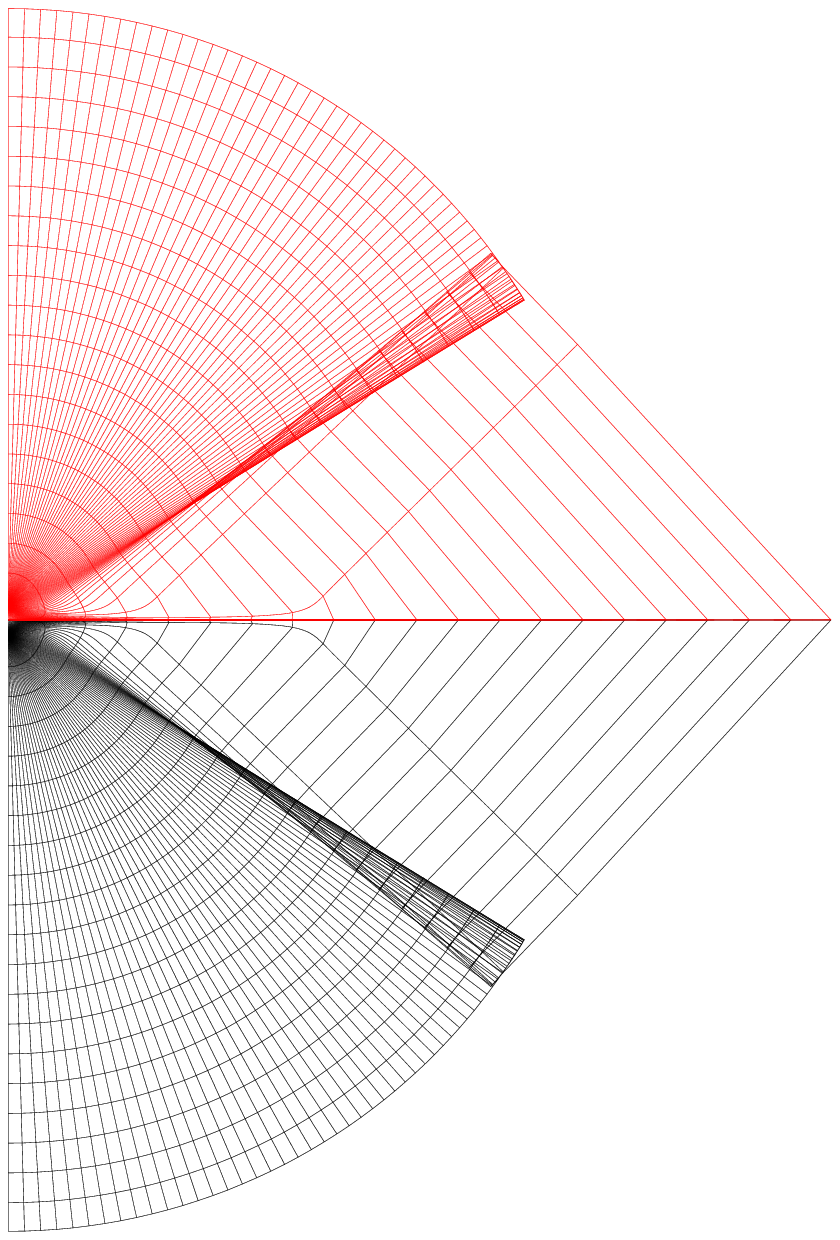}}
\put(340,-10){\includegraphics[height=8.5cm]{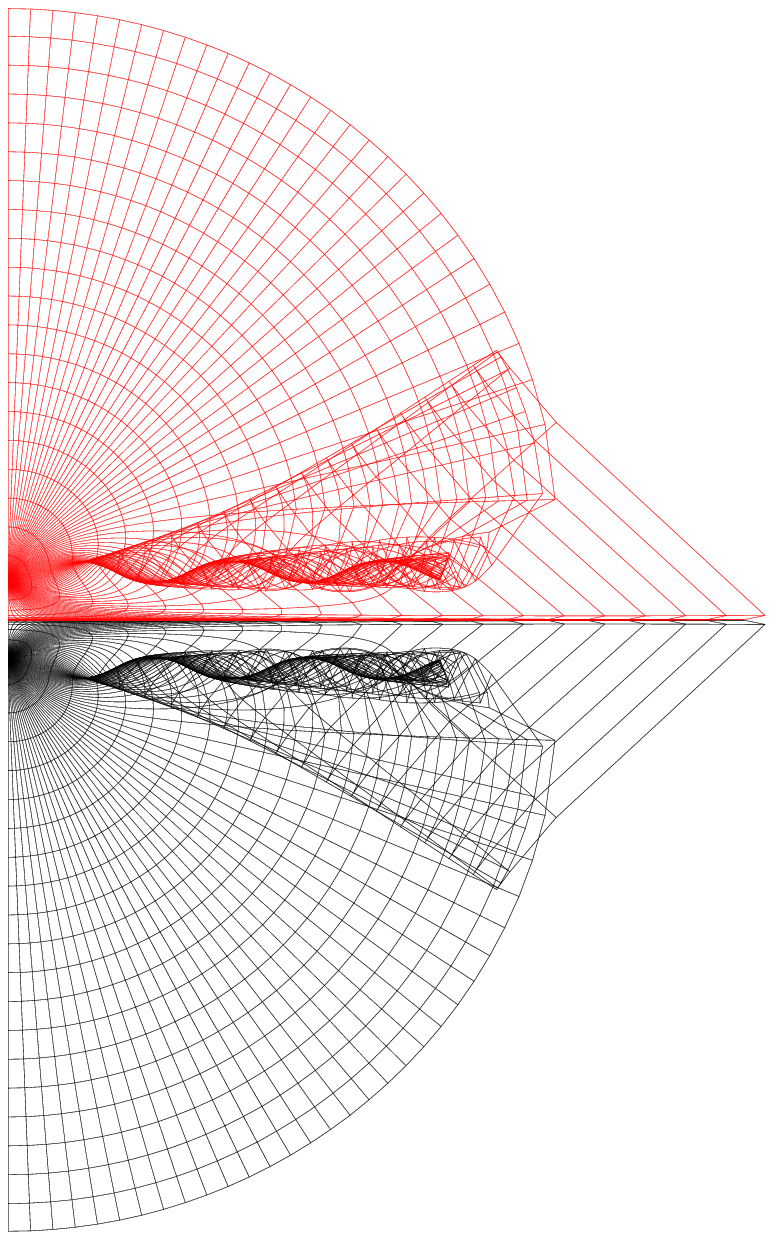}}
\put(-5,103.5){\makebox(0,0){\small$q^1$}}
\put(-5,116.0){\makebox(0,0){\small$q^2$}}
\put(0,106.5){\makebox(0,0){\tiny$\bullet$}}
\put(0,113.0){\makebox(0,0){\tiny$\bullet$}}
\put(95,10){\makebox(0,0){\bf scalar}}
\put(95,0){\makebox(0,0){\bf kernel}}
\put(265,10){\makebox(0,0){\small\bf curl-free}}
\put(265,0){\makebox(0,0){\bf kernel}}
\put(455,10){\makebox(0,0){\bf divergence-free}}
\put(455,0){\makebox(0,0){\bf kernel}}
\put(-12,145){\rotatebox{90}{\small increasing $t$}}
\put(-1.6,140){\vector(0,1){65}} %
\put(-12,30){\rotatebox{90}{\small increasing $t$}}
\put(-1.6,85){\vector(0,-1){65}} %
\put(35,229){\vector(-4,1){0}} %
\put(75,209){\vector(1,-1){0}} %
\qbezier[180](35,229)(59,223)(75,209)
\put(78,242){\makebox(0,0){\small varying}}
\put(78,233){\makebox(0,0){\small initial momenta}}
\put(35,180){\colorbox{white}{\small$p_2$}}
\put(35,38){\colorbox{white}{\small$p_1$}}
\thicklines
\put(0,113){\vector(1,2){52}} %
\put(0,106.5){\vector(1,-2){52}} %
\end{picture}
\end{center}
\caption{
Exponential maps~$\exp_q\!\big(t\dot{q}(0)\big)$ in~$\mathcal{L}^2(\mathbb{R}^2)$ for scalar, 
curl-free, and divergence-free Gaussian-type kernels;
the initial landmark configuration~$q=(q^1,q^2)$ 
is shown
in the first graph. The black and red grids illustrate the geodesic 
trajectories initiating from~$q^1$ and~$q^2$
respectively, for different choices of the initial momenta 
($p_1$ and~$p_2$ are chosen symmetrically with respect to the~$x$-axis),
and sampled in~$t\in[0,1]$.
}
\label{FigExp}
\end{figure}\par
\begin{figure}
\begin{center}
\begin{picture}(425,210)
\setlength{\unitlength}{1pt}
\put(0,70){\includegraphics[height=5cm]{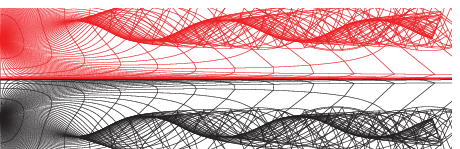}}
\put(-30,140){\makebox(0,0){\small (A)}}
\put(-30,15){\makebox(0,0){\small (B)}}
\put(0,-10){\includegraphics[height=1.88cm]{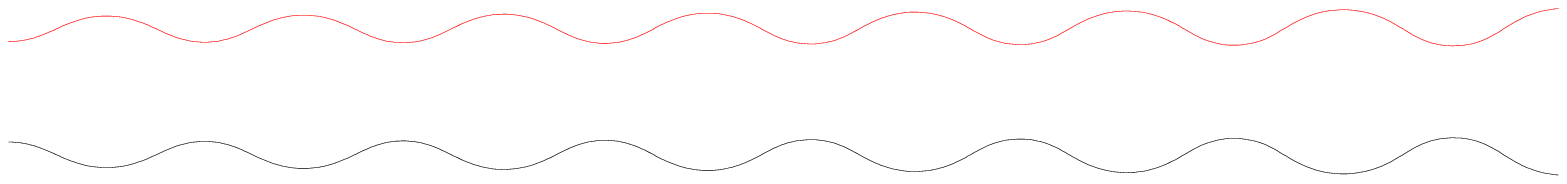}}
\put(-6,0){\makebox(0,0){\small$q^1$}}
\put(-6,30){\makebox(0,0){\small$q^2$}}
\put(0,30){\makebox(0,0){\tiny$\bullet$}}
\put(0,0){\makebox(0,0){\tiny$\bullet$}}
\put(0,100){\makebox(0,0){\tiny$\bullet$}}
\put(0,180){\makebox(0,0){\tiny$\bullet$}}
\put(-6,100){\makebox(0,0){\small$q^1$}}
\put(-6,180){\makebox(0,0){\small$q^2$}}
\end{picture}
\end{center}
\caption{
(A) Detail of the exponential map and (B)
a landmark trajectory in the divergence-free case.
}
\label{FigExp2}
\end{figure}
Figure~\ref{FigExp2} shows a detail of the same exponential map
in the region where most foldings occur,
and a single trajectory of landmarks in the divergence-free case 
(with initial horizontal momenta,~$\theta=0$), at different spatial scales;
note that the latter trajectory exhibits an apparent ``periodic'' behavior. Our interpretation
of this phenomenon, that only 
appears in the divergence-free case, is that for two landmarks to move in the same direction 
in an incompressible fluid
they have to ``swim'' in such fashion so that the fluid gets out of the way as they 
proceed along their paths.
In any case, this is certainly a qualitative 
behavior that needs to be investigated more and suggests that
the differential geometry of the manifold of landmark points~$\mathcal{L}^N(\Rd)$
with the metric~\eqref{gtensor},
with matrix-valued kernels, should also be studied further.
\section{Conclusions}
\label{secCon}
In this paper we have provided a thorough and systematic study of 
matrix-valued definite positive kernels
that induce translation- and rotation-invariant metrics in the corresponding 
Reproducing Kernel Hilbert Spaces.
We have conducted a novel 
analysis of such kernels 
in the Fourier domain: here the classification
of spaces of curl-free and divergence free vector
fields is very natural, in that each corresponds to
one of the two coefficients of the Fourier transform 
of the kernel being equal to zero. This allows
one to perform the Hodge decomposition in RKHS
via the computation of Hankel transforms (which 
may be done analytically or numerically).
We have also devised a method 
for building curl-free and divergence-free 
kernels via an appropriate 
differentiation of scalar kernels;
we have also proven that this constructive
procedure is general, in that any TRI kernel
(more precisely, the terms of its Hodge
decomposition)
may be obtained this way. 
Such method may thus be used to
design a large class of positive kernels.
We have given a  summary 
of the application of the LDDMM approach 
to the interpolation of vector fields and the matching
of landmark points, together with the 
differential-geometric interpretation and the 
derivation of the corresponding Hamiltonian. 
Finally, we have given numerical examples of 
the dynamics of landmark points and the corresponding
flow of diffeomorphisms for different choices of
TRI kernels.
\par
Future work should focus on the design 
of kernels that are appropriate for different applications,
especially in computational anatomy and related fields.
For example, the shape of geodesics should be 
studied in relation to the choice of the kernel; in particular,
note that all the kernels that we have considered in the examples
have a length constant (e.g.~the parameter~$\sigma>0$ for Bessel-type kernels), 
so that the induced dynamics in the 
space of landmarks (and, for that matter, 
in the group of diffeomorphisms itself) is not scale invariant; furthermore, the regularity
of the kernels at zero and the heaviness of their tails are also
crucial in determining both the qualitative dynamics and the  
complexity of numerical implementations.
Last,  but not least, 
the visualizetion of the exponential map 
in the previous section 
(especially in the case of divergence-free TRI kernels) reveals
the presence of several conjugate points, i.e.~landmark
configurations that are connected by distinct geodesics;
the study of the dynamics of~$\mathcal{L}^N(\Rd)$ 
induced by the action of diffeomorphism groups
should be furthered, and 
the investigation of the 
differential geometry of the space of landmarks
that was started in~\cite{MMM1} in the case of scalar kernels,
which is useful for the ultimate goal of performing statistics
on such manifold,
should be 
extended to the case of generic TRI kernels.



\section{Acknowledgements}
This work was supported by the Agence Nationale de la Recherche of France 
(ANR grant ``KaraMetria'' ANR-09-BLAN-0332). The authors wish
to thank David Mumford for his inspirational vision, 
invaluable advice, and continuous encouragement and support.
\appendix
\par\vspace{.8cm}
\noindent{\Large{\bf Appendices}}
\section{Relevant properties of Hankel transforms and Bessel functions}
\label{appBessel}
{\bf $\bullet$ Fourier-Bessel 
integral}~\cite[Chapter~2]{ditkin}~\cite[Chapter~1]{oberhettinger}{\bf:}
this is also known as Hankel's integral formula, or the inversion formula 
for Hankel transforms:
\begin{equation}
\label{inv_Htf2}
\int_0^\infty
\Big\{
\int_0^\infty
f(r) \, J_\mu(2\pi\varrho r) \,r\,dr
\Big\} J_\mu(2\pi t\varrho)\, \varrho\,d\varrho=\frac{f(t)}{(2\pi)^2},
\quad t>0.
\end{equation}
\noindent{\bf $\bullet$ Recurrence relations for Bessel functions of the 
first kind}~\cite[\S9.1.7]{abramowitz}{\bf:}
\begin{align}
\label{Jrec1}
J_{\nu-1}(z)+J_{\nu+1}(z) & = \frac{2\nu}{z}J_\nu(z)
\\
\label{Jrec3}
J_{\nu-1}(z)-\frac{\nu}{z}J_{\nu}(z) & = J'_\nu(z)
\\
\label{Jrec4}
-J_{\nu+1}(z)+\frac{\nu}{z}J_{\nu}(z) & = J'_\nu(z)
\end{align}
\noindent{\bf $\bullet$ A result on modified Bessel functions of the second kind}{\bf:}
\begin{lemma} 
\label{lembess}
For fixed~$\sigma>0$ and arbitrary~$\nu$ 
the function~$\displaystyle f_\nu(r):=\Big(\frac{r}{\sigma}\Big)^\nu
K_\nu\Big(\frac{r}{\sigma}\Big)$, $r>0$,
is such that:
\begin{align}
\label{auxfnu}
f'_\nu(r)
&=
-\frac{1}{\sigma}
\Big(\frac{r}{\sigma}\Big)^\nu
K_{\nu-1}\Big(\frac{r}{\sigma}\Big),
&
f''_\nu(r)
&=
\frac{1}{\sigma^2}f_\nu(r)+\frac{2\nu-1}{r}f'_\nu(r)\,.
\end{align}
\end{lemma}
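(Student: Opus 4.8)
The plan is to prove Lemma~\ref{lembess} by a direct computation using only the standard recurrence and derivative identities for the modified Bessel function $K_\nu$. Recall the two classical facts: $K_\nu' (z) = -\tfrac12\big(K_{\nu-1}(z)+K_{\nu+1}(z)\big)$ and $K_{\nu+1}(z)-K_{\nu-1}(z) = \tfrac{2\nu}{z}K_\nu(z)$, which together give the convenient form $\frac{d}{dz}\big(z^\nu K_\nu(z)\big) = -z^\nu K_{\nu-1}(z)$. Everything will follow by applying this with $z = r/\sigma$ and the chain rule.

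First I would verify the formula for $f_\nu'$. Writing $f_\nu(r) = \sigma^{-\nu} r^\nu K_\nu(r/\sigma)$ — or more cleanly $f_\nu(r) = g(r/\sigma)$ with $g(z) = z^\nu K_\nu(z)$ — the chain rule gives $f_\nu'(r) = \tfrac1\sigma g'(r/\sigma)$. By the identity $g'(z) = -z^\nu K_{\nu-1}(z)$ we get $f_\nu'(r) = -\tfrac1\sigma (r/\sigma)^\nu K_{\nu-1}(r/\sigma)$, which is exactly the first claim in~\eqref{auxfnu}.

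Next I would compute $f_\nu''$. From $f_\nu'(r) = -\tfrac1\sigma h(r/\sigma)$ with $h(z) = z^\nu K_{\nu-1}(z)$, differentiating once more gives $f_\nu''(r) = -\tfrac1{\sigma^2} h'(r/\sigma)$. Now $h(z) = z\cdot\big(z^{\nu-1}K_{\nu-1}(z)\big)$, so by the product rule and again the identity $\frac{d}{dz}\big(z^{\nu-1}K_{\nu-1}(z)\big) = -z^{\nu-1}K_{\nu-2}(z)$ one gets $h'(z) = z^{\nu-1}K_{\nu-1}(z) - z^\nu K_{\nu-2}(z)$. It is then cleanest to eliminate $K_{\nu-2}$ using the recurrence $K_{\nu-2}(z) = K_\nu(z) - \tfrac{2(\nu-1)}{z}K_{\nu-1}(z)$, giving $h'(z) = -z^\nu K_\nu(z) + (2\nu-1)z^{\nu-1}K_{\nu-1}(z)$. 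Substituting back, $f_\nu''(r) = \tfrac1{\sigma^2}(r/\sigma)^\nu K_\nu(r/\sigma) - \tfrac{2\nu-1}{\sigma^2}(r/\sigma)^{\nu-1}K_{\nu-1}(r/\sigma)$. Recognizing the first term as $\tfrac1{\sigma^2}f_\nu(r)$ and the second as $\tfrac{2\nu-1}{r}\cdot\big(-\tfrac1\sigma(r/\sigma)^\nu K_{\nu-1}(r/\sigma)\big) = \tfrac{2\nu-1}{r}f_\nu'(r)$ completes the proof.

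There is no real obstacle here; the only thing to be careful about is choosing the right Bessel recurrences so that the intermediate $K_{\nu-2}$ term disappears and the answer collapses into the stated self-referential form. An alternative, perhaps slicker, route worth mentioning: since $u(z) := z^\nu K_\nu(z)$ satisfies the modified Bessel-type ODE $z u'' + (1-2\nu)u' - z u = 0$ (obtained by substituting into Bessel's equation for $K_\nu$), setting $f_\nu(r) = u(r/\sigma)$ and rescaling immediately yields $r f_\nu'' + (1-2\nu)f_\nu' - \tfrac{r}{\sigma^2}f_\nu = 0$, which rearranges to the second identity in~\eqref{auxfnu}; the first identity is then just the definition of $u'$. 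I would likely present the direct computation but could fall back on this ODE argument if the bookkeeping gets unwieldy.
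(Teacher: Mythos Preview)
Your proposal is correct and follows essentially the same route as the paper's proof: both obtain $f'_\nu$ via the identity $\frac{d}{dz}\big(z^\nu K_\nu(z)\big)=-z^\nu K_{\nu-1}(z)$ (the paper derives it from $K'_\nu(z)=-K_{\nu-1}(z)-\tfrac{\nu}{z}K_\nu(z)$), then factor $z^\nu K_{\nu-1}(z)=z\cdot z^{\nu-1}K_{\nu-1}(z)$, differentiate using the same identity at order $\nu-1$, and eliminate $K_{\nu-2}$ with the recurrence $K_{\nu-2}(z)=K_\nu(z)-\tfrac{2(\nu-1)}{z}K_{\nu-1}(z)$. Your alternative ODE argument is a nice bonus but not needed.
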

\begin{proof}
We have
$
f'_\nu(r)
=
\frac{\nu}{\sigma}
\big(\frac{r}{\sigma}\big)^{\nu-1}
K_{\nu}\big(\frac{r}{\sigma}\big)
+\frac{1}{\sigma}
\big(\frac{r}{\sigma}\big)^{\nu}
K'_{\nu}\big(\frac{r}{\sigma}\big)
$; 
using the 
property~$K'_\nu(z)=-K_{\nu-1}(z)-\frac{\nu}{z}K_\nu(z)$ 
of modified Bessel functions~\cite[\S9.6.26]{abramowitz}
yields the first of~\eqref{auxfnu}. Differentiating the latter
gives
$$
f''_\nu(r)=
\frac{d}{dr}
\Big\{
-\frac{1}{\sigma}\Big(\frac{r}{\sigma}\Big)
\cdot
\Big(\frac{r}{\sigma}\Big)^{\nu-1}K_{\nu-1}
(\frac{r}{\sigma}\Big)
\Big\}
\stackrel{(\ast)}{=}
-\frac{1}{\sigma^2}\Big(\frac{r}{\sigma}\Big)^{\nu-1}
K_{\nu-1}
\Big(\frac{r}{\sigma}\Big)
+
\frac{1}{\sigma^2}
\Big(\frac{r}{\sigma}\Big)^{\nu}
K_{\nu-2}
\Big(\frac{r}{\sigma}\Big),
$$
where in~$(\ast)$ we have used the first of equations~\eqref{auxfnu}
with~$\nu-1$ instead of~$\nu$. At this point we use
the property $K_{\nu-2}(z)=K_{\nu}(z)-2\frac{\nu-1}{z}K_{\nu-1}(z)$ of modified Bessel 
functions~\cite[\S9.6.26]{abramowitz},
and elementary manipulations lead to the second of equations~\eqref{auxfnu}.
\end{proof}
\noindent{\bf $\bullet$ Asymptotic expansion for large 
arguments of the modified Bessel function~$K_\nu$}~\cite[\S9.7.2]{abramowitz}{\bf:}
\begin{equation}
\label{KnuLarge}
K_{\nu}(z)
\sim
\sqrt{\frac{\pi}{2z}}
e^{-z}
\Big\{
1
+
\frac{\mu-1}{8z}
+
\frac{(\mu-1)(\mu-9)}{2!(8z)^2}
+
\frac{(\mu-1)(\mu-9)(\mu-25)}{3!(8z)^3}
+
\ldots
\Big\}
\end{equation}
valid for fixed~$\nu$ and $|\arg z|<\frac{3}{2}\pi$; in the above,~$\mu=4\nu^2$.
\section{Bochner's Theorem and consequences}
\label{appBochner}
We shall reformulate the generalized version of Bochner's theorem as follows:
\begin{theorem}[Bochner]
\label{bochner3}
Consider a matrix-valued function~$\kk\in L^1(\Rm,\Rdd)$ whose 
Fourier transform~$\widehat{\kk}$ is also~$L^1$, and 
define~$K(x,y):=\kk(x-y)$,
 $x,y\in \Rm$. The following facts are equivalent:
\begin{packed_enum}
\item $K$ is a positive definite function;
\item for all~$f\in C^\infty_c(\Rm,\mathbb{C}^d)$ it is the case that 
$\displaystyle\int_\Rm f^\ast(\xi)\, \widehat{\mathbf{ k}}(\xi)\, f(\xi)\, d\xi\geq 0$;
\item for all~$\xi\in\Rm$, it is the case that\
 $\widehat{\mathbf{ k}}(\xi)$ is a positive hermitian matrix.
\end{packed_enum}
\end{theorem}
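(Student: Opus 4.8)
The plan is to prove the chain of equivalences (i)\,$\Leftrightarrow$\,(ii)\,$\Leftrightarrow$\,(iii) by passing through the Fourier domain, exactly as in the classical scalar Bochner argument, but keeping track of the matrix structure. The key tool is the Fourier representation of the quadratic form attached to $K$: for $N\in\mathbb{N}$, points $x_1,\dots,x_N\in\Rm$ and vectors $\alpha_1,\dots,\alpha_N\in\mathbb{C}^d$, one has
$$
\sum_{a,b=1}^N \alpha_a^\ast\,\kk(x_a-x_b)\,\alpha_b
=\int_{\Rm} \Big(\sum_{a=1}^N \alpha_a e^{-2\pi i x_a\cdot\xi}\Big)^{\!\ast}\,\widehat{\kk}(\xi)\,\Big(\sum_{b=1}^N \alpha_b e^{-2\pi i x_b\cdot\xi}\Big)\,d\xi,
$$
which follows by writing $\kk(x_a-x_b)=\int_{\Rm}\widehat{\kk}(\xi)e^{2\pi i(x_a-x_b)\cdot\xi}\,d\xi$ (valid since both $\kk$ and $\widehat{\kk}$ are $L^1$, so the Fourier inversion formula holds pointwise after adjusting $\kk$ on a null set). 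This identity is the hinge of the whole proof.

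First I would prove (iii)\,$\Rightarrow$\,(ii)\,$\Rightarrow$\,(i), which is the easy direction. If $\widehat{\kk}(\xi)$ is positive hermitian for every $\xi$, then for any $f\in C_c^\infty(\Rm,\mathbb{C}^d)$ the integrand $f^\ast(\xi)\widehat{\kk}(\xi)f(\xi)$ is pointwise $\geq 0$, so (ii) holds. For (ii)\,$\Rightarrow$\,(i), given data $\{x_a,\alpha_a\}$ I would approximate the (distributional) trigonometric sum $g(\xi)=\sum_a \alpha_a e^{-2\pi i x_a\cdot\xi}$ by smooth compactly supported functions: choose a mollified cutoff $\chi_R\in C_c^\infty$ that is $1$ on a large ball, set $f_R(\xi)=\chi_R(\xi)g(\xi)\in C_c^\infty(\Rm,\mathbb{C}^d)$, and let $R\to\infty$. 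Because $\widehat{\kk}\in L^1$ and $g$ is bounded, dominated convergence gives $\int f_R^\ast\widehat{\kk}f_R\to\int g^\ast\widehat{\kk}g$, and the latter equals $\sum_{a,b}\alpha_a^\ast\kk(x_a-x_b)\alpha_b$ by the displayed identity; positivity is preserved in the limit. Taking real $\alpha_a$ recovers the definition of positive definiteness in Definition~\ref{def_posdef}. (The symmetry $\kk(-x)=\kk(x)^T$ required for $K$ to be a genuine kernel follows separately from $\widehat{\kk}$ hermitian, but that is a side remark.)

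The substantive direction is (i)\,$\Rightarrow$\,(iii): positive definiteness of $K$ forces $\widehat{\kk}(\xi_0)$ to be positive hermitian at each fixed $\xi_0$. The plan is to first deduce (i)\,$\Rightarrow$\,(ii) by a density/limiting argument (Riemann-sum approximation of $\int f^\ast\widehat{\kk}f$ by finite sums $\sum_{a,b}f(\xi_a)^\ast\kk(x_a-x_b)f(\xi_b)\cdot(\text{weights})$, obtained by discretizing the identity above with $\alpha_a=f(\xi_a)\sqrt{w_a}$), and then extract pointwise positivity of $\widehat{\kk}$ from (ii) by localizing: fix $\xi_0$ and a vector $v\in\mathbb{C}^d$, and test (ii) against $f(\xi)=v\,\phi_\varepsilon(\xi-\xi_0)$ where $\phi_\varepsilon$ is an $L^2$-normalized approximate identity. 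Then $\int f^\ast\widehat{\kk}f = \int |\phi_\varepsilon(\xi-\xi_0)|^2\, v^\ast\widehat{\kk}(\xi)v\,d\xi \to v^\ast\widehat{\kk}(\xi_0)v$ as $\varepsilon\to 0$ at every Lebesgue point of the (continuous, since $\kk\in L^1$) function $\xi\mapsto v^\ast\widehat{\kk}(\xi)v$; since $\widehat{\kk}$ is continuous this holds for all $\xi_0$. Hence $v^\ast\widehat{\kk}(\xi_0)v\geq 0$ for all $v$, i.e.\ $\widehat{\kk}(\xi_0)$ is positive semidefinite, and in particular hermitian. The main obstacle is the bookkeeping in the Riemann-sum step of (i)\,$\Rightarrow$\,(ii): one must verify that the discretization of $\int_{\Rm} g^\ast(\xi)\widehat{\kk}(\xi)g(\xi)\,d\xi$ by genuine positive-definiteness test sums converges, which uses uniform continuity of $\widehat{\kk}$ on compacta and the $L^1$-decay of $\widehat{\kk}$ to control the tails; once that is in place everything else is soft. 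I would remark at the end that the $L^2$ case follows by the standard density of $L^1\cap L^2$ in $L^2$, as indicated in the text.
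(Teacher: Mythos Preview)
Your overall architecture is correct and matches the paper's: both hinge on the identity
\[
\sum_{a,b}\alpha_a^\ast\,\kk(x_a-x_b)\,\alpha_b
=\int_{\Rm} g_{\boldsymbol x,\boldsymbol\alpha}^\ast(\xi)\,\widehat{\kk}(\xi)\,g_{\boldsymbol x,\boldsymbol\alpha}(\xi)\,d\xi,
\qquad
g_{\boldsymbol x,\boldsymbol\alpha}(\xi)=\sum_a\alpha_a\,e^{-2\pi i x_a\cdot\xi},
\]
and your treatments of (iii)$\Rightarrow$(ii), (ii)$\Rightarrow$(i) (cutoff plus dominated convergence), and (ii)$\Rightarrow$(iii) (localization by an approximate identity) are all correct and essentially equivalent to what the paper does.

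The one place your sketch goes astray is the step (i)$\Rightarrow$(ii). Your proposed ``Riemann-sum approximation of $\int f^\ast\widehat{\kk}f$ by $\sum_{a,b}f(\xi_a)^\ast\kk(x_a-x_b)f(\xi_b)\cdot(\text{weights})$, with $\alpha_a=f(\xi_a)\sqrt{w_a}$'' does not parse as written: it mixes frequency samples $\xi_a$ (in the argument of $f$) with spatial points $x_a$ (in the argument of $\kk$) without specifying any relation between them, and the resulting sum is not a Riemann sum for $\int f^\ast\widehat{\kk}f$. The actual content of this step is not discretizing the integral but rather \emph{approximating the test function $f$ by an exponential sum $g_{\boldsymbol x,\boldsymbol\alpha}$}, so that the left-hand side of the identity --- which is $\geq 0$ by (i) --- approximates $\int f^\ast\widehat{\kk}f$. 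The paper does this cleanly: choose a cube $C_A$ containing $\mathrm{supp}\,f$ with $\int_{\Rm\setminus C_A}\|\widehat{\kk}\|<\varepsilon$, extend $f$ periodically, and truncate its Fourier series to get a trigonometric polynomial uniformly close to $f$ on $C_A$; then split the integral over $C_A$ and its complement and control each piece using $\|f-g_{\boldsymbol x,\boldsymbol\alpha}\|_{C_A,\infty}<\varepsilon$ and $\widehat{\kk}\in L^1$. Once you replace your Riemann-sum paragraph with this trigonometric-approximation argument (or, equivalently, pass to the spatial side via Plancherel and discretize $\iint\check f^\ast(x)\kk(x-y)\check f(y)\,dx\,dy$, controlling the Schwartz tails of $\check f$), the proof is complete.
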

\begin{proof}
We have that~$\mathbf{ k}(x)=\int_\Rm
\widehat{\mathbf{ k}}(\xi)e^{2\pi i\langle x,\xi\rangle}d\xi
$, $x\in\Rm$
so we first note that for fixed~$N\in \mathbb{N}$,
${\boldsymbol x} =(x_1,\ldots,x_N)\in (\Rm)^N$ and ${\boldsymbol\alpha}=(\alpha_1,\ldots,\alpha_N)\in(\Rd)^N$
we may write
\begin{equation}
\label{rewrite}
\sum_{a,b=1}^N \alpha_a\cdot \mathbf{ k}(x_a-x_b)\alpha_b
= 
\int_\Rm
\fxa^\ast(\xi)\,\kkh(\xi)\fxa(\xi)\,d\xi,
\end{equation}
where~$\fxa(\xi):=\sum_{a=1}^N e^{-2\pi i x_a\cdot\xi}\alpha_a$. We now proceed with the proof.
\par
(i.$\,\Rightarrow\,$ii.) 
Fix $f\in C^\infty_c(\Rm,\mathbb{C}^d)$ and~$\varepsilon>0$.
By the continuity of integration, there exists~$A>0$ such that 
$C_A:=[-A,A]^m$ verifies~$\int_{\Rm\setminus C_A}\|\kkh(\xi)\|d\xi<\varepsilon$.
We can choose~$A$ large enough such that~$\mathrm{supp} f\subseteq C_A$.
Consider now the \em periodic extension \em of~$f$ in~$\Rm$ and its
Fourier series expansion:
$$
f_{\mathrm{per}}(\xi):=
\sum_{p\in\mathbb{Z}^m}
f(\xi-Ap)
=
\sum_{p\in\mathbb{Z}^m}
c_p e^{-2\pi i\frac{p}{2A}\cdot\xi},\;\xi\in\Rm,
\quad
\mathrm{with}
\;\;
c_p:=\int_\Rm f(\xi)\, 
e^{2\pi i \frac{p}{2A}\cdot\xi}\,d\xi,\;p\in \mathbb{Z}^m.
$$
The above Fourier series converges uniformly,
whence 
there exist~$M\in\mathbb{N}$, points~\mbox{$p_1,\ldots,p_M\in\mathbb{Z}^m$}
such that $\big|f(\xi)-\sum_{a=1}^N
\exp\big({-2\pi i \frac{p_a}{2A}\cdot\xi}\big)\alpha_a\big|<\varepsilon$ for all
$\xi\in C_A$,
where we have set~$\alpha_a:=c_{p_a}$, for $a=1,\ldots,M$. In other words, 
there exist ${\boldsymbol x}=(x_1,\ldots,x_M)$ 
and~${\boldsymbol \alpha}=(\alpha_1,\ldots,\alpha_M)$ such that 
$$\textstyle
\sup_{\xi\in C_A}\big\{\big|f(\xi)-\sum_{a=1}^N
\exp\big({-2\pi i  x_a\cdot\xi}\big)\alpha_a\big|\big\}<\varepsilon.
$$
Since~$\|f-\fxa\|_{C_A,\infty}<\varepsilon$ we 
have 
$\|f_{\mathrm{per}}-\fxa\|_{\infty}<\varepsilon$,
whence
$\|\fxa\|_\infty<\|f_\mathrm{per}\|_\infty+\varepsilon=\|f\|_\infty+\varepsilon$.
Now,
\begin{align*}
\bigg|
\int_\Rm
&
\fxa^\ast(\xi)\,
\kkh(\xi)\fxa(\xi)\,d\xi
-
\int_\Rm
f^\ast(\xi)\,\kkh(\xi)f(\xi)\,d\xi\,
\bigg|
\\
&
\leq
\underbrace{\bigg|
\int_{\Rm\setminus C_A}
\fxa^\ast(\xi)\,\kkh(\xi)\fxa(\xi)\,d\xi
\,
\bigg|}_{I_1}
+\underbrace{\bigg|
\int_{C_A}
\fxa^\ast(\xi)\,\kkh(\xi)\fxa(\xi)\,d\xi
-
\int_{C_A}
f^\ast(\xi)\,\kkh(\xi)f(\xi)\,d\xi\,
\bigg|}_{I_2},
\end{align*}
\begin{align*}
\mbox{with }\;\; 
I_1&\leq
\int_{\Rm\setminus C_A}
\big\|\kkh(\xi)\big\|
\|\fxa(\xi)\|^2d\xi
<
\big(\|f\|_\infty+\varepsilon
\big)^2
\int_{\Rm\setminus C_A}
\big\|\kkh(\xi)\big\|
d\xi
<
\big(\|f\|_\infty+\varepsilon
\big)^2\varepsilon
\\
\mbox{and }\;\; 
I_2&\leq 
\bigg|
\int_{C_A}
(\fxa(\xi)-f(\xi))^\ast\,\kkh(\xi)\fxa(\xi)\,d\xi
\,\bigg|
+
\bigg|
\int_{C_A}
f^\ast(\xi)\,\kkh(\xi)(\fxa(\xi)-f(\xi))\,d\xi\,
\bigg|
\\
&\leq
\int_{C_A}\|\fxa(\xi)-f(\xi)\| 
\big\|\kkh(\xi)\big\|
\|\fxa(\xi)\|
\,
d \xi
+
\int_{C_A}
\|f(\xi)\|
\big\|\kkh(\xi)\big\|
\|\fxa(\xi)-f(\xi)\|
\,
d \xi
\\
&<
\varepsilon
\|\kkh\|_{L^1}
\big(\|f\|_\infty+\varepsilon\big)
+
\|f\|_\infty
\|\kkh\|_{L^1}\,
\varepsilon,
\end{align*}
where~$\|\kkh\|_{L_1}=\int_\Rm\|\kkh(\xi)\|d\xi$. Therefore $I_1+I_2 \leq 
\big(\|f\|_\infty+\varepsilon
\big)^2\varepsilon
+
\|\kkh\|_{L^1}
\big(\|f\|_\infty+\varepsilon
\big)\varepsilon
+
\|\kkh\|_{L^1}
\|f\|_\infty
\,\varepsilon$,
which concludes this part of the proof.
\par
(ii.$\,\Rightarrow\,$iii.) 
The matrix~$\kkh(\xi)$ is self-adjoint for all~$\xi$ 
by the arbitrariness of~$f\in C^\infty_c(\Rm,\mathbb{C}^d)$.
Assume now that for a~$\xi_0\in\Rm$ the matrix~$\kkh(\xi_0)$
has an eigenvalue~$\lambda_0<0$, with eigenvector~\mbox{$\gamma_0\in\mathbb{C}^m$}.
By the continuity of~$\kkh$ there exists a neighborhood~$\Omega_0$ of~$\xi_0$
such that $\gamma_0^\ast\kkh(\xi)\gamma_0<0$ 
in $\Omega_0$; let~$f(\xi):=\psi(\xi)\gamma_0$, $\xi\in\Rm$, where 
$\psi\in C_c^\infty(\Rm,\mathbb{R})$, with $\mathrm{supp}\,\psi\subseteq\Omega_0$. Then
$\int_\Rm f^\ast\kkh f\, d\xi=\int_{\Omega_0}\psi^2\gamma_0^\ast\kkh\gamma_0\,d\xi<0$,
which is a contradiction.
\par
(iii.$\,\Rightarrow\,$i.) If~$\kkh(\xi)$ is self-adjoint with nonnegative 
eigenvalues for all~$\xi\in\Rm$ then the right-hand side of~\eqref{rewrite}
is nonnegative for all~${\boldsymbol x}$
and~${\boldsymbol\alpha}$; whence the function~$K(x,y)$ is positive definite.
\end{proof}
We now proceed with the proof of Proposition~\ref{pos_h}, that states
that translation- and rotation-invariant kernels~$\kk$ of the type~\eqref{invker}
are \em strictly \em positive definite as long as at least one of 
the two coefficients~$\AAA,\BBB:
\mathbb{R}^+\rightarrow \mathbb{R}$ 
of the Fourier transform of~$\kk$ is positive somewhere.
\begin{proof}[Proof of Prop.~\ref{pos_h}]
In the TRI kernel setting~$m=d$.
Note that for any 
${\boldsymbol x} =(x_1,\ldots,x_N)\in (\Rd)^N$ and~${\boldsymbol\alpha}=(\alpha_1,\ldots,\alpha_N)\in(\Rd)^N$ (where at least one of the~$\alpha_a$'s is nonzero),
and for any~$\xi\in\Rd$, the vector~$\fxa(\xi)\in \mathbb{C}^m
$ introduced in the previous proof
is \em neither \em parallel \em nor \em perpendicular to~$\xi$.
\par
First assume that there exists~$r_0\geq0$ such that $\AAA(r_0)>0$ or
$\BBB(r_0)>0$. Choose~$\xi_0\in\Rd$ such that~$\|\xi_0\|=r_0$,
and arbitrary ${\boldsymbol x}=(x_1,\ldots,x_N)$ and~${\boldsymbol\alpha}=(\alpha_1,\ldots,\alpha_N)$. 
We shall write:
$$
\fxa(\xi_0)=u^\parallel_0+u^\perp_0, 
\qquad
\mathrm{with}\;\; u_0^\parallel:=\mathrm{Pr}_{\xi_0}^\parallel \fxa(\xi_0)
\;\;
\mathrm{and}\;\; u_0^\perp:=\mathrm{Pr}_{\xi_0}^\perp \fxa(\xi_0);
$$ 
by the remark above, we have that~$u_0^\parallel\not=0$ and~$u_0^\perp\not=0$. 
It is immediate to verify that 
\begin{align*}
\fxa^\ast(\xi_0)&
\mathrm{Pr}^\parallel_{\xi_0}
\fxa(\xi_0)=\|u_0^\parallel\|^2>0
&
&\mathrm{and}
&
\fxa^\ast(\xi_0)&
\mathrm{Pr}^\perp_{\xi_0}
\fxa(\xi_0)=\|u_0^\perp\|^2>0,
\end{align*}
while $\fxa^\ast(\xi)
\mathrm{Pr}^\parallel_{\xi}
\fxa(\xi)\geq0$ and
$\fxa^\ast(\xi)
\mathrm{Pr}^\perp_{\xi}
\,\fxa(\xi)\geq0$ for generic~$\xi\in\Rd$.
In conclusion
$$
\int_\Rd
\fxa^\ast(\xi)\,\kkh(\xi)\fxa(\xi)\,d\xi
=
\int_\Rd
\Big\{
h^\parallel(\|\xi\|)
\fxa^\ast(\xi)
\mathrm{Pr}^\parallel_{\xi}
\fxa(\xi)
+
h^\perp(\|\xi\|)
\fxa^\ast(\xi)
\mathrm{Pr}^\perp_{\xi}
\fxa(\xi)
\Big\}d\xi>0,
$$
and by~\eqref{rewrite} the kernel is \em strictly \em definite positive.
Vice versa, by~\eqref{rewrite} if~$\kk$ is strictly definite positive then 
the above strict inequality holds for all  
${\boldsymbol x} =(x_1,\ldots,x_N)\in (\Rd)^N$ 
and~\mbox{${\boldsymbol\alpha}=(\alpha_1,\ldots,\alpha_N)\in(\Rd)^N$} where at least one of the~$\alpha_a$'s is nonzero.
Therefore either~$\AAA$ or~$\BBB$ must be strictly positive 
somewhere in~$(0,\infty)$. This completes the proof.
\end{proof}
\section{Proof of Proposition~\ref{FT}}
\label{AppA}
Note that since~$f$ is symmetric so must
be its Fourier transform, i.e.~$\widehat{f}(\xi)=\widehat{f}(-\xi)$.
We now prove
that~it is actually \em radially \em symmetric. 
Using polar coordinates in~$\Rd$~\cite{folland} we can write:
\begin{equation}
\label{ffxi}
\widehat{f}(\xi)
=\widehat{f}(-\xi)
:=
\int_{\Rd}
f(x)
e^{2\pi i  x\cdot\xi}dx
=
\int_0^\infty
r^{\dd-1}g(r)
\int_{\mathbb{S}^{\dd-1}}
e^{2\pi i r s\cdot\xi}
\,ds\,dr,
\qquad
\xi\in \Rd,
\end{equation}
where we use the symbol~$\mathbb{S}^{n}$ for the unit $n$-sphere. Fix~$\xi$, and
let~$\eta$
be such that~$\|\eta\|=\|\xi\|$; in other words $\eta=R\xi$
for some $R\in O(\Rd)$. We have that
$
\widehat{f}(\eta)
=
\int_0^\infty
r^{\dd-1}g(r)
\int_{\mathbb{S}^{\dd-1}}
e^{2\pi i r s\cdot\eta}
ds\,dr$, 
but
$$
\int_{\mathbb{S}^{\dd-1}}
e^{2\pi i r s\cdot\eta}
ds
=
\int_{\mathbb{S}^{\dd-1}}
e^{2\pi i r s\cdot R\xi }
ds
=
\int_{\mathbb{S}^{\dd-1}}
e^{2\pi i r (R^T\!s)\cdot\xi }
ds
=
\int_{\mathbb{S}^{\dd-1}}
e^{2\pi i r\,s'\cdot\xi}
ds',
$$
since $|\mathrm{det}\, R|=1$. Whence $\widehat{f}(\eta)=\widehat{f}(\xi)$,  
i.e.~$\widehat{f}(\xi)=G(\|\xi\|)$,
for some function~$G: \mathbb{R}^+\rightarrow \mathbb{R}$.
Without loss of generality in~\eqref{ffxi} we can choose $\xi$ to be parallel
to the standard basis vector~$e_1$. Letting
$\varrho=\|\xi\|$ 
and $\cos\varphi= s\cdot e_1 $
we can write~$\xi\cdot s=\varrho\cos\varphi$
(note that~$\|s\|=1$ on~$\mathbb{S}^{\dd-1}$). This yields:
\begin{align}
\nonumber
\int_{\mathbb{S}^{\dd-1}}
&
e^{2\pi ir\, s\cdot \xi}\,
ds
=
\int_{\mathbb{S}^{\dd-1}}
e^{2\pi i\varrho r\cos\varphi }\,
ds
=
\sigma(\mathbb{S}^{\dd-2})
\int_0^\pi
e^{2\pi i\varrho r\cos\varphi }
(\sin\varphi)^{\dd-2}
\;d\varphi
\\
&
\stackrel{(\ast)}{=}
\sigma(\mathbb{S}^{\dd-2})
\int_{-1}^1
e^{2\pi i\varrho ru}
(1-u^2)^{\frac{\dd-3}{2}}\,du
=
2\sigma(\mathbb{S}^{\dd-2})
\int_{0}^1
(1-u^2)^{\frac{\dd-3}{2}}
\cos(2\pi \varrho ru)
\,du,
\label{auxffxi}
\end{align}
where $\sigma(\mathbb{S}^n)
=
2\pi^\frac{n+1}{2}/\Gamma(\frac{n+1}{2})
$ is the surface area of the unit $n$-sphere;
in~($\ast$) we have performed the substitution $u=\cos\varphi$.
We now use the integral representation of Bessel functions
~\cite[\S9.1.20]{abramowitz}
$$
J_\nu(z)
=
\frac{2(\frac{1}{2}z)^{\nu}}{\sqrt{\pi}\,\Gamma(\nu+\frac{1}{2})}
\int_{0}^1
(1-u^2)^{\nu-\frac{1}{2}}\cos(zu)
\,du\,,
\qquad
\mbox{ valid for }
{\textstyle\Re\nu>-\frac{1}{2}},
$$
to get
$$
\int_{0}^1
(1-u^2)^{\frac{\dd-3}{2}}
\cos(2\pi \varrho ru)
\,du
=
\frac{\sqrt{\pi}\,\Gamma(\frac{\dd-1}{2})}{2(\pi\varrho r)^{\frac{\dd}{2}-1}}\;
J_{\frac{\dd}{2}-1}(2\pi\varrho r);
$$
inserting this expression into~\eqref{auxffxi} and the latter into~\eqref{ffxi} 
finally completes the proof.

\bibliography{micheli}
\bibliographystyle{abbrv}

\end{document}